\newcommand{\bel}[1]{\begin{equation}\label{#1}}
\newcommand{\be}{\begin{equation}}
\newcommand{\ba}{\begin{eqnarray}}
\newcommand{\ea}{\end{eqnarray}}
\newcommand{\rf}[1]{(\ref{#1})}
\newcommand{\qe}{\end{equation}}
\newcommand{\R}{{\mathbb R}}
\theoremstyle{theorem}
\newtheorem{theo}{Theorem}[section]
\theoremstyle{example}
\newtheorem{example}{Example}[section]
\theoremstyle{corollary}
\newtheorem{coro}{Corollary}[section]
\theoremstyle{lemma}
\newtheorem{lemma}{Lemma}[section]
\theoremstyle{definition}
\newtheorem{defi}{Definition}[section]
\theoremstyle{proof}
\theoremstyle{remark}
\newtheorem{rem}{Remark}[section]
\begin{document}

\title[Dual Cheeger constant and spectra of graphs]{The dual Cheeger constant and spectra of infinite graphs}
\author{Frank Bauer}
 \email{Frank.Bauer@mis.mpg.de}
\address{Max Planck Institute for Mathematics in the Sciences\\
04103 Leipzig, Germany.}
\author{Bobo Hua}
\email{bobohua@mis.mpg.de}
\address{Max Planck Institute for Mathematics in the Sciences\\
04103 Leipzig, Germany.}
\author{J\"urgen Jost}
 \email{jost@mis.mpg.de}
\address{Max Planck Institute for Mathematics in the Sciences\\
04103 Leipzig, Germany.}
\address{Department of Mathematics and Computer Science \\University of
Leipzig \\04109 Leipzig, Germany }
\address{Santa Fe Institute for the Sciences of Complexity, Santa Fe,
NM 87501, USA}

\thanks{The research leading to these results has received funding from the
European Research Council under the European Union's Seventh
Framework Programme (FP7/2007-2013) / ERC grant agreement
n$^\circ$ 267087.}

\begin{abstract}In this article we study the top of the spectrum of
the normalized Laplace operator on infinite graphs. We introduce
the dual Cheeger constant and show that it controls  the top of
the spectrum from above and below in a similar way as the Cheeger
constant controls the bottom of the spectrum. Moreover, we show
that the dual Cheeger constant at infinity can be used to
characterize that the essential spectrum of the normalized Laplace
operator shrinks to one point.
\end{abstract}
\maketitle \tableofcontents

\section{Introduction}
While global geometric properties are inherently nonlinear, they
nevertheless can often be controlled by linear techniques. A prime
example in geometry is the spectrum of the Laplace operator  which
encodes geometric information about the underlying manifold. This
principle has been particularly fertile and most intensively
developed in Riemannian geometry, see \cite{Chavel84} for an
overview. It possesses a more general validity, however. In
particular, more recently, spectral methods have been successfully
explored in graph theory. In fact, it was observed that, while
being very different from manifolds, graphs can be investigated by
modifying techniques that originally have been developed in
Riemannian geometry. This insight was very fruitful and led to
many important results in graph theory, such as a discrete version
of Courant's nodal domain theorem \cite{Davies01}, Sobolev and
Harnack inequalities \cite{Chung-Yau95, Diaconis96,
Chung94aharnack, Chung12}, heat kernel estimates \cite{Delmotte99,
Coulhon98}, and many more. In particular, it turns out that the
normalized Laplace operator on graphs is related to the
Laplace-Beltrami operator for a Riemannian manifold. However, not
only results from the continuous setting triggered new results in
the discrete case but also results in graph theory led to new
insights in geometric analysis. One example of the mutual
stimulation between both fields is for example given in the work
of Chung, Grigor'yan and Yau \cite{Chung-Grigoryan-Yau96,
Chung-Grigoryan-Yau97, Chung-Grigoryan-Yau00} where a universal
approach for eigenvalue estimates on continuous and discrete
spaces was developed.

Graph theory, however, also has specific aspects that are
different from what occurs in other parts of geometry. Our
question then is whether and how these aspects can be explored
with spectral methods. For instance, a bipartite graph, i.e. a
graph whose vertex set can be split into two subsets such that
there are only edges between vertices belonging to different
subsets, has no counterpart in Riemannian geometry. Another
difference to Riemannian geometry is that for graphs, the spectrum
of the normalized Laplace operator is bounded from above by two.
For finite graphs, these two properties are connected by the fact
that 2 is an eigenvalue of the normalized Laplace operator if and
only if the graph is bipartite.

This leads us to the specific  purpose of this paper which is to
investigate the top of the spectrum of the normalized Laplace
operator for infinite graphs. In order to do this we have
developed on the one hand new techniques that have no counterpart
in Riemannian geometry, and on the other hand have discovered
results that are similar to important theorems in the continuous
setting.

In Section \ref{Section3} we introduce the Dirichlet Laplace
operator and its basic properties. Let $\Gamma$ be an infinite
graph and $\Omega$ a finite, connected subset of $\Gamma$. Let
$\lambda_1(\Omega)$ ($\lambda_\mathrm{max}(\Omega)$) be the
  first (largest) eigenvalue of the Dirichlet Laplace operator on $\Omega.$
We then have
\begin{equation}
  \label{01}
\lambda_1(\Omega) + \lambda_\mathrm{max}(\Omega) \le 2,
\end{equation}
and we prove in Theorem \ref{Lembi} that we have the equality
$\lambda_1(\Omega) + \lambda_\mathrm{max}(\Omega) = 2$ if and only
if $\Omega$
 is bipartite. In Section \ref{Section4} we
introduce a geometric quantity, the so-called dual Cheeger
constant $\bar{h}(\Gamma)$ that roughly speaking measures how
close a graph is to a bipartite one. We will show in this section
that the dual Cheeger constant $\bar{h}(\Gamma)$ is closely
related to the Cheeger constant $h(\Gamma)$. The Cheeger constant
$h(\Gamma)$ of a infinite graph $\Gamma$ is one of the most
fundamental geometric quantities which gives rise to important
analytic consequences such as the first eigenvalue estimate, heat
kernel estimates and so on. Also in the setting of finite graphs
the Cheeger constant is related to the growth behavior of the
graph since it is closely connected to the important notion of
expanders, see \cite{Lubotzky94} and the references therein for
more details. The Cheeger constant for a finite subset $\Omega$ of
the vertex set $V$ of $\Gamma$ is defined as
$$h(\Omega) = \inf_{\substack{\emptyset\neq U\subset
\Omega\\\sharp U
    <\infty}}\frac{|\partial U|}{\mathrm{vol}(U)}, $$
where the volume of $U$ is the sum of the degrees of its vertices,
and $|\partial U|$ measures the edges going from a vertex in $U$
to one outside $U$. A sequence of finite subsets of $\Gamma$
satisfying
$\Omega_1\subset\Omega_2\subset\cdots\subset\Omega_n\subset\cdots$
and $\Gamma=\cup_{n=1}^{\infty}\Omega_n,$ denoted by
$\Omega\uparrow\Gamma,$ is called an exhaustion of $\Gamma.$ Since
$h(\Omega)$ is non-increasing when $\Omega$ increases, we can
define the Cheeger constant as the limit for subsets exhausting
$\Gamma$, i.e. $h(\Gamma)=\lim_{\Omega\uparrow \Gamma}h(\Omega),$
which does not depend on the choice of the exhaustion. The Cheeger
constant tells us how difficult it is to carve out large subsets
with small boundary. For a finite subset $\Omega\subset V$ we then
define the dual Cheeger constant  by
$$\bar{h}(\Omega) =
\sup_{\substack{V_1, V_2\subset \Omega\\\sharp V_1, \sharp
V_2<\infty}}\frac{2|E(V_1,V_2)|}{\mathrm{vol}(V_1)+\mathrm{vol}(V_2)},$$
where $V_1,V_2$ are two disjoint nonempty subsets of $\Omega$ and
$|E(V_1,V_2)|$ counts the edges between them. The dual Cheeger
constant of $\Gamma$ then is obtained as the limit for
exhaustions, i.e. $\bar{h}(\Gamma)=\lim_{\Omega\uparrow
\Gamma}\bar{h}(\Omega).$ The dual Cheeger constant tells us how
easy it is to find large subsets with few inside connections, but
many between them. Observe that a bipartite graph is one that we
can divide into two subsets without any internal connections. This
is the paradigm behind the dual Cheeger constant.

In Theorem \ref{h and hbar}, we prove
\begin{equation}
  \label{02}
 h(\Omega)+ \bar{h}(\Omega) \le 1,
\end{equation}
with equality $h(\Omega)+ \bar{h}(\Omega)=1$ if the graph is
bipartite (the converse for equality is not true, see Example
\ref{counterexample1}). There is an obvious analogy between
(\ref{01})  and (\ref{02}), and their equality cases for bipartite
graphs. We shall explore this analogy in this paper. In fact, the
dual Cheeger constant was first introduced in \cite{Bauer12} to
effectively estimate the largest eigenvalue of a finite graph. In
this paper, we develop this technique for the setting of the
Dirichlet Laplace operator on  finite subsets $\Omega$ of an
infinite graph $\Gamma.$ Our first main result is that the largest
eigenvalue of the normalized Dirichlet Laplace operator can be
controlled from above and below in terms of the dual Cheeger
constant, see Theorem \ref{Mtheo1}. Thus, we obtain inequalities
relating the largest Dirichlet eigenvalue and the dual Cheeger
constant $\bar{h}(\Omega)$ that are analogous to the relationship
between the first Dirichlet eigenvalue and the Cheeger constant.
In Theorem \ref{h and hbar 3} we prove that for graphs without
self-loops there is another relation between the dual Cheeger
constant $\bar{h}(\Omega)$ and $h(\Omega)$,
\begin{equation}
  \label{03}
 \frac{1}{2}(1-h(\Omega))\leq \bar{h}(\Omega)
\end{equation}
(\ref{02}) and (\ref{03}) thus tell us  that $\bar{h}(\Omega)$ and
$h(\Omega)$ can be controlled by each other. These estimates will
be the key tool in Section \ref{Section10} where we study the
essential spectrum of the normalized Laplace operator of an
infinite graph. Nevertheless, the analogy between
$\bar{h}(\Omega)$ and $h(\Omega)$ suggested by (\ref{02}) and
(\ref{03}) has its limitations, as we shall see in Section
\ref{Section8} (but the analogy will make a surprising comeback in
Section \ref{Section10}).

But let us first describe results where the analogy holds. In
Section \ref{Section5} we prove eigenvalue comparison theorems for
the largest Dirichlet eigenvalue that are counterparts of the
first eigenvalue comparison theorems on graphs by Urakawa
\cite{Urakawa99} which are discrete versions of Cheng's first
eigenvalue comparison for Riemannian manifolds \cite{Cheng75a,
Cheng75b}. The mostly used comparison models in the literature
(see e.g. \cite{Urakawa99,Friedman93}) for graphs are homogeneous
trees, denoted by $T_d\ (d\in \mathds{N}, \ d\geq 2)$. Here we
propose a novel comparison model for graphs, the weighted
half-line  $R_l\ (l\in \mathds{R},\ l\geq 2)$ which is inspired by
the behavior of the eigenfunctions for the first and largest
eigenvalues on balls in the
 homogeneous tree $T_l$. Compared to the homogenous tree $T_d$, the
advantage of the weighted half-line $R_l$ is that we get better
estimates in the comparison theorems since, for our model space,
in contrast to the parameter $d$ for a homogeneous tree, $l$ need
not be an integer. Our main result in this section is that the
largest Dirichlet eigenvalue of a ball in a graph can be
controlled by that of a ball in the weighted half-line $R_l$ of
the same radius and a quantity that is related to the
bipartiteness of the graph.

In Section \ref{Section6} we show how the eigenvalue comparison
theorems from Section \ref{Section5} can be used to estimate the
largest, the second largest and other largest eigenvalues of the
normalized Laplace operator for finite graphs. In Section
\ref{Section7} we use the dual Cheeger constant $\bar{h}(\Gamma)$
to estimate the top of the spectrum of $\Gamma$. In Section
\ref{Section8} we study the dual Cheeger constant and its
geometric and analytic consequences. Let us denote by
$\sigma(\Delta)\ (\subset [0,2])$ the spectrum of the normalized
Laplace operator of an infinite graph $\Gamma,$ by
$\underline{\lambda}(\Gamma):=\lim_{\Omega\uparrow\Gamma}\lambda_1(\Omega)=\inf
\sigma(\Delta)$ the bottom of the spectrum of an infinite graph
$\Gamma$ and  by
$\bar{\lambda}(\Gamma):=\lim_{\Omega\uparrow\Gamma}\lambda_{\mathrm{max}}(\Omega)=\sup
\sigma(\Delta)$ its top.  It is well known that the spectral gap
for $\underline{\lambda}(\Gamma),$ i.e.
$\underline{\lambda}(\Gamma)>0,$ implies $\Gamma$ has exponential
volume growth and a fast heat kernel decay. Moreover, the spectral
gap for $\underline{\lambda}(\Gamma)$ is a rough-isometric
invariant (see e.g. \cite{Woess00}). We shall derive the
exponential volume growth condition for the spectral gap for
$\bar{\lambda}(\Gamma),$ i.e.  $\bar{\lambda}(\Gamma)< 2,$ if we
further assume the graph $\Gamma$ is, in some sense, close to a
bipartite one. More importantly, we present some examples (see
Example \ref{ex2} and Example \ref{ex3}) to show that the spectral
gap for $\bar{\lambda}$ is not a rough-isometric invariant. Since
the dual Cheeger constant $\bar{h}(\Gamma)$ is closely related to
$\bar{\lambda}(\Gamma),$ it is follows that also
$\bar{h}(\Gamma)<1$ is not a rough-isometric invariant. Note that
in contrast, $h(\Gamma)>0$ is a rough-isometric invariant.

In Section \ref{Section9} we study the top of the spectrum of
infinite graphs with certain symmetries. A graph $\Gamma$ is
called quasi-transitive if it has only finitely many orbits by the
action of the group of automorphisms, $\mathrm{Aut}(\Gamma)$, i.e.
$\sharp\Gamma\slash \mathrm{Aut}(\Gamma)<\infty,$ see
\cite{Woess00}. It is obvious that Cayley graphs are
quasi-transitive. We prove in Theorem \ref{mainth1}
 that for a non-bipartite quasi-transitive graph $\Gamma$,
$\bar{h}(\Gamma)\leq 1-\delta,$ where $\delta=\delta(\Gamma)>0.$
As a corollary (see Corollary \ref{mthlast}), we obtain that for a
quasi-transitive graph $\Gamma,$ $\bar{\lambda}(\Gamma)=2$ implies
that $\Gamma$ is bipartite. This is a generalization of a result
for Cayley graphs in \cite{HarpeRobertsonValette}. The proof in
\cite{HarpeRobertsonValette} is based on techniques from
functional analysis, in particular C*-algebras. Our proof is
completely different, since we use a combinatorial argument to
estimate the geometric quantity $\bar{h}(\Gamma)$. From the
geometric point of view this proof has the advantage that it can
easily be extended to quasi-transitive graphs, which is not true
for the proof technique used in \cite{HarpeRobertsonValette}.

In Section \ref{Section10} we investigate the essential spectrum
of infinite graphs. The main result is that the dual Cheeger
constant at infinity can be used to characterize that the
essential spectrum shrinks to one point. Let
$\sigma^{\mathrm{ess}}(\Gamma)$ denote the essential spectrum of
an infinite graph $\Gamma.$
 Note that the essential spectrum, which is related to the geometry at infinity, cannot be empty since the
normalized Laplace operator is bounded. Fujiwara \cite{Fujiwara96}
discovered (see also \cite{Keller10}) that the Cheeger constant at
infinity, denoted by $h_{\infty}$, is equal to 1 if and only if
the essential spectrum is smallest possible (i.e.
$\sigma^{\mathrm{ess}}(\Gamma)=\{1\}$). We show the following
criteria of concentration of the essential spectrum by the dual
Cheeger constant at infinity, denoted by $\bar{h}_{\infty}$.
\begin{theo}[see Theorem \ref{hbar and h at infinity}]
Let $\Gamma$ be an infinite graph without self-loops. Then
$$\bar{h}_{\infty}(\Gamma)=0\Longleftrightarrow h_{\infty}(\Gamma)=1\Longleftrightarrow \sigma^{\mathrm{ess}}(\Gamma)=\{1\}.$$
\end{theo}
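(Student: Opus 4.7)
The plan is to reduce the three-way equivalence to two ingredients: the inequalities $(\ref{02})$ and $(\ref{03})$ from the introduction, and Fujiwara's criterion $h_\infty(\Gamma)=1 \iff \sigma^{\mathrm{ess}}(\Gamma)=\{1\}$ cited above.

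First I would extend $(\ref{02})$ and $(\ref{03})$ to the ``at infinity'' setting. Writing
$$h_\infty(\Gamma) = \lim_{K\uparrow\Gamma} h(\Gamma\setminus K), \qquad \bar{h}_\infty(\Gamma) = \lim_{K\uparrow\Gamma} \bar{h}(\Gamma\setminus K),$$
with $K$ ranging over finite subsets of $\Gamma$, the definitions of $h$ and $\bar h$ (an infimum over finite $U$, respectively a supremum over finite $V_1,V_2$) make sense for any subset of $\Gamma$, and the proofs of $(\ref{02})$ and $(\ref{03})$ apply verbatim with $\Omega=\Gamma\setminus K$. Letting $K$ exhaust $\Gamma$ one then obtains
\begin{equation*}
h_\infty(\Gamma) + \bar{h}_\infty(\Gamma) \le 1, \qquad \tfrac{1}{2}\bigl(1 - h_\infty(\Gamma)\bigr) \le \bar{h}_\infty(\Gamma),
\end{equation*}
where the no-self-loop hypothesis enters only through $(\ref{03})$.

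The first equivalence $\bar{h}_\infty(\Gamma)=0 \iff h_\infty(\Gamma)=1$ then follows by a short sandwich: if $\bar{h}_\infty=0$, the second inequality forces $h_\infty \ge 1$, while the trivial bound $|\partial U|\le \mathrm{vol}(U)$ gives $h_\infty \le 1$, so $h_\infty=1$; conversely, if $h_\infty=1$, the first inequality gives $\bar{h}_\infty \le 0$, which combined with the nonnegativity of $\bar h$ yields $\bar{h}_\infty=0$. The second equivalence $h_\infty(\Gamma)=1 \iff \sigma^{\mathrm{ess}}(\Gamma)=\{1\}$ is Fujiwara's theorem, which may be quoted directly.

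The main obstacle I anticipate is not the closing sandwich but the careful verification that $h_\infty$ and $\bar h_\infty$ are well-defined and that $(\ref{02})$ and $(\ref{03})$ transfer cleanly to them. One must confirm the monotonicity of $h(\Gamma\setminus K)$ and $\bar h(\Gamma\setminus K)$ in $K$ so that the defining limits exist, and check that the inequalities survive the exhaustion without any boundary-term loss. Once this bookkeeping is in place, the theorem is essentially a corollary of $(\ref{02})$, $(\ref{03})$, and Fujiwara's result.
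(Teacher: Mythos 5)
Your proposal is correct and matches the paper's proof essentially step for step: both cite Fujiwara for the equivalence $h_\infty=1\iff\sigma^{\mathrm{ess}}=\{1\}$, both derive $\tfrac{1}{2}(1-h_\infty)\leq\bar h_\infty\leq 1-h_\infty$ from Theorems \ref{h and hbar} and \ref{h and hbar 3} via exhaustion, and both close with the same sandwich. The only cosmetic difference is that you apply the two inequalities directly to the (possibly infinite) subset $\Gamma\setminus K$ and note the definitions restrict to finite $U,V_1,V_2$, whereas the paper first records the inequalities for finite $\Omega$ and then passes through an explicit double exhaustion $\Omega\uparrow\Gamma\setminus K$, $K\uparrow\Gamma$; the two routes are equivalent.
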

The typical example here is a rapidly branching tree. This result
is somehow surprising since our results in Section \ref{Section8}
suggest that the dual Cheeger constant is weaker than the Cheeger
constant. However it turns out that at infinity (in the extreme
case $\bar{h}_\infty=0$ and $h_\infty=1$) both quantities contain
the same information. This is a new phenomenon for discrete
structures that has no
  analogue for Riemannian manifolds. For graphs with
self-loops, we give an example which has
$\bar{h}_{\infty}=h_{\infty}=0$ and $\sigma^{\mathrm{ess}}=\{0\}.$
In another direction, we generalize the results about the
essential spectrum in \cite{Urakawa99} by the comparison method,
see Theorem \ref{lasts1}, \ref{ees1} and Corollary \ref{lasts2}.
These results are discrete analogues of results in
\cite{Donnelly81,Donnelly79}.

\section{Preliminaries}\label{Preliminaries}
Let $\Gamma=(V,E)$ denote a locally finite, connected graph with
infinitely many vertices. Here $V$ denotes the vertex and
$E\subset V\times V$ the edge set of $\Gamma$. In this article we
study weighted graphs, i.e. we consider a positive symmetric
weight function $\mu$ on the edge set $\mu:E\to \mathbb{R}_+$. In
particular, for the edge $e=(x,y)$ connecting $x$ and $y$ (also
denoted by $x\sim y$) we write $\mu(e) = \mu_{xy}$ and we extend
$\mu$ to the whole of $V\times V$ by setting $\mu_{xy}=0$ if
$(x,y)\notin E$. We point out that we do allow self-loops in the
graph, that is $\mu_{xx}>0$ is possible for all $x\in V$.
Moreover, we consider a measure $\mu$ (by abuse of notation, we
will also denote this measure by $\mu$ but this should not lead to
any confusion) on the vertex set $\mu:V\to \mathbb{R}_+$ defined
by $\mu(x) = \sum_{y\in V}\mu_{xy}$ for all $x\in V$. In this
paper, for simplicity, we always assume that $\Omega$ is a finite,
connected subset of $V$ (otherwise one can easily extend the
results to the connected components of $\Omega$), and the
cardinality of $\Omega$ satisfies $\sharp \Omega\geq2$ (if $\sharp
\Omega =1$, then the Dirichlet Laplace operator is trivial, i.e.
has one single eigenvalue which is equal to
$1-\frac{\mu_{xx}}{\mu(x)}$ - see below for more details). We
define the volume of $\Omega$ as $\mathrm{vol}(\Omega):=
\mu(\Omega) = \sum_{x\in \Omega}\mu(x)$. The boundary $\partial
\Omega$ of $\Omega$ is defined as the set of all vertices $y\notin
\Omega$ for which there exists a vertex $x\in \Omega$ such that
$x\sim y$ and we define $|\partial \Omega| = \sum_{x\in
\Omega}\sum_{y\in \Omega^c}\mu_{xy}$, where $\Omega^c$ denotes the
complement of $\Omega$.

 For any two subsets $\Omega_1,\Omega_2\subset V$ we denote
$E(\Omega_1,\Omega_2)=\{(x,y)\in E: x\in \Omega_1, y\in
\Omega_2\}$ and $|E(\Omega_1,\Omega_2)| = \sum_{x\in
\Omega_1}\sum_{y\in \Omega_2}\mu_{xy}$. If $\Omega_1$ and
$\Omega_2$ are disjoint subsets of a simple ($\mu_{xx}=0$ for all
$x$) and unweighted (i.e. $\mu_{xy}=1$ for any $x\sim y$) graph
$\Gamma$, then $|E(\Omega_1,\Omega_2)|=\sharp
E(\Omega_1,\Omega_2)$ and $|E(\Omega_1,\Omega_1)|=2\sharp
E(\Omega_1,\Omega_1)$ where $\sharp E(\Omega_1,\Omega_2) (\sharp
E(\Omega_1,\Omega_1))$ is the number of edges in
$E(\Omega_1,\Omega_2) (E(\Omega_1,\Omega_1)).$ In particular, we
have $|\partial \Omega| = |E(\Omega, \Omega^c)|$. The following
formula connecting the volume and the boundary of a subset
$\Omega\subset V$ will be repeatedly used throughout this paper
\begin{equation}
\label{volumeformula}\mathrm{vol}(\Omega)=|E(\Omega,\Omega)| +
|E(\Omega,\Omega^c)|=|E(\Omega,\Omega)|+|\partial
\Omega|.\end{equation}

An important concept in this article is that of an exhaustion of
an infinite graph by finite subsets. A sequence of finite subsets
of an infinite graph $\Gamma$ satisfying
$\Omega_1\subset\Omega_2\subset\cdots\subset\Omega_n\subset\cdots$
and $\Gamma=\cup_{n=1}^{\infty}\Omega_n,$ denoted by
$\Omega\uparrow\Gamma,$ is called an exhaustion of $\Gamma.$ For
quantities that are monotone in $\Omega$, i.e. if
$\Omega_1\subset\Omega_2$ we have $f(\Omega_1) \leq f(\Omega_2)$
(or $f(\Omega_1) \geq f(\Omega_2)$) we write
$\lim_{\Omega\uparrow\Gamma}f(\Omega) := \lim_{n\to\infty}
f(\Omega_n)= f(\Gamma)$. Note that for monotone functions in
$\Omega$ this limit exists and it does not depend on our choice of
the exhaustion.

Now we are going to introduce the main object of interest of this
article - the normalized Laplace operator of a graph. We introduce
the Hilbert space,
$$\ell^2(V,\mu) := \{f:V\to \mathbb{R}\; |\; (f,f)_\mu <\infty\}$$
where we denote the inner product on $\ell^2(V,\mu)$ by $(f,g)_\mu
= \sum_{x\in V}\mu(x)f(x)g(x)$. Note that the space of functions
with finite support denoted by $C_0(V)$ is dense in
$\ell^2(V,\mu)$, i.e. $\overline{C_0(V)}= \ell^2(V,\mu)$.
 The normalized Laplace operator $\Delta:\ell^2(V,\mu)\to
 \ell^2(V,\mu)$ is pointwise defined by
 $$\Delta f(x) =f(x) - \frac{1}{\mu(x)}\sum_{y\in V} \mu_{xy}f(y).$$
It is well known (see e.g. \cite{DodziukKendall}) that $\Delta$ is
a nonnegative, self-adjoint operator whose spectrum is bounded
from above by two. We use the convention that the bottom and the
top of the spectrum of $\Delta$ are denoted by
$\underline{\lambda}(\Gamma) = \inf \sigma(\Delta)$ and
$\overline{\lambda}(\Gamma) = \sup \sigma(\Delta)$, respectively.

Besides the normalized Laplace operator $\Delta$ one can also
study the normalized Laplace operator with Dirichlet boundary
conditions. Let $\Omega$ be a finite subset of $V$ and
$\ell^2(\Omega,\mu)$ be the space of real-valued functions on
$\Omega$. Note that every function $f\in\ell^2(\Omega,\mu)$ can be
extended to a function $\tilde{f}\in\ell^2(V,\mu)$ by setting
$\tilde{f}(x)=0$ for all $x\in \Omega^c$. The Laplace operator
with Dirichlet boundary conditions $\Delta_\Omega$ is defined as
$\Delta_\Omega: \ell^2(\Omega,\mu) \to \ell^2(\Omega,\mu)$,
$$\Delta_\Omega f = (\Delta \tilde{f})_{|\Omega}.$$ Thus for $x\in
\Omega$ the Dirichlet Laplace operator is pointwise defined by
$$\Delta_\Omega f(x) = f(x) - \frac{1}{\mu(x)}\sum_{y\in\Omega} \mu_{xy}f(y) =
\tilde{f}(x) - \frac{1}{\mu(x)}\sum_{y\in V}
\mu_{xy}\tilde{f}(y).$$ A simple calculation shows that
$\Delta_\Omega$ is a positive self-adjoint operator. We arrange
the eigenvalues of the Dirichlet Laplace operator $\Delta_\Omega$
in increasing order, i.e. $\lambda_1(\Omega)\leq \lambda_2(\Omega)
\leq \ldots \leq \lambda_N(\Omega) $, where $N$ is the cardinality
of the set $\Omega$, i.e. $N = \sharp \Omega$. In the following we
will also denote the largest Dirichlet eigenvalue
$\lambda_N(\Omega)$ by $\lambda_\mathrm{max}(\Omega)$.

It is well known that the spectra of the Laplace operator $\Delta$
and the Laplace operator with Dirichlet boundary conditions
$\Delta_\Omega$ are connected to each other. In particular, for an
exhaustion  $\Omega\uparrow\Gamma$ we have \cite{DodziukKendall}
\begin{equation}\label{Exhaustion}\lim_{\Omega\uparrow\Gamma}\lambda_1(\Omega)=
\underline{\lambda}(\Gamma) \text{ and }
\lim_{\Omega\uparrow\Gamma}\lambda_\mathrm{max}(\Omega)=
\overline{\lambda}(\Gamma).\end{equation} Note that these limits
are well defined since $\lambda_1(\Omega)$ and
$\lambda_\mathrm{max}(\Omega)$ are monotone in $\Omega$. Because
of this connection our strategy is to first study the eigenvalues
of the Dirichlet Laplace operator and then use an exhaustion of
the graph to estimate the top and the bottom of the spectrum of
the Laplace operator $\Delta$. In particular, we obtain estimates
for the essential spectrum of $\Delta$.

\section{The Dirichlet Laplace operator}\label{Section3}
The Dirichlet Laplace operator has the following basic properties:
\begin{lemma}[Basic properties of $\Delta_\Omega$]\label{BasicProperties}
\begin{itemize}\item[]
\item[$(i)$] $0<\lambda_1(\Omega)\leq 1-\frac{1}{\sharp
\Omega}\sum_{x\in\Omega}\frac{\mu_{xx}}{\mu(x)}\leq 1.$
\item[$(ii)$] $\lambda_1(\Omega)$ is a simple eigenvalue.
\item[$(iii)$] The eigenfunction $f_1$ corresponding to
$\lambda_1(\Omega)$ satisfies $f_1(x)>0$ or $f_1(x)<0$ for all
$x\in \Omega$.
\item[$(iv)$]$\lambda_1(\Omega)+\lambda_\mathrm{max}(\Omega) \leq
2- 2\min_{x\in \Omega}\frac{\mu_{xx}}{\mu(x)}\leq 2$. \item[$(v)$]
$\lambda_1(\Omega)$ ($\lambda_\mathrm{max}(\Omega)$) is
nonincreasing (nondecreasing) when $\Omega$ increases.
 \item[$(vi)$] The first eigenvalue is given by the Rayleigh
 quotient
  \begin{eqnarray} \label{Rayleigh
Lambda1}\lambda_1(\Omega) &=& \inf_{f\in\ell^2(\Omega,\mu)\neq
0}\frac{(\Delta_\Omega f,f)_\mu}{(f,f)_\mu}
\\&=&
\inf_{\tilde{f}\neq 0}\frac{\frac{1}{2} \sum_{x,y\in
V}\mu_{xy}(\tilde{f}(x)-\tilde{f}(y))^2}{\sum_{x\in V} \mu(x)
\tilde{f}^2(x)}\end{eqnarray} and the largest eigenvalue is given
by
\begin{eqnarray} \label{Rayleigh LambdaN}\lambda_\mathrm{max}(\Omega) &=&
\sup_{f\in\ell^2(\Omega,\mu)\neq 0}\frac{(\Delta_\Omega
f,f)_\mu}{(f,f)_\mu}
\\ &=& \sup_{\tilde{f}\neq 0}\frac{\frac{1}{2}
\sum_{x,y\in V}\mu_{xy}(\tilde{f}(x)-\tilde{f}(y))^2}{\sum_{x\in
V} \mu(x) \tilde{f}^2(x)}\end{eqnarray}
\end{itemize}
\end{lemma}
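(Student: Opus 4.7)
The plan is to treat (vi) first, since the two Rayleigh quotient expressions are the engine that drives the rest of the lemma. A direct algebraic manipulation, using $\sum_{y\in V}\mu_{xy}=\mu(x)$, converts $(\Delta_\Omega f,f)_\mu = \sum_{x\in\Omega}\mu(x)f(x)^2 - \sum_{x,y\in\Omega}\mu_{xy}f(x)f(y)$ into $\tfrac12\sum_{x,y\in V}\mu_{xy}(\tilde f(x)-\tilde f(y))^2$, where I use that $\tilde f$ vanishes outside $\Omega$ to extend the index set to all of $V\times V$. The min-max characterization of the extreme eigenvalues of the self-adjoint operator $\Delta_\Omega$ on the finite-dimensional space $\ell^2(\Omega,\mu)$ then yields both Rayleigh quotient formulas.

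Once (vi) is available, monotonicity (v) is immediate: extension by zero embeds $\ell^2(\Omega_1,\mu)\hookrightarrow\ell^2(\Omega_2,\mu)$ while preserving the numerator and denominator of the edge-sum Rayleigh quotient, so enlarging $\Omega$ enlarges the class of admissible test functions and can only decrease the infimum and increase the supremum. For (i), the upper bound $\lambda_1(\Omega)\le 1-\frac{1}{\sharp\Omega}\sum_{x\in\Omega}\mu_{xx}/\mu(x)$ follows from $\sharp\Omega\cdot\lambda_1(\Omega)\le \mathrm{tr}(\Delta_\Omega)=\sum_{x\in\Omega}(1-\mu_{xx}/\mu(x))$. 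Strict positivity of $\lambda_1$ comes from the quadratic-form expression: if it vanished on some $f\neq 0$, then $\tilde f$ would be constant along every edge of the connected graph $\Gamma$, and since $\tilde f=0$ outside the finite set $\Omega$, connectedness forces $\tilde f\equiv 0$, a contradiction.

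For (ii) and (iii) I apply Perron-Frobenius after symmetrization: $\Delta_\Omega$ is unitarily equivalent (conjugate by $f\mapsto \sqrt{\mu}\,f$) to the matrix on $\ell^2(\Omega)$ with entries $\delta_{xy}-\mu_{xy}/\sqrt{\mu(x)\mu(y)}$, so $I$ minus this matrix is entrywise nonnegative, and its underlying support graph is exactly the connected subgraph $\Omega$, hence irreducible. Perron-Frobenius produces a simple largest eigenvalue, equal to $1-\lambda_1(\Omega)$, with a strictly positive eigenvector, and undoing the conjugation delivers both the simplicity (ii) and the constant-sign property (iii).

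The main obstacle is (iv). My plan is to take an eigenfunction $f_{\max}$ realizing $\lambda_\mathrm{max}(\Omega)$ and plug $g:=|f_{\max}|$ into the Rayleigh quotient for $\lambda_1$. Since $(g,g)_\mu=(f_{\max},f_{\max})_\mu$, combining the two Rayleigh identities gives
\begin{equation*}
\lambda_1(\Omega)+\lambda_\mathrm{max}(\Omega)\le 2+\frac{\sum_{x,y\in\Omega}\mu_{xy}\bigl(-f_{\max}(x)f_{\max}(y)-g(x)g(y)\bigr)}{\sum_{x\in\Omega}\mu(x)f_{\max}(x)^2}.
\end{equation*}
For the off-diagonal terms $x\neq y$ the bound $g(x)g(y)=|f_{\max}(x)||f_{\max}(y)|\ge -f_{\max}(x)f_{\max}(y)$ makes the summand nonpositive, so only the diagonal $x=y$ terms can contribute to the bound, each summand equalling $\mu_{xx}\bigl(-f_{\max}(x)^2-g(x)^2\bigr)=-2\mu_{xx}f_{\max}(x)^2$. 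Factoring out and using $\mu_{xx}/\mu(x)\ge \min_{x\in\Omega}\mu_{xx}/\mu(x)$ in the ratio produces the sharp bound $2-2\min_{x\in\Omega}\mu_{xx}/\mu(x)$, and the outer inequality by $2$ is then immediate since $\mu_{xx}/\mu(x)\ge 0$.
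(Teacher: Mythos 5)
The paper does not actually prove Lemma~\ref{BasicProperties}; it simply cites \cite{Dodziuk84, Friedman93, Grigoryan09} for these standard facts. Your proof is self-contained and correct, so the comparison is really between your argument and the classical one implicit in those references. Parts (vi), (v) and the positivity in (i) are the usual quadratic-form and variational arguments, and your trace bound for the upper half of (i) is clean and matches the stated inequality exactly. For (ii) and (iii) you correctly conjugate by the unitary $f\mapsto\sqrt{\mu}\,f$ to pass to the real-symmetric operator $\mathcal{L}_\Omega = I - D^{-1/2}W_\Omega D^{-1/2}$, whose complement $I-\mathcal{L}_\Omega$ is entrywise nonnegative and irreducible because $\Omega$ is assumed connected; Perron--Frobenius for an irreducible nonnegative \emph{symmetric} matrix gives that the spectral radius equals the top eigenvalue, is simple, and has a strictly positive eigenvector, which transports back to a simple $\lambda_1(\Omega)$ with a nowhere-vanishing, constant-sign $f_1$. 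This is indeed the standard route and needs no prior knowledge of (iv), since for a symmetric nonnegative matrix the spectral radius is automatically the largest eigenvalue.

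The genuinely nice piece is your argument for (iv). Plugging $|f_{\max}|$ into the $\lambda_1$ Rayleigh quotient, combining the two identities, and then splitting the error sum into off-diagonal pairs (which are nonpositive because $|f_{\max}(x)||f_{\max}(y)|+f_{\max}(x)f_{\max}(y)\ge 0$) and the diagonal loop terms $-2\mu_{xx}f_{\max}(x)^2$ yields exactly the refined bound $\lambda_1(\Omega)+\lambda_{\max}(\Omega)\le 2-2\min_{x\in\Omega}\mu_{xx}/\mu(x)$. This is sharper than the simple ``$\le 2$'' that the same absolute-value trick gives in the loop-free case, and it correctly isolates the self-loop contribution. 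I see no gap; all steps check out, and in particular the passage from $\sum_x\mu_{xx}f_{\max}(x)^2/\sum_x\mu(x)f_{\max}(x)^2\ge\min_x\mu_{xx}/\mu(x)$ is valid since each term satisfies $\mu_{xx}f_{\max}(x)^2=\frac{\mu_{xx}}{\mu(x)}\mu(x)f_{\max}(x)^2$.
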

\begin{proof}
All these facts are well-known and can for instance be found in
\cite{Dodziuk84, Friedman93,  Grigoryan09}.
\end{proof}
Before we continue we make the following two technical remarks.
\begin{rem}\begin{itemize}
\item[$(i)$] From the definition of the Dirichlet Laplace operator
one immediately observes that in the trivial case
$\sharp\Omega=1$, the Dirichlet Laplace operator has one single
eigenvalue which is equal to $1-\frac{\mu_{xx}}{\mu(x)}$. Because
of this we restrict ourselves from now on to subsets
$\Omega\subset V$ with $\sharp \Omega>1$. \item[$(ii)$] Often it
is convenient to sum over edges instead of vertices or vice versa.
However if there are loops in the graph we have
$$\frac{1}{2}\sum_{x,y\in V}\mu_{xy}\neq \sum_{e=(x,y)\in E}\mu_{xy}.$$ This is the case
because in the first sum the loops are only counted once. Hence in
order to replace a sum over vertices by a sum over edges we have
to adjust the edge weights. We define a new weight function
$\theta:E\to \mathbb{R}_+$ on the edge set $E$ by
$\theta_{xy}=\mu_{xy}$ for all $x\neq y\in V$ and
$\theta_{xx}=\frac{1}{2}\mu_{xx}$ for all $x\in V$. For these new
edge weights we have
$$\frac{1}{2}\sum_{x,y\in V}\mu_{xy}= \sum_{e=(x,y)\in
E}\theta_{xy}.$$ Note that $\theta:E\to \mathbb{R}_+$ coincides
with $\mu:E\to \mathbb{R}_+$ in the case $\Gamma$ has no loops.
\end{itemize}
\end{rem}

\begin{lemma}[Green's formula]\label{GreenLemma}
Let $f,g\in \ell^2(\Omega,\mu)$ then,
\begin{eqnarray}\label{Green} (\Delta_\Omega f,g)_\mu
&=& \frac{1}{2}\sum_{x,y\in V}\mu_{xy}
(\nabla_{xy}\tilde{f})(\nabla_{xy}\tilde{g})
\\&=&
\frac{1}{2}\sum_{x,y\in
V}\mu_{xy}(\tilde{f}(x)-\tilde{f}(y))(\tilde{g}(x)-\tilde{g}(y))
\\&=& \sum_{e=(x,y)\in E}\theta_{xy}(\tilde{f}(x)-\tilde{f}(y))(\tilde{g}(x)-\tilde{g}(y)),
\end{eqnarray}where $\nabla$ is the co-boundary operator. More precisely, if we fix an orientation on the edge set
the co-boundary operator of an edge $e= (x,y)$ from $x$ to $y$ is
given by $\nabla_{xy}f = f(x)-f(y)$.
\end{lemma}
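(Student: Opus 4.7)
The plan is a direct computation that converts the vertex-sum $\sum_{x\in\Omega}\mu(x)(\Delta_\Omega f)(x)g(x)$ into a symmetric edge-sum by (a) extending the sum from $\Omega$ to $V$ via the zero-extensions $\tilde f,\tilde g$, and (b) symmetrizing in $x,y$ using $\mu_{xy}=\mu_{yx}$.

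First I would unfold the inner product. By definition,
\[
(\Delta_\Omega f,g)_\mu \;=\; \sum_{x\in\Omega}\mu(x)\bigl(\Delta_\Omega f\bigr)(x)\,g(x),
\]
and multiplying the pointwise identity $(\Delta_\Omega f)(x) = f(x)-\mu(x)^{-1}\sum_{y\in\Omega}\mu_{xy}f(y)$ by $\mu(x)$, together with the key identity $\mu(x)=\sum_{y\in V}\mu_{xy}$, gives
\[
\mu(x)(\Delta_\Omega f)(x) \;=\; \sum_{y\in V}\mu_{xy}\tilde f(x)-\sum_{y\in V}\mu_{xy}\tilde f(y) \;=\; \sum_{y\in V}\mu_{xy}\bigl(\tilde f(x)-\tilde f(y)\bigr),
\]
for every $x\in\Omega$, where the second equality uses $\tilde f\equiv 0$ off $\Omega$ to pass from a sum over $\Omega$ to a sum over $V$. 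Since $g(x)=\tilde g(x)$ for $x\in\Omega$ and $\tilde g(x)=0$ for $x\in\Omega^c$, the outer sum also extends freely to all of $V$, yielding
\[
(\Delta_\Omega f,g)_\mu \;=\; \sum_{x,y\in V}\mu_{xy}\,\tilde g(x)\bigl(\tilde f(x)-\tilde f(y)\bigr).
\]

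Next I would symmetrize. Swapping the roles of $x$ and $y$ and using $\mu_{xy}=\mu_{yx}$, the right-hand side equals $-\sum_{x,y\in V}\mu_{xy}\,\tilde g(y)(\tilde f(x)-\tilde f(y))$; averaging the two expressions gives
\[
(\Delta_\Omega f,g)_\mu \;=\; \tfrac{1}{2}\sum_{x,y\in V}\mu_{xy}\bigl(\tilde f(x)-\tilde f(y)\bigr)\bigl(\tilde g(x)-\tilde g(y)\bigr),
\]
which is the first two displayed forms of \eqref{Green} (writing the gradient as $\nabla_{xy}\tilde f=\tilde f(x)-\tilde f(y)$).

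Finally, to pass from the vertex-pair sum to the edge sum $\sum_{e=(x,y)\in E}\theta_{xy}(\cdots)$, I would invoke the bookkeeping identity $\tfrac{1}{2}\sum_{x,y\in V}\mu_{xy}\,\Phi(x,y) = \sum_{e=(x,y)\in E}\theta_{xy}\,\Phi(x,y)$ for any symmetric $\Phi(x,y)=\Phi(y,x)$, which is exactly the motivation for defining $\theta_{xx}=\tfrac12\mu_{xx}$ and $\theta_{xy}=\mu_{xy}$ for $x\neq y$ in the preceding remark. The only subtlety worth flagging is the treatment of self-loops: the loop contribution to the symmetric quadratic form vanishes automatically since $\tilde f(x)-\tilde f(x)=0$, so both normalizations agree on loop terms regardless of the factor $\tfrac12$. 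There is no real obstacle here; the computation is essentially forced once one keeps track of the zero-extension in the step that extends the inner sum from $\Omega$ to $V$.
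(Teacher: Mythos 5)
Your computation is correct and complete: the unfolding of the inner product, the passage from $\Omega$-sums to $V$-sums using that $\tilde f,\tilde g$ vanish off $\Omega$, the symmetrization via $\mu_{xy}=\mu_{yx}$, and the bookkeeping translation to the edge-sum via $\theta$ all hold, and your remark that self-loops contribute zero to the quadratic form is exactly the right thing to flag. The paper does not actually spell out a proof here---it simply cites \cite{Dodziuk84, Grigoryan09}---so there is no argument to compare against; what you wrote is the standard discrete Green's formula computation that those references carry out, and it fills the gap the paper leaves to the reader.
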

\begin{proof}See \cite{Dodziuk84, Grigoryan09}.
\end{proof}
We recall here the well-known eigenvalue interlacing theorem
sometimes also referred to as the {\it inclusion principle} (cf.
\cite{Horn90}).
\begin{lemma}\label{interlacing}
Let $A$ be a $N\times N$ Hermitian matrix, let $r$ be an integer
with $1\leq r\leq N-1$ and let $A_{N-r}$ denote the
$(N-r)\times(N-r)$ principle submatrix of $A$ obtained by deleting
$r$ rows and the corresponding columns from $A$. For each integer
$k$ such that $1\leq k\leq N-r$ the eigenvalues of $A$ and
$A_{N-r}$ satisfy
$$\lambda_k(A)\leq \lambda_k(A_{N-r})\leq \lambda_{k+r}(A).$$
\end{lemma}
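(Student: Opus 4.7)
The plan is to prove this by the Courant--Fischer min--max characterization of eigenvalues of a Hermitian matrix together with a dimension-counting argument. Recall that for a Hermitian $N\times N$ matrix $A$ with eigenvalues $\lambda_1(A)\leq\cdots\leq\lambda_N(A)$ one has the two dual formulas
\begin{equation*}
\lambda_j(A)=\min_{\dim S=j}\max_{0\neq x\in S}\frac{x^{*}Ax}{x^{*}x}=\max_{\dim S=N-j+1}\min_{0\neq x\in S}\frac{x^{*}Ax}{x^{*}x}.
\end{equation*}
Without loss of generality we may assume that $A_{N-r}$ is obtained by deleting the last $r$ rows and columns of $A$. Let $W\subset\mathbb{C}^{N}$ denote the $(N-r)$-dimensional coordinate subspace consisting of vectors whose last $r$ entries vanish. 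The crucial observation is that for every $x\in W$, viewed simultaneously as a vector in $\mathbb{C}^{N}$ and, by truncation, in $\mathbb{C}^{N-r}$, the Rayleigh quotients $x^{*}Ax/x^{*}x$ and $x^{*}A_{N-r}x/x^{*}x$ agree.

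For the lower bound $\lambda_{k}(A)\leq\lambda_{k}(A_{N-r})$, I would observe that every $k$-dimensional subspace $T\subset W$ is in particular a $k$-dimensional subspace of $\mathbb{C}^{N}$, so by the min-form of Courant--Fischer applied to $A$,
\begin{equation*}
\lambda_{k}(A)\leq\max_{0\neq x\in T}\frac{x^{*}Ax}{x^{*}x}=\max_{0\neq x\in T}\frac{x^{*}A_{N-r}x}{x^{*}x}.
\end{equation*}
Taking the infimum over all $k$-dimensional subspaces $T\subset W$ and invoking Courant--Fischer for $A_{N-r}$ yields $\lambda_{k}(A)\leq\lambda_{k}(A_{N-r})$.

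For the upper bound $\lambda_{k}(A_{N-r})\leq\lambda_{k+r}(A)$, the key step is dimension counting. Given any subspace $S\subset\mathbb{C}^{N}$ with $\dim S=k+r$, the intersection satisfies $\dim(S\cap W)\geq\dim S+\dim W-N=k$, so $S\cap W$ contains a $k$-dimensional subspace $T\subset W$. Using that the Rayleigh quotients for $A$ and $A_{N-r}$ coincide on $W$ and applying the min-form of Courant--Fischer to $A_{N-r}$ on the subspace $T$, I obtain
\begin{equation*}
\max_{0\neq x\in S}\frac{x^{*}Ax}{x^{*}x}\geq\max_{0\neq x\in T}\frac{x^{*}A_{N-r}x}{x^{*}x}\geq\lambda_{k}(A_{N-r}).
\end{equation*}
Minimizing the left side over all $(k+r)$-dimensional subspaces $S$ of $\mathbb{C}^{N}$ produces exactly $\lambda_{k+r}(A)\geq\lambda_{k}(A_{N-r})$.

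The only real subtlety is the dimension count $\dim(S\cap W)\geq k$ used in the second half; once that is in hand, both inequalities drop out of the min-max formulas in one line each. Note that iterating the case $r=1$ would also work and is perhaps conceptually cleaner, but the direct proof above avoids induction and the bookkeeping of intermediate matrices.
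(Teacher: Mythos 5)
Your proof is correct. The Courant--Fischer statement is accurate (with eigenvalues in increasing order), the identification of the Rayleigh quotients of $A$ and $A_{N-r}$ on the coordinate subspace $W$ is exactly right, the lower bound follows from applying the min-form of Courant--Fischer to $A$ on $k$-dimensional subspaces of $W$, and the dimension count $\dim(S\cap W)\geq \dim S+\dim W-N=k$ correctly drives the upper bound. Note, however, that the paper offers no proof of this lemma at all: it is stated as a recalled fact, citing Horn and Johnson's \emph{Matrix Analysis}. Your argument is the standard textbook proof one finds there, so while it is not the paper's own argument, it is complete and matches the expected reference proof. One minor remark: in the sentence ``I would observe that every $k$-dimensional subspace $T\subset W$ is in particular a $k$-dimensional subspace of $\mathbb{C}^{N}$'' it would read a little more cleanly to say explicitly that the min-form of Courant--Fischer gives $\lambda_k(A)\leq\max_{x\in T}x^*Ax/x^*x$ for \emph{any} particular $k$-dimensional $T$ before specializing to $T\subset W$, but the logic as written is already sound.
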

Lemma \ref{interlacing} yields the following generalization of
Lemma \ref{BasicProperties} $(v)$.
\begin{coro}
Let $\Omega_2\subset\Omega_1\subset V$ such that
$\sharp\Omega_1=N$ and $\sharp\Omega_2=N-r$. Then the Dirichlet
eigenvalues of $\Delta_{\Omega_1}$ and $\Delta_{\Omega_2}$
interlace, i.e.
\begin{equation}\label{interlacing3}\lambda_k(\Omega_1)\leq
\lambda_k(\Omega_2)\leq \lambda_{k+r}(\Omega_1)\end{equation} for
all integers $1\leq k\leq N-r$. In particular,
$$\lambda_1(\Omega_1) \leq \lambda_1(\Omega_2)
\text{ and } \lambda_\mathrm{max}(\Omega_2)\leq
\lambda_\mathrm{max}(\Omega_1).$$
\end{coro}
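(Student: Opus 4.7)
The plan is to reduce the claim to a direct application of Lemma \ref{interlacing}. The operator $\Delta_\Omega$ is self-adjoint with respect to the weighted inner product $(\cdot,\cdot)_\mu$, but its matrix representation in the standard basis is in general not symmetric because of the asymmetric factor $1/\mu(x)$. So the first step is to symmetrize.

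Let $D_\Omega$ be the diagonal matrix indexed by $\Omega$ with entries $\mu(x)$, and set
\[
\tilde{\Delta}_\Omega := D_\Omega^{1/2}\, \Delta_\Omega\, D_\Omega^{-1/2}.
\]
A direct calculation gives $(\tilde{\Delta}_\Omega)_{xy} = \delta_{xy} - \mu_{xy}/\sqrt{\mu(x)\mu(y)}$ for $x,y\in\Omega$, so $\tilde{\Delta}_\Omega$ is Hermitian. Since conjugation by an invertible matrix preserves the spectrum, $\tilde{\Delta}_\Omega$ and $\Delta_\Omega$ have the same eigenvalues with the same multiplicities; equivalently, the eigenvalue problem $\Delta_\Omega f = \lambda f$ corresponds to $\tilde{\Delta}_\Omega g = \lambda g$ with $g = D_\Omega^{1/2} f$.

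The second, crucial observation is that for $\Omega_2 \subset \Omega_1$, the matrix $\tilde{\Delta}_{\Omega_2}$ is exactly the $(N-r)\times(N-r)$ principal submatrix of $\tilde{\Delta}_{\Omega_1}$ obtained by deleting the rows and columns indexed by $\Omega_1\setminus\Omega_2$. This is because the Dirichlet convention extends functions on $\Omega_2$ by zero outside $\Omega_2$, so the entries $(\tilde{\Delta}_{\Omega_2})_{xy}$ depend only on $\mu_{xy}$ and the vertex weights $\mu(x),\mu(y)$ for $x,y\in\Omega_2$, and these coincide with the corresponding entries of $\tilde{\Delta}_{\Omega_1}$. (The normalization $\mu(x) = \sum_{y\in V}\mu_{xy}$ is fixed by the ambient graph, not by $\Omega$, which is what makes this restriction clean.)

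Having verified these two points, the inequality \eqref{interlacing3} is an immediate application of Lemma \ref{interlacing} to the pair $\tilde{\Delta}_{\Omega_1}$, $\tilde{\Delta}_{\Omega_2}$. Taking $k=1$ in \eqref{interlacing3} yields $\lambda_1(\Omega_1) \le \lambda_1(\Omega_2)$, and taking $k = N-r$ together with the obvious fact $\lambda_{N-r}(\Omega_2) = \lambda_\mathrm{max}(\Omega_2)$ and $\lambda_{N-r+r}(\Omega_1) = \lambda_\mathrm{max}(\Omega_1)$ yields $\lambda_\mathrm{max}(\Omega_2) \le \lambda_\mathrm{max}(\Omega_1)$. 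The only step that requires any care is the symmetrization, which is needed because Lemma \ref{interlacing} is stated for Hermitian matrices rather than for operators self-adjoint with respect to a weighted inner product; once the conjugation by $D_\Omega^{1/2}$ is in place, everything else is purely mechanical.
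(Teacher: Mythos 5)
Your proof is correct and follows essentially the same route as the paper: the paper also conjugates by $D_\Omega^{1/2}$ to pass to the symmetric form $\mathcal{L}_\Omega = I_\Omega - D_\Omega^{-1/2}W_\Omega D_\Omega^{-1/2}$ (your $\tilde{\Delta}_\Omega$), applies Lemma~\ref{interlacing}, and transfers the conclusion back via similarity. The one place where your write-up is a bit more careful than the paper's is in explicitly checking that $\tilde{\Delta}_{\Omega_2}$ really is a principal submatrix of $\tilde{\Delta}_{\Omega_1}$, noting that this rests on the Dirichlet convention of using the ambient degree $\mu(x)=\sum_{y\in V}\mu_{xy}$ rather than a degree recomputed inside $\Omega$; the paper takes this for granted.
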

\begin{proof}
Looking at a matrix representation of the Dirichlet Laplace
operator of a subset $\Omega$ (also denoted by $\Delta_\Omega$) we
observe that $\Delta_\Omega = I_\Omega - D_\Omega^{-1}W_\Omega$ is
not Hermitian or in this case real symmetric. Here $I_\Omega$,
$D_\Omega$ and $W_\Omega$ are the identity matrix, the diagonal
matrix of vertex degrees and the weighted adjacency matrix of
$\Omega$, respectively. Note that since $\Omega$ is connected,
$\mu(x)>0$ for all $x\in \Omega$ and hence $D^{-1}$ always exists.
So we cannot directly apply Lemma \ref{interlacing} to our
Dirichlet Laplace operator $\Delta_\Omega$. However, Chung's
version of the Dirichlet Laplace operator \cite{Chung97}
$\mathcal{L}_\Omega = I_\Omega - D_\Omega^{-\frac{1}{2}}W_\Omega
D_\Omega^{-\frac{1}{2}}$ is real symmetric. Hence we can apply
Lemma \ref{interlacing} and obtain that the eigenvalues of
$\mathcal{L}_{\Omega_1}$ and $\mathcal{L}_{\Omega_2}$ interlace.
Now we observe that for any $\Omega$ the Laplacians
$\Delta_\Omega$ and $\mathcal{L}_\Omega$ satisfy
$$\Delta_\Omega = D_\Omega^{-\frac{1}{2}}\mathcal{L}_\Omega D_\Omega^{\frac{1}{2}},$$
i.e. $\Delta_\Omega$ and $\mathcal{L}_\Omega$ are similar to each
other and hence have the same spectrum. Since this holds for all
$\Omega$ it follows that if the eigenvalues of
$\mathcal{L}_{\Omega_1}$ and $\mathcal{L}_{\Omega_2}$ satisfy
(\ref{interlacing3}), then the same is true for the eigenvalues of
$\Delta_{\Omega_1}$ and $\Delta_{\Omega_2}$.
\end{proof}
For the rest of this section we have a closer look at the
Dirichlet eigenvalues of subsets $\Omega$ that are bipartite. We
say a subset $\Omega\subset V$ is bipartite if $\Omega$ is a
bipartite induced subgraph of $\Gamma$. Recall that a graph
(subgraph) is bipartite if and only if it does not contain a cycle
of odd length.
\begin{lemma}\label{symmetric eigenvalues}Let $\Omega$ be bipartite. Then the eigenvalues of
the Dirichlet Laplace operator satisfy: with $\lambda(\Omega)$,
$2-\lambda(\Omega)$ is also an eigenvalue of $\Delta_\Omega$, i.e.
the spectrum is symmetric about $1$.
\end{lemma}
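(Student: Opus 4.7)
The plan is to build the eigenfunction for $2-\lambda(\Omega)$ by sign-flipping on one of the two parts of the bipartition. Concretely, since $\Omega$ is bipartite, write $\Omega = A \sqcup B$ as the induced bipartition, so that $\mu_{xy} = 0$ whenever $x,y \in A$ or $x,y \in B$. In particular, bipartiteness forbids odd cycles, so $\mu_{xx}=0$ for every $x \in \Omega$.

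Next, given an eigenfunction $f \in \ell^2(\Omega,\mu)$ of $\Delta_\Omega$ with eigenvalue $\lambda(\Omega)$, I would define
\[
 g(x) = \begin{cases} f(x), & x \in A, \\ -f(x), & x \in B. \end{cases}
\]
Then $g \in \ell^2(\Omega,\mu)$ is nonzero, and I claim $\Delta_\Omega g = (2-\lambda(\Omega))\, g$ pointwise. To verify this, I would compute $\Delta_\Omega g(x)$ separately for $x \in A$ and $x \in B$. In the first case, using that $\mu_{xy}=0$ for $y \in A \cap \Omega$ (hence the internal Dirichlet sum only runs over $y \in B \cap \Omega$),
\[
 \Delta_\Omega g(x) = f(x) + \frac{1}{\mu(x)}\sum_{y \in B \cap \Omega} \mu_{xy}\,f(y).
\]
But the eigenvalue equation $\Delta_\Omega f(x) = \lambda(\Omega) f(x)$ rewrites (again using bipartiteness of the induced sum) as
\[
 \frac{1}{\mu(x)}\sum_{y \in B \cap \Omega} \mu_{xy}\,f(y) = (1-\lambda(\Omega))\,f(x),
\]
so $\Delta_\Omega g(x) = (2-\lambda(\Omega))\,f(x) = (2-\lambda(\Omega))\,g(x)$. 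The computation for $x \in B$ is symmetric, with the two sign changes from $g(x)=-f(x)$ and from the sum running over $A \cap \Omega$ combining to give the same identity.

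Thus $g$ is an eigenfunction of $\Delta_\Omega$ with eigenvalue $2-\lambda(\Omega)$, and the spectrum is invariant under $\lambda \mapsto 2-\lambda$, which is exactly symmetry about $1$. The only delicate point I expect is making sure the normalization factor $\mu(x) = \sum_{y \in V} \mu_{xy}$ — which still counts edges from $\Omega$ to $\Omega^c$ — does not interfere with the algebra; it does not, because in both the eigenvalue equation for $f$ and the computation of $\Delta_\Omega g$ one only sees the same internal sum $\frac{1}{\mu(x)}\sum_{y \in \Omega} \mu_{xy}(\cdot)$, and the Dirichlet boundary convention $\tilde f \equiv 0$ on $\Omega^c$ automatically suppresses the outside vertices symmetrically in both identities.
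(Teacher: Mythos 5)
Your proof is correct and is essentially the same as the paper's: define $g$ by flipping the sign of $f$ on one part of the bipartition and verify directly that $\Delta_\Omega g = (2-\lambda(\Omega))g$. You have merely written out the pointwise computation that the paper leaves implicit.
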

\begin{proof} The proof is very simple - it relies on the observation that
if $f$ is an eigenfunction for $\lambda(\Omega)$, then for a
bipartition $U,\overline{U}$ of $\Omega$
$$g(x) = \left\{\begin{array}{cc}f(x) & \text{ if } x\in U\\
-f(x) & \text{ if } x\in \overline{U}
\end{array}\right.$$
is also an eigenfunction for $\Delta_\Omega$ corresponding to the
eigenvalue $2-\lambda(\Omega)$.
\end{proof}For this result we obtain immediately the following
useful corollaries.
\begin{coro}\label{Largest Eigenvalue is simple}
The largest eigenvalue $\lambda_\mathrm{max}(\Omega)$ is simple if
$\Omega$ is bipartite.
\end{coro}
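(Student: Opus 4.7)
The plan is to combine the symmetry of the spectrum established in Lemma \ref{symmetric eigenvalues} with the simplicity of the first Dirichlet eigenvalue from Lemma \ref{BasicProperties}$(ii)$. Since $\Omega$ is bipartite, the spectrum of $\Delta_\Omega$ is symmetric about $1$, so in particular $\lambda_{\max}(\Omega)$ and $\lambda_1(\Omega)$ are paired: $\lambda_{\max}(\Omega) = 2 - \lambda_1(\Omega)$.

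The key point is that the construction in the proof of Lemma \ref{symmetric eigenvalues} gives more than just a symmetry of the set of eigenvalues --- it provides an explicit \emph{linear isomorphism} between eigenspaces. Fix a bipartition $\Omega = U \sqcup \overline{U}$ and define $T : \ell^2(\Omega,\mu)\to \ell^2(\Omega,\mu)$ by $(Tf)(x) = f(x)$ for $x \in U$ and $(Tf)(x) = -f(x)$ for $x \in \overline{U}$. Then $T$ is linear with $T^2 = \mathrm{Id}$, and the argument in Lemma \ref{symmetric eigenvalues} shows that $T$ maps the $\lambda$-eigenspace of $\Delta_\Omega$ bijectively onto the $(2-\lambda)$-eigenspace. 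Consequently the geometric multiplicities of $\lambda$ and $2-\lambda$ coincide.

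Applying this to $\lambda = \lambda_1(\Omega)$, and using that $\lambda_1(\Omega)$ is simple by Lemma \ref{BasicProperties}$(ii)$, we conclude that $2 - \lambda_1(\Omega) = \lambda_{\max}(\Omega)$ is also simple.

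There is no real obstacle here; the main thing to verify carefully is that the map $T$ is a genuine linear bijection between eigenspaces (not merely a pointwise trick applied to a single eigenfunction), so that multiplicities --- not just membership in the spectrum --- are preserved under $\lambda \mapsto 2-\lambda$.
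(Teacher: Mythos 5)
Your proof is correct and follows essentially the same route as the paper, which simply cites Lemma \ref{symmetric eigenvalues} and Lemma \ref{BasicProperties}$(ii)$ as a ``direct consequence.'' You are more explicit in observing that the sign-flip map $T$ is a linear involution intertwining the $\lambda$- and $(2-\lambda)$-eigenspaces, hence preserving multiplicities --- a detail the paper leaves implicit but which is exactly what makes the corollary go through.
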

\begin{proof}This is a direct consequence of Lemma
\ref{BasicProperties} $(ii)$ and Lemma \ref{symmetric
eigenvalues}.
\end{proof}
\begin{coro}Let $\Omega$ be bipartite. Then the eigenfunction
$f_\mathrm{max}$ corresponding to the largest eigenvalue satisfies
$f_\mathrm{max}(x) \neq 0$ for all $x \in \Omega$.
\end{coro}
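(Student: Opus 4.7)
The plan is to combine the three preceding results on bipartite subsets: the symmetry of the spectrum about $1$ (Lemma \ref{symmetric eigenvalues}), the strict positivity/negativity of the first eigenfunction (Lemma \ref{BasicProperties}$(iii)$), and the simplicity of the largest eigenvalue in the bipartite case (Corollary \ref{Largest Eigenvalue is simple}).

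First I would observe that by Lemma \ref{symmetric eigenvalues}, whenever $\lambda$ is an eigenvalue of $\Delta_\Omega$, so is $2-\lambda$, and this involution is realized explicitly: if $f$ is an eigenfunction for $\lambda$ and $U,\overline{U}$ denote the two parts of the bipartition of $\Omega$, then the function $g$ defined by $g(x)=f(x)$ for $x\in U$ and $g(x)=-f(x)$ for $x\in\overline{U}$ is an eigenfunction for $2-\lambda$. In particular, applying this symmetry to $\lambda_1(\Omega)$ gives an eigenvalue $2-\lambda_1(\Omega)$, and since the spectrum lies in $[0,2]$ and the symmetry pairs smallest with largest, $\lambda_\mathrm{max}(\Omega)=2-\lambda_1(\Omega)$.

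Next I would take the first eigenfunction $f_1$ of $\Delta_\Omega$. By Lemma \ref{BasicProperties}$(iii)$, $f_1(x)\neq 0$ for every $x\in\Omega$. Applying the bipartite sign-flip construction above to $f_1$, I get a function $g$ that is an eigenfunction of $\Delta_\Omega$ for the eigenvalue $2-\lambda_1(\Omega)=\lambda_\mathrm{max}(\Omega)$, and which inherits the property $g(x)\neq 0$ for all $x\in\Omega$ since $|g(x)|=|f_1(x)|>0$.

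Finally, I would invoke Corollary \ref{Largest Eigenvalue is simple}: since $\Omega$ is bipartite, the eigenspace of $\lambda_\mathrm{max}(\Omega)$ is one-dimensional, so the given eigenfunction $f_\mathrm{max}$ must be a nonzero scalar multiple of $g$, and therefore $f_\mathrm{max}(x)\neq 0$ for every $x\in\Omega$. There is no real obstacle here; the statement follows essentially by a bookkeeping argument combining the three preceding facts, and the only point requiring even minor care is verifying that the eigenvalue produced by the sign flip of $f_1$ is indeed the largest one, which is immediate from the symmetry of the spectrum about $1$.
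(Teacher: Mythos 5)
Your proposal is correct and follows essentially the same route as the paper: sign-flip the first eigenfunction $f_1$ (nonvanishing by Lemma \ref{BasicProperties}$(iii)$) to produce an eigenfunction for $2-\lambda_1(\Omega)=\lambda_\mathrm{max}(\Omega)$, and conclude. The only difference is that you explicitly invoke Corollary \ref{Largest Eigenvalue is simple} to justify that this sign-flipped function is the unique eigenfunction up to scalar, a step the paper's proof leaves implicit by writing ``the eigenfunction for the largest eigenvalue is given by'' the explicit formula; your version is a bit more careful on this point.
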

\begin{proof}From the proof of the last lemma we know that if
$f_1$ is the eigenfunction for the smallest eigenvalue then the
eigenfunction for the largest eigenvalue is given by
\begin{equation}\label{explicit expression for fmax}f_\mathrm{max}(x) = \left\{\begin{array}{cc}f_1(x) & \text{ if } x\in U\\
-f_1(x) & \text{ if } x\in \overline{U}.
\end{array}\right.\end{equation} By Lemma \ref{BasicProperties} $(iii)$ we have $f_1(x) \neq 0$ for all $x\in \Omega$ and
hence $f_\mathrm{max}(x)\neq 0$ for all $x\in \Omega$.
\end{proof}

\begin{theo}\label{Lembi}
We have $\lambda_1(\Omega) + \lambda_\mathrm{max}(\Omega) = 2$ iff
$\Omega$ is bipartite.
\end{theo}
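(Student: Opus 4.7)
The strategy is to prove the two directions separately, using Lemma \ref{symmetric eigenvalues} for the easy direction and a Rayleigh quotient argument for the harder one.

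For the direction $\Omega$ bipartite $\Rightarrow \lambda_1(\Omega)+\lambda_{\max}(\Omega)=2$, I would simply combine Lemma \ref{symmetric eigenvalues} with Lemma \ref{BasicProperties}$(i)$. Symmetry of the spectrum about $1$ tells us that $2-\lambda_1(\Omega)$ is an eigenvalue; since every eigenvalue $\lambda_k$ has $\lambda_k\le 2-\lambda_1$ (equivalent, after replacing $\lambda_k$ by its mirror $2-\lambda_k\ge\lambda_1$), we conclude $\lambda_{\max}(\Omega)=2-\lambda_1(\Omega)$.

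For the converse, my plan is to derive a ``dual'' Rayleigh characterization of $\lambda_{\max}$ and then test it against a well-chosen function. Using the identity $(a+b)^2+(a-b)^2=2(a^2+b^2)$ and the symmetry of $\mu$, one gets
\[
\tfrac{1}{2}\sum_{x,y\in V}\mu_{xy}(\tilde{f}(x)+\tilde{f}(y))^2 + \tfrac{1}{2}\sum_{x,y\in V}\mu_{xy}(\tilde{f}(x)-\tilde{f}(y))^2 = 2\sum_{x\in V}\mu(x)\tilde{f}^2(x).
\]
Combined with Lemma \ref{BasicProperties}$(vi)$, this yields
\[
2-\lambda_{\max}(\Omega) = \inf_{\tilde{f}\neq 0}\frac{\frac{1}{2}\sum_{x,y}\mu_{xy}(\tilde{f}(x)+\tilde{f}(y))^2}{\sum_{x}\mu(x)\tilde{f}^2(x)}.
\]

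Now assume $\lambda_1(\Omega)+\lambda_{\max}(\Omega)=2$, let $f_{\max}$ be an eigenfunction for $\lambda_{\max}(\Omega)$ (extended by zero), and test $\lambda_1$ against $|f_{\max}|$. The pointwise inequality $(|a|-|b|)^2\le (a+b)^2$, with equality iff $ab\le 0$, gives
\[
\lambda_1(\Omega)\le\frac{\frac{1}{2}\sum_{x,y}\mu_{xy}(|f_{\max}(x)|-|f_{\max}(y)|)^2}{\sum_x\mu(x)f_{\max}^2(x)}\le\frac{\frac{1}{2}\sum_{x,y}\mu_{xy}(f_{\max}(x)+f_{\max}(y))^2}{\sum_x\mu(x)f_{\max}^2(x)}=2-\lambda_{\max}(\Omega).
\]
Under our hypothesis both inequalities are equalities, so $|f_{\max}|$ achieves the Rayleigh quotient for $\lambda_1(\Omega)$, and $f_{\max}(x)f_{\max}(y)\le 0$ holds on every edge $(x,y)$ with $\mu_{xy}>0$.

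The last step is extracting a bipartition. From the first equality, $|f_{\max}|$ is a first eigenfunction, so by Lemma \ref{BasicProperties}$(iii)$ we get $f_{\max}(x)\neq 0$ for all $x\in\Omega$. Setting $U=\{x\in\Omega: f_{\max}(x)>0\}$ and $\bar U=\{x\in\Omega: f_{\max}(x)<0\}$, the edge condition forces every edge inside $\Omega$ to go between $U$ and $\bar U$; for a self-loop at $x\in\Omega$ it would force $f_{\max}(x)^2\le 0$, contradicting $f_{\max}(x)\neq 0$, so $\mu_{xx}=0$ for every $x\in\Omega$ (boundary edges to $\Omega^c$ where $\tilde{f}_{\max}=0$ are automatically consistent). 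Hence $\Omega$ is bipartite.

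The only real subtlety, and thus the main thing to be careful about, is making sure that the sign analysis at the end really produces a bipartition of the \emph{induced subgraph} on $\Omega$ and excludes self-loops; the boundary contributions in the Rayleigh sum could otherwise obscure which edges the equality statement actually constrains.
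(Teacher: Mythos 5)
Your proof is correct and uses the same ingredients as the paper's: testing $|f_{\max}|$ in the Rayleigh quotient for $\lambda_1$, the pointwise inequality $(|a|-|b|)^2\le(a+b)^2$ (equivalent, via $(a+b)^2+(a-b)^2=2(a^2+b^2)$, to the paper's $(f(x)-f(y))^2+(|f|(x)-|f|(y))^2\le 2f(x)^2+2f(y)^2$), and Lemma~\ref{BasicProperties}$(iii)$ to rule out zeros of $f_{\max}$. The only difference is that you argue the equality case directly and read off the bipartition from the signs of $f_{\max}$ (also handling self-loops in one line), whereas the paper argues the contrapositive by locating an odd circuit and showing strict inequality somewhere along it; this is a slightly tidier organization of the same argument.
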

\begin{proof}By Lemma \ref{symmetric eigenvalues}, it suffices to prove that $\lambda_1(\Omega) + \lambda_\mathrm{max}(\Omega) = 2$ implies
$\Omega$ is bipartite. It is well known (see c.f.
\cite{Grigoryan09}) that $\lambda_1(\Omega) +
\lambda_\mathrm{max}(\Omega) \leq 2.$ Let $f$ be an eigenfunction
for $\lambda_\mathrm{max}(\Omega)$. Then we have
$$\lambda_\mathrm{max}(\Omega) = \frac{\frac{1}{2}\sum_{x,y}\mu_{xy}(f(x)-f(y))^2}{\sum_x
\mu(x)f(x)^2}$$ and \begin{eqnarray}\label{Estimate3}
\lambda_1(\Omega) &=& \inf_g
\frac{\frac{1}{2}\sum_{x,y}\mu_{xy}(g(x)-g(y))^2}{\sum_x
\mu(x)g(x)^2} \\&\leq& \label{Estimate4}
\frac{\frac{1}{2}\sum_{x,y}\mu_{xy}(|f|(x)-|f|(y))^2}{\sum_x
\mu(x)|f|(x)^2}.
\end{eqnarray}
We have
\begin{eqnarray}(f(x) - f(y))^2 + (|f|(x) - |f|(y))^2 &=&
 2 f(x)^2 + 2f(y)^2 - 2f(x)f(y) -
2|f|(x)|f|(y)\nonumber\\&\leq& 2 f(x)^2 + 2f(y)^2.
\label{Estimate} \end{eqnarray} Thus we have
\begin{eqnarray}\lambda_1(\Omega) + \lambda_\mathrm{max}(\Omega) \nonumber  &\overset{(*)}{\leq}&
\frac{\frac{1}{2}\sum_{x,y}\mu_{xy}[(f(x)-f(y))^2+
(|f|(x)-|f|(y))^2]}{\sum_x \mu(x)f(x)^2}\\&\overset{(**)}{\leq}&
\nonumber \frac{\sum_{x,y}\mu_{xy}(f(x)^2 + f(y)^2)}{\sum_x
\mu(x)f(x)^2}
\\&=&\frac{2\sum_{x,y}\mu_{xy}f(x)^2}{\sum_x \mu(x)f(x)^2}\nonumber \\
&=&\label{Estimate2} 2.
\end{eqnarray}
Now we have a closer look at equation (\ref{Estimate}). We have
strict inequality in (\ref{Estimate}) and hence in $(**)$ of
(\ref{Estimate2}) if $f(x)f(y)>0$ for some $x\sim y$. Suppose that
$\Omega$ is not bipartite, then $\Omega$  contains a cycle $C$ of
odd length with no repeated vertices (called circuit) and hence
there exists at least one pair of neighbors $x\sim y$ such that
$f(x)f(y)\geq 0$, (if not, $f$ has alternating signs along the
cycle $C$ which contradicts to the odd length of $C$). If
$f(x)f(y)> 0$ we are done. Otherwise there exists at least one
vertex $x\in \Omega$ such that $f(x)=0$ and hence $|f|(x) =0$.
From Lemma \ref{BasicProperties} $(iii)$ it follows that $|f|$ is
not an eigenfunction for $\lambda_1(\Omega).$ Hence we have strict
inequality in (\ref{Estimate4}) and then also in $(*)$ of
(\ref{Estimate2}). Thus $\lambda_1(\Omega) +
\lambda_\mathrm{max}(\Omega)< 2$ if $\Omega$ is not bipartite. It
remains to show that $\lambda_1(\Omega) +
\lambda_\mathrm{max}(\Omega)= 2$ if $\Omega$ is bipartite. This
follows immediately from Lemma \ref{symmetric eigenvalues} since
the eigenvalues of the Dirichlet Laplace operator $\Delta_\Omega$
are symmetric about 1 if $\Omega$ is bipartite.
 \end{proof}

\section{The Cheeger and the dual Cheeger estimate}\label{Section4}
In this section we introduce the Cheeger constant $h$ and the dual
Cheeger constant $\bar{h}$ of a graph and show how they can be
used to estimate the smallest and the largest eigenvalue of the
Dirichlet Laplace operator from above and below. Moreover, we
discuss in detail the connection between $h$ and $\bar{h}$.  This
will be particularly important in Section \ref{Section10} when we
study the essential spectrum of the Laplace operator $\Delta$.
\begin{defi}[Cheeger constant]
For any subset $\Omega\subset V$ we define the Cheeger constant
$h(\Omega)$ by
$$h(\Omega) = \inf_{\substack{\emptyset\neq U\subset \Omega\\\sharp U <\infty}}\frac{|\partial U|}{\mathrm{vol}(U)}.$$
\end{defi}

\begin{defi}[Dual Cheeger constant]
For any subset $\Omega\subset V$ we define the dual Cheeger
constant $\bar{h}(\Omega)$ by
\begin{equation}\label{dcc}\bar{h}(\Omega) =
\sup_{\substack{V_1, V_2\subset \Omega\\\sharp V_1, \sharp
V_2<\infty}}\frac{2|E(V_1,V_2)|}{\mathrm{vol}(V_1)+\mathrm{vol}(V_2)},\end{equation}
where $V_1,V_2$ are two disjoint nonempty subsets of $\Omega$.
\end{defi} We observe that the dual Cheeger constant can only be defined
if $\sharp\Omega>1$. However, as discussed above we exclude the
case $\sharp\Omega=1$ since it is trivial anyway. The Cheeger
constant and the dual Cheeger constant are related to each other
in the following way:
\begin{theo}\label{h and hbar} We have
 $$\bar{h}(\Omega)\leq 1-h(\Omega)$$ and
equality holds if $\Omega$ is bipartite.
\end{theo}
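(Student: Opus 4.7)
The plan is to compare the two optimizations directly by pairing each competitor $(V_1,V_2)$ for $\bar{h}(\Omega)$ with the single set $U:=V_1\cup V_2$ competing for $h(\Omega)$. Since $V_1$ and $V_2$ are disjoint, we have the decomposition
\[
|E(U,U)| = |E(V_1,V_1)| + |E(V_2,V_2)| + 2|E(V_1,V_2)| \ge 2|E(V_1,V_2)|,
\]
and combining this with the volume--boundary identity (\ref{volumeformula}) gives
\[
\mathrm{vol}(V_1)+\mathrm{vol}(V_2) = \mathrm{vol}(U) = |E(U,U)|+|\partial U| \ge 2|E(V_1,V_2)| + |\partial U|.
\]
Dividing through and bounding $|\partial U|/\mathrm{vol}(U)$ below by $h(\Omega)$ yields
\[
\frac{2|E(V_1,V_2)|}{\mathrm{vol}(V_1)+\mathrm{vol}(V_2)} \le 1 - h(\Omega),
\]
and taking the supremum on the left gives the stated inequality.

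For equality when $\Omega$ is bipartite, I would reverse this argument by starting from a Cheeger minimizer $U^{\ast}\subset\Omega$, which exists because $\Omega$ is finite. The induced subgraph on $U^{\ast}$ is bipartite, so it admits a bipartition $(A,B)$ with $|E(A,A)|=|E(B,B)|=0$. Whenever both $A$ and $B$ are nonempty, each inequality above becomes an equality for the pair $(V_1,V_2)=(A,B)$, so
\[
\bar{h}(\Omega) \ge \frac{2|E(A,B)|}{\mathrm{vol}(A)+\mathrm{vol}(B)} = 1 - h(\Omega),
\]
which, together with the first half, yields the desired equality.

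The main point to verify is the degenerate case in which $U^{\ast}$ carries no internal edges, so that the bipartition $(A,B)$ must have an empty part. I would handle this by noting that a bipartite subgraph contains no self-loops, so the absence of internal edges in $U^{\ast}$ forces $\mathrm{vol}(U^{\ast})=|\partial U^{\ast}|$ and hence $h(\Omega)=1$. This in turn forces $\Omega$ itself to have no internal edges, since any edge $x\sim y$ in $\Omega$ would make the two-vertex set $\{x,y\}$ have Cheeger ratio strictly less than one, contradicting minimality. Consequently $\bar{h}(\Omega)=0$ and the identity $\bar{h}(\Omega)=1-h(\Omega)=0$ is trivial in this case, so no genuine obstacle remains and the bipartite equality holds in all cases.
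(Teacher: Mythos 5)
Your proof of the inequality $\bar h(\Omega)\le 1-h(\Omega)$ is exactly the paper's: pair each $(V_1,V_2)$ with $U=V_1\cup V_2$, decompose $|E(U,U)|$, use the volume--boundary identity, and bound the boundary ratio by $h(\Omega)$. No issues there.

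For the equality case when $\Omega$ is bipartite your argument is correct, and it takes a genuinely cleaner route than the paper's. The paper first proves (by a contradiction/exchange argument) that the supremum defining $\bar h(\Omega)$ is attained by a pair with $|E(V_1,V_1)|=|E(V_2,V_2)|=0$, and then matches that supremum with $1-h(\Omega)$ by observing that every $U\subset\Omega$ with $\sharp U\ge 2$ admits such a bipartite splitting. You avoid any discussion of where $\bar h$'s supremum lives: you simply take a Cheeger minimizer $U^{\ast}$ (which exists since $\Omega$ is finite), split it bipartitely into $(A,B)$, and compute directly that the dual Cheeger ratio of $(A,B)$ equals $1-|\partial U^{\ast}|/\mathrm{vol}(U^{\ast})=1-h(\Omega)$, which forces $\bar h(\Omega)\ge 1-h(\Omega)$. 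This is shorter and more constructive; what the paper's version buys in exchange is the extra structural statement that a dual-Cheeger maximizer can always be taken to be a bipartite pair, which is reused implicitly later, but that fact is not needed for this theorem. One small remark: your degenerate case (where $U^{\ast}$ has no internal edge, so one of $A,B$ is empty) is actually vacuous under the paper's standing hypotheses, since the paper always assumes $\Omega$ is connected with $\sharp\Omega\ge 2$; your own observation that any internal edge gives Cheeger ratio $<1$ already shows that $h(\Omega)<1$ and hence $U^{\ast}$ has an internal edge. So the case split is harmless but could be dispensed with.
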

\begin{proof}
Let $V_1,V_2\subset \Omega$ be two nonempty disjoint subsets of
$\Omega$. Note that the volume of the subset $V_1$, can be written
in the form (cf. eq. (\ref{volumeformula}))
$$\mathrm{vol}(V_1) = |E(V_1,V_2)| + |E(V_1,V_1)| + |E(V_1,(V_1\cup
V_2)^c)|$$and a similar expression holds for the volume of $V_2$.
Hence we have
\begin{eqnarray}\label{VolumeV1V2}\nonumber \mathrm{vol}(V_1)+\mathrm{vol}(V_2) &=& 2 |E(V_1,V_2)| + |E(V_1,V_1)| + |E(V_2,V_2)|
+ |\partial(V_1\cup V_2)|
\\&\geq& 2 |E(V_1,V_2)| + |\partial (V_1\cup V_2)|.
\end{eqnarray}
From this it follows that
$$\frac{2|E(V_1,V_2)|}{\mathrm{vol}(V_1)+\mathrm{vol}(V_2)}
\leq 1 - \frac{|\partial(V_1\cup V_2)|}{\mathrm{vol}(V_1\cup
V_2)}.$$ Taking the supremum over all nonempty disjoint subsets
$V_1,V_2\subset \Omega$ we get \begin{eqnarray}\ \ \
\bar{h}(\Omega) &\overset{(*)}{=}& \sup_{V_1,V_2\subset
\Omega}\frac{2|E(V_1,V_2)|}{\mathrm{vol}(V_1)+\mathrm{vol}(V_2)}
\leq \sup_{V_1,V_2\subset \Omega} \left(1 - \frac{|\partial
(V_1\cup V_2)|}{\mathrm{vol}(V_1\cup V_2)}\right)\label{estimate5}
\\&=& 1 - \inf_{U\subset \Omega, \sharp U\geq 2}\frac{|\partial
U|}{\mathrm{vol}(U)} \overset{(**)}{=} 1 -
\inf_{U\subset\Omega}\frac{|\partial U|}{\mathrm{vol}(U)} =
1-h(\Omega),\nonumber\end{eqnarray} where $(**)$ follows from the
fact that $\inf_{U\subset\Omega}\frac{|\partial
U|}{\mathrm{vol}(U)}$ cannot be achieved on singletons, i.e.
$\sharp U=1$. More precisely, if $\sharp U =1$ then clearly
$\frac{|\partial U|}{\mathrm{vol}(U)}=1$. However, since $\sharp
\Omega>1$ and $\Omega$ is connected, we can find $W\subset \Omega$
such that $|E(W,W)|>0$. Thus we have $$\frac{|\partial
W|}{\mathrm{vol}(W)}= \frac{|\partial W|}{|\partial W| +
|E(W,W)|}<1$$ which contradicts that $U$ achieves the infimum.

Now if $\Omega$ is bipartite, we claim that in $(*)$ of
\eqref{estimate5} the supremum is obtained for two subsets
$V_1,V_2\subset\Omega$ that satisfy $|E(V_1,V_1)| =
|E(V_2,V_2)|=0$. If it is not the case, there exists $V_1^\prime,
V_2^\prime\subset \Omega$ that achieve the supremum in $(*)$ of
\eqref{estimate5} and satisfy $|E(V_1^\prime, V_1^\prime)|\neq 0$
or $|E(V_2^\prime, V_2^\prime)|\neq 0$. Since $\Omega$ is
bipartite (i.e. in particular $\mu_{xx}=0$ for any $x\in V$), we
can find nonempty disjoint subsets $V_1, V_2\subset \Omega$ that
satisfy $V_1\cup V_2 = V_1^\prime\cup V_2^\prime$ and
$|E(V_1,V_1)| = |E(V_2,V_2)|=0$. Then we have
$$\frac{1}{2}\sum_{x,y \in V_1\cup V_2}\mu_{xy} = |E(V_1,V_2)| +\frac{1}{2}
|E(V_1,V_1)|+ \frac{1}{2}|E(V_2,V_2)| = |E(V_1,V_2)| $$ and
$$\frac{1}{2}\sum_{x,y \in V_1^\prime\cup V_2^\prime}\mu_{xy} =
|E(V_1^\prime,V_2^\prime)| + \frac{1}{2}|E(V_1^\prime,
V_1^\prime)| +\frac{1}{2} |E(V_2^\prime,
V_2^\prime)|>|E(V_1^\prime, V_2^\prime)|,$$ where we used in the
last equation that  $|E(V_1^\prime, V_1^\prime)|\neq 0$ or
$|E(V_2^\prime, V_2^\prime)|\neq 0$. By construction we have
$V_1\cup V_2 = V_1^\prime\cup V_2^\prime$ which implies
$|E(V_1^\prime,V_2^\prime)|< |E(V_1,V_2)|$ and $\mathrm{vol}(V_1)
+ \mathrm{vol}(V_2) =\mathrm{vol}(V_1^\prime) +
\mathrm{vol}(V_2^\prime)$. This is a contradiction to the
assumption that $V_1^\prime, V_2^\prime$ achieve the supremum in
$(*)$ of \eqref{estimate5}.

Using \eqref{VolumeV1V2} and the claim in (\ref{estimate5}), we
have
\begin{eqnarray}\ \ \ \bar{h}(\Omega) &=&
\sup_{\substack{V_1,V_2\subset
\Omega\\|E(V_1,V_1)|=0\\|E(V_2,V_2)|=0}}\frac{2|E(V_1,V_2)|}{\mathrm{vol}(V_1)+\mathrm{vol}(V_2)}
= \sup_{\substack{V_1,V_2\subset
\Omega\\|E(V_1,V_1)|=0\\|E(V_2,V_2)|=0}} \left(1 - \frac{|\partial
(V_1\cup V_2)|}{\mathrm{vol}(V_1\cup V_2)}\right)\nonumber  \\
&=& \sup_{V_1,V_2\subset \Omega} \left(1 - \frac{|\partial
(V_1\cup V_2)|}{\mathrm{vol}(V_1\cup V_2)}\right)\label{estimate6}
= 1 - \inf_{\substack{U\subset\Omega\\ \sharp U\geq 2}}\frac{|\partial U|}{\mathrm{vol}(U)}\\
&=& 1 - \inf_{U\subset\Omega}\frac{|\partial U|}{\mathrm{vol}(U)}
= 1-h(\Omega),\nonumber\end{eqnarray} where in \eqref{estimate6}
we use the fact (since $\Omega$ is bipartite) that for any
disjoint $V_1,V_2$ there exist disjoint $U_1$ and $U_2$ such that
$V_1\cup V_2 = U_1\cup U_2$ and $|E(U_1,U_1)| = |E(U_2,U_2)|=0$.
\end{proof}
The next example shows that the converse of the second assertion
in Theorem \ref{h and hbar} is in general not true, i.e.
$h(\Omega) +\bar{h}(\Omega) =1$ does not imply that $\Omega$ is
bipartite.
\begin{example}\label{counterexample1} Let $G$ be the standard
lattice $\mathds{Z}^2$ with one more edge, $((0,1),(1,0))$, and
$\Omega'$ a finite subset of $G$ containing the origin and the
additional edge, i.e. $(0,0)\in \Omega'$ and $((0,1),(1,0))\in
E(\Omega',\Omega').$ Denote by $M:=\mathrm{vol}(\Omega')$ the
volume of $\Omega'.$ Moreover, let $K_{m,n}$ be a large complete
bipartite graph such that $K:=\mathrm{vol}(K_{m,n})\geq M.$ By
adding an edge that connects the origin in $\Omega'$ to a vertex
in $K_{m,n},$ we obtain an infinite graph, $\Gamma=G\cup K_{m,n}$,
see Figure \ref{Fig.lattice}.  Let $\Omega:=\Omega'\cup K_{m,n}.$
First of all we note that $\Omega$ is not bipartite. However, we
will show that $h(\Omega) +\bar{h}(\Omega) =1$ holds. We claim
that $U_0:=K_{m,n}$ achieves the Cheeger constant in $\Omega$,
i.e. $\frac{|\partial U_0|}{\mathrm{vol}(U_0)}\leq \frac{|\partial
U|}{\mathrm{vol}(U)},$ for any $U\subset \Omega$. Note that by
construction of $\Omega$, $U_0$ is the only subset of $\Omega$
s.t. $|\partial U|=1$, i.e. $|\partial U|=1$ implies that $U =
U_0$. Thus for all $U\neq U_0$
$$\frac{|\partial U|}{\mathrm{vol}(U)}\geq
\frac{2}{\mathrm{vol}(U)}\geq \frac{2}{\mathrm{vol}(\Omega)} \geq
\frac{2}{K+M+2}.$$ Since $M\leq K,$
$$\frac{|\partial U|}{\mathrm{vol}(U)}\geq \frac{1}{K+1}=
\frac{|\partial U_0|}{\mathrm{vol}(U_0)}.$$ This proves our claim
that $U_0$ achieves the Cheeger constant. Moreover by choosing
$V_1,\ V_2$ to be the bipartition of $K_{m,n}$ we have
$$\bar{h}(\Omega) = \sup_{V_1,V_2\subset \Omega}\frac{2|E(V_1,V_2)|}{\mathrm{vol}(V_1) + \mathrm{vol}(V_2)}\geq \frac{K}{K+1}$$ and thus
$$h(\Omega) + \bar{h}(\Omega)\geq \frac{1}{K+1}+
\frac{K}{K+1}=1.$$ Together with Theorem \ref{h and hbar} this
implies that $h(\Omega) +\bar{h}(\Omega) =1$ although $\Omega$ is
not bipartite.
\end{example}
\begin{figure}\begin{center}
\includegraphics[width =
 7cm]{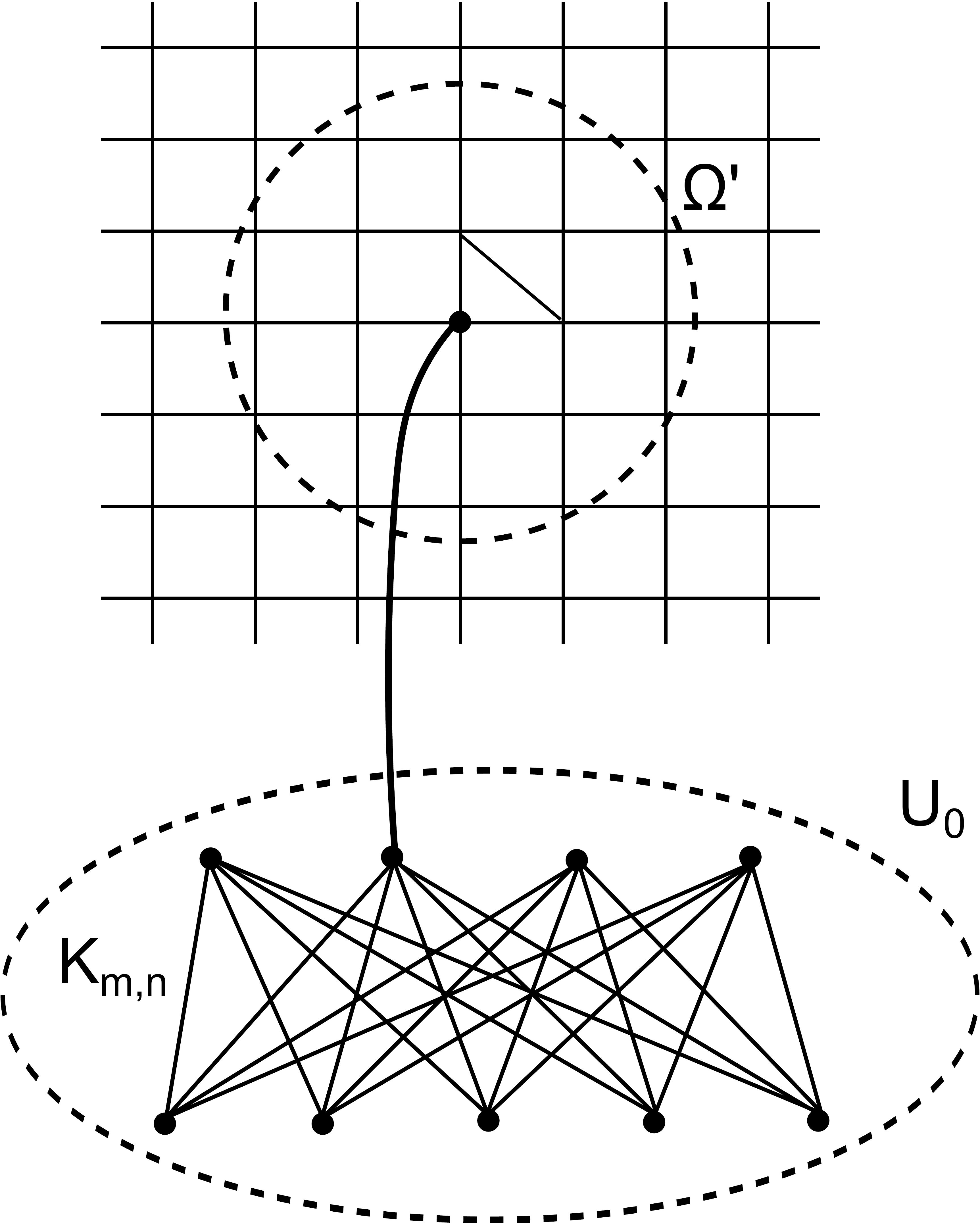}\caption{\label{Fig.lattice}The graph in  Example \ref{counterexample1}.}
\end{center}
\end{figure}

\begin{rem}The last example showed that $h(\Omega)
+\bar{h}(\Omega) =1$ does not imply that $\Omega$ is bipartite.
However, if $h(\Omega) +\bar{h}(\Omega) =1$ we can show that the
partition $V_1,V_2$ that achieves $\bar{h}(\Omega)$ is bipartite.
This can be seen as follows: Let $V_1,V_2$ be the partition that
achieves $\bar{h}(\Omega)$ and let $U = V_1\cup V_2$. Then by
definition  $\frac{|\partial U|}{\mathrm{vol}(U)}\geq h(\Omega)$
and thus we have
$$1 = h(\Omega) +\bar{h}(\Omega) \leq \frac{2|E(V_1,V_2)| + |\partial(V_1\cup V_2)|}
{2|E(V_1,V_2)| + |E(V_1,V_1)| + |E(V_2,V_2)| + |\partial(V_1\cup
V_2)|}.$$ This implies that $|E(V_1,V_1)| =  |E(V_2,V_2)|=0$ which
yields that the partition $V_1,V_2$ is bipartite.
\end{rem}
In order to give a lower bound for $\bar{h}(\Omega)$ in terms of
$(1-h(\Omega))$ we recall the following theorem from
\cite{Bauer12}:
\begin{theo}\label{surgery} Let $\Gamma$ be a graph without self-loops. Then for any finite $U \subset V$ there exists a partition
$V_1\cup V_2 =U$ such that \begin{equation}\label{surgery
equation}|E(V_1,V_2)| \geq \max\{|E(V_1,V_1)|,
|E(V_2,V_2)|\}.\end{equation}
\end{theo}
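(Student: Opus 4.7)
The plan is a standard max--cut argument. First I would pick, among the (finitely many) partitions $V_1 \cup V_2 = U$ with $V_1 \cap V_2 = \emptyset$, one that maximizes the cross--weight $|E(V_1,V_2)|$; the maximum is attained simply because $U$ is finite. Allowing $V_1$ or $V_2$ to be empty in this optimization causes no trouble, since an empty side makes the conclusion trivial.

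Next I would exploit local optimality with respect to single--vertex swaps. Moving a vertex $v \in V_1$ to the other side changes the cross--weight by exactly $\deg_{V_1}(v) - \deg_{V_2}(v)$, where $\deg_W(v) := \sum_{y \in W}\mu_{vy}$. The assumption that $\Gamma$ has no self--loops enters precisely here: it guarantees $\mu_{vv}=0$, so $\deg_{V_1\setminus\{v\}}(v) = \deg_{V_1}(v)$ and the bookkeeping of which edges turn internal and which turn cross is clean. Maximality of the cut therefore forces $\deg_{V_1}(v) \le \deg_{V_2}(v)$ for every $v \in V_1$. Summing over $v \in V_1$ and using
\[
\sum_{v\in V_1}\deg_{V_1}(v) = |E(V_1,V_1)|, \qquad \sum_{v\in V_1}\deg_{V_2}(v) = |E(V_1,V_2)|,
\]
which are immediate from the definition $|E(A,B)| = \sum_{x\in A}\sum_{y\in B}\mu_{xy}$, I obtain $|E(V_1,V_1)| \le |E(V_1,V_2)|$. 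The symmetric argument applied to single--vertex moves out of $V_2$ yields $|E(V_2,V_2)| \le |E(V_1,V_2)|$, and the two bounds together give the claim.

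There is no deep obstacle. The only subtlety worth stressing is the consistency between the double--counting built into $|E(W,W)|$ (each internal edge contributes both $\mu_{xy}$ and $\mu_{yx}$) and the double--counting that appears when one sums $\deg_W(v)$ over $v\in W$; these match exactly, which is what makes the summation step produce the theorem's inequality rather than an off--by--a--factor variant. If self--loops were permitted, the swap computation would pick up an extra $\mu_{vv}$ term on the wrong side and this clean matching would break, showing that the self--loop--free hypothesis genuinely cannot be dropped.
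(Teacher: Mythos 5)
Your proof is correct, and it is the same underlying idea as the paper's, but executed in its cleaner direct form. The paper argues by contradiction: it assumes no partition satisfies the conclusion, picks a vertex whose $V_2$-degree is strictly less than its $V_1$-degree (by pigeonhole on the sums), moves it across, observes the cut increases by at least a global $\epsilon_0 > 0$, and derives a contradiction with $\mathrm{vol}(U) < \infty$. You instead take the maximizer of $|E(V_1,V_2)|$ outright (it exists by finiteness), apply single-vertex local optimality to get $\deg_{V_1}(v) \le \deg_{V_2}(v)$ for every $v \in V_1$, and sum; this avoids the $\epsilon_0$ bookkeeping entirely and directly produces the inequality with the correct normalization, thanks to the matched double-counting you point out. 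Both proofs use the no-self-loop hypothesis in the same essential place, namely to ensure that moving $v$ does not leave an unwanted $\mu_{vv}$ term on the internal side.

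One small inaccuracy in your aside: if the maximizing partition has an empty side, the conclusion is not automatically trivial — it would require $|E(U,U)| = 0$. What actually saves you is that local optimality at such a partition forces $\deg_{V_2}(v) \le \deg_{V_1}(v) = 0$ for every $v$ in the nonempty side, so $U$ has no internal edges and the inequality degenerates to $0 \ge 0$. Your summation argument already contains this; the remark as phrased just overstates the triviality slightly. This does not affect the correctness of the proof.
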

\begin{proof} For completeness, we include a proof here.
Suppose the assertion is not true, that is for any partition
$V_1\cup V_2=U$ we have
\begin{equation}\label{converse1}
|E(V_1,V_2)| < \max\{|E(V_1,V_1)|, |E(V_2,V_2)|\}.
\end{equation}
We start with an arbitrarily partition $V_1\cup V_2=U.$ Without
loss of generality, we assume $|E(V_1,V_2)|<|E(V_1,V_1)|,$ i.e.
$$\sum_{x\in V_1}\sum_{y\in V_2}\mu_{xy}<\sum_{x\in V_1}\sum_{z\in V_1}\mu_{xz}.$$
Then there exists a vertex $x\in V_1$ such that $$0\leq \sum_{y\in
V_2}\mu_{xy}=|E(\{x\}, V_2)|<\sum_{z\in
V_1}\mu_{xz}=|E(\{x\},V_1)|.$$ Since $|E(\{x\},V_1)|>0$ it follows
that $\sharp V_1\geq 2$ because otherwise $V_1=\{x\}$ and hence
$E(\{x\},V_1)=0$. We define a new partition, $V_1'\cup V_2'=U,$ as
$$V_1'=V_1\backslash \{x\}\neq\emptyset,\ V_2'=V_2\cup \{x\}.$$
Then it is evident that
$$|E(V_1',V_2')|=|E(V_1,V_2)|+|E(\{x\},V_1)|-|E(\{x\},V_2)|\geq |E(V_1,V_2)|+\epsilon_0,$$
where
\begin{eqnarray*}\epsilon_0&:=&\min \{|E(\{x\},V_1)|-|E(\{x\},V_2)|:\\ && V_1\cup V_2=U, V_1\cap V_2=\emptyset, x\in U, |E(\{x\},V_1)|-|E(\{x\},V_2)|>0\}.\end{eqnarray*}
Note that since $U$ is finite and by our assumption
\eqref{converse1}, it follows that $\epsilon_0>0.$

Since \eqref{converse1} holds for all partitions, we may carry out
this process for infinitely many times. That is we may obtain
arbitrary large $|E(V_1,V_2)|$ which contradicts that
$\mathrm{vol}(U)<\infty.$
\end{proof}

 \begin{theo}\label{h and hbar 3} If there are no self-loops
in the graph, then
\begin{equation} \label{h and hbar 2}\frac{1}{2}(1 - h(\Omega))\leq
\bar{h}(\Omega).\end{equation}
\end{theo}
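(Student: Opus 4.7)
The plan is to combine the surgery result (Theorem \ref{surgery}) with the volume--boundary decomposition already derived in equation (\ref{VolumeV1V2}). The inequality asks us to manufacture a partition $(V_1,V_2)$ with $|E(V_1,V_2)|$ large relative to $\mathrm{vol}(V_1)+\mathrm{vol}(V_2)$, and the natural candidate to feed into the dual Cheeger definition is a carefully chosen bipartition of a set $U$ that realizes the ordinary Cheeger constant.

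First I would choose $U\subset\Omega$ attaining $h(\Omega)=|\partial U|/\mathrm{vol}(U)$; since $\Omega$ is finite, the infimum in the definition of $h(\Omega)$ is attained. We may assume $h(\Omega)<1$, as otherwise the claimed inequality reads $0\leq \bar{h}(\Omega)$ and is trivial. Apply Theorem \ref{surgery} to this $U$ (this is where the no-self-loop hypothesis enters) to produce a partition $V_1\cup V_2=U$ with
\[
|E(V_1,V_2)|\geq \max\{|E(V_1,V_1)|,|E(V_2,V_2)|\}\geq \tfrac{1}{2}\bigl(|E(V_1,V_1)|+|E(V_2,V_2)|\bigr).
\]
Note that $h(\Omega)<1$ forces $|E(U,U)|>0$, which together with the surgery inequality forces $|E(V_1,V_2)|>0$, so both $V_1$ and $V_2$ are nonempty and constitute a legal test pair in the definition of $\bar{h}(\Omega)$.

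Next I would plug into the volume formula from equation (\ref{VolumeV1V2}), noting that $V_1\cup V_2=U$ implies $\partial(V_1\cup V_2)=\partial U$ and $\mathrm{vol}(V_1)+\mathrm{vol}(V_2)=\mathrm{vol}(U)$:
\[
\mathrm{vol}(U)=2|E(V_1,V_2)|+|E(V_1,V_1)|+|E(V_2,V_2)|+|\partial U|\leq 4|E(V_1,V_2)|+|\partial U|.
\]
Rearranging gives $|E(V_1,V_2)|\geq \tfrac{1}{4}(\mathrm{vol}(U)-|\partial U|)$, and therefore
\[
\bar{h}(\Omega)\geq \frac{2|E(V_1,V_2)|}{\mathrm{vol}(U)}\geq \frac{1}{2}\Bigl(1-\frac{|\partial U|}{\mathrm{vol}(U)}\Bigr)=\frac{1}{2}(1-h(\Omega)).
\]

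The only genuine obstacle is producing a bipartition whose cross-edges dominate the internal edges on both sides simultaneously — precisely the content of Theorem \ref{surgery}. Once that tool is in hand, everything else is a direct manipulation of the volume decomposition and the observation $\mathrm{vol}(V_1)+\mathrm{vol}(V_2)=\mathrm{vol}(U)$. The self-loop hypothesis is needed only through Theorem \ref{surgery}; if self-loops were allowed, the surgery procedure could be blocked by a loop whose weight violates the swap-improvement step, and the $\tfrac{1}{2}$ factor in the bound would no longer be available.
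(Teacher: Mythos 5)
Your proof is correct and takes essentially the same route as the paper: the key inputs are Theorem \ref{surgery} (to dominate the internal edges $|E(V_1,V_1)|+|E(V_2,V_2)|$ by $2|E(V_1,V_2)|$) combined with the volume decomposition \eqref{VolumeV1V2}. The only difference is that you fix the extremal $U$ achieving $h(\Omega)$ at the outset and add the useful verification that $V_1,V_2$ are nonempty, whereas the paper rewrites $\bar{h}(\Omega)$ as a double supremum over $U$ and over bipartitions of $U$, applies the surgery bound inside, and optimizes over $U$ at the end — the two formulations are equivalent.
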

\begin{proof}We have
\begin{eqnarray*}
\bar{h}(\Omega) &=& \sup_{\substack{U\subset \Omega\\\sharp U \geq
2}} \sup_{\substack{V_1,V_2\\V_1\cup V_2 = U}}
\frac{2|E(V_1,V_2)|}{\mathrm{vol}(V_1) + \mathrm{vol}(V_2)}\\&=&
\sup_{\substack{U\subset \Omega\\\sharp U \geq 2}}
\sup_{\substack{V_1,V_2\\V_1\cup V_2 = U}}
\left(\frac{2|E(V_1,V_2)| + \frac{1}{2}|\partial(V_1\cup
V_2)|}{\mathrm{vol}(V_1) + \mathrm{vol}(V_2)} - \frac{1}{2}\frac{
|\partial(V_1\cup V_2)|}{\mathrm{vol}(V_1) +
\mathrm{vol}(V_2)}\right)\\&=& \sup_{\substack{U\subset
\Omega\\\sharp U \geq 2}} \sup_{\substack{V_1,V_2\\V_1\cup V_2 =
U}} \left(\frac{2|E(V_1,V_2)| + \frac{1}{2}|\partial(V_1\cup
V_2)|}{2|E(V_1,V_2)|+ |E(V_1,V_1)| +|E(V_2,V_2)| +
|\partial(V_1\cup V_2)| }\right.\\&&- \left.\frac{1}{2}\frac{
|\partial(V_1\cup V_2)|}{\mathrm{vol}(V_1) +
\mathrm{vol}(V_2)}\right).
\end{eqnarray*}
Using Theorem \ref{surgery} we obtain
\begin{eqnarray*}
\bar{h}(\Omega) &\geq& \sup_{\substack{U\subset \Omega\\\sharp U
\geq 2}} \left(\frac{1}{2} - \frac{1}{2}\frac{|\partial
U|}{\mathrm{vol}(U)}\right)\\&=& \frac{1}{2} - \frac{1}{2}
\inf_{U\subset \Omega} \frac{|\partial U|}{\mathrm{vol}(U)}\\&=&
\frac{1}{2}(1 - h(\Omega)),
\end{eqnarray*}where we again observe that the infimum of $\frac{|\partial
U|}{\mathrm{vol}(U)}$ cannot be achieved by a singleton.
\end{proof}

\begin{rem}
\begin{itemize}\item[$(i)$]
It is clear that for weighted graphs with self-loops
$\bar{h}(\Omega)$ can be arbitrarily close to zero. Consider for
instance a subgraph consisting of two vertices connected by an
edge. If one of the two vertices (say vertex $x$) has a self-loop
with weight $\mu_{xx}$, then both $\bar{h}(\Omega)$ and
$h(\Omega)\to 0$ as $\mu_{xx}\to \infty$. \item[$(ii)$]If we allow
self-loops in the graph the best lower bound for $\bar{h}(\Omega)$
that we can obtain is $\bar{h}(\Omega)\geq \frac{2 \min_{x,y\in
\Omega}\mu_{xy}}{\mathrm{vol(\Omega)}}$. This estimate is sharp
for the graph discussed in $(i)$ as $\mu_{xx}\to \infty$.
\item[$(iii)$] Using an argument by Alon \cite{Alon96} and
Hofmeister and Lefmann \cite{Hofmeister98} (see also Scott
\cite{Scott05}) we can show that even strict inequality holds in
\eqref{h and hbar 2}.
\end{itemize}
\end{rem}

We introduce the following notation:
\begin{defi}For a function $g\in
\ell^2(\Omega,\mu)$ we define $P(g):= \{x\in \Omega: g(x)>0\}$ and
$N(g):= \{x\in \Omega: g(x)<0\}$.
\end{defi}
\begin{defi}[Auxiliary Cheeger constant]
For a function $g\in \ell^2(\Omega,\mu)$ the auxiliary Cheeger
constant $h(\Omega,g)$ is defined by \be \label{hg} h(\Omega,g) :=
\min_{\emptyset \neq U \subseteq P(g)}\frac{|E(U,U^c)|}
{\mathrm{vol}(U)}.\qe
\end{defi}
\begin{rem}\label{Remark} Since for every function $g\in\ell^2(\Omega,\mu)$ we have
$P(g)\subset\Omega$ it is obvious that $h(\Omega,g)\geq h(\Omega)$
for all $g\in\ell^2(\Omega,\mu)$.
\end{rem}We
recall the following two lemmata from \cite{Diaconis91}, see also
\cite{Bauer12}.
\begin{lemma}\label{Lemma1}
For every function $g\in \ell^2(\Omega,\mu)$ we have
\[1+\sqrt{1-h^2(\Omega, g)}\geq \frac{\sum_{e=(x,y)}\mu_{xy}(\tilde{g}_+(x)-\tilde{g}_+(y))^2}
{\sum_x \mu(x)\tilde{g}_+(x)^2}\geq 1-\sqrt{1-h^2(\Omega, g)}, \]
where $g_+$ is the positive part of $g$, i.e.
\[g_+(x)=\left\{\begin{array}{ccc} g(x) & \mbox{ if}&  x\in P(g)\\0 &&
    else. \end{array}\right.\]
\end{lemma}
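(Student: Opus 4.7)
The plan is to rewrite the two-sided bound as $|R(g_+) - 1| \le \sqrt{1 - h^2(\Omega, g)}$, where
\[R(f) := \frac{\sum_{e=(x,y)} \mu_{xy}(f(x)-f(y))^2}{\sum_x \mu(x) f(x)^2}\]
is the Rayleigh-type quotient. Squaring, this is equivalent to
\[h(\Omega,g)^2 \le R(g_+)\bigl(2 - R(g_+)\bigr),\]
because solving this quadratic in $R(g_+)$ produces exactly the interval $[1-\sqrt{1-h^2},\ 1+\sqrt{1-h^2}]$. So the entire problem reduces to proving this single inequality for the nonnegative function $f := g_+$.

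Next I would compute the complementary quantity $2A(f)-B(f)$, where $A(f)=\sum_x \mu(x) f(x)^2$ and $B(f)=\tfrac12\sum_{x,y}\mu_{xy}(f(x)-f(y))^2$. Writing $\sum_x \mu(x) f(x)^2 = \tfrac12\sum_{x,y}\mu_{xy}(f(x)^2+f(y)^2)$ and using the identity
\[(f(x)^2+f(y)^2) - \tfrac12(f(x)-f(y))^2 = \tfrac12(f(x)+f(y))^2,\]
one obtains $2A(f)-B(f) = \tfrac12\sum_{x,y}\mu_{xy}(f(x)+f(y))^2$. Then the Cauchy--Schwarz inequality applied to the factorization $|f(x)^2-f(y)^2| = |f(x)-f(y)|\cdot|f(x)+f(y)|$ yields
\[\Bigl(\tfrac12\sum_{x,y}\mu_{xy}|f(x)^2-f(y)^2|\Bigr)^2 \le B(f)\cdot\bigl(2A(f)-B(f)\bigr).\]

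The final step, and the one carrying the geometric content, is a layer-cake (coarea) decomposition of $f^2 = g_+^2$. Setting $U_t := \{x\in\Omega : g_+(x)^2 \ge t\}$ for $t>0$, one has $U_t \subseteq P(g)$, and
\[\tfrac12\sum_{x,y}\mu_{xy}|g_+(x)^2-g_+(y)^2| = \int_0^\infty |E(U_t,U_t^c)|\,dt,\qquad A(g_+) = \int_0^\infty \mathrm{vol}(U_t)\,dt.\]
Since each $U_t \subseteq P(g)$, the definition of $h(\Omega,g)$ gives $|E(U_t,U_t^c)| \ge h(\Omega,g)\,\mathrm{vol}(U_t)$ pointwise in $t$, which upon integration yields $\tfrac12\sum_{x,y}\mu_{xy}|g_+(x)^2-g_+(y)^2| \ge h(\Omega,g)\cdot A(g_+)$. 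Inserting this into the Cauchy--Schwarz bound gives $h(\Omega,g)^2 A(g_+)^2 \le B(g_+)\bigl(2A(g_+)-B(g_+)\bigr)$, i.e.\ the desired quadratic inequality, completing the proof.

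The main subtlety will be the bookkeeping between edge sums and vertex sums (the distinction the paper flags between $\mu$ and $\theta$) together with the presence of self-loops: loops contribute zero to $B(g_+)$ but must be handled carefully in the identity $\sum_x \mu(x)f(x)^2 = \tfrac12\sum_{x,y}\mu_{xy}(f(x)^2+f(y)^2)$ and in the coarea identity. Modulo this bookkeeping, the argument is a direct adaptation of the Diaconis--Stroock / Lawler--Sokal style Cheeger inequality to the positive part $g_+$, with no ingredients beyond Cauchy--Schwarz and the layer-cake representation.
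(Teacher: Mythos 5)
Your proposal is correct and is essentially the paper's own argument: the same Cauchy--Schwarz estimate applied to the factorization $|f(x)^2-f(y)^2|=|f(x)-f(y)|\cdot|f(x)+f(y)|$, the same layer-cake (coarea) bound $\tfrac12\sum_{x,y}\mu_{xy}|g_+(x)^2-g_+(y)^2|\ge h(\Omega,g)\,A(g_+)$ using $U_t\subseteq P(g)$, and the same algebraic reduction to $h^2\le W(2-W)$ (which the paper writes as $W\ge h^2/(2-W)$). The only cosmetic differences are that you slice on level sets of $g_+^2$ with $dt$ whereas the paper slices on level sets of $\tilde g_+$ with $t\,dt$, and you phrase the final step as solving the quadratic rather than dividing through by $(2-W)$.
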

\begin{proof}
First, we write
\begin{eqnarray*}W&:=& \frac{\sum_{e=(x,y)}\mu_{xy}(\tilde{g}_+(x)-\tilde{g}_+(y))^2}{\sum_x
\mu(x)\tilde{g}_+(x)^2}\\&=&
\frac{\sum_{e=(x,y)}\theta_{xy}(\tilde{g}_+(x)-\tilde{g}_+(y))^2}{\sum_x \mu(x)\tilde{g}_+(x)^2}\\
&=&
\frac{\sum_{e=(x,y)}\theta_{xy}(\tilde{g}_+(x)-\tilde{g}_+(y))^2
\sum_{e=(x,y)}\theta_{xy}(\tilde{g}_+(x)+\tilde{g}_+(y))^2}{\sum_x
\mu(x)\tilde{g}_+(x)^2
\sum_{e=(x,y)}\theta_{xy}(\tilde{g}_+(x)+\tilde{g}_+(y))^2}
\\&=:& \frac{I}{II}.\end{eqnarray*} Using the Cauchy-Schwarz
inequality we obtain
\[I \geq \left(\sum_{e=(x,y)}\theta_{xy}|\tilde{g}_+(x)^2-\tilde{g}_+(y)^2|\right)^2
=\left(\sum_{e=(x,y)}\mu_{xy}|\tilde{g}_+(x)^2-\tilde{g}_+(y)^2|\right)^2.\]
Now we have
\begin{eqnarray*}\sum_{e=(x,y)}\mu_{xy}|\tilde{g}_+(x)^2-\tilde{g}_+(y)^2|&=& \sum_{e=(x,y):\tilde{g}_+(x)> \tilde{g}_+(y)}\mu_{xy}(\tilde{g}_+(x)^2-\tilde{g}_+(y)^2)\\&=& 2
\sum_{e=(x,y):\tilde{g}_+(x)>
\tilde{g}_+(y)}\mu_{xy}\int_{\tilde{g}_+(y)}^{\tilde{g}_+(x)}t
dt\\&=& 2 \int_{0}^{\infty} \sum_{e=(x,y):\tilde{g}_+(y)\leq t<
\tilde{g}_+(x)}\mu_{xy} \,tdt.
\end{eqnarray*}Note that $\sum_{e=(x,y):\tilde{g}_+(y)\leq t< \tilde{g}_+(x)}\mu_{xy}=|E(P_t,P_t^c)|$
where $P_t:= \{x:\tilde{g}_+(x)>t\}$. Using \rf{hg} we obtain,
\begin{eqnarray*}\sum_{e=(x,y)}\mu_{xy}|\tilde{g}_+(x)^2-\tilde{g}_+(y)^2|&\geq& 2
h(\Omega, g)\int_0^\infty\mathrm{vol}(P_t)tdt\\&=&  2 h(\Omega,
g)\int_0^\infty\sum_{x:\tilde{g}_+(x)>t}\mu(x)tdt\\&=&  2
h(\Omega, g)\sum_{x\in V}\mu(x)\int_0^{\tilde{g}_+(x)}tdt
\\&=& h(\Omega, g)\sum_x\mu(x)\tilde{g}_+(x)^2
\end{eqnarray*}and so it follows  that \[I\geq h^2(\Omega,g)(\sum_x\mu(x)\tilde{g}_+(x)^2)^2.\]
\begin{eqnarray*}II&=& \sum_x \mu(x)\tilde{g}_+(x)^2
\sum_{e=(x,y)}\theta_{xy}(\tilde{g}_+(x)+\tilde{g}_+(y))^2 \\&=&
\sum_x \mu(x)\tilde{g}_+(x)^2 (\sum_x\mu(x)\tilde{g}_+(x)^2 +
\sum_{x,y}\mu_{xy}\tilde{g}_+(x)\tilde{g}_+(y))\\&=& \sum_x
\mu(x)\tilde{g}_+(x)^2 (2\sum_x\mu(x)\tilde{g}_+(x)^2 -
\sum_{e=(x,y)}\mu_{xy}(\tilde{g}_+(x)-\tilde{g}_+(y))^2)\\&=&
(2-W)(\sum_x \mu(x)\tilde{g}_+(x)^2)^2.
\end{eqnarray*}
Combining everything we obtain,
\[W\geq \frac{h^2(\Omega, g)}{(2-W)}\] and  consequently
\[1+\sqrt{1-h^2(\Omega, g)}\geq W \geq 1-\sqrt{1-h^2(\Omega, g)}.\]
\end{proof}
The second observation that we need to prove the Cheeger
inequality is the following lemma, see \cite{Diaconis91}:
\begin{lemma}\label{Lemma2}
For every non-negative real number $\lambda$ and
$g\in\ell^2(\Omega,\mu)$ we have
\[\lambda \geq \frac{\sum_{e=(x,y)}\mu_{xy}(\tilde{g}_+(x)-\tilde{g}_+(y))^2}{\sum_x
\mu(x)\tilde{g}_+(x)^2} \] if $\Delta_\Omega g(x)\leq \lambda
g(x)$ for all $x\in P(g)$.
\end{lemma}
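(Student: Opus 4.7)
The plan is to recognize the numerator of the displayed quotient as a Dirichlet energy $(\Delta_\Omega g_+, g_+)_\mu$ via Lemma \ref{GreenLemma}, and then to show that $g_+$ inherits from $g$ the pointwise inequality $\Delta_\Omega g_+ \leq \lambda g_+$ on the support of $g_+$, from which the Rayleigh-quotient estimate follows immediately.

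First I would apply Green's formula (Lemma \ref{GreenLemma}) with $f = g = g_+ \in \ell^2(\Omega,\mu)$, noting that self-loops contribute zero to the symmetrised sum. This identifies
\[
\sum_{e=(x,y)}\mu_{xy}(\tilde{g}_+(x)-\tilde{g}_+(y))^2 \;=\; (\Delta_\Omega g_+, g_+)_\mu.
\]
Thus the claim reduces to
\[
\lambda (g_+, g_+)_\mu \;\geq\; (\Delta_\Omega g_+, g_+)_\mu.
\]

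The heart of the argument is the pointwise comparison
\[
\Delta_\Omega g_+(x) \;\leq\; \Delta_\Omega g(x) \qquad \text{for every } x \in P(g).
\]
To see this, compute the difference using the pointwise definition of $\Delta_\Omega$. For $x \in P(g)$ one has $g_+(x) = g(x)$, so the diagonal terms cancel and
\[
\Delta_\Omega g_+(x) - \Delta_\Omega g(x) \;=\; -\frac{1}{\mu(x)} \sum_{y \in \Omega} \mu_{xy}\bigl(g_+(y) - g(y)\bigr).
\]
Since $g_+(y) - g(y) = \max(0,-g(y)) \geq 0$, the right-hand side is non-positive. Combining this with the hypothesis $\Delta_\Omega g(x) \leq \lambda g(x) = \lambda g_+(x)$ for $x \in P(g)$ yields $\Delta_\Omega g_+(x) \leq \lambda g_+(x)$ on $P(g)$.

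Finally, since $g_+$ vanishes outside $P(g)$ and is strictly positive on $P(g)$,
\[
(\Delta_\Omega g_+, g_+)_\mu = \sum_{x\in P(g)} \mu(x)\,\Delta_\Omega g_+(x)\,g_+(x) \;\leq\; \lambda \sum_{x\in P(g)} \mu(x)\,g_+(x)^2 \;=\; \lambda (g_+,g_+)_\mu,
\]
which is the desired inequality after dividing by $(g_+,g_+)_\mu$ (which we may assume is positive, else the inequality is trivial). The main subtlety to keep track of is the sign argument in the middle step: one must use that $g_+ - g \geq 0$ everywhere and that $x \in P(g)$ makes the diagonal difference vanish, so that truncating the negative part of $g$ can only decrease the Laplacian on $P(g)$.
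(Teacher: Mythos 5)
Your proof is correct, and it arrives at the conclusion by a slightly different route than the paper. The paper uses the mixed inner product $(\Delta_\Omega g, g_+)_\mu$ as a bridge: it bounds this quantity from above by $\lambda(g_+,g_+)_\mu$ via the hypothesis (summing only over $P(g)$, where $g_+=g$), and from below by $(\Delta_\Omega g_+,g_+)_\mu$ via Green's formula together with the edge-wise sign inequality $(\tilde g(x)-\tilde g(y))(\tilde g_+(x)-\tilde g_+(y))\geq(\tilde g_+(x)-\tilde g_+(y))^2$. You instead establish the pointwise comparison $\Delta_\Omega g_+(x)\leq\Delta_\Omega g(x)$ on $P(g)$ (a discrete Kato-type inequality) and then integrate against $g_+$. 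Your pointwise inequality, once paired with $g_+$, is exactly the paper's inequality $(\Delta_\Omega g_+,g_+)_\mu\leq(\Delta_\Omega g,g_+)_\mu$, so the two arguments are two sides of the same coin — the paper's works at the edge level of the Dirichlet form while yours works at the vertex level of the operator. Your version is arguably cleaner in that the sign argument becomes a one-line computation $g_+-g\geq0$ rather than a short case analysis on edges, at the small cost of invoking Green's formula twice (once implicitly to define the numerator, once to pass to the operator form); the paper invokes it only once. Both are sound and essentially equivalent in content.
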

\begin{proof}On the one hand we have
\begin{eqnarray*}(\Delta_\Omega g, g_+)_\mu &=& \sum_{x\in
\Omega}\mu(x)\Delta_\Omega g(x)g_+(x)  \\&=& \sum_{x\in
P(g)}\mu(x)\Delta_\Omega g(x)g_+(x)\\&\leq& \lambda\sum_{x\in
P(g)}\mu(x) g_+(x)g_+(x) \\&=& \lambda\sum_{x\in V}\mu(x)
\tilde{g}_+(x)\tilde{g}_+(x),\end{eqnarray*} where we used our
assumption. On the other hand we have
\begin{eqnarray*}(\Delta_\Omega g, g_+)_\mu &=&
\sum_{e=(x,y)\in
E}\theta_{xy}(\tilde{g}(x)-\tilde{g}(y))(\tilde{g}_+(x)-\tilde{g}_+(y))
\\&\geq&\sum_{e=(x,y)\in E}\theta_{xy}(\tilde{g}_+(x)-\tilde{g}_+(y))^2,
\end{eqnarray*}where we used the Green's formula, see Lemma \ref{GreenLemma}.
\end{proof}

\begin{theo}[Cheeger inequality cf. \cite{DodziukKarp}]\label{CheegerInequality}
We have
\begin{equation}\label{eq7}1 - \sqrt{1 - h^2(\Omega)}\leq \lambda_1(\Omega)\leq h(\Omega).\end{equation}
\end{theo}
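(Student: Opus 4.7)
The plan is to prove the two inequalities separately, exploiting the preparatory Lemmas \ref{Lemma1} and \ref{Lemma2} together with the Rayleigh quotient characterization in Lemma \ref{BasicProperties}(vi) and the positivity of the first Dirichlet eigenfunction in Lemma \ref{BasicProperties}(iii).

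For the upper bound $\lambda_1(\Omega)\leq h(\Omega)$, I would use the variational formula (\ref{Rayleigh Lambda1}) with indicator test functions. Fix an arbitrary nonempty $U\subset \Omega$ and take $f=\mathds{1}_U\in\ell^2(\Omega,\mu)$. Its extension $\tilde f$ is the indicator of $U$ on $V$. A direct calculation gives
\[
\tfrac{1}{2}\sum_{x,y\in V}\mu_{xy}(\tilde f(x)-\tilde f(y))^2=|E(U,U^c)|=|\partial U|,\qquad \sum_{x\in V}\mu(x)\tilde f(x)^2=\mathrm{vol}(U),
\]
so the Rayleigh quotient of $f$ equals $|\partial U|/\mathrm{vol}(U)$. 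Plugging into \rf{Rayleigh Lambda1} and taking the infimum over finite nonempty $U\subset\Omega$ yields $\lambda_1(\Omega)\leq h(\Omega)$.

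For the lower bound $1-\sqrt{1-h^2(\Omega)}\leq\lambda_1(\Omega)$, I would apply Lemmas \ref{Lemma1} and \ref{Lemma2} to the first eigenfunction itself. Let $f_1$ be the eigenfunction for $\lambda_1(\Omega)$. By Lemma \ref{BasicProperties}(iii), $f_1$ is of one sign on $\Omega$, so we may assume $f_1>0$ on $\Omega$. Then $P(f_1)=\Omega$, which by the definition of $h(\Omega,g)$ gives
\[
h(\Omega,f_1)=\min_{\emptyset\ne U\subseteq \Omega}\frac{|E(U,U^c)|}{\mathrm{vol}(U)}=h(\Omega),
\]
and moreover $(f_1)_+=f_1$. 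Since $\Delta_\Omega f_1=\lambda_1(\Omega)f_1$ pointwise on $\Omega$, the hypothesis of Lemma \ref{Lemma2} is satisfied with $\lambda=\lambda_1(\Omega)$ and $g=f_1$, yielding
\[
\lambda_1(\Omega)\geq \frac{\sum_{e=(x,y)}\mu_{xy}(\tilde f_1(x)-\tilde f_1(y))^2}{\sum_x\mu(x)\tilde f_1(x)^2}.
\]
Combining with the lower estimate from Lemma \ref{Lemma1} applied to $g=f_1$ gives
\[
\lambda_1(\Omega)\geq 1-\sqrt{1-h^2(\Omega,f_1)}=1-\sqrt{1-h^2(\Omega)},
\]
which finishes the proof.

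The argument is essentially a matter of assembling the two auxiliary lemmas correctly; the only subtlety is ensuring that the ratios in Lemmas \ref{Lemma1} and \ref{Lemma2} coincide with the Rayleigh quotient of $\Delta_\Omega$. This follows from the Green's formula (Lemma \ref{GreenLemma}) together with the fact that the loop terms contribute $(\tilde g_+(x)-\tilde g_+(x))^2=0$ to the edge-sum, so $\sum_e\mu_{xy}(\tilde g_+(x)-\tilde g_+(y))^2=\sum_e\theta_{xy}(\tilde g_+(x)-\tilde g_+(y))^2=\tfrac12\sum_{x,y}\mu_{xy}(\tilde g_+(x)-\tilde g_+(y))^2$. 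No genuine obstacle arises; the one-sign property of $f_1$ is what unlocks the identification $h(\Omega,f_1)=h(\Omega)$ and hence the sharp lower bound.
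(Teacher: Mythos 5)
Your proof is correct and follows essentially the same route as the paper: indicator test functions in the Rayleigh quotient for the upper bound, and Lemmas \ref{Lemma1} and \ref{Lemma2} applied to the first eigenfunction for the lower bound. The only minor variation is that you deduce the exact equality $h(\Omega,f_1)=h(\Omega)$ from the one-sign property of $f_1$ (Lemma \ref{BasicProperties}(iii)), whereas the paper simply cites the inequality $h(\Omega,g)\geq h(\Omega)$ from Remark \ref{Remark} together with monotonicity of $t\mapsto 1-\sqrt{1-t^2}$; both lead to the same conclusion.
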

\begin{proof}
For completeness we give a proof here.

First we show that $\lambda_1(\Omega)\leq h(\Omega)$.  We consider
the following function:
$$f(x) = \left\{\begin{array}{cc} 1&\text{if } x\in U\subset \Omega\\ 0 &
\text{else.}
\end{array}\right.$$
Using this function in the Rayleigh quotient (\ref{Rayleigh
Lambda1}) yields
\begin{eqnarray*}
\lambda_1(\Omega) &\leq& \frac{\frac{1}{2}
\sum_{x,y}\mu_{xy}(f(x)-f(y))^2}{\sum_x \mu(x) f^2(x)}\\&=&
\frac{\sum_{x\in U}\sum_{y\in U^c}\mu_{xy}}{\mathrm{vol}(U)} \\&=&
\frac{|\partial U|}{\mathrm{vol}(U)}.
\end{eqnarray*}
Since this holds for all $U\subset \Omega$ we have
$$\lambda_1(\Omega) \leq \inf_{\emptyset \neq U\subset \Omega}\frac{|\partial
U|}{\mathrm{vol}(U)} =h(\Omega).$$

The inequality $1 - \sqrt{1 - h^2(\Omega)}\leq \lambda_1(\Omega)$
follows from Lemma \ref{Lemma1}, Lemma \ref{Lemma2} and Remark
\ref{Remark} by taking $\lambda=\lambda_1(\Omega)$ and $g=u_1$ an
eigenfunction for $\lambda_1(\Omega)$.
\end{proof}
The next theorem is the main result of this section.
\begin{theo}[Dual Cheeger inequality]\label{Mtheo1}
We have
\begin{equation}\label{eq6}2\bar{h}(\Omega) + h(\Omega) \leq \lambda_\mathrm{max}(\Omega)\leq 1 + \sqrt{1 - (1- \bar{h}(\Omega))^2}.\end{equation}
\end{theo}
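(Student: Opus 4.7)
The theorem consists of two independent inequalities that I would prove by different techniques: the lower bound by a direct test function in the Rayleigh characterization of $\lambda_\mathrm{max}$, and the upper bound by a Cauchy--Schwarz argument in the spirit of the Cheeger inequality (Theorem \ref{CheegerInequality}).

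For the lower bound $2\bar{h}(\Omega) + h(\Omega) \leq \lambda_\mathrm{max}(\Omega)$, given any nonempty disjoint $V_1, V_2 \subset \Omega$, I would substitute the test function $f := \mathbf{1}_{V_1} - \mathbf{1}_{V_2}$ into the variational formula of Lemma \ref{BasicProperties}\,(vi). Using identity \eqref{volumeformula} for $V_1 \cup V_2$ (and $\mathrm{vol}(V_1)+\mathrm{vol}(V_2) = \mathrm{vol}(V_1\cup V_2)$ since $V_1 \cap V_2 = \emptyset$), the Rayleigh quotient becomes
$$\frac{4|E(V_1, V_2)| + |\partial(V_1 \cup V_2)|}{\mathrm{vol}(V_1) + \mathrm{vol}(V_2)} \;=\; 2\cdot \frac{2|E(V_1,V_2)|}{\mathrm{vol}(V_1)+\mathrm{vol}(V_2)} + \frac{|\partial(V_1\cup V_2)|}{\mathrm{vol}(V_1\cup V_2)}.$$
The second summand is at least $h(\Omega)$ by definition of the Cheeger constant, and this uniform lower bound over all $V_1, V_2$ allows me to pass to the supremum and conclude
$$\lambda_\mathrm{max}(\Omega) \;\geq\; 2\sup_{V_1, V_2}\frac{2|E(V_1,V_2)|}{\mathrm{vol}(V_1)+\mathrm{vol}(V_2)} + h(\Omega) \;=\; 2\bar{h}(\Omega) + h(\Omega).$$

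For the upper bound, let $u$ be an eigenfunction for $\lambda_\mathrm{max}(\Omega)$ and set $M := \sum_x \mu(x) u(x)^2$. Green's formula (Lemma \ref{GreenLemma}) together with the pointwise identity $(u(x)-u(y))^2 + (u(x)+u(y))^2 = 2u(x)^2+2u(y)^2$ gives $\sum_{e}\theta_{xy}(u(x)-u(y))^2 = \lambda_\mathrm{max}(\Omega)\,M$ and $\sum_{e}\theta_{xy}(u(x)+u(y))^2 = (2-\lambda_\mathrm{max}(\Omega))\,M$. I would then apply Cauchy--Schwarz (with absolute values) to the factorization $u^2(x)-u^2(y) = (u(x)-u(y))(u(x)+u(y))$ to obtain
$$\Bigl(\sum_e \theta_{xy}|u^2(x)-u^2(y)|\Bigr)^2 \;\leq\; \lambda_\mathrm{max}(\Omega)\bigl(2-\lambda_\mathrm{max}(\Omega)\bigr)\, M^2.$$
Combined with a lower bound of the form $\sum_e \theta_{xy}|u^2(x)-u^2(y)| \geq (1-\bar{h}(\Omega))\,M$, this gives $(1-\bar{h}(\Omega))^2 \leq \lambda_\mathrm{max}(2-\lambda_\mathrm{max}) = 1 - (\lambda_\mathrm{max}-1)^2$, which rearranges to the stated upper bound.

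The main obstacle is establishing the latter inequality $\sum_e \theta_{xy}|u^2(x)-u^2(y)| \geq (1-\bar{h}(\Omega))\,M$. My approach would be the layer cake formula applied to $|u|$: with $P_t := \{x \in \Omega : |u(x)| > t\}$, both sides reduce to $2\int_0^\infty t\,|E(P_t, P_t^c)|\,dt$ and $M = 2\int_0^\infty t\,\mathrm{vol}(P_t)\,dt$, so it suffices to show the pointwise bound $|E(P_t, P_t^c)| \geq (1-\bar{h}(\Omega))\,\mathrm{vol}(P_t)$ for each $t>0$. Writing $P_t = V_1^t \sqcup V_2^t$ with $V_1^t := P_t \cap P(u)$ and $V_2^t := P_t \cap N(u)$, the definition of $\bar{h}(\Omega)$ directly yields $2|E(V_1^t, V_2^t)| \leq \bar{h}(\Omega)\,\mathrm{vol}(P_t)$; the difficulty is that $|E(P_t, P_t)| = 2|E(V_1^t,V_2^t)| + |E(V_1^t,V_1^t)| + |E(V_2^t,V_2^t)|$ also contains internal edges within each sign component. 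I would attempt to close this gap either by exploiting the eigenvalue equation $\Delta_\Omega u = \lambda_\mathrm{max}(\Omega)\, u$ on $P(u)$ and $N(u)$ to constrain the sign structure of $u$, or by invoking the surgery theorem (Theorem \ref{surgery}) to replace $V_1^t, V_2^t$ with a more favorable bipartition of $P_t$ at a controlled loss.
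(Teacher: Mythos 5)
Your lower bound argument is exactly the paper's and is correct: the test function $\mathbf{1}_{V_1}-\mathbf{1}_{V_2}$ in the Rayleigh quotient gives $\lambda_\mathrm{max}(\Omega)\geq 2\cdot\frac{2|E(V_1,V_2)|}{\mathrm{vol}(V_1)+\mathrm{vol}(V_2)}+\frac{|\partial(V_1\cup V_2)|}{\mathrm{vol}(V_1\cup V_2)}$, the second term is at least $h(\Omega)$, and taking the supremum finishes.

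Your upper bound plan, however, has a genuine gap at precisely the spot you flag, and neither of your two proposed repairs closes it. You need $|E(P_t,P_t^c)|\geq(1-\bar h(\Omega))\,\mathrm{vol}(P_t)$, which after rearranging via $\mathrm{vol}(P_t)=|E(P_t,P_t)|+|\partial P_t|$ is equivalent to $|E(P_t,P_t)|\leq\bar h(\Omega)\,\mathrm{vol}(P_t)$. The dual Cheeger definition only controls the \emph{cross} edges $2|E(V_1^t,V_2^t)|\leq\bar h(\Omega)\,\mathrm{vol}(P_t)$, and there is no control whatsoever on the internal edges $|E(V_i^t,V_i^t)|$. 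As for your patches: (1) the eigenvalue equation cannot constrain the sign structure the way you want, because if $\Omega$ is not bipartite then \emph{every} choice of $u$ has sign-agreeing edges along each odd circuit, so the internal edge sets $E(V_i^t,V_i^t)$ are genuinely nonempty; (2) the surgery theorem gives a bipartition $U_1,U_2$ of $P_t$ with $|E(U_1,U_1)|+|E(U_2,U_2)|\leq 2|E(U_1,U_2)|$, hence only $|E(P_t,P_t)|\leq 4|E(U_1,U_2)|\leq 2\bar h(\Omega)\,\mathrm{vol}(P_t)$, which yields the weaker bound $\lambda_\mathrm{max}(\Omega)\leq 1+\sqrt{1-(1-2\bar h(\Omega))^2}$, not the claimed inequality. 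So neither patch recovers the theorem.

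The idea you are missing is the graph duplication (Desai--Rao) trick, which the paper uses in exactly this spot. From $\Gamma$ one builds a new graph $\Gamma'$ by duplicating every vertex of $P(f)\cup N(f)$; each sign-agreeing edge $(x,y)$ with $x,y\in P(f)$ (or both in $N(f)$) is replaced by two edges $(x,y')$ and $(y,x')$ to the zero-valued copies, with weights preserved, so that all vertex degrees in $V$ are unchanged. Setting $g=|f|$ on $P(f)\cup N(f)$ and $g=0$ elsewhere (including copies), the problematic internal edges of $\Gamma$ become edges from $P(g)$ to zero-valued vertices in $\Gamma'$; moreover for a sign-agreeing edge one has $(f(x)+f(y))^2\geq f(x)^2+f(y)^2$, so the quadratic form $\sum_e\theta_{xy}(\tilde f(x)+\tilde f(y))^2$ only decreases when passing to $\sum_{e'}\mu'_{xy}(\tilde g(x)-\tilde g(y))^2$, while the $\ell^2$ norm is unchanged. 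One can then apply Lemma \ref{Lemma1} to $g$ on $\Gamma'$: for any $W\subseteq P(g)$, $|E'(W,W^c)|$ equals the \emph{sum} of the internal edges of $P(W)$, the internal edges of $N(W)$, and $|\partial(P(W)\cup N(W))|$ back in $\Gamma$, which is $\mathrm{vol}(W)-2|E(P(W),N(W))|$. Hence $h'(\Omega,g)\geq 1-\bar h(\Omega)$ directly from the dual Cheeger definition, and $\xi_1(\Omega)=2-\lambda_\mathrm{max}(\Omega)\geq 1-\sqrt{1-(1-\bar h(\Omega))^2}$ follows. In short: the coarea/Cauchy--Schwarz machinery you invoke is the right engine, but it must be run on the duplicated graph, not on $\Gamma$ itself, because only there do the internal sign-agreeing edges get counted correctly against $\bar h(\Omega)$.
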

\begin{proof}Let $V_1,V_2$ be two disjoint nonempty subsets of
$\Omega$. We consider the following function:
$$f(x) = \left\{\begin{array}{cc} 1&\text{if } x\in V_1\\
-1&\text{if } x\in V_2 \\
0 & \text{else.}
\end{array}\right.$$
Using this function in the Rayleigh quotient (\ref{Rayleigh
LambdaN}) yields
\begin{eqnarray*}
\lambda_\mathrm{max}(\Omega) &\geq& \frac{\frac{1}{2}
\sum_{x,y}\mu_{xy}(\tilde{f}(x)-\tilde{f}(y))^2}{\sum_x \mu(x)
\tilde{f}^2(x)}\\&=& \frac{\sum_{x\in V_1}\sum_{y\in V_2}\mu_{xy}4
+ \sum_{x\in V_1\cup V_2}\sum_{y \in (V_1\cup
V_2)^c}\mu_{xy}}{\mathrm{vol}(V_1)+ \mathrm{vol}(V_2)}
\\&=& 2 \frac{2 |E(V_1,V_2)|}{\mathrm{vol}(V_1)+\mathrm{vol}(V_2)} + \frac{|\partial (V_1\cup V_2)|}{\mathrm{vol}(V_1\cup V_2)}
\\&\geq& 2 \frac{2 |E(V_1,V_2)|}{\mathrm{vol}(V_1)+\mathrm{vol}(V_2)} + \inf_{\substack{U\subset\Omega\\\sharp U\geq 2}} \frac{|\partial U|}{\mathrm{vol}(U)}\\
&=& 2 \frac{2 |E(V_1,V_2)|}{\mathrm{vol}(V_1)+\mathrm{vol}(V_2)} +
\inf_{U\subset\Omega} \frac{|\partial U|}{\mathrm{vol}(U)},
\end{eqnarray*} where we again used the observation that the
infimum is not achieved by a singleton.  Since this holds for all
disjoint nonempty subsets $V_1, V_2\subset \Omega$ we have
$$2\bar{h}(\Omega) + h(\Omega) \leq \lambda_\mathrm{max}(\Omega).$$

Now we prove the remaining inequality
$\lambda_\mathrm{max}(\Omega) \leq
1+\sqrt{1-(1-\overline{h}(\Omega))^2}$. When one studies the
largest eigenvalue of $\Delta_\Omega$ it is convenient to
introduce the operator $Q_\Omega=2I_\Omega -\Delta_\Omega =
I_\Omega + P_\Omega$ where $I_\Omega$ is the identity operator on
$\Omega$ and $P_\Omega$ the transition probability operator. If
$\lambda(\Omega)$ is an eigenvalue of $\Delta_\Omega$ and
corresponding eigenfunction $f$ then $f$ is also an eigenfunction
for $Q_\Omega$ and corresponding eigenvalue $\xi(\Omega) =
2-\lambda(\Omega)$. Thus, controlling the largest eigenvalue
$\lambda_\mathrm{max}(\Omega)$ of $\Delta_\Omega$ from above is
equivalent to controlling the smallest eigenvalue $\xi_1(\Omega)$
of $Q_\Omega$ from below. The smallest eigenvalue $\xi_1(\Omega)$
of $Q_\Omega$ is given by
\begin{eqnarray*}\xi_1(\Omega) &=& \inf_{\tilde{f}\neq 0} \frac{\frac{1}{2}\sum_{x,y\in V}\mu_{xy}(\tilde{f}(x) +
\tilde{f}(y))^2}{\sum_{x\in
V}\mu(x)\tilde{f}(x)^2}\\&=&\inf_{\tilde{f}\neq 0}
\frac{\sum_{e=(x,y)\in E}\theta_{xy}(\tilde{f}(x) +
\tilde{f}(y))^2}{\sum_{x\in V}\mu(x)\tilde{f}(x)^2}
\end{eqnarray*} where as above $\theta_{xy}=\mu_{xy}$ for all $x\neq y$ and
$\theta_{xx}=\frac{1}{2}\mu_{xx}$ for all $x\in V$. This simply
follows from the standard minmax characterization of the
eigenvalues
\[\xi_1(\Omega)=\inf_{f\neq 0}\frac{(Q_\Omega f,f)_\mu}{(f,f)_\mu},\] and the following formula for any
$f,g\in\ell^2(\Omega,\mu)$
\begin{eqnarray*}
(Q_\Omega f,g)_\mu&=& \sum_{x\in\Omega}\mu(x)Q_\Omega f(x)g(x)=
\sum_{x\in V}\sum_{y\in V}\mu_{xy}
(\tilde{f}(x)+\tilde{f}(y))\tilde{g}(x)\\&=& \sum_{y\in
V}\sum_{x\in V}\mu_{yx} (\tilde{f}(y)+\tilde{f}(x))\tilde{g}(y)
\end{eqnarray*} where we just exchanged $x$ and $y$. Adding the last two
lines and setting $\tilde{f}=\tilde{g}$ yields\[(Q_\Omega
f,f)_\mu=
\frac{1}{2}\sum_{x,y}\mu_{xy}(\tilde{f}(x)+\tilde{f}(y))^2.\]

In order to prove the lower bound for $\xi_1(\Omega)$ we will use
a technique developed in \cite{Desai94}. The idea is the
following: Construct a graph $\Gamma^\prime$ out of $\Gamma$ s.t.
the quantity $h^\prime(\Omega,g)$ defined  in (\ref{hg}) for the
new graph $\Gamma^\prime$ controls $\xi_1(\Omega)$ from below. In
a second step, we show that $h^\prime(\Omega, g)$ in turn can be
controlled by the quantity $1-\overline{h}(\Omega)$ of the
original graph. This then yields the desired estimate.

Let $f$ be an eigenfunction for the eigenvalue $\xi_1(\Omega)$ of
$Q_{\Omega}$ and define as above $P(f)= \{x\in \Omega : f(x)>0\}$
and $N(f)= \{x\in \Omega : f(x) <0\}$. Then the new graph
$\Gamma^\prime =(V^\prime, E^\prime)$ is constructed from
$\Gamma=(V,E)$ in the following way. Duplicate all vertices in
$P(f)\cup N(f)$ and denote the copies by a prime, e.g. if $x\in
P(f)$ then the copy of $x$ is denoted by $x^\prime$. The copies of
$P(f)$ and $N(f)$ are denoted by $P^\prime(f)$ and $N^\prime(f)$
respectively. The vertex set $V^\prime $ of $\Gamma^\prime$ is
given by $V^\prime= V\cup P^\prime(f)\cup N^\prime(f)$. Every edge
$(x,y)\in E(P(f),P(f))$ in $\Gamma$ is replaced by two edges
$(x,y^\prime)$ and $(y,x^\prime)$ in $\Gamma^\prime$ s.t.
$\mu_{xy} = \mu^\prime_{xy^\prime}=\mu^\prime_{yx^\prime}$.
Similarly, if the edge is a loop, then $e=(x,x)$ is replaced by
one edge $(x,x^\prime)$ s.t. $\mu_{xx}=\mu_{xx^\prime}$. The same
is done with edges in $E(N(f),N(f))$. All other edges remain
unchanged, i.e. if $(k,l)\in E\setminus (E(P(f),P(f))\cup
E(N(f),N(f)))$ then $(k,l)\in E^\prime$ and $\mu_{kl} =
\mu^\prime_{kl}$. It is important to note that this construction
does not change the degrees of the vertices in
$V=V^\prime\setminus (P^\prime(f)\cup N^\prime(f))$.

Consider the function $\tilde{g}:V^\prime \rightarrow \R$,
\[\tilde{g}(x)=\left\{\begin{array}{ccc} |f(x)| & \mbox{ if}&  x \in P(f)\cup N(f) \\0 &&
else. \end{array}\right.\] It can easily be checked that by
construction of $\Gamma^\prime$ we have
\begin{eqnarray*}\xi_1(\Omega) &=& \frac{\sum_{e=(x,y)\in E}\theta_{xy}(\tilde{f}(x) +
\tilde{f}(y))^2}{\sum_{x\in V}\mu(x)\tilde{f}(x)^2}\\&\geq&
\frac{\sum_{e^\prime=(x,y)\in
E^\prime}\mu^\prime_{xy}(\tilde{g}(x) -
\tilde{g}(y))^2}{\sum_{x\in
V^\prime}\mu^\prime(x)\tilde{g}(x)^2}\\&\geq& 1-
\sqrt{1-(h^\prime(\Omega, g))^2}
\end{eqnarray*} where we used  Lemma \ref{Lemma1} to obtain the last
inequality. For any non-empty subset $W\subseteq P(g) = P(f)\cup
N(f)$ we define $P(W)=W\cap P(f)$ and $N(W)=W\cap N(f)$. Let
$\emptyset\neq U\subseteq P(g)$ the subset that realizes the
infimum, i.e.
\begin{eqnarray*}h^\prime(\Omega, g) &=& \inf_{\emptyset\neq W\subseteq P(g)}
\frac{|E^\prime(W,W^c)|}{\mathrm{vol}(W)}=
\frac{|E^\prime(U,U^c)|}{\mathrm{vol}(U)}\\&=&
\frac{|E(P(U),P(U))|+ |E(N(U),N(U))| + |\partial(P(U)\cup N(U))|}
{\mathrm{vol}(P(U))+{\mathrm{vol}(N(U))}}\\&=& 1- \frac{2
|E(P(U),N(U))|}{\mathrm{vol}(P(U))+{\mathrm{vol}(N(U))}}\\&\geq&
1- \sup_{V_1,V_2\subset\Omega}\frac{2
|E(V_1,V_2)|}{\mathrm{vol}(V_1)+{\mathrm{vol}(V_2)}}
\\&=&1-\overline{h}(\Omega),
\end{eqnarray*}where we used that $P(U),N(U)\subset \Omega$.
 Thus we have
\[2-\lambda_\mathrm{max}(\Omega)=\xi_1(\Omega) \geq 1 - \sqrt{1-(1-\overline{h}(\Omega))^2}\] and so
\[\lambda_\mathrm{max}(\Omega)\leq 1+\sqrt{1-(1-\overline{h}(\Omega))^2}.\]
\end{proof}
\begin{rem}
\begin{itemize}\item[$(i)$]
Note that if $\Omega$ is bipartite, then the upper bound for
$\lambda_\mathrm{max}(\Omega)$ follows directly from Lemma
\ref{BasicProperties} $(iv)$, the Cheeger inequality, and Theorem
\ref{h and hbar}. \item[$(ii)$] For finite graphs it is also
possible to obtain a Cheeger and a dual Cheeger estimate for the
smallest nonzero and the largest eigenvalue, respectively (see for
instance \cite{Chung97} and \cite{Bauer12}). For finite graphs the
Cheeger together with the dual Cheeger estimate are very powerful
since many graph properties such as the diameter, the independence
number or the convergence of a random walk to its stationary
distribution are controlled by the maximal difference from the
smallest nonzero and the largest eigenvalue from $1$, see
\cite{Chung97, Lubotzky94}.

\end{itemize}
\end{rem}
\section{Eigenvalue comparison theorems}\label{Section5}
In this section we prove some eigenvalue comparison theorems for
the largest eigenvalue of the Dirichlet Laplace operator that do
not have analogues in Riemannian geometry. Similar eigenvalue
comparison theorems for the smallest eigenvalue were obtained by
Urakawa \cite{Urakawa99}. Urakawa's results are discrete versions
of Cheng's eigenvalue comparison theorem for Riemannian manifolds
\cite{Cheng75a, Cheng75b}. In the literature (see e.g.
\cite{Urakawa99,Friedman93}), the mostly used comparison models
for graphs are homogeneous trees, since regular trees are
considered to be discrete analogues of simply connected space
forms. Here we propose a novel comparison model for graphs, the
weighted half-line  $R_l\ (l\in \mathds{R},\ l\geq 2)$ which is
defined by $R_l:=(V,E),$ $V=\mathds{N}\cup\{0\}, E=\{(i,i+1),i\geq
0, i\in V\}$ and the edge weights are defined as
$\mu_{(i,i+1)}=(l-1)^i,$ see Figure \ref{Fig.5}. More precisely,
we want to compare the largest Dirichlet eigenvalue of a ball in
any graph with the largest Dirichlet eigenvalue of a ball with the
same radius in $R_l$ (in contrast to the existing literature,
where homogenous trees $T_d$ were used instead). The advantage of
the weighted half-line $R_l$ is that we get better estimates in
the comparison theorems since we can overcome the restriction that
$d$ must be an integer if we use the homogenous tree $T_d$ as a
comparison model. In particular, the results using $R_l$ or $T_d$
as a comparison model coincide if $l=d$ are integers.

\begin{figure}\begin{center}
\includegraphics[width =
 10cm]{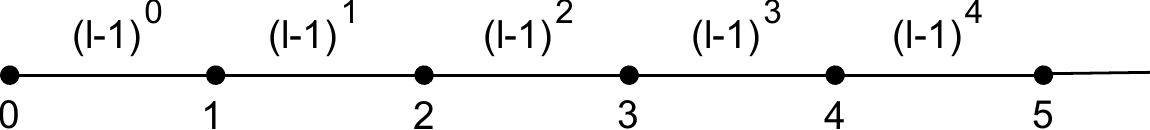}\caption{\label{Fig.5}The weighted half-line $R_l$}
\end{center}
\end{figure}
The next lemma shows that the eigenvalues and eigenfunctions of
the Dirichlet operator of $R_l$ behave like those on infinite
$d$-homogenous trees $T_d$, since a similar result was obtained
for $T_d$ by Friedmann \cite{Friedman93} and Urakawa
\cite{Urakawa99}.

\begin{lemma}\label{Dirichlet Eigenvalues of Rl} Let
$V_l(r)=\{x\in R_l: d(x,0)\leq r\}$ and let $l\geq 2$. The first
eigenvalue of the Dirichlet Laplace operator on $V_l(r)$ in $R_l$
is
$$\lambda_1(V_l(r))=1-\frac{2\sqrt{l-1}}{l}\cos\theta_{l,r},$$ where
$\theta_{l,r}$ is the smallest positive solution $\theta$ of
$g_{r}(\theta)=\frac{l}{2(l-1)},$ where
$$g_r(\theta)=\frac{\sin((r+1)\theta)\cos\theta}{\sin(r\theta)}.$$ We have $\theta_{2,r}=\frac{\pi}{2(r+1)}$ and for $l>2$ we have $$\max\{\frac{\pi}{r+\eta},\frac{\pi}{2(r+1)}\}\leq\theta_{l,r}\leq\frac{\pi}{r+1},$$ where $\eta=\eta(l)=\frac{2(l-1)}{l-2}.$ The first eigenfunction $f_1$ is
$$f_1(x)=(l-1)^{-\frac{r(x)}{2}}\sin(r+1-r(x))\theta_{l,r}$$ which is positive and decreasing (as a function of one variable) on $V_l(r)$ where $r(x)=d(x,0).$
\end{lemma}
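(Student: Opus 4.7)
The plan is to reduce the Dirichlet eigenvalue problem on $V_l(r)\subset R_l$ to a three-term linear recurrence and solve it with an obvious trigonometric ansatz. First I would write down the Dirichlet Laplace operator on $V_l(r)=\{0,1,\ldots,r\}$ using the edge weights $\mu_{(i,i+1)}=(l-1)^i$. A direct computation gives $\mu(0)=1$ and $\mu(i)=l(l-1)^{i-1}$ for $1\le i\le r$, so for an interior vertex $1\le i\le r$ one has
\[
\Delta f(i)=f(i)-\tfrac{1}{l}f(i-1)-\tfrac{l-1}{l}f(i+1),
\]
while at the endpoint $i=0$ one has $\Delta f(0)=f(0)-f(1)$, and the Dirichlet condition is $f(r+1)=0$.

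Next I would substitute the ansatz $f(i)=(l-1)^{-i/2}\sin((r+1-i)\theta)$ into the interior equation. Using the product-to-sum identity $\sin((r+2-i)\theta)+\sin((r-i)\theta)=2\sin((r+1-i)\theta)\cos\theta$ together with $(l-1)/l\cdot (l-1)^{-1/2}=\sqrt{l-1}/l$, the three-term recurrence collapses to
\[
\Delta f(i)=\left(1-\tfrac{2\sqrt{l-1}}{l}\cos\theta\right)f(i)
\]
for every $\theta$ and every interior $i$, and the boundary condition $f(r+1)=0$ holds automatically. The only remaining constraint comes from $i=0$: equating $\Delta f(0)=f(0)-f(1)$ with $\lambda f(0)$ gives, after one manipulation, the characteristic equation
\[
\frac{\sin((r+1)\theta)\cos\theta}{\sin(r\theta)}=\frac{l}{2(l-1)},
\]
i.e.\ $g_r(\theta)=l/(2(l-1))$. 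To identify \emph{which} root yields $\lambda_1$, I would use that $\lambda=1-\tfrac{2\sqrt{l-1}}{l}\cos\theta$ is strictly increasing in $\theta$ on $(0,\pi/(r+1))$ while $f_1>0$ on $V_l(r)$ by Lemma \ref{BasicProperties}$(iii)$, and the ansatz is strictly positive precisely when $(r+1)\theta<\pi$; this forces $\theta_{l,r}$ to be the smallest positive root of the characteristic equation, and the displayed formula for $f_1$ then follows by substitution.

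Finally I would establish the bounds on $\theta_{l,r}$. The upper bound $\theta_{l,r}\le\pi/(r+1)$ was already explained. For $l=2$ the equation $g_r(\theta)=1$ simplifies, via
\[
\sin((r+1)\theta)\cos\theta-\sin(r\theta)=\sin\theta\cos((r+1)\theta),
\]
to $\sin\theta\cos((r+1)\theta)=0$, whose smallest positive root is $\pi/(2(r+1))$. For $l>2$ one has $l/(2(l-1))<1=g_r(\pi/(2(r+1)))$; combined with the monotonicity of $g_r$ on $(0,\pi/(r+1))$, this gives $\theta_{l,r}\ge\pi/(2(r+1))$. The sharper bound $\theta_{l,r}\ge\pi/(r+\eta)$ is then obtained by evaluating $g_r$ at $\theta=\pi/(r+\eta)$ and checking, by a trigonometric estimate tuned to the precise value $\eta=2(l-1)/(l-2)$, that $g_r(\pi/(r+\eta))\ge l/(2(l-1))$. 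I expect this last verification, together with carefully establishing that $g_r$ is monotone on $(0,\pi/(r+1))$, to be the main obstacle; the rest of the argument is a direct substitution calculation.
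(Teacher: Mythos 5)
The paper itself omits the proof, referring to Urakawa (Lemma 3.1) and Friedman (Proposition 3.2) for the analogous calculation on $d$-regular trees; your reduction to a three-term recurrence solved by the ansatz $f(i)=(l-1)^{-i/2}\sin((r+1-i)\theta)$ is exactly the standard route, and your computations of $\mu(i)$, of $\Delta f$ at interior vertices via the sum-to-product identity, and of the boundary condition at $i=0$ producing the characteristic equation $g_r(\theta)=l/(2(l-1))$ are all correct, as is the $l=2$ simplification.

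Two remarks. First, you do not actually need the monotonicity of $g_r$ on $(0,\pi/(r+1))$, which you flag as the main obstacle. By Lemma \ref{BasicProperties}$(iii)$ the positive Dirichlet eigenfunction is unique, so $g_r-l/(2(l-1))$ has a unique zero in $(0,\pi/(r+1))$; since $g_r(\theta)\to (r+1)/r>l/(2(l-1))$ as $\theta\to 0^+$, that function is positive on all of $(0,\theta_{l,r})$, and hence to prove $\theta_{l,r}\ge a$ it suffices to check $g_r(a)\ge l/(2(l-1))$ at the single points $a=\pi/(2(r+1))$ and $a=\pi/(r+\eta)$. The latter verification is the only real computation you leave undone. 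Second, you do not address the claim that $f_1$ is decreasing, and this needs a short separate argument. Writing the claim as $\sqrt{l-1}\,\sin((j+1)\theta)>\sin(j\theta)$ for $1\le j\le r$ (with $j=r+1-i$), the base case $j=r$ follows from the characteristic equation, which gives $\sqrt{l-1}\,\sin((r+1)\theta)=\tfrac{l}{2\sqrt{l-1}\cos\theta}\sin(r\theta)>\sin(r\theta)$ because $\cos\theta<1\le\tfrac{l}{2\sqrt{l-1}}$ by AM--GM; downward induction using $\sin((j+2)\theta)+\sin(j\theta)=2\sin((j+1)\theta)\cos\theta$ and the same bound $2\cos\theta\le\sqrt{l-1}+\tfrac{1}{\sqrt{l-1}}$ then closes the argument.
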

\begin{proof} We omit the proof of this lemma since it is
straightforward and similar to the proof in the case of
$d$-regular trees $T_d$, see \cite{Urakawa99} (Lemma $3.1$) and
\cite{Friedman93} (Proposition 3.2).
\end{proof}

The following corollary is a straightforward consequence of the
previous lemma by the bipartiteness of $R_l,$ see Theorem
\ref{Lembi}.
\begin{lemma}\label{Estimates Eigenvalues Rl}
For $l>2,$
\begin{eqnarray}\label{estim3}
1-\frac{2\sqrt{l-1}}{l}\cos\frac{\pi}{r+\eta}&\leq& \lambda_1(V_l(r))\ \ \ \leq 1-\frac{2\sqrt{l-1}}{l}\cos\frac{\pi}{r+1},\\
1+\frac{2\sqrt{l-1}}{l}\cos\frac{\pi}{r+1}&\leq&
\lambda_{\mathrm{max}}(V_l(r))\leq
1+\frac{2\sqrt{l-1}}{l}\cos\frac{\pi}{r+\eta} ,
\end{eqnarray} where $\eta=\frac{2(l-1)}{l-2}$, and for $l=2$
$$\lambda_1(V_l(r))= 1 - \cos\frac{\pi}{2(r +1)} \text{ and }
\lambda_{\mathrm{max}}(V_l(r))= 1 + \cos\frac{\pi}{2(r +1)}.$$
\end{lemma}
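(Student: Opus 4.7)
The plan is to read off both bounds directly from the previous lemma by combining the monotonicity of $\cos$ on $[0,\pi/2]$ with the bipartiteness of $R_l$ and Theorem \ref{Lembi}. Nothing deeper should be needed, since the previous lemma already gives an explicit formula for $\lambda_1(V_l(r))$ together with two-sided bounds on the relevant angle $\theta_{l,r}$.

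First I would treat $\lambda_1$. Lemma \ref{Dirichlet Eigenvalues of Rl} gives
\[
\lambda_1(V_l(r)) = 1 - \frac{2\sqrt{l-1}}{l}\cos\theta_{l,r},
\]
together with the sandwich
\[
\frac{\pi}{r+\eta} \le \theta_{l,r} \le \frac{\pi}{r+1}
\]
for $l>2$ (I would simply discard the other entry in the $\max$, since $\theta_{l,r}\ge\pi/(r+\eta)$ alone suffices). All three quantities lie in $(0,\pi/2]$ (for the range $r\ge 1$ forced by $\sharp V_l(r)\ge 2$), where $\cos$ is strictly decreasing, so
\[
\cos\tfrac{\pi}{r+1} \;\le\; \cos\theta_{l,r} \;\le\; \cos\tfrac{\pi}{r+\eta},
\]
and substituting back into the formula for $\lambda_1$ gives the first displayed line of the lemma (the direction of the inequalities flips because of the minus sign).

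For $\lambda_\mathrm{max}$ I would appeal to bipartiteness. The graph $R_l$ is a weighted half-line, hence its induced subgraph on any ball $V_l(r)$ is a weighted path, which is bipartite (color by parity of the distance to $0$). By Theorem \ref{Lembi} this forces $\lambda_1(V_l(r)) + \lambda_\mathrm{max}(V_l(r)) = 2$, so
\[
\lambda_\mathrm{max}(V_l(r)) = 1 + \frac{2\sqrt{l-1}}{l}\cos\theta_{l,r},
\]
and the same sandwich on $\cos\theta_{l,r}$ as above yields the second displayed line (now with aligned inequality directions, because of the plus sign). Finally, the case $l=2$ is immediate: the previous lemma gives $\theta_{2,r}=\pi/(2(r+1))$ exactly and $2\sqrt{l-1}/l=1$, so $\lambda_1(V_l(r))=1-\cos\bigl(\pi/(2(r+1))\bigr)$, and bipartiteness again gives the symmetric expression for $\lambda_\mathrm{max}$.

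There is no real obstacle here; the only point that deserves a brief check is that $\theta_{l,r}\in(0,\pi/2)$ on the relevant range of $r$, so that the monotonicity of $\cos$ may be applied without care about endpoints. This is visible from $\theta_{l,r}\le \pi/(r+1)\le \pi/2$ for $r\ge 1$, and the case $r=0$ is excluded because $\sharp V_l(0)=1$ is the trivial one-point case ruled out in Section \ref{Preliminaries}.
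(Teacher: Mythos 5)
Your proposal is correct and takes essentially the same approach the paper intends: the paper explicitly presents this lemma as "a straightforward consequence of the previous lemma by the bipartiteness of $R_l$, see Theorem \ref{Lembi}," which is precisely the combination of the explicit formula and sandwich on $\theta_{l,r}$ from Lemma \ref{Dirichlet Eigenvalues of Rl}, monotonicity of $\cos$ on $(0,\pi/2]$, and the reflection $\lambda_{\mathrm{max}}=2-\lambda_1$ for bipartite $\Omega$ that you use.
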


Now we study the eigenfunction $f_\mathrm{max}$ for the largest
Dirichlet eigenvalue of a ball in $R_l$.
\begin{lemma}\label{Eigenfunction tree} The eigenfunction
$f_\mathrm{max}$ for the largest Dirichlet eigenvalue
$\lambda_\mathrm{max}(V_l(r))$ satisfies:
\begin{enumerate}
\item[$(i)$] $|f_\mathrm{max}(s)|$ is decreasing in $s$, i.e.
$|f_\mathrm{max}(s)|> |f_\mathrm{max}(s+1)|$  $\forall$ $0\leq
s=r(x)\leq r.$ \item[$(ii)$]
$f_\mathrm{max}(s)f_\mathrm{max}(s+1)<0$ for all $0\leq s< r-1.$
\end{enumerate}
\end{lemma}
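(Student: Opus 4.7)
The plan is to reduce both claims to properties of the first Dirichlet eigenfunction $f_1$ on $V_l(r)$, exploiting the bipartite structure of the ball.

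First, observe that $V_l(r) = \{0, 1, \ldots, r\}$ is a path, hence bipartite, with bipartition $U = \{x : r(x) \text{ even}\}$ and $\overline{U} = \{x : r(x) \text{ odd}\}$. By Corollary \ref{Largest Eigenvalue is simple}, $\lambda_{\mathrm{max}}(V_l(r))$ is a simple eigenvalue. From the proof of Lemma \ref{symmetric eigenvalues}, specifically formula \eqref{explicit expression for fmax}, an eigenfunction for $\lambda_{\mathrm{max}}(V_l(r))$ is given, up to a scalar, by
$$f_{\mathrm{max}}(x) = (-1)^{r(x)} f_1(x).$$

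Given this closed form for $f_{\mathrm{max}}$, both parts of the lemma become statements about $f_1$, which we read off from Lemma \ref{Dirichlet Eigenvalues of Rl}. For (i), since $f_1>0$ on $V_l(r)$, we have $|f_{\mathrm{max}}(x)| = f_1(x)$, so the strict decrease of $|f_{\mathrm{max}}|$ is precisely the strict decrease of $f_1$ in $r(x)$. For (ii), positivity of $f_1$ combined with the sign alternation of $(-1)^{r(x)}$ yields
$$f_{\mathrm{max}}(s) f_{\mathrm{max}}(s+1) \;=\; (-1)^s (-1)^{s+1} f_1(s) f_1(s+1) \;=\; -f_1(s)f_1(s+1) \;<\; 0,$$
valid for any $s$ with both $s$ and $s+1$ in $V_l(r)$.

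The only point that might require extra care is strict (as opposed to weak) monotonicity of $f_1$. I would verify this directly from the explicit formula $f_1(s) = (l-1)^{-s/2}\sin((r+1-s)\theta_{l,r})$ from Lemma \ref{Dirichlet Eigenvalues of Rl}. For $l>2$ the prefactor $(l-1)^{-s/2}$ is already strictly decreasing, and for $l=2$ we have $\theta_{2,r} = \pi/(2(r+1))$, so the arguments $(r+1-s)\pi/(2(r+1))$ lie in $(0,\pi/2]$, where sine is strictly increasing; hence $\sin((r+1-s)\theta_{l,r})$ is strictly decreasing in $s$. Combining the two effects gives strict monotonicity in all cases. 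The main (minor) obstacle is thus just verifying this strictness; the rest is a direct consequence of bipartiteness and the known behavior of $f_1$.
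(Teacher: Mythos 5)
Your reduction is exactly the paper's: exploit the bipartite structure of the path $V_l(r)$, write $f_\mathrm{max}(x)=(-1)^{r(x)}f_1(x)$ via Lemma \ref{symmetric eigenvalues}, and then invoke the strict positivity and strict monotonicity of $f_1$ from Lemma \ref{Dirichlet Eigenvalues of Rl}. The extra verification of strict monotonicity that you sketch, however, does not close for $l>2$. A strictly decreasing prefactor times a positive factor need not be strictly decreasing, and here the sine factor $\sin((r+1-s)\theta_{l,r})$ is in general \emph{not} monotone in $s$: since $\theta_{l,r}$ can be close to $\pi/(r+1)$, the argument $(r+1-s)\theta_{l,r}$ sweeps an interval straddling $\pi/2$, so $\sin((r+1-s)\theta_{l,r})$ can first increase in $s$ before eventually decreasing (for example, $l=3$, $r=2$ gives $\theta_{3,2}\approx 0.66$, and $\sin(3\theta_{3,2})\approx 0.92 < \sin(2\theta_{3,2})\approx 0.97$). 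What one actually needs is $\sin((r+1-s)\theta_{l,r})>(l-1)^{-1/2}\sin((r-s)\theta_{l,r})$ for every relevant $s$, i.e.\ that the exponential decay of $(l-1)^{-s/2}$ dominates the possible growth of the sine; that is precisely the (omitted) computation behind the assertion in Lemma \ref{Dirichlet Eigenvalues of Rl} that $f_1$ is decreasing. Since your core argument only relies on the cited monotonicity of $f_1$, the proof of the lemma still goes through, but the $l>2$ shortcut you offer as a ``direct check'' is not a proof of strictness as stated.
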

\begin{proof} Since the weighted half line $R_l$ is
bipartite, we know from the proof of Lemma \ref{symmetric
eigenvalues} that the eigenfunction for the largest eigenvalue
$f_\mathrm{max}$ is given by
\begin{equation}\label{3} f_\mathrm{max}(x) = \left\{\begin{array}{cc}f_1(x) & \text{ if } x\in U\\
-f_1(x) & \text{ if } x\in \overline{U}
\end{array}\right.
\end{equation} where $U\cup\overline{U}=V_l(r)$ is a bipartition of $V_l(r)$
and $f_1$ is the first eigenfunction. Since by Lemma
\ref{Dirichlet Eigenvalues of Rl} the first eigenfunction $f_1$ is
positive and decreasing both statements follow immediately.
\end{proof}

In order to prove the eigenvalue comparison theorem we also need
the following Barta-type theorem for the largest Dirichlet
eigenvalue.
\begin{lemma}[Barta-type theorem for $\lambda_\mathrm{max}(\Omega)$]\label{Barta}
Let $f\in\ell^2(\Omega,\mu)$ be any function s.t. $f(x)\neq 0$ for
all $x\in \Omega$. Then we have $$\lambda_\mathrm{max}(\Omega)\geq
\inf_{x\in \Omega}\frac{\Delta_\Omega f(x)}{f(x)}.$$
\end{lemma}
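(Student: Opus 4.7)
The plan is to use the variational characterization of $\lambda_{\mathrm{max}}(\Omega)$ as a supremum of Rayleigh quotients (Lemma \ref{BasicProperties} $(vi)$) and plug in the test function $f$ itself. The estimate
$$\lambda_{\mathrm{max}}(\Omega) \;\geq\; \frac{(\Delta_\Omega f, f)_\mu}{(f,f)_\mu}$$
is immediate from the Rayleigh characterization, so everything reduces to showing
$$(\Delta_\Omega f, f)_\mu \;\geq\; c \,(f,f)_\mu, \qquad \text{where } c:=\inf_{x\in\Omega}\frac{\Delta_\Omega f(x)}{f(x)}.$$

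The key observation is a pointwise sign analysis. By definition of $c$, for every $x\in\Omega$ we have $\Delta_\Omega f(x) - c f(x)$ and $f(x)$ sharing the same sign: if $f(x)>0$ then $\Delta_\Omega f(x)/f(x)\geq c$ gives $\Delta_\Omega f(x) - cf(x) \geq 0$, and if $f(x)<0$ then the same inequality $\Delta_\Omega f(x)/f(x)\geq c$ (after multiplying by the negative quantity $f(x)$) gives $\Delta_\Omega f(x) - cf(x) \leq 0$. In both cases the product satisfies
$$f(x)\bigl(\Delta_\Omega f(x) - c f(x)\bigr) \;\geq\; 0.$$
Summing against the measure $\mu$ over $\Omega$ yields $(\Delta_\Omega f, f)_\mu - c(f,f)_\mu \geq 0$, as required. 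Combined with the Rayleigh bound this gives $\lambda_{\mathrm{max}}(\Omega)\geq c$, which is the claim.

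There is really no obstacle here; the hypothesis $f(x)\neq 0$ for all $x\in\Omega$ is exactly what is needed so that $\Delta_\Omega f(x)/f(x)$ is well defined at every point and the pointwise sign dichotomy above is exhaustive. Note the pleasing symmetry with the classical Barta bound $\lambda_1(\Omega)\leq \sup_x \Delta_\Omega f(x)/f(x)$ (for $f>0$): the same Rayleigh-quotient test function argument produces an upper bound for $\lambda_1$ from a positive test function and a lower bound for $\lambda_{\mathrm{max}}$ from a nowhere-zero test function.
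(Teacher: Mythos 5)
Your proposal is correct and takes essentially the same route as the paper: both plug $f$ into the Rayleigh quotient for $\lambda_{\mathrm{max}}(\Omega)$ and reduce to the pointwise inequality $\mu(x)\,\Delta_\Omega f(x)\,f(x)\geq c\,\mu(x)\,f(x)^2$. The paper obtains this in one step by writing $\Delta_\Omega f(x)\,f(x)=\tfrac{\Delta_\Omega f(x)}{f(x)}\,f(x)^2$ and multiplying the bound $\tfrac{\Delta_\Omega f(x)}{f(x)}\geq c$ by $f(x)^2>0$, while you reach the same inequality via a sign case analysis; the two are equivalent.
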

\begin{proof}For all $f\in \ell^2(\Omega,\mu)$ with $f(x)\neq 0$ for all $x\in \Omega$ we
have
\begin{eqnarray*}
(\Delta_\Omega f, f)_\mu &=& \sum_{x\in \Omega} \mu(x)
\Delta_\Omega f(x) f(x)\\ &=& \sum_{x\in \Omega} \mu(x)
\frac{\Delta_\Omega f(x)}{f(x)} f^2(x)\\&\geq&  \sum_{x\in \Omega}
\mu(x) \inf_{x\in \Omega}\frac{\Delta_\Omega f(x)}{f(x)}
f^2(x)\\&=& \inf_{x\in \Omega}\frac{\Delta_\Omega f(x)}{f(x)}
(f,f)_\mu.
\end{eqnarray*}Now
the lemma follows since for all such functions $f$
$$\lambda_\mathrm{max}(\Omega) = \sup_{g\in \ell^2(\Omega,\mu)}
\frac{(\Delta_\Omega g,g)_\mu}{(g,g)_\mu}\geq \frac{(\Delta_\Omega
f,f)_\mu}{(f,f)_\mu}\geq \inf_{x\in \Omega}\frac{\Delta_\Omega
f(x)}{f(x)}.$$
\end{proof}

We introduce the following notation:
\begin{defi}For a graph $\Gamma=(V,E)$ we  define
\begin{eqnarray*}V_+(x) &=& \{y\in V: y\sim x, d(x_0,y) = d(x_0,x) +1\}
\\V_-(x) &=& \{y\in V: y\sim x, d(x_0,y) = d(x_0,x) -1\}\\
V_0(x) &=& \{y\in V: y\sim x, d(x_0,y) = d(x_0,x)\}
\end{eqnarray*} for some
fixed $x_0\in V$. Moreover, we define \begin{eqnarray*} \mu_+(x)
:= \sum_{y\in V_+(x)}\mu_{xy},\quad \mu_-(x) := \sum_{y\in
V_-(x)}\mu_{xy}\quad \text{ and }\ \mu_0(x) := \sum_{y\in
V_0(x)}\mu_{xy}.
\end{eqnarray*}
\end{defi}
Clearly, $\mu(x) = \mu_+(x) + \mu_-(x)+ \mu_0(x)$.\\
For the sequel, let us observe that for the weighted half-line
  $R_l$ and reference point $x_0 =0$, at every vertex, $\frac{\mu(x)}{\mu_-(x)}=l$ and $\mu_0(x)=0$.\\

\begin{theo}[Eigenvalue comparison theorem I]\label{ecpt1} Let
$B(x_0,r)$ be a ball in $\Gamma$ centered at $x_0$ with radius $r$
and $2 \leq l:= \sup_{x\in B(x_0,r)}\frac{\mu(x)}{\mu_-(x)}<
\infty$. Then the largest Dirichlet eigenvalue
$\lambda_\mathrm{max}(B(x_0,r))$ satisfies
$$\lambda_\mathrm{max}(B(x_0,r))\geq \lambda_\mathrm{max}(V_l(r))- 2\kappa(x_0,r),$$
where $V_l(r)$ is a ball of radius $r$ centered at $0$ in the
weighted half-line $R_l$ and $\kappa(x_0,r):=\sup_{x\in
B(x_0,r)}\frac{\mu_0(x)}{\mu(x)}$.
\end{theo}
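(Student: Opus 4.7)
The plan is to apply the Barta-type bound of Lemma \ref{Barta} to a pulled-back eigenfunction from the comparison space. Let $f_\mathrm{max}$ denote the eigenfunction on $V_l(r)\subset R_l$ corresponding to $\lambda^*:=\lambda_\mathrm{max}(V_l(r))$. By Lemma \ref{Eigenfunction tree}, $f_\mathrm{max}$ is nowhere zero on $V_l(r)$, so the pull-back $f(x):=f_\mathrm{max}(d(x_0,x))$ on $\Omega:=B(x_0,r)$ is admissible for Lemma \ref{Barta}, which reduces the theorem to the pointwise inequality
$$\frac{\Delta_\Omega f(x)}{f(x)}\ \geq\ \lambda^*-2\kappa(x_0,r),\qquad x\in\Omega.$$

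For an interior vertex $x$ with $s:=d(x_0,x)\in(0,r)$, splitting its neighbors into $V_\pm(x),V_0(x)$ and writing $\alpha:=\mu_+(x)/\mu(x)$, $\beta:=\mu_-(x)/\mu(x)$, $\gamma:=\mu_0(x)/\mu(x)$ (so $\alpha+\beta+\gamma=1$, with $\beta\geq 1/l$ by the hypothesis on $l$ and $\gamma\leq\kappa(x_0,r)$), a direct computation gives
$$\Delta_\Omega f(x)=(1-\gamma)f_\mathrm{max}(s)-\alpha f_\mathrm{max}(s+1)-\beta f_\mathrm{max}(s-1).$$
Subtracting $\lambda^*f(x)$, substituting the half-line eigenvalue identity $\lambda^* f_\mathrm{max}(s)=f_\mathrm{max}(s)-\tfrac{l-1}{l}f_\mathrm{max}(s+1)-\tfrac{1}{l}f_\mathrm{max}(s-1)$, and introducing $\epsilon:=\beta-1/l\geq 0$ together with the consequent identity $\tfrac{l-1}{l}-\alpha=\epsilon+\gamma$, I would obtain the compact key identity
$$\frac{\Delta_\Omega f(x)}{f(x)}-\lambda^*=-\gamma(1-A)+\epsilon(A-B),$$
where $A:=f_\mathrm{max}(s+1)/f_\mathrm{max}(s)$ and $B:=f_\mathrm{max}(s-1)/f_\mathrm{max}(s)$.

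Lemma \ref{Eigenfunction tree} then supplies precisely the sign information needed. Sign-alternation plus $|f_\mathrm{max}(s+1)|<|f_\mathrm{max}(s)|$ gives $A\in(-1,0)$ and hence $1-A\in(1,2)$; together with $|f_\mathrm{max}(s-1)|>|f_\mathrm{max}(s+1)|$ (and the fact that $f_\mathrm{max}(s\pm 1)$ share the sign opposite to $f_\mathrm{max}(s)$), the same input forces $A-B>0$. Therefore $\epsilon(A-B)\geq 0$ and $-\gamma(1-A)\geq -2\gamma\geq -2\kappa(x_0,r)$, yielding the pointwise bound on interior vertices. The boundary sphere $s=r$ is actually easier, since the $V_+$-term drops out of both $\Delta_\Omega f$ and the Dirichlet equation at the endpoint of $V_l(r)$, so the analogous identity collapses to $-\gamma+(1/l-\beta)B\geq -\gamma$, using $B<0$ and $\beta\geq 1/l$. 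At the root $x_0$, where $\beta=0$, the half-line equation gives $f_\mathrm{max}(1)=(1-\lambda^*)f_\mathrm{max}(0)$, and a short computation produces $\Delta_\Omega f(x_0)/f(x_0)=(1-\gamma_0)\lambda^*\geq \lambda^*-2\kappa(x_0,r)$ using $\lambda^*\leq 2$. Combining the three cases and invoking Lemma \ref{Barta} gives the theorem.

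The main obstacle I foresee is not the Barta step but the bookkeeping behind the compact identity: the two correction terms $(\tfrac{l-1}{l}-\alpha)A$ and $(1/l-\beta)B$ have opposite signs and are not individually controllable, so a crude term-by-term estimate wastes the factor $2$. The recombination via $\alpha+\beta+\gamma=1$ is essential, because only then does the sign-alternation of $f_\mathrm{max}$ absorb $\epsilon(A-B)$ into a non-negative quantity, leaving only $-\gamma(1-A)$ to be controlled by the universal bound $1-A<2$ coming from the strict decrease of $|f_\mathrm{max}|$.
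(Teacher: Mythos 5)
Your proof is correct and follows the paper's own approach almost exactly: pull back the comparison eigenfunction from $V_l(r)$, invoke the Barta-type bound of Lemma \ref{Barta}, and use the sign-alternation and modulus-decrease of $f_\mathrm{max}$ (Lemma \ref{Eigenfunction tree}) to control the pointwise quotient. Your compact identity $-\gamma(1-A)+\epsilon(A-B)$ is exactly the negative of the paper's quantity $\bigl(\tfrac{1}{l}-\beta\bigr)(1-B)+\bigl(\tfrac{l-1}{l}-\alpha\bigr)(1-A)$ after the substitution $\epsilon=\beta-1/l$, and your separate treatment of $s=r$ and $s=0$ merely spells out the Dirichlet convention $f_\mathrm{max}(r+1)=0$ and the root equation that the paper handles implicitly (in fact the paper's claim that the difference at $x_0$ vanishes is slightly imprecise when $\mu_{x_0x_0}>0$, and your bound $(1-\gamma_0)\lambda^*\geq\lambda^*-2\kappa$ quietly repairs that).
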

Note that if $x_0$ is not contained in a circuit of odd length
$\leq 2r+1$ then $\kappa(x_0,r)=0$. In particular, if the ball
$B(x_0,r)$, considered as an induced subgraph in $\Gamma$, is
bipartite, then $\kappa(x_0,r)=0$. In particular we have the
following corollary:
\begin{coro}Let $B(x_0,r)$ be a ball on $\Gamma$ that does not contain any
cycle of odd length, then under the same assumptions as in the
last theorem we have
$$\lambda_\mathrm{max}(B(x_0,r))\geq \lambda_\mathrm{max}(V_l(r)).$$
\end{coro}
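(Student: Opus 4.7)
The plan is to reduce the corollary directly to Theorem \ref{ecpt1} by verifying that the absence of odd cycles in $B(x_0,r)$ forces the error term $\kappa(x_0,r)$ to vanish. Recall that $\kappa(x_0,r)=\sup_{x\in B(x_0,r)}\frac{\mu_0(x)}{\mu(x)}$, where $\mu_0(x)=\sum_{y\in V_0(x)}\mu_{xy}$ and $V_0(x)$ consists of neighbors of $x$ at the same distance from $x_0$ as $x$ itself. Since $\mu(x)>0$, it suffices to show that $\mu_0(x)=0$ for every $x\in B(x_0,r)$.

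I would argue by contraposition. Suppose $\mu_0(x)>0$ for some $x\in B(x_0,r)$. Then there is a vertex $y\in V_0(x)$ with $\mu_{xy}>0$, i.e.\ an edge joining $x$ and $y$ such that $d(x_0,x)=d(x_0,y)=:k\leq r$. Choose shortest paths $P_x$ from $x_0$ to $x$ and $P_y$ from $x_0$ to $y$; each has length $k$ and, by minimality, only visits vertices at distance $\le k$ from $x_0$, so both lie in $B(x_0,r)$. Concatenating $P_x$, the edge $(x,y)$, and $P_y$ traversed in reverse yields a closed walk based at $x_0$ of length $2k+1$, which is odd, and which stays inside $B(x_0,r)$.

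The key remaining step is the classical combinatorial fact that any closed walk of odd length in a graph contains a cycle of odd length (as a sub-walk). This is proved by induction on the length of the walk: if the walk has no repeated intermediate vertex it is already a cycle; otherwise a repetition splits the walk into two shorter closed walks whose lengths sum to the original length, so one of them is odd, and induction applies. The resulting odd cycle only uses edges and vertices from the original walk, hence lies in $B(x_0,r)$, contradicting the hypothesis. Therefore $\mu_0(x)=0$ for all $x\in B(x_0,r)$, so $\kappa(x_0,r)=0$, and Theorem \ref{ecpt1} gives
$$\lambda_\mathrm{max}(B(x_0,r))\geq \lambda_\mathrm{max}(V_l(r))-2\kappa(x_0,r)=\lambda_\mathrm{max}(V_l(r)),$$
as claimed.

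There is essentially no major obstacle here; the only non-cosmetic point is invoking the walk-to-cycle extraction, and everything else is a direct application of Theorem \ref{ecpt1} together with the remark preceding the corollary.
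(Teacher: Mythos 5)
Your proof is correct and follows essentially the same route as the paper: the corollary is obtained by showing that the absence of odd cycles in $B(x_0,r)$ forces $\mu_0(x)=0$ for all $x$ in the ball, hence $\kappa(x_0,r)=0$, and then applying Theorem \ref{ecpt1}. The paper realizes the same idea via Lemma \ref{kappa and local clustering} (constructing an odd circuit directly from the last branching point of two shortest paths), while you extract an odd cycle from an odd closed walk; the two arguments are interchangeable.
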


\begin{rem}
The choice of $l$ in the last theorem yields the best possible
results. However, all we need is that $l\geq
\frac{\mu(x)}{\mu_-(x)}$ for all $x\in B(x_0,r)$. Thus it is
sufficient to choose $l\geq
\frac{\sup_x\mu(x)}{\inf_{x,y}\mu_{xy}}$ in Theorem \ref{ecpt1}.
This also shows why we restrict ourselves to $R_l$ for $l\geq2$.
If $l<2$ and $l\geq \frac{\sup_{x\in V}\mu(x)}{\inf_{x,y\in
V}\mu_{xy}}$ then this immediately implies that the graph is
finite - in fact the graph consists of two vertices connected by
one edge, provided the graph is connected.
\end{rem}
Before we prove this theorem, we show that the quantity
$\kappa(x_0,r)$ is related to a local clustering coefficient
through the following quantity
\begin{equation}\label{definition local clustering} C(x_0,
r):= \sup_{x\in B(x_0,r)}\frac{\sum_{y\in
\mathcal{C}(x,r)}\mu_{xy}}{\mu(x)}\end{equation} where
$\mathcal{C}(x,r):= \{y\in V: y\sim x$ $x$ and $y$ are both
contained in a circuit of odd length $\leq 2r+1\}$ and a circuit
is a cycle without repeated vertices. Recall that a graph is
bipartite if and only if there are no cycles of odd length in the
graph. Hence $C(x_0,r)$ expresses how different a graph is from a
bipartite graph. Moreover, $C(x_0,r)$ is related to a local
clustering coefficient since $C(x_0,r)=0$ implies that all $x\in
B(x_0,r)$ are not contained in any triangle.  Estimates for the
largest eigenvalue of the Laplace operator for finite graphs in
terms of a local clustering coefficient were obtained in
\cite{Bauer12}.
\begin{lemma}\label{kappa and local clustering} We have for all $x_0\in V$ and $r\geq 0,$ \begin{equation}\label{clustering}\kappa(x_0,r)\leq C
(x_0,r)\end{equation}
\end{lemma}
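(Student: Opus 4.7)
The plan is to prove the stronger pointwise statement $\mu_0(x) \leq \sum_{y \in \mathcal{C}(x,r)}\mu_{xy}$ for every $x \in B(x_0,r)$, i.e.\ that $V_0(x) \subseteq \mathcal{C}(x,r)$; the inequality \eqref{clustering} then follows by dividing by $\mu(x)$ and taking the supremum. So I fix $x \in B(x_0,r)$ and $y \in V_0(x)$, and aim to exhibit a circuit of odd length at most $2r+1$ containing both $x$ and $y$.

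Write $k := d(x_0,x) = d(x_0,y) \leq r$, and choose shortest paths $x_0 = u_0, u_1, \ldots, u_k = x$ and $x_0 = v_0, v_1, \ldots, v_k = y$. Let $j$ be the largest index with $u_j = v_j$ (well-defined since $u_0 = v_0$). I then form the closed walk obtained by going from $u_j = v_j$ to $x$ along the $u$-path, then across the edge $(x,y)$, then from $y$ back to $v_j$ along the reversed $v$-path. Its length is $2(k-j) + 1$, which is odd and at most $2r + 1$.

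The key point is to show that this closed walk is in fact a circuit (no repeated vertices). The two tails $u_j, \ldots, u_k$ and $v_j, \ldots, v_k$ have no internal repetitions because subpaths of shortest paths are shortest. To rule out a crossing $u_i = v_{i'}$ with $i, i' > j$: both $u_i$ and $v_{i'}$ lie on shortest paths from $x_0$, so $d(x_0,u_i) = i$ and $d(x_0,v_{i'}) = i'$; equality forces $i = i'$, contradicting maximality of $j$. Finally $u_k = x \neq y = v_k$ (otherwise $x \sim x$ only via a self-loop, which I treat separately). Hence $y \in \mathcal{C}(x,r)$ as required.

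The one edge case is $y = x$, which occurs precisely when there is a self-loop at $x$; then the self-loop itself is a circuit of length $1 \leq 2r+1$, so $y = x \in \mathcal{C}(x,r)$ trivially. No real obstacle is anticipated; the only subtlety is the vertex-distinctness argument above, which is what upgrades the odd closed walk into an odd circuit and uses the shortest-path property in an essential way.
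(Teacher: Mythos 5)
Your proof is correct and follows essentially the same route as the paper's: take shortest paths from $x_0$ to $x$ and to $y$, locate the last common vertex, and join the two tails across the edge $(x,y)$ to form an odd circuit of length at most $2r+1$. The only difference is that you explicitly verify the no-repeated-vertices claim via the distance argument ($u_i = v_{i'}$ forces $i=i'$) and separately treat the self-loop case $y=x$, whereas the paper asserts both without comment.
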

\begin{proof}It suffices to show that if $z\in V_0(x)$ for
some fixed $x_0$, then $z\in \mathcal{C}(x,r)$, i.e.
$V_0(x)\subset \mathcal{C}(x,r)$. We assume that for a fixed
vertex $x_0\in V$ we have $d(x_0,x)= d(x_0,z)= N\leq r$ and $z\sim
x$. Let $P_x$ and $P_z$ denote a shortest path from $x_0$ to $x$
and $z$, respectively. We label the vertices in the path $P_x$ by
$x_0, x_1, \ldots, x_N = x$ and the vertices in $P_z$ by $x_0=z_0,
z_1, \ldots, z_N = z$. Let $0\leq k\leq N-1$ be such that $x_l\neq
z_l$ for all $l>k$, i.e. $x_k= z_k$ is the last branching point of
the two paths $P_x$ and $P_z$. We claim that $x_k,x_{k+1}, \ldots,
x_N=x,z=z_N, z_{N-1}, \ldots, z_k = x_k$ is a circuit of odd
length. Clearly, this is a cycle of length $2(N-k)+1$, i.e. a
cycle of odd length. Moreover, by construction the vertices in
this cycle are all different from each other. Thus $z$ and $x$ are
contained in a circuit of odd length $\leq 2(N-k)+1\leq 2r+1$ and
hence $z\in \mathcal{C}(x,r)$. This completes the proof.
\end{proof}
Using techniques developed by Urakawa \cite{Urakawa99} we now give
a proof of Theorem \ref{ecpt1}.
\begin{proof}[Proof of Theorem \ref{ecpt1}] Let $B(x_0,r)$ be a ball centered
at $x_0$ in $\Gamma$ and $V_l(r)$ be a ball in $R_l$ centered at
$0$ with the same radius $r$. Using the eigenfunction
$f_\mathrm{max}$ on $V_l(r)$ we define a function (denoted by the
same letter) on the ball $B(x_0,r)$ in $\Gamma$ by
$$f_\mathrm{max}(x) = f_\mathrm{max} (r(x)) \text{ \qquad for all $x\in B(x_0,r)$},$$ where $r(x)=d(x,x_0).$
Now let $\Delta$ and $\Delta^l$ be the Laplace operators on
$B(x_0,r)$ and $V_l(r)$ respectively. For all $x\in B(x_0,r)$ and
$z\in V_l(r)$ with $0<s = r(x) = r_l(z) \leq r$ where
$r_l(z)=d(z,0)=|z|$ we obtain
\begin{eqnarray*}\Delta f_\mathrm{max}(x)&=& f_\mathrm{max}(x) -
\frac{\mu_0(x)f_\mathrm{max}(x) + \mu_+(x) f_\mathrm{max}(x+1)+
\mu_-(x)f_\mathrm{max}(x-1)}{\mu(x)}
\\&=& \frac{\mu_-(x)}{\mu(x)}(f_\mathrm{max}(x) -
f_\mathrm{max}(x-1))+ \frac{\mu_+(x)}{\mu(x)}(f_\mathrm{max}(x) -
f_\mathrm{max}(x+1))
\end{eqnarray*}
and
\begin{eqnarray*}
\Delta^l f_\mathrm{max}(z) = \frac{1}{l}(f_\mathrm{max}(x) -
f_\mathrm{max}(x-1)) + \frac{l-1}{l}(f_\mathrm{max}(x) -
f_\mathrm{max}(x+1)).
\end{eqnarray*}
Putting these two equations together we obtain:

\begin{eqnarray}\label{evc1}\frac{\Delta^l f_\mathrm{max}(z)}{f_\mathrm{max}(z)}- \frac{\Delta
f_\mathrm{max}(x)}{f_\mathrm{max}(x)} &=& \left(\frac{1}{l} -
\frac{\mu_-(x)}{\mu(x)}\right)\frac{f_\mathrm{max}(s) - f_\mathrm{max}(s-1)}{f_\mathrm{max}(s)}\nonumber\\
&+& \left(\frac{l-1}{l} -
\frac{\mu_+(x)}{\mu(x)}\right)\frac{f_\mathrm{max}(s) -
f_\mathrm{max}(s+1)}{f_\mathrm{max}(s)}.
\end{eqnarray}

 Moreover, in the case $r(x) = r_l(z)=0$ we obtain
\begin{equation}\label{evc2}\frac{\Delta^lf_\mathrm{max}(0)}{f_\mathrm{max}(0)} - \frac{\Delta
f_\mathrm{max}(x_0)}{f_\mathrm{max}(x_0)} = 0.\end{equation} It is
important to note that by Lemma \ref{Eigenfunction tree}
$$1<\frac{f_\mathrm{max}(s) - f_\mathrm{max}(s+1)}{f_\mathrm{max}(s)}\leq \frac{f_\mathrm{max}(s) -
f_\mathrm{max}(s-1)}{f_\mathrm{max}(s)} \text{ for all }0\leq
s\leq r$$ and by our assumption it holds that $\left(\frac{1}{l} -
\frac{\mu_-(x)}{\mu(x)}\right)\leq 0$ for all $x\in B(x_0,r)$.
Hence we obtain
\begin{eqnarray*}\frac{\Delta^l f_\mathrm{max}(z)}{f_\mathrm{max}(z)}- \frac{\Delta f_\mathrm{max}(x)}{f_\mathrm{max}(x)}
&\leq& \left(\frac{1}{l} - \frac{\mu_-(x)}{\mu(x)} + \frac{l-1}{l}
- \frac{\mu_+(x)}{\mu(x)}\right)\frac{f_\mathrm{max}(s) -
f_\mathrm{max}(s+1)}{f_\mathrm{max}(s)}
\\&=& \frac{\mu_0(x)}{\mu(x)}\left(1 -
\frac{f_\mathrm{max}(s+1)}{f_\mathrm{max}(s)}\right)\\&\leq& 2
\frac{\mu_0(x)}{\mu(x)},
\end{eqnarray*}where used again Lemma \ref{Eigenfunction tree}.
Using the Barta-type estimate in Lemma \ref{Barta} it follows
\begin{eqnarray*} \lambda_\mathrm{max}(B(x_0,r))&\geq& \inf_{x\in B(x_0,r)}\frac{\Delta
f_\mathrm{max}(x)}{f_\mathrm{max}(x)}\\&\geq& \inf_{z\in
V_l(r)}\frac{\Delta^l f_\mathrm{max}(z)}{f_\mathrm{max}(z)} -
2\sup_{x\in B(x_0,r)}\frac{\mu_0(x)}{\mu(x)}\\&=&
\lambda_\mathrm{max}(V_l(r)) - 2\kappa(x_0,r).
\end{eqnarray*}
\end{proof}

The largest Dirichlet eigenvalue of a ball of radius $r$ in the
weighted half-line $R_l$ is precisely known, see Lemma
\ref{Dirichlet Eigenvalues of Rl}. In particular, by combining
Theorem \ref{ecpt1} with Lemma \ref{Estimates Eigenvalues Rl}  we
obtain the following corollary:
\begin{coro} \label{Estimates Balls 2}Let $B(x_0,r)$ be a
ball in $\Gamma$ and $2\leq l= \sup_{x\in
B(x_0,r)}\frac{\mu(x)}{\mu_-(x)}<\infty.$  Then the largest
Dirichlet eigenvalue $\lambda_\mathrm{max}(B(x_0,r))$ satisfies
for $l>2$
\begin{eqnarray*}\lambda_\mathrm{max}(B(x_0,r))&\geq& 1 +
\frac{2\sqrt{l-1}}{l}\cos\left(\frac{\pi}{r+1}\right)-
2\kappa(x_0,r),\end{eqnarray*} and for $l=2$
\begin{eqnarray*}\lambda_\mathrm{max}(B(x_0,r))&\geq& 1 +
\cos\left(\frac{\pi}{2(r+1)}\right)-
2\kappa(x_0,r).\end{eqnarray*}
\end{coro}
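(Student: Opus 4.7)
The statement is a direct combination of two earlier results, so the plan is essentially bookkeeping rather than the introduction of new ideas. First I would invoke Theorem \ref{ecpt1} with the given value $l = \sup_{x \in B(x_0,r)} \mu(x)/\mu_-(x)$, which immediately gives the lower bound
\[
\lambda_\mathrm{max}(B(x_0,r)) \;\geq\; \lambda_\mathrm{max}(V_l(r)) - 2\kappa(x_0,r).
\]
The hypothesis $l \geq 2$ is exactly the admissibility condition needed to apply that theorem (and to make $R_l$ a well-defined comparison model in the sense of the preceding discussion).

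Next I would substitute in the explicit lower bound for $\lambda_\mathrm{max}(V_l(r))$ coming from Lemma \ref{Estimates Eigenvalues Rl}. For $l > 2$ that lemma gives
\[
\lambda_\mathrm{max}(V_l(r)) \;\geq\; 1 + \frac{2\sqrt{l-1}}{l}\cos\!\left(\frac{\pi}{r+1}\right),
\]
which combined with the inequality from Theorem \ref{ecpt1} yields the first claimed estimate. For the boundary case $l = 2$ the same lemma gives the exact identity $\lambda_\mathrm{max}(V_2(r)) = 1 + \cos\bigl(\pi/(2(r+1))\bigr)$, whose substitution produces the second claimed estimate.

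The only point that requires a moment of care is consistency of the two regimes: the lower bound in Lemma \ref{Estimates Eigenvalues Rl} for $l > 2$ uses the angle $\pi/(r+1)$ coming from one side of the sandwich for $\theta_{l,r}$, while the $l = 2$ case naturally uses $\pi/(2(r+1))$ since $\theta_{2,r} = \pi/(2(r+1))$ exactly. Since both substitutions are already written out explicitly in Lemma \ref{Estimates Eigenvalues Rl}, no additional estimate or limiting argument is needed — one simply pipes the two bounds through the inequality of Theorem \ref{ecpt1}. There is no real obstacle here; the work was done in establishing the comparison theorem and in analyzing the spectrum of $R_l$, and this corollary merely records the resulting quantitative bound in a form convenient for later use.
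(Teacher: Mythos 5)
Your proposal is correct and matches the paper's own argument exactly: the paper introduces the corollary with the phrase "by combining Theorem \ref{ecpt1} with Lemma \ref{Estimates Eigenvalues Rl} we obtain the following corollary," which is precisely the two-step substitution you carry out. No differences to note.
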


By a similar argument as in Theorem \ref{ecpt1} we obtain a lower
bound estimate of $\lambda_1$ which implies the upper bound
estimate for $\lambda_{\mathrm{max}}$. The following theorem
generalizes $(ii)$ of Theorem $3.3$ in \cite{Urakawa99}.

\begin{theo}[Eigenvalue comparison theorem II]\label{Theob1}
Let $B(x_0,r)$ be a ball in $\Gamma$. Suppose $2\leq l:=\inf_{x\in
B(x_0,r)}\frac{\mu(x)}{\mu_-(x)}<\infty,$ then
\begin{equation}\label{estim0}
\lambda_1(B(x_0,r))\geq\lambda_1(V_l(r))-\kappa(x_0,r),
\end{equation}
\begin{equation}
\lambda_{\mathrm{max}}(B(x_0,r))\leq\lambda_{\mathrm{max}}(V_l(r))+\kappa(x_0,r),
\end{equation} where $\kappa(x_0,r):=\sup_{x\in
B(x_0,r)}\frac{\mu_0(x)}{\mu(x)}.$
\end{theo}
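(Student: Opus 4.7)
The plan is to adapt the strategy of Theorem \ref{ecpt1}, but with the first Dirichlet eigenfunction $f_1$ on $V_l(r)\subset R_l$ in place of $f_{\max}$. By Lemma \ref{Dirichlet Eigenvalues of Rl} this $f_1$ is positive and strictly radially decreasing, which is the key sign information I will exploit. The lower bound on $\lambda_1(B(x_0,r))$ will then be extracted via the classical Barta estimate
$$\lambda_1(\Omega)\ \geq\ \inf_{x\in\Omega}\frac{\Delta_\Omega g(x)}{g(x)}$$
valid for any $g\in\ell^2(\Omega,\mu)$ with $g>0$ on $\Omega$; this is the $\lambda_1$-analogue of Lemma \ref{Barta} and follows from Lemma \ref{BasicProperties}(iii) together with self-adjointness of $\Delta_\Omega$.

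First I would take the test function $f(x):=f_1(r(x))$ on $B(x_0,r)$, where $r(x)=d(x,x_0)$. Repeating the computation that produced (\ref{evc1}), for $x\in B(x_0,r)$ with $1\leq s=r(x)\leq r$ and any $z\in V_l(r)$ with $r_l(z)=s$ one gets
$$\frac{\Delta^l f_1(z)}{f_1(z)}-\frac{\Delta f(x)}{f(x)}=\Bigl(\tfrac{1}{l}-\tfrac{\mu_-(x)}{\mu(x)}\Bigr)\frac{f_1(s)-f_1(s-1)}{f_1(s)}+\Bigl(\tfrac{l-1}{l}-\tfrac{\mu_+(x)}{\mu(x)}\Bigr)\frac{f_1(s)-f_1(s+1)}{f_1(s)},$$
with the understanding that $f_1(r+1)=0$ at the boundary of $V_l(r)$ and that the $\mu_-$-term is absent at the centre $s=0$ (where $\mu_-(x_0)=0$).

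Next, writing $a:=\tfrac{1}{l}-\tfrac{\mu_-(x)}{\mu(x)}$ and $b:=\tfrac{\mu_0(x)}{\mu(x)}$ and using $\mu=\mu_++\mu_-+\mu_0$, the second parenthesis becomes $b-a$, and rearranging yields
$$\frac{\Delta^l f_1(z)}{f_1(z)}-\frac{\Delta f(x)}{f(x)}=a\cdot\frac{f_1(s+1)-f_1(s-1)}{f_1(s)}+b\cdot\frac{f_1(s)-f_1(s+1)}{f_1(s)}.$$
The hypothesis $l\leq\mu(x)/\mu_-(x)$ forces $a\geq0$; monotonicity of $f_1$ (Lemma \ref{Dirichlet Eigenvalues of Rl}) makes the first fraction non-positive and pins the second fraction in $[0,1]$ (the upper endpoint uses $f_1(s+1)\geq 0$, which remains valid at $s=r$). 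Since $\Delta^l f_1/f_1\equiv\lambda_1(V_l(r))$ on $V_l(r)$, these sign facts give
$$\frac{\Delta f(x)}{f(x)}\ \geq\ \lambda_1(V_l(r))-b\ \geq\ \lambda_1(V_l(r))-\kappa(x_0,r)$$
at every $x\in B(x_0,r)$, and the Barta estimate above yields the first inequality of the theorem.

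For the upper bound on $\lambda_{\max}(B(x_0,r))$ I would not repeat the whole argument. Since $R_l$, hence $V_l(r)$, is bipartite, Theorem \ref{Lembi} gives $\lambda_1(V_l(r))+\lambda_{\max}(V_l(r))=2$, while Lemma \ref{BasicProperties}(iv) gives $\lambda_1(B(x_0,r))+\lambda_{\max}(B(x_0,r))\leq 2$. Combining these with the inequality just proved:
$$\lambda_{\max}(B(x_0,r))\leq 2-\lambda_1(B(x_0,r))\leq 2-\lambda_1(V_l(r))+\kappa(x_0,r)=\lambda_{\max}(V_l(r))+\kappa(x_0,r).$$
The main obstacle is the sign bookkeeping in the middle step: the rearrangement must simultaneously convert the reversed hypothesis on $\mu/\mu_-$ (compared to Theorem \ref{ecpt1}) and the monotonicity (rather than alternation) of $f_1$ into a clean one-sided bound by $\kappa(x_0,r)$. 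The centre and boundary cases superficially demand separate handling but, with the conventions $\mu_-(x_0)=0$ and $f_1(r+1)=0$, are absorbed into the same identity.
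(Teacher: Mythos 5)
Your argument is correct and follows essentially the same route as the paper: transplant the positive decreasing first eigenfunction $f_1$ of $V_l(r)$ to $B(x_0,r)$, compare $\Delta f/f$ with $\Delta^l f_1/f_1$ via the identity \eqref{evc1}, use the sign of $\tfrac{1}{l}-\tfrac{\mu_-(x)}{\mu(x)}$ together with monotonicity of $f_1$ to bound the difference by $\kappa(x_0,r)$, invoke Barta for $\lambda_1$, and finally deduce the $\lambda_{\max}$ bound from bipartiteness of $R_l$ (Theorem \ref{Lembi}) and Lemma \ref{BasicProperties}$(iv)$. The only cosmetic difference is that you regroup the two coefficient terms as $a\cdot\tfrac{f_1(s+1)-f_1(s-1)}{f_1(s)}+b\cdot\tfrac{f_1(s)-f_1(s+1)}{f_1(s)}$ before estimating, whereas the paper drops the $a$-term first and then bounds the remaining factor; both give the same $\leq \mu_0(x)/\mu(x)$ conclusion.
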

\begin{proof}
By $(iv)$ of Lemma \ref{BasicProperties}, the bipartiteness of the
weighted half-line $R_l$ and Theorem \ref{Lembi}, it suffices to
prove \eqref{estim0}. Let $f_1$ be the first eigenfunction of the
Dirichlet Laplace operator on $V_l(r)$ in $R_l.$ By Lemma
\ref{Dirichlet Eigenvalues of Rl}, $f_1$ is a positive, decreasing
function on $V_l(r).$ We define a function on $B(x_0,r)$ in
$\Gamma$ as $f_1(x)=f_1(r(x))$ where $r(x)=d(x,x_0).$ The same
calculation as \eqref{evc1} and \eqref{evc2} yields for any $x\in
B(x_0,r)$ and $z\in V_l(r)$ with $0<s = r(x) = r_l(z) \leq r$
where $r_l(z)=d(z,0)$

\begin{eqnarray*}\frac{\Delta^l f_1(z)}{f_{1}(z)}- \frac{\Delta
f_1(x)}{f_{1}(x)} &=& \left(\frac{1}{l} -
\frac{\mu_-(x)}{\mu(x)}\right)\frac{f_1(s) - f_1(s-1)}{f_1(s)}\nonumber\\
&+& \left(\frac{l-1}{l} -
\frac{\mu_+(x)}{\mu(x)}\right)\frac{f_1(s) - f_1(s+1)}{f_1(s)},
\end{eqnarray*} and

\begin{equation*}\frac{\Delta^lf_1(0)}{f_1(0)} - \frac{\Delta
f_1(x_0)}{f_1(x_0)} = 0.\end{equation*} By our assumption it holds
that $\left(\frac{1}{l} - \frac{\mu_-(x)}{\mu(x)}\right)\geq 0$
for all $x\in B(x_0,r)$. Note that $f_1(s)$ is a decreasing
function of $s$ which implies that $\frac{f_1(s) -
f_1(s-1)}{f_1(s)}\leq 0$ Hence
\begin{eqnarray*}\frac{\Delta^l f_1(z)}{f_1(z)}- \frac{\Delta f_1(x)}{f_1(x)}
&\leq& \left(\frac{l-1}{l} -
\frac{\mu_+(x)}{\mu(x)}\right)\frac{f_1(s) - f_1(s+1)}{f_1(s)}
\\&=& \left(\frac{\mu_0(x)}{\mu(x)}+\frac{\mu_{-}(x)}{\mu(x)}-\frac{1}{l}\right)\left(1 -
\frac{f_1(s+1)}{f_1(s)}\right)\\&\leq& \frac{\mu_0(x)}{\mu(x)}\leq
\kappa(x_0,r).
\end{eqnarray*}

Then by the Barta's theorem for the first eigenvalue, see Theorem
$2.1$ in \cite{Urakawa99},
\begin{eqnarray*}\lambda_1(B(x_0,r))&\geq& \inf_{x\in B(x_0,r)}\frac{\Delta f_1}{f_1}\geq \inf_{z\in V_l(r)}\frac{\Delta^l f_1}{f_1}-\kappa(x_0,r)\\
&=&\lambda_1(V_l(r))-\kappa(x_0,r).
\end{eqnarray*}
\end{proof}

By Lemma \ref{Estimates Eigenvalues Rl} we obtain the following
corollary which is the counterpart to Corollary \ref{Estimates
Balls 2} (we omit the easier case of $l=2$).
\begin{coro}\label{Lowercomp1}
Let $B(x_0,r)$ be a ball in $\Gamma$ and $2< l=\inf_{x\in
B(x_0,r)}\frac{\mu(x)}{\mu_-(x)}<\infty.$ Then
\begin{eqnarray}\label{estim1}
\lambda_1(B(x_0,r))&\geq&
1-\frac{2\sqrt{l-1}}{l}\cos\frac{\pi}{r+\eta}-\kappa(x_0,r),\nonumber
\end{eqnarray}
\begin{eqnarray*}
\lambda_{\mathrm{max}}(B(x_0,r))&\leq&
1+\frac{2\sqrt{l-1}}{l}\cos\frac{\pi}{r+\eta}+\kappa(x_0,r)
\end{eqnarray*} where as before $\eta=\frac{2(l-1)}{l-2},$ and $\kappa(x_0,r)=\sup_{x\in
B(x_0,r)}\frac{\mu_0(x)}{\mu(x)}$.
\end{coro}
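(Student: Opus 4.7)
The corollary is essentially a one-line combination of the two results already in hand, so my plan is simply to identify which inequalities to plug together and verify that the direction of each estimate is consistent.

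First, I would invoke Theorem \ref{Theob1}, which, under the hypothesis $l = \inf_{x\in B(x_0,r)} \mu(x)/\mu_-(x) \geq 2$, gives the comparison inequalities
\[
\lambda_1(B(x_0,r)) \geq \lambda_1(V_l(r)) - \kappa(x_0,r), \qquad
\lambda_{\mathrm{max}}(B(x_0,r)) \leq \lambda_{\mathrm{max}}(V_l(r)) + \kappa(x_0,r).
\]
Next, I would appeal to Lemma \ref{Estimates Eigenvalues Rl}, which in the case $l > 2$ supplies the explicit two-sided estimates
\[
\lambda_1(V_l(r)) \geq 1 - \frac{2\sqrt{l-1}}{l}\cos\frac{\pi}{r+\eta}, \qquad
\lambda_{\mathrm{max}}(V_l(r)) \leq 1 + \frac{2\sqrt{l-1}}{l}\cos\frac{\pi}{r+\eta},
\]
with $\eta = 2(l-1)/(l-2)$. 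Substituting these into the inequalities of Theorem \ref{Theob1} in the correct direction (lower bound into lower bound for $\lambda_1$, upper bound into upper bound for $\lambda_{\mathrm{max}}$) yields both inequalities claimed in the corollary.

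The only thing that might be called a ``step'' rather than an immediate substitution is checking that the hypothesis of Theorem \ref{Theob1}, namely $l \geq 2$, is compatible with the hypothesis $l > 2$ of Lemma \ref{Estimates Eigenvalues Rl}, which it obviously is; and noting that the $l = 2$ case is omitted by the statement of the corollary, so no additional bookkeeping with $\cos(\pi/2(r+1))$ is required. There is no genuine obstacle here: the corollary is a direct restatement of the composition of the two previous results, and I would write the proof in a single sentence saying ``combine Theorem \ref{Theob1} with the lower (resp.\ upper) bound for $\lambda_1(V_l(r))$ (resp.\ $\lambda_{\mathrm{max}}(V_l(r))$) in Lemma \ref{Estimates Eigenvalues Rl}.''
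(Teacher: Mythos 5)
Your proposal is correct and is exactly the argument the paper intends: chain Theorem \ref{Theob1} (the comparison with $V_l(r)$ up to $\kappa(x_0,r)$) with the two-sided bounds for $\lambda_1(V_l(r))$ and $\lambda_{\mathrm{max}}(V_l(r))$ in Lemma \ref{Estimates Eigenvalues Rl}, using the $\frac{\pi}{r+\eta}$ side of each. The paper presents this as immediate, and your reconstruction of the composition, including which direction of each bound is used, matches.
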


\section{Eigenvalue estimates for finite graphs}\label{Section6}
In this section, we show how the eigenvalue comparison theorem
obtained in the last section can be used to estimate the largest
eigenvalues of finite graphs.
\begin{defi}We say two balls $B_1$ and $B_2$ are edge disjoint if
$|E(B_1,B_2)|=0$.
\end{defi}
\begin{lemma}\label{Estimate Balls}
Let $G=(V,E)$ be a finite graph with $\sharp V=N$ and let $B_1,
\ldots B_m$ be $m\geq 1$ edge disjoint balls in $\Gamma$. Then
$$\theta_{N-m} \geq \min_{i = 1,\ldots m}\lambda_\mathrm{max}(B_i),$$ where
$\theta_i$, $i=0,\ldots,N-1$ are the eigenvalues of the Laplace
operator of the finite graph and $\lambda_i$ are the eigenvalues
of the Laplace operator with Dirichlet boundary conditions.
\end{lemma}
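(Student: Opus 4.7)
The plan is to apply the Courant--Fischer min--max characterization of the eigenvalues of the self-adjoint operator $\Delta$ on $\ell^2(V,\mu)$ with an $m$-dimensional test subspace constructed from the Dirichlet eigenfunctions on the edge-disjoint balls. Indexing so that $\theta_0\le\theta_1\le\cdots\le\theta_{N-1}$, the $m$-th largest eigenvalue is characterized as
\begin{equation*}
\theta_{N-m}=\max_{\substack{S\subset\ell^2(V,\mu)\\ \dim S=m}}\;\min_{\substack{f\in S\\ f\neq 0}}\frac{(\Delta f,f)_\mu}{(f,f)_\mu}.
\end{equation*}
Hence it suffices to exhibit a single $m$-dimensional subspace on which the Rayleigh quotient is bounded below by $\min_i\lambda_{\max}(B_i)$.

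For each $i=1,\dots,m$, let $f_i\in\ell^2(B_i,\mu)$ be an eigenfunction of $\Delta_{B_i}$ corresponding to $\lambda_{\max}(B_i)$, and let $\tilde f_i\in\ell^2(V,\mu)$ be its extension by zero. Set $S:=\mathrm{span}\{\tilde f_1,\dots,\tilde f_m\}$. A preliminary observation I would record is that \emph{edge-disjointness forces vertex-disjointness} of the $B_i$ in the connected graph $G$: indeed, if $v\in B_i\cap B_j$ with $i\neq j$, then $\mu_{vw}=0$ for every $w\in B_j$ by the hypothesis $|E(B_i,B_j)|=0$, and since $B_j$ is a connected ball containing $v$, a short combinatorial argument (tracing a shortest path from $v$ to the center $x_{0,j}$ inside $B_j$) collapses $B_j$ to $\{v\}$ and then, by the same reasoning with the roles of $i,j$ reversed, $B_i=\{v\}=B_j$, a contradiction. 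As a consequence, the $\tilde f_i$ have pairwise disjoint supports and hence are linearly independent, so $\dim S=m$.

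Next I would show that the matrix of $\Delta$ in the basis $\{\tilde f_i\}$ is diagonal. For $i\ne j$, Green's formula (Lemma~\ref{GreenLemma}) gives
\begin{equation*}
(\Delta\tilde f_i,\tilde f_j)_\mu=\tfrac{1}{2}\sum_{x,y\in V}\mu_{xy}(\tilde f_i(x)-\tilde f_i(y))(\tilde f_j(x)-\tilde f_j(y)).
\end{equation*}
A term survives only if $\{x,y\}$ meets $\mathrm{supp}(\tilde f_i)\subset B_i$ \emph{and} meets $\mathrm{supp}(\tilde f_j)\subset B_j$; by vertex-disjointness this forces one endpoint into $B_i$ and the other into $B_j$, whereupon $\mu_{xy}=0$ by edge-disjointness. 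Hence $(\Delta\tilde f_i,\tilde f_j)_\mu=0$ for $i\neq j$. On the diagonal, $(\Delta\tilde f_i,\tilde f_i)_\mu=\lambda_{\max}(B_i)(\tilde f_i,\tilde f_i)_\mu$ directly from the defining eigenvalue equation of $\Delta_{B_i}$ and the fact that extension by zero does not affect the relevant Rayleigh sums.

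The conclusion is immediate: for $f=\sum_i c_i\tilde f_i\in S$, disjoint supports give $(f,f)_\mu=\sum_i c_i^2(\tilde f_i,\tilde f_i)_\mu$, while the previous step yields
\begin{equation*}
(\Delta f,f)_\mu=\sum_i c_i^2\,\lambda_{\max}(B_i)\,(\tilde f_i,\tilde f_i)_\mu\;\ge\;\min_{1\le i\le m}\lambda_{\max}(B_i)\cdot(f,f)_\mu,
\end{equation*}
and the min--max inequality above delivers $\theta_{N-m}\ge\min_i\lambda_{\max}(B_i)$. The only genuinely non-routine step is the vertex-disjointness reduction; everything else is standard min--max plus bookkeeping with extensions by zero.
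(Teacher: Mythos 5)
Your proof is correct and follows essentially the same route as the paper: extend the Dirichlet eigenfunctions $f_i$ by zero, observe that the extensions have mutually disjoint supports and are $\Delta$-orthogonal because the balls are edge-disjoint, and then conclude via a variational characterization of $\theta_{N-m}$. The only cosmetic differences are that you invoke the full Courant--Fischer max-min principle (the paper instead picks coefficients $a_i$ so that $\sum a_i \tilde f_i$ is orthogonal to the top $m-1$ eigenfunctions of $\Delta$ and uses the one-dimensional variational formula), and you compute the off-diagonal terms $(\Delta\tilde f_i,\tilde f_j)_\mu$ via Green's formula rather than the pointwise expression for $\Delta$; both are equivalent bookkeeping. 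A small plus of your write-up is that you explicitly derive vertex-disjointness of the balls from edge-disjointness, a step the paper's proof uses implicitly (it says ``the balls are disjoint'' although the hypothesis only gives edge-disjointness); your argument there is sound, with the understood caveat that one must exclude singleton balls (i.e.\ radius zero), which the paper also implicitly excludes by its standing assumption $\sharp\Omega\ge 2$.
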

\begin{proof}We use the same technique as Quenell \cite{Quenell96} who
proved a similar result for the smallest eigenvalue of a finite
graph.By the variational characterization of the eigenvalues we
have
$$\theta_{N-1} = \sup_{g\neq 0}\frac{(\Delta g,g)_G}{(g,g)_G},$$
and
\begin{equation}\label{variational principle}\theta_{N-m} = \sup_{g\perp g_{N-1}, \ldots
g_{N-m+1}}\frac{(\Delta g,g)_G}{(g,g)_G},\end{equation} where
$(f,g)_G= \sum_{x\in V}\mu(x) f(x)g(x)$ is the scalar product on
the finite graph $G$ and $g_i$ is an eigenfunction for the
eigenvalue $\theta_i$.
 From now
on we use the following notation: $\Delta_i = \Delta_{B_i}$
denotes the Dirichlet Laplace operator on the ball $B_i$.

Let $f_i: B_i\to \mathbb{R} $ be the eigenfunction corresponding
to the largest eigenvalue $\lambda_\mathrm{max}(B_i)$ of the
Dirichlet Laplace operator $\Delta_i$. We extend this to a
function $\tilde{f}_i: V \to \mathbb{R}$ on the whole of $V$ by
setting
$$\tilde{f}_i(x) = \left\{\begin{array}{cc}f_i(x) & \text{ if } x\in B_i\\
0 & \text{ if } x\notin B_i.
\end{array}\right.$$
Because the balls $B_1,\ldots B_m$ are disjoint the functions
$\tilde{f}_i,\tilde{f}_j$ satisfy $\mathrm{supp}\tilde{f}_i\cap
\mathrm{supp}\tilde{f}_j = \emptyset$ and hence $(\tilde{f}_i,
\tilde{f}_j)_G=0$ for all $i\neq j$. From this it follows that the
functions $\tilde{f}_1,\ldots \tilde{f}_{m}$ span a
$m$-dimensional subspace in $\ell^2(G,\mu)$. Hence there exists
coefficients $a_i$ such that the function
$$\tilde{f} := \sum_{i=1}^ma_i\tilde{f}_i$$ is orthonogal to the $m-1$
dimensional subspace spanned by the $g_i$, i.e. $(\tilde{f},
g_i)=0$ for all $i = N-1, \ldots, N-m+1$. From the variational
principle (\ref{variational principle}) we immediately obtain
\begin{equation}\label{4} \theta_{N-m}\geq \frac{(\Delta\tilde{f},
\tilde{f})_G}{(\tilde{f},\tilde{f})_G}.\end{equation}

We wish to show that
\begin{equation} \label{5}\frac{(\Delta\tilde{f}_i,
\tilde{f}_i)_G}{(\tilde{f}_i,\tilde{f}_i)_G}=\frac{(\Delta_i f_i,
f_i)_{B_i}}{(f_i,f_i)_{B_i}}.\end{equation} This can be seen from
the following straightforward calculation:
\begin{eqnarray*}
(\Delta\tilde{f}_i, \tilde{f}_i)_G &=& \sum_{x\in
V}\mu(x)\Delta\tilde{f}_i(x)\tilde{f}_i(x)\\&=& \sum_{x\in
V}\mu(x)[(\tilde{f}_i(x) - \frac{1}{\mu(x)}\sum_{y\in
V}\mu_{xy}\tilde{f}_i(y))\tilde{f}_i(x)]
\\&=&\sum_{x\in B_i}\mu(x)[(f_i(x) -
\frac{1}{\mu(x)}\sum_{y\in B_i}\mu_{xy}f_i(y))f_i(x)]\\&=&
\sum_{x\in B_i}\mu(x)\Delta_if_i(x)f_i(x)\\&=& (\Delta_if_i,
f_i)_{B_i}.
\end{eqnarray*} Together with  $(\tilde{f}_i,\tilde{f}_i)_G =
\sum_{x\in V} \mu(x)\tilde{f}_i(x)^2 = \sum_{x\in B_i}
\mu(x)f_i(x)^2 = (f_i,f_i)_{B_i}$ we conclude
$$\frac{(\Delta\tilde{f}_i,
\tilde{f}_i)_G}{(\tilde{f}_i,\tilde{f}_i)_G}=\frac{(\Delta_i f_i,
f_i)_{B_i}}{(f_i,f_i)_{B_i}} = \lambda_\mathrm{max}(B_i).$$

Next we show that
\begin{equation}\label{6}(\Delta\tilde{f}_i,
\tilde{f}_j)_G  = \sum_{x\in V}\mu(x)[\tilde{f}_i(x) -
\frac{1}{\mu(x)}\sum_{y\in V}
\mu_{xy}\tilde{f}_i(y)]\tilde{f}_j(x)= 0\text{ if $i\neq
j$.}\end{equation} We have a look at each term
\begin{eqnarray} \label{2}[\tilde{f}_i(x) -
\frac{1}{\mu(x)}\sum_{y\in V}
\mu_{xy}\tilde{f}_i(y)]\tilde{f}_j(x)
\end{eqnarray}in the sum
separately: We distinguish the following two cases: $(i)$ If
$x\notin B_j$ equation (\ref{2}) is obviously equal to zero.
$(ii)$ If $x\in B_j$ it follows from the edge disjointness of the
balls that $x \notin B_i$ and that all neighbors $y \sim x$ are
not contained in $B_i$. Hence also in the latter case equation
(\ref{2}) is equal to zero. Hence if $i\neq j$, then
$(\Delta\tilde{f}_i, \tilde{f}_j)_G=0$.

Using (\ref{5}) and (\ref{6}) we compute
\begin{eqnarray*}
(\Delta\tilde{f},\tilde{f})_G &=&
(\Delta\sum_{i=1}^ma_i\tilde{f}_i,
\sum_{j=1}^ma_j\tilde{f}_j)_G\\&=& \sum_{i,j=1}^m a_ia_j
(\Delta\tilde{f}_i,\tilde{f}_j)_G\\&=&\sum_{i=1}^ma_i^2
(\Delta\tilde{f}_i,\tilde{f}_i)_G\\&=& \sum_{i=1}^ma_i^2
(\Delta_if_i,f_i)_{B_i}\\&=& \sum_{i=1}^m
a_i^2\lambda_\mathrm{max}(B_i)(f_i,f_i)_{B_i}\\&\geq&
\min_{i=1,\ldots m}\lambda_\mathrm{max}(B_i)\sum_{i=1}^m
a_i^2(f_i,f_i)_{B_i}\\&=& \min_{i=1,\ldots
m}\lambda_\mathrm{max}(B_i)(\tilde{f},\tilde{f})_G.
\end{eqnarray*} Combining this with (\ref{4}) completes the proof.
\end{proof}

\begin{theo}\label{theofinitegraphs}Let $G=(V,E)$ be a finite graph
and suppose that $2<l=\sup_{x\in
V}\frac{\mu(x)}{\mu_-(x)}<\infty$.
 The $(N-m)$-th eigenvalue $\theta_{N-m}$, $m = 1,2,\ldots \lfloor
\frac{D}{2}\rfloor$ of the Laplace operator on $G$ satisfies
\begin{eqnarray*}\theta_{N-m} &\geq& 1 +
2\frac{\sqrt{l-1}}{l}\cos\left(\frac{\pi}{ \lfloor
\frac{D}{2m}\rfloor}\right) - 2\kappa(G),\end{eqnarray*} where
$D\geq 2$ is the diameter of the graph, $\kappa(G) = \sup_{x_0\in
G}\kappa(x_0,G)$ and $\kappa(x_0,G) = \sup_{x\in
G}\frac{\mu_0(x)}{\mu(x)}$.
\end{theo}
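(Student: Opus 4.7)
The plan is to combine the packing result Lemma \ref{Estimate Balls} with the lower bound on $\lambda_{\max}$ of a ball furnished by Corollary \ref{Estimates Balls 2}. The key geometric step is to exhibit $m$ pairwise edge-disjoint balls in $G$ whose common radius is as large as possible given the diameter constraint; once this is done, the theorem reduces to plugging in the two previous results.

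First I would fix a geodesic $x_0, x_1, \ldots, x_D$ realizing the diameter of $G$, so that $d(x_i, x_j) = |i-j|$ for all $0 \le i, j \le D$. Set $r := \lfloor D/(2m) \rfloor - 1$, which is nonnegative thanks to the assumption $m \leq \lfloor D/2 \rfloor$; this is the radius suggested by the target expression $\cos(\pi/\lfloor D/(2m)\rfloor) = \cos(\pi/(r+1))$ in Corollary \ref{Estimates Balls 2}. Place the $m$ centers along the geodesic at $c_k := x_{(2k-1)\lfloor D/(2m)\rfloor}$ for $k = 1, \ldots, m$; the largest index is $(2m-1)\lfloor D/(2m)\rfloor \leq (2m-1) \cdot D/(2m) < D$, so every $c_k$ lies on the geodesic.

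Next I would verify that the balls $B_k := B(c_k, r)$ are pairwise edge-disjoint. Consecutive centers satisfy $d(c_k, c_{k+1}) = 2\lfloor D/(2m)\rfloor = 2r+2$, because distances along a geodesic are absolute differences of indices. If there were an edge between $y \in B_k$ and $z \in B_{k+1}$, the triangle inequality would give $d(c_k, c_{k+1}) \leq d(c_k, y) + 1 + d(z, c_{k+1}) \leq 2r+1$, a contradiction; non-consecutive pairs are even further apart and so also edge-disjoint. Consequently Lemma \ref{Estimate Balls} applies and yields $\theta_{N-m} \geq \min_{k} \lambda_{\max}(B_k)$.

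Finally I would apply Corollary \ref{Estimates Balls 2} to each $B_k$ with the global value $l = \sup_{x \in V}\mu(x)/\mu_-(x)$ in place of the ball-local sup. This substitution is legitimate because the proof of Theorem \ref{ecpt1} only uses the pointwise inequality $1/l \leq \mu_-(x)/\mu(x)$, which is preserved if $l$ is enlarged (as noted in the remark following Theorem \ref{ecpt1}). The corollary then gives
$$\lambda_{\max}(B_k) \;\geq\; 1 + \frac{2\sqrt{l-1}}{l}\cos\!\left(\frac{\pi}{\lfloor D/(2m)\rfloor}\right) - 2\kappa(c_k, r),$$
and bounding $\kappa(c_k, r) \leq \kappa(G)$ uniformly in $k$ completes the argument. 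The main obstacle is really the geometric packing step: once the centers on a diameter-realizing geodesic are chosen with the right spacing, the remaining work is a direct invocation of the two previously established results.
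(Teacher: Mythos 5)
Your proof is correct and follows essentially the same approach as the paper's: pack $m$ edge-disjoint balls of radius $\lfloor D/(2m)\rfloor - 1$ along a diameter-realizing geodesic, invoke Lemma \ref{Estimate Balls} to get $\theta_{N-m}\geq \min_k \lambda_{\mathrm{max}}(B_k)$, and then apply Corollary \ref{Estimates Balls 2} (using the global $l$, which the remark after Theorem \ref{ecpt1} permits). Your explicit placement of the ball centers $c_k = x_{(2k-1)\lfloor D/(2m)\rfloor}$ is a welcome clarification of the paper's terser "we can find $m$ vertices," but the underlying argument is identical.
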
The case $D=1$ is not interesting since the graph is
then a complete graph and hence all eigenvalues are precisely
known anyway.
\begin{proof}
Let $m = 1,2,\ldots,\lfloor \frac{D}{2}\rfloor$ be given. We can
find $m$ vertices $x_1,\ldots,x_m$ in $G$ that satisfy
$$d(x_i,x_{i+1})\geq 2r,$$ where $r= \lfloor
\frac{D}{2m}\rfloor\geq 1$. Then the balls $B_i:=B(x_i, r-1)$,
$i=1,\ldots m$ are edge disjoint. Indeed if we assume that there
exists $x\in B_i$ and $y\in B_{i+1}$ such that $x\sim y$, then we
obtain from the triangle inequality
$$2r\leq d(x_i,x_{i+1}) \leq d(x_i,x)+ d(x,y)+d(y,x_{i+1})\leq r-1 +1 +r -1= 2r-1 $$
which is a contradiction. Hence we have shown that we can find $m
=1,\ldots, \lfloor \frac{D}{2}\rfloor$ edge disjoint balls $B(x_i,
\lfloor \frac{D}{2m}\rfloor-1)$ in $G$. Using Corollary
\ref{Estimates Balls 2} and Lemma \ref{Estimate Balls} we obtain
\begin{eqnarray*}\theta_{N-m}&\geq&
\min_{i=1,\ldots,m}\lambda_\mathrm{max}(B(x_i,\lfloor
\frac{D}{2m}\rfloor-1))\\&\geq&
 1 + \frac{2\sqrt{l-1}}{l}\cos\left(\frac{\pi}{\lfloor \frac{D}{2m}\rfloor}\right)
- 2\max_{i=1,\ldots,m}\kappa(x_i,G)\\ &\geq& 1 +
\frac{2\sqrt{l-1}}{l}\cos\left(\frac{\pi}{\lfloor
\frac{D}{2m}\rfloor}\right) - 2\kappa(G).
\end{eqnarray*}
\end{proof}
\begin{rem}
\begin{itemize}\item[$(i)$] Under the same
assumptions as in the last theorem, Urakawa \cite{Urakawa99}
showed that \begin{equation} \label{Urakawa} \theta_m\leq 1 -
2\frac{\sqrt{l-1}}{l}\cos\left(\frac{\pi}{ \lfloor
\frac{D}{2m}\rfloor}\right),\end{equation} for all $m =
1,2,\ldots,\lfloor \frac{D}{2}\rfloor$. Thus Theorem
\ref{theofinitegraphs} is a counterpart of Urakawa's result.
\item[$(ii)$] Using our results in this section together with the
results in \cite{Urakawa99} one immediately obtains a proof of the
famous Alon-Boppana-Serre theorem for the smallest nonzero
eigenvalue and a Alon-Boppana-Serre-type theorem for the largest
eigenvalue if there are no short circuits of odd length in the
graph, see \cite{Friedman93, Lubotzky94, Davidoff03}. In fact one
even obtains slightly stronger results since we do not need the
assumption that the graph is regular nor that $l$ in the above
theorems is an integer, but we only need that the degrees are
bounded from above. A generalization of the Alon-Boppana-Serre
theorem (for both the smallest non-zero and the largest
eigenvalue) for non-regular graphs with bounded degree but integer
$l$ were obtained recently in \cite{Mohar10}, by using a different
method based on the eigenvalue interlacing property of induced
subgraphs.
\end{itemize}
\end{rem}

\section{The top of the spectrum of $\Delta$}\label{Section7}
In Section \ref{Preliminaries} we discussed that the spectrum of
the Laplace operator is connected to the spectrum of the Dirichlet
Laplace operator, see \eqref{Exhaustion}. Now we are going to use
the results obtained for the Dirichlet Laplace operator in the
previous sections to estimate the top of $\sigma(\Delta)$.

The following simple observation will be often used throughout
this section.
\begin{lemma}\label{finitetoinfinite} Let $K$ be a finite subset
of $\Gamma$ and $\Omega\uparrow\Gamma\setminus K$ be an exhaustion
of $\Gamma\setminus K$.  We have
$$\lim_{\Omega\uparrow\Gamma\setminus K}\lambda_{\mathrm{max}}(\Omega)
= \overline{\lambda}(\Gamma\setminus K),$$ where
$\overline{\lambda}(\Gamma\setminus K) = \sup
\sigma(\Delta_{\Gamma\setminus K})$ and $\Delta_{\Gamma\setminus
K}$ is the Laplace operator with Dirichlet boundary condition on
$\Gamma\setminus K$.
\end{lemma}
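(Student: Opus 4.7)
The plan is to mimic the proof of the exhaustion formula \eqref{Exhaustion} stated earlier (due to Dodziuk--Kendall), applied to the ``subgraph with Dirichlet boundary on $K$'' rather than to $\Gamma$ itself. The point is that $\Delta_{\Gamma\setminus K}$ is a bounded, self-adjoint, nonnegative operator on $\ell^2(\Gamma\setminus K,\mu)$, and the finite-dimensional operators $\Delta_\Omega$ for $\Omega\subset\Gamma\setminus K$ are exactly its compressions to the corresponding finitely supported subspaces. So the argument is essentially the same as in the original setting, just with $\Gamma$ replaced by $\Gamma\setminus K$.

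First I would set $a_n := \lambda_{\max}(\Omega_n)$ and record that $a_n$ is monotone nondecreasing in $n$ by Lemma \ref{BasicProperties}(v) (applied inside $\Gamma\setminus K$), so the limit $a := \lim_{n\to\infty}a_n$ exists. For the upper bound $a\le\overline\lambda(\Gamma\setminus K)$, I would use the Rayleigh characterization: for each $n$, any eigenfunction $f_n\in\ell^2(\Omega_n,\mu)$ realizing $\lambda_{\max}(\Omega_n)$ extends by zero to an element $\tilde f_n\in\ell^2(\Gamma\setminus K,\mu)$ with the same Rayleigh quotient with respect to $\Delta_{\Gamma\setminus K}$ (this is immediate from the pointwise definition of the Dirichlet Laplacian and Green's formula Lemma \ref{GreenLemma}), giving $a_n\le\overline\lambda(\Gamma\setminus K)$.

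For the reverse inequality I would fix $\varepsilon>0$ and, using $\overline\lambda(\Gamma\setminus K)=\sup\sigma(\Delta_{\Gamma\setminus K})$ together with the variational formula, pick $g\in\ell^2(\Gamma\setminus K,\mu)\setminus\{0\}$ with
\[
\frac{(\Delta_{\Gamma\setminus K}\,g,g)_\mu}{(g,g)_\mu}>\overline\lambda(\Gamma\setminus K)-\varepsilon.
\]
Since $C_0(\Gamma\setminus K)$ is dense in $\ell^2(\Gamma\setminus K,\mu)$ and $\Delta_{\Gamma\setminus K}$ is bounded, the Rayleigh quotient is continuous in $g$, so I may assume $g$ is finitely supported. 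Then $\operatorname{supp}(g)\subset\Omega_{n_0}$ for some $n_0$, and for every $n\ge n_0$ the restriction $g|_{\Omega_n}$ is a valid test function for $\Delta_{\Omega_n}$ whose Rayleigh quotient is exactly that of $g$ for $\Delta_{\Gamma\setminus K}$. Hence $a_n\ge\overline\lambda(\Gamma\setminus K)-\varepsilon$ for all $n\ge n_0$, and letting $\varepsilon\to 0$ completes the proof.

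The argument is essentially routine; the only place that needs care is the continuity of the Rayleigh quotient in the approximation step, but this is automatic from boundedness of $\Delta_{\Gamma\setminus K}$ (its norm is at most $2$). No new idea beyond the original Dodziuk--Kendall exhaustion result is required.
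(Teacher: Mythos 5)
Your proof is correct and follows essentially the same route as the paper's: monotonicity for existence of the limit, zero-extension of test functions for one inequality, and the density of $C_0(\Gamma\setminus K)$ in $\ell^2(\Gamma\setminus K,\mu)$ (plus boundedness of the operator) for the other. The paper cites \cite{DodziukKarp} for the variational characterization $\overline{\lambda}(\Gamma\setminus K)=\sup_{f\in C_0(\Gamma\setminus K)}\frac{(df,df)}{(f,f)}$, whereas you spell out the short density/continuity argument yourself, but the content is the same.
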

\begin{proof}
First we note that the limit on the l.h.s. exists. This follows
from the monotonicity of the largest eigenvalue (see Lemma
\ref{BasicProperties} (v)).

For any $f\in C_0(\Gamma\setminus K)$ (as before $C_0$ denotes the
space of finitely supported functions) there exists a finite
$\Omega(f)\subset\Gamma\setminus K$ such that
$\mathrm{supp}f\subset \Omega(f)$ and consequently
$$\frac{(df,df)}{(f,f)} \leq \sup_{g\in C_0(\Omega(f))}
\frac{(dg,dg)}{(g,g)}.$$  If $\Omega\uparrow\Gamma\setminus K$ is
an exhaustion of $\Gamma\setminus K$, we obtain for all $f\in
C_0(\Gamma\setminus K)$
$$\frac{(df,df)}{(f,f)} \leq \lim_{\Omega\uparrow\Gamma\setminus K} \sup_{g\in C_0(\Omega)}
\frac{(dg,dg)}{(g,g)}.$$ Since $\overline{C_0(\Gamma\setminus K)}=
\ell^2(\Gamma\setminus K)$ one can show that \cite{DodziukKarp}
$$\overline{\lambda}(\Gamma\setminus K) =
\sup_{f\in\ell^2(\Gamma\setminus K)}\frac{(df,df)}{(f,f)} =
\sup_{f\in C_0(\Gamma\setminus K)}\frac{(df,df)}{(f,f)}.$$
Altogether we conclude that
$$\overline{\lambda}(\Gamma\setminus K) =
\sup_{f\in C_0(\Gamma\setminus K)}\frac{(df,df)}{(f,f)} \leq
\lim_{\Omega\uparrow\Gamma\setminus K} \sup_{g\in
C_0(\Omega)}\frac{(dg,dg)}{(g,g)}=
\lim_{\Omega\uparrow\Gamma\setminus K}
\lambda_\mathrm{max}(\Omega).$$ On the other hand, for any finite
$\Omega\subset \Gamma\setminus K$ it follows from the monotonicity
of the largest Dirichlet eigenvalue that
$$\lambda_\mathrm{max}(\Omega)\leq \overline{\lambda}(\Gamma\setminus
K).$$ Taking an exhaustion $\Omega\uparrow\Gamma\setminus K$ we
obtain
$$\lim_{\Omega\uparrow\Gamma\setminus K} \lambda_\mathrm{max}(\Omega)
\leq \overline{\lambda}(\Gamma\setminus K).$$ This completes the
proof.
\end{proof}
\begin{lemma}\label{bottom1}
Similarly, the smallest eigenvalue satisfies
$$\lim_{\Omega\uparrow\Gamma\setminus K}\lambda_{1}(\Omega)
= \underline{\lambda}(\Gamma\setminus K),$$ where
$\underline{\lambda}(\Gamma\setminus K) = \inf
\sigma(\Delta_{\Gamma\setminus K})$.
\end{lemma}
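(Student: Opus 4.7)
The plan is to mirror the proof of Lemma \ref{finitetoinfinite} verbatim, exchanging supremum for infimum and reversing the monotonicity direction. First I would invoke Lemma \ref{BasicProperties} $(v)$: $\lambda_1(\Omega)$ is nonincreasing in $\Omega$, so for any exhaustion $\Omega\uparrow\Gamma\setminus K$ the limit $\lim_{\Omega\uparrow\Gamma\setminus K}\lambda_1(\Omega)$ exists and is independent of the choice of exhaustion. The proof then splits into two inequalities.

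For the upper bound, I would fix $f\in C_0(\Gamma\setminus K)$ and choose a finite set $\Omega(f)\subset \Gamma\setminus K$ containing $\mathrm{supp}\, f$. By the Rayleigh characterization (\ref{Rayleigh Lambda1}),
$$\lambda_1(\Omega(f)) \leq \frac{(df,df)}{(f,f)}.$$
Since any exhaustion eventually satisfies $\Omega\supset\Omega(f)$, the monotonicity of $\lambda_1$ gives
$$\lim_{\Omega\uparrow\Gamma\setminus K}\lambda_1(\Omega) \leq \lambda_1(\Omega(f)) \leq \frac{(df,df)}{(f,f)}.$$
Taking the infimum over $f\in C_0(\Gamma\setminus K)$ and using $\overline{C_0(\Gamma\setminus K)} = \ell^2(\Gamma\setminus K,\mu)$ (exactly as cited from \cite{DodziukKarp} in the previous lemma), I conclude
$$\lim_{\Omega\uparrow\Gamma\setminus K}\lambda_1(\Omega) \leq \inf_{f\in C_0(\Gamma\setminus K)}\frac{(df,df)}{(f,f)} = \underline{\lambda}(\Gamma\setminus K).$$

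For the lower bound, for each finite $\Omega\subset\Gamma\setminus K$ any $f\in C_0(\Omega)$ extends by zero to an element of $C_0(\Gamma\setminus K)$, so
$$\underline{\lambda}(\Gamma\setminus K)= \inf_{f\in C_0(\Gamma\setminus K)}\frac{(df,df)}{(f,f)} \leq \inf_{f\in C_0(\Omega)}\frac{(df,df)}{(f,f)} = \lambda_1(\Omega).$$
Passing to the limit along the exhaustion yields $\underline{\lambda}(\Gamma\setminus K)\leq \lim_{\Omega\uparrow\Gamma\setminus K}\lambda_1(\Omega)$, which combined with the previous inequality finishes the proof. There is no real obstacle here — the argument is essentially dual to that of Lemma \ref{finitetoinfinite}, the only care needed being to use the correct monotonicity direction (nonincreasing) of $\lambda_1$ together with the Rayleigh quotient in its infimum form.
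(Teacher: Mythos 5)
Your proof is correct and follows exactly the route the authors intended: the paper omits the proof precisely because it is obtained from Lemma \ref{finitetoinfinite} by swapping suprema for infima and using the nonincreasing monotonicity of $\lambda_1$, which is what you carried out, citing the same density fact $\overline{C_0(\Gamma\setminus K)}=\ell^2(\Gamma\setminus K,\mu)$ to pass from $C_0$ to the full Rayleigh quotient.
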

\begin{proof}The proof is similar to the one of the last lemma, so
we omit it here.
\end{proof}

The last two lemmata will be very useful in the following since
they allows us to transfer results form finite to infinite
subsets.

\begin{lemma}For any finite $K\subset \Gamma$ we have
\begin{equation}\label{finitetoinfinite2}2\bar{h}(\Gamma\setminus K) + h(\Gamma\setminus K)\leq
 \overline{\lambda}(\Gamma\setminus K)\leq 1+ \sqrt{1 - (1-\bar{h}(\Gamma\setminus
 K))^2},\end{equation}where $\bar{h}(\Gamma\setminus
 K)= \lim_{\Omega\uparrow\Gamma\setminus K}\bar{h}(\Omega)$ and $h(\Gamma\setminus
 K)= \lim_{\Omega\uparrow\Gamma\setminus K}h(\Omega)$.
\end{lemma}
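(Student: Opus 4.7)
The plan is to apply the dual Cheeger inequality from Theorem \ref{Mtheo1} to each member of an exhaustion of $\Gamma\setminus K$ and then pass to the limit, using Lemma \ref{finitetoinfinite} for the convergence of the largest Dirichlet eigenvalues. Fix an exhaustion $\Omega_n\uparrow\Gamma\setminus K$ by finite connected subsets. Theorem \ref{Mtheo1} gives, for every $n$,
\begin{equation*}
2\bar{h}(\Omega_n)+h(\Omega_n)\;\le\;\lambda_\mathrm{max}(\Omega_n)\;\le\;1+\sqrt{1-(1-\bar h(\Omega_n))^2}.
\end{equation*}

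Next I would verify the relevant monotonicities. As $\Omega$ grows, the class of admissible disjoint pairs $V_1,V_2\subset\Omega$ in the definition of $\bar{h}(\Omega)$ only gets larger, so $\bar{h}(\Omega)$ is non-decreasing; similarly, the infimum defining $h(\Omega)$ is taken over a larger collection, so $h(\Omega)$ is non-increasing. Combined with Lemma \ref{BasicProperties}~(v), all three sequences $\bar{h}(\Omega_n)$, $h(\Omega_n)$, $\lambda_\mathrm{max}(\Omega_n)$ are monotone, hence convergent, and their limits coincide with $\bar{h}(\Gamma\setminus K)$, $h(\Gamma\setminus K)$, and $\overline{\lambda}(\Gamma\setminus K)$ respectively (the last identification being exactly Lemma \ref{finitetoinfinite}, which was stated for general finite $K$).

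It then remains to take $n\to\infty$ in the displayed inequality. The left side converges to $2\bar{h}(\Gamma\setminus K)+h(\Gamma\setminus K)$ by linearity, and the right side converges to $1+\sqrt{1-(1-\bar{h}(\Gamma\setminus K))^2}$ by continuity of the map $t\mapsto 1+\sqrt{1-(1-t)^2}$ on $[0,1]$. This yields \eqref{finitetoinfinite2} at once.

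The only mild subtlety, and thus the step I would check most carefully, is the compatibility of the two limit procedures: the right-hand bound of Theorem \ref{Mtheo1} is not itself monotone in a transparent way as a bound on $\lambda_\mathrm{max}$ (since $\lambda_\mathrm{max}(\Omega_n)$ increases while $\bar{h}(\Omega_n)$ also increases), but we do not need monotonicity of the bound itself, only pointwise validity on each $\Omega_n$ together with the convergence of each of the three numerical sequences. Because these convergences are guaranteed by monotonicity and Lemma \ref{finitetoinfinite}, passing to the limit is unproblematic, and no additional argument (such as diagonalization over test functions) is required.
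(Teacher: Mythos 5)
Your proposal is correct and follows essentially the same route as the paper's proof: apply Theorem \ref{Mtheo1} to each member of an exhaustion of $\Gamma\setminus K$, invoke Lemma \ref{finitetoinfinite} to identify $\lim_n\lambda_\mathrm{max}(\Omega_n)$ with $\overline{\lambda}(\Gamma\setminus K)$, and pass to the limit using the monotone convergence of $h(\Omega_n)$, $\bar h(\Omega_n)$ and the continuity of $t\mapsto 1+\sqrt{1-(1-t)^2}$. The concern you flag about compatibility of the limits is correctly resolved and matches the paper's (implicit) reasoning.
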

\begin{proof}
From Lemma \ref{finitetoinfinite} and \eqref{eq6} we immediately
obtain for an exhaustion $\Omega\uparrow\Gamma\setminus K$:
\begin{eqnarray*}\overline{\lambda}(\Gamma\setminus
K)&=&\lim_{\Omega\uparrow\Gamma\setminus
K}\lambda_\mathrm{max}(\Omega)\\&\leq&
\lim_{\Omega\uparrow\Gamma\setminus K}\left( 1+ \sqrt{1 -
(1-\bar{h}(\Omega))^2}\right) \\&=& 1+ \sqrt{1 -
(1-\bar{h}(\Gamma\setminus K))^2}\end{eqnarray*} and
\begin{eqnarray*}
\overline{\lambda}(\Gamma\setminus K) &=&\lim_{\Omega\uparrow\Gamma\setminus K}\lambda_\mathrm{max}(\Omega)\\
&\geq& \lim_{\Omega\uparrow\Gamma\setminus K} \left(
2\bar{h}(\Omega) + h(\Omega)\right)\\&=& 2\bar{h}(\Gamma\setminus
K) + h(\Gamma\setminus K).\end{eqnarray*}
\end{proof}
Now we obtain an estimate for the top of the spectrum by the
Cheeger and the dual Cheeger constant.

\begin{theo}\label{Estimates top of the spectrum} The top of the spectrum satisfies:
$$2\bar{h}(\Gamma) + h(\Gamma)\leq
\overline{\lambda}(\Gamma)\leq 1+ \sqrt{1 -
(1-\bar{h}(\Gamma))^2}$$
\end{theo}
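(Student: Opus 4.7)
The plan is to prove this by passing to the limit in the finite-subset dual Cheeger inequality of Theorem \ref{Mtheo1}. In fact, the statement is precisely the $K = \emptyset$ case of the preceding lemma, so the argument amounts to checking that the same limiting procedure used there carries through when no vertices are removed. Concretely, I would fix an exhaustion $\Omega \uparrow \Gamma$ and apply Theorem \ref{Mtheo1} to each $\Omega_n$ to obtain the chain of inequalities
\[
2\bar{h}(\Omega_n) + h(\Omega_n) \leq \lambda_{\mathrm{max}}(\Omega_n) \leq 1 + \sqrt{1 - (1 - \bar{h}(\Omega_n))^2},
\]
and then let $n \to \infty$.

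For the limiting step, the key point is that each of the three quantities involved is monotone in $\Omega$, so the limits exist and are independent of the chosen exhaustion. By Lemma \ref{BasicProperties}(v), $\lambda_{\mathrm{max}}(\Omega)$ is nondecreasing in $\Omega$, and \eqref{Exhaustion} gives $\lim_{\Omega\uparrow\Gamma}\lambda_{\mathrm{max}}(\Omega) = \overline{\lambda}(\Gamma)$. The Cheeger constant $h(\Omega)$ is nonincreasing (as noted in the introduction, since the infimum is taken over an enlarging family of subsets), while the dual Cheeger constant $\bar{h}(\Omega)$ is nondecreasing (since the supremum in its definition is taken over an enlarging family of pairs $(V_1,V_2)$), so both limits $h(\Gamma) = \lim_{\Omega\uparrow\Gamma} h(\Omega)$ and $\bar{h}(\Gamma) = \lim_{\Omega\uparrow\Gamma} \bar{h}(\Omega)$ exist in $[0,1]$.

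Passing to the limit on the left, the lower bound $2\bar{h}(\Omega_n) + h(\Omega_n) \leq \lambda_{\mathrm{max}}(\Omega_n)$ yields $2\bar{h}(\Gamma) + h(\Gamma) \leq \overline{\lambda}(\Gamma)$, using that the sum of the monotone limits is the limit of the sum. For the upper bound, continuity of $t \mapsto 1 + \sqrt{1 - (1-t)^2}$ on $[0,1]$ together with $\bar{h}(\Omega_n) \to \bar{h}(\Gamma)$ gives $\overline{\lambda}(\Gamma) \leq 1 + \sqrt{1 - (1 - \bar{h}(\Gamma))^2}$.

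There is really no substantive obstacle here beyond the bookkeeping of monotonicities: the whole content is already contained in Theorem \ref{Mtheo1} and in the exhaustion identities of Section \ref{Preliminaries}. The one small point worth making explicit is that the definition of $\bar{h}(\Gamma)$ and $h(\Gamma)$ as exhaustion limits does not depend on the choice of exhaustion (by monotonicity), so it is legitimate to work with any convenient $\Omega_n$. This parallels the previous lemma, which established the stronger statement for $\Gamma \setminus K$ with arbitrary finite $K$; Theorem \ref{Estimates top of the spectrum} is simply its $K = \emptyset$ specialization.
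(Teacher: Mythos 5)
Your proposal is correct and takes the same route as the paper: the paper's own proof is literally ``set $K=\emptyset$ in the preceding lemma,'' and that lemma is proved by exactly the limiting argument you spell out (apply the finite dual Cheeger inequality of Theorem \ref{Mtheo1} to an exhaustion, then pass to the limit using monotonicity and \eqref{Exhaustion}). You have simply unpacked the $K=\emptyset$ case rather than citing the lemma directly, which is a cosmetic difference only.
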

\begin{proof}Let $K=\ \emptyset$ in \eqref{finitetoinfinite2}.
This completes the proof.
\end{proof}

For completeness, we include the estimate of the bottom of the
spectrum which is an easy consequence of Theorem
\ref{CheegerInequality} and Lemma \ref{bottom1}.
\begin{theo}[cf. \cite{Fujiwara96}]\label{bottom4} For any finite $K\subset \Gamma$ we have
\begin{equation}\label{bottom2}
1-\sqrt{1-h^2(\Gamma\setminus K)}\leq
\underline{\lambda}(\Gamma\setminus K)\leq h(\Gamma\setminus K),
\end{equation}
\begin{equation}\label{bottom3}
1-\sqrt{1-h^2(\Gamma)}\leq \underline{\lambda}(\Gamma)\leq
h(\Gamma).
\end{equation}
\end{theo}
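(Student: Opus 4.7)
The plan is to prove this in complete parallel to the argument just used for the top of the spectrum in Theorem~\ref{Estimates top of the spectrum}, but invoking the Cheeger inequality (Theorem~\ref{CheegerInequality}) instead of the dual Cheeger inequality, and Lemma~\ref{bottom1} instead of Lemma~\ref{finitetoinfinite}. Concretely, I would first prove \eqref{bottom2} for an arbitrary finite $K$, and then obtain \eqref{bottom3} as the special case $K=\emptyset$.

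For \eqref{bottom2}, I would fix a finite $K\subset\Gamma$ and choose an exhaustion $\Omega\uparrow\Gamma\setminus K$ by finite connected subsets. For each such $\Omega$ the Cheeger inequality (Theorem~\ref{CheegerInequality}) gives
\[
1-\sqrt{1-h^{2}(\Omega)}\;\leq\;\lambda_{1}(\Omega)\;\leq\;h(\Omega).
\]
Now I would let $\Omega\uparrow\Gamma\setminus K$. By Lemma~\ref{bottom1} we have $\lambda_{1}(\Omega)\to\underline{\lambda}(\Gamma\setminus K)$, and by the very definition of the Cheeger constant for an (exhausted) infinite subset, $h(\Omega)\to h(\Gamma\setminus K)$; since $h(\Omega)$ is non-increasing in $\Omega$, this limit is well-defined and independent of the exhaustion. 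Finally, the map $t\mapsto 1-\sqrt{1-t^{2}}$ is continuous on $[0,1]$, so passing to the limit on both sides of the Cheeger inequality yields
\[
1-\sqrt{1-h^{2}(\Gamma\setminus K)}\;\leq\;\underline{\lambda}(\Gamma\setminus K)\;\leq\;h(\Gamma\setminus K),
\]
which is \eqref{bottom2}.

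For \eqref{bottom3}, I would simply apply \eqref{bottom2} with $K=\emptyset$, noting that in this case $\Gamma\setminus K=\Gamma$, $\underline{\lambda}(\Gamma\setminus K)=\underline{\lambda}(\Gamma)$, and $h(\Gamma\setminus K)=h(\Gamma)$ by the definitions recalled in the introduction and in Section~\ref{Preliminaries}.

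There is no real obstacle here: the substantive content sits entirely in the finite Cheeger inequality (Theorem~\ref{CheegerInequality}) and in the exhaustion lemma (Lemma~\ref{bottom1}), both of which are already established. The only small technical point worth stating explicitly is the continuity of $t\mapsto 1-\sqrt{1-t^{2}}$, which legitimizes passing the square root through the limit; everything else is monotone convergence along the exhaustion.
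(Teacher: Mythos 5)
Your proposal is correct and matches the paper's intended argument exactly: the authors state that Theorem~\ref{bottom4} is an easy consequence of Theorem~\ref{CheegerInequality} and Lemma~\ref{bottom1}, which is precisely the route you take, passing the finite Cheeger inequality to the limit along an exhaustion of $\Gamma\setminus K$ and then specializing to $K=\emptyset$. The one detail you make explicit that the paper leaves tacit is the continuity of $t\mapsto 1-\sqrt{1-t^{2}}$, which is a sound and harmless addition.
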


\section{The largest eigenvalue and geometric properties of
graphs}\label{Section8} For an infinite graph $\Gamma$ with
positive spectrum (called a nonamenable graph), i.e.
$\underline{\lambda}(\Gamma)>0$ which is equivalent to
$h(\Gamma)>0$ by \eqref{bottom3} in Theorem \ref{bottom4}, it is
known that the graph has exponential volume growth and very fast
heat kernel decay, (see $(10.4)$ and Lemma 8.1 in \cite{Woess00}).
In addition, nonamenablity of graphs is a rough-isometric
invariant property (see Theorem 4.7 in \cite{Woess00}). In this
section we ask the question what the top of the spectrum can tell
us about the graph. More precisely we are asking what we can infer
from $\bar{\lambda}(\Gamma)<2$ about the graph. In contrast to
$\underline{\lambda}(\Gamma)>0$ we will present some examples (see
Example \ref{ex2} and Example \ref{ex3}) which show that
$\bar{\lambda}(\Gamma)<2$ is not rough-isometric invariant. Before
giving these examples, we prove some affirmative results which
indicate that the top of the spectrum controls the geometry of the
graph to the same extent as the first eigenvalue does if the graph
is in some sense close to a bipartite one. Note that Theorem
\ref{Estimates top of the spectrum} implies that
$\overline{\lambda}(\Gamma)<2$ is equivalent to
$\bar{h}(\Gamma)<1.$

In the sequel, we fix some vertex $x_0\in\Gamma.$ We denote the
geodesic sphere of radius $r$ ($r\in\mathds{N}\cup\{0\}$) centered
at $x_0$ by $S_r:=S_r(x_0)=\{y\in\Gamma: d(y,x_0)=r\}.$ Let
$p_r:=|E(S_r,S_{r+1})|, q_r:=|E(S_r,S_r)|,$
$P_r:=\sum_{i=0}^rp_i,$ $Q_r:=\sum_{i=0}^rq_i$ and
$p_{-1}=0,p_0=\mu(x_0)-\mu_{x_0,x_0},q_0=\mu_{x_0,x_0}.$ Then
$\mathrm{vol}(B_r):=\mathrm{vol}(B_r(x_0))=P_{r-1}+P_r+Q_r.$
\begin{theo}\label{EPV}
Let $\Gamma$ be an infinite graph with $\bar{h}(\Gamma)\leq
1-\epsilon_0$ for some $0<\epsilon_0<1.$ If
\begin{equation}\label{eq2}
\limsup_{r\rightarrow\infty}\frac{Q_r}{\mathrm{vol}(B_r)}<\epsilon_0,
\end{equation} then
$$\mathrm{vol}(B_r)\geq C_1e^{C_2r},$$ for some $C_1,C_2>0$ and any $r\geq1.$
\end{theo}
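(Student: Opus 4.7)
My plan is to exploit the dual Cheeger bound on a very natural bipartition of the ball $B_r = B_r(x_0)$, namely the one coming from the parity of $d(\cdot, x_0)$.

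First, I would set
\[
V_1 := \bigcup_{\substack{0\le i\le r\\ i\text{ even}}} S_i, \qquad V_2 := \bigcup_{\substack{0\le i\le r\\ i\text{ odd}}} S_i,
\]
so that $V_1\cup V_2 = B_r$ and $V_1, V_2$ are disjoint and finite. Because $S_i$ and $S_j$ are non-adjacent whenever $|i-j|\ge 2$ and edges inside any $S_i$ stay within one of the two classes, one finds
\[
|E(V_1,V_2)| \;=\; \sum_{i=0}^{r-1} p_i \;=\; P_{r-1},
\qquad
\mathrm{vol}(V_1)+\mathrm{vol}(V_2) \;=\; \mathrm{vol}(B_r) \;=\; P_{r-1}+P_r+Q_r,
\]
where the second identity is a direct expansion of $\mathrm{vol}(B_r)$ using the three contributions $|E(V_1,V_1)|+|E(V_2,V_2)| = Q_r$, $2|E(V_1,V_2)| = 2P_{r-1}$, and $|\partial B_r| = p_r$. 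The hypothesis $\bar h(\Gamma)\le 1-\varepsilon_0$ applied to this pair then yields
\[
\frac{2 P_{r-1}}{P_{r-1}+P_r+Q_r} \;\le\; 1-\varepsilon_0,
\]
which I rewrite as $2P_{r-1}\le (1-\varepsilon_0)\,\mathrm{vol}(B_r)$, i.e.\ $P_{r-1}\le \tfrac{1-\varepsilon_0}{2}\mathrm{vol}(B_r)$.

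Next I would bring in \eqref{eq2}. Pick $\delta>0$ with $\limsup_{r\to\infty} Q_r/\mathrm{vol}(B_r) < \varepsilon_0 - 2\delta$; then for all $r\ge r_0$ one has $Q_r \le (\varepsilon_0-2\delta)\mathrm{vol}(B_r)$, hence
\[
P_{r-1}+P_r \;=\; \mathrm{vol}(B_r)-Q_r \;\ge\; (1-\varepsilon_0+2\delta)\,\mathrm{vol}(B_r).
\]
Subtracting the upper bound on $P_{r-1}$ obtained above gives
\[
p_r \;=\; P_r - P_{r-1} \;\ge\; \Bigl(\tfrac{1-\varepsilon_0}{2}+2\delta\Bigr)\mathrm{vol}(B_r)-\tfrac{1-\varepsilon_0}{2}\mathrm{vol}(B_r) \;=\; 2\delta\,\mathrm{vol}(B_r).
\]
Wait — more carefully: $P_r \ge (P_r+P_{r-1}) - P_{r-1} \ge (1-\varepsilon_0+2\delta)\mathrm{vol}(B_r) - \tfrac{1-\varepsilon_0}{2}\mathrm{vol}(B_r) = \bigl(\tfrac{1-\varepsilon_0}{2}+2\delta\bigr)\mathrm{vol}(B_r)$, so $p_r = P_r - P_{r-1}\ge 2\delta\,\mathrm{vol}(B_r)$.

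Finally I would use that the volume of $B_{r+1}$ picks up at least the cross-edge contribution from $S_r$ to $S_{r+1}$:
\[
\mathrm{vol}(B_{r+1}) \;-\; \mathrm{vol}(B_r) \;=\; \sum_{x\in S_{r+1}}\mu(x) \;\ge\; |E(S_{r+1},S_r)| \;=\; p_r,
\]
so $\mathrm{vol}(B_{r+1})\ge (1+2\delta)\mathrm{vol}(B_r)$ for all $r\ge r_0$. Iterating from $r_0$ gives
\[
\mathrm{vol}(B_r) \;\ge\; \mathrm{vol}(B_{r_0})\,(1+2\delta)^{r-r_0} \;=\; C_1 e^{C_2 r}
\]
with $C_2=\log(1+2\delta)$ and a suitable $C_1>0$, which absorbs the finitely many small $r$'s.

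The only delicate point is the bookkeeping in step one — making sure that the splitting by parity of $d(\cdot,x_0)$ really produces $|E(V_1,V_2)|=P_{r-1}$ and that the formula $\mathrm{vol}(B_r)=P_{r-1}+P_r+Q_r$ matches the conventions (each non-loop cross-sphere edge is counted once in $p_i$ but contributes $2\mu_{xy}$ to the total volume, whereas intra-sphere edges contribute $q_i$). Once that combinatorial identity is pinned down, the rest is a one-line use of the dual Cheeger inequality plus the loop-density hypothesis \eqref{eq2}.
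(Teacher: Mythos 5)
Your proof is correct and follows essentially the same strategy as the paper's: bipartition $B_r$ by the parity of $d(\cdot,x_0)$, apply the dual Cheeger bound to get $2P_{r-1}\le(1-\epsilon_0)\mathrm{vol}(B_r)$, use \eqref{eq2} to control $Q_r$, conclude $p_r\ge c\,\mathrm{vol}(B_r)$ for some $c>0$, and iterate. Your algebra is slightly streamlined (you bound $p_r$ directly against $\mathrm{vol}(B_r)$ rather than against $\mathrm{vol}(B_{r-1})$ via a lower bound on $P_{r-1}$), but this is a cosmetic difference, not a different route.
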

\begin{rem} The condition \eqref{eq2} means that the graph $\Gamma$
is close to a bipartite graph in the sense that $Q_r$ is dominated
by $\mathrm{vol}(B_r)$ (or equivalently $P_r$). Of course
condition \eqref{eq2} is trivially satisfied for bipartite graphs
since $(Q_r=0)$ in this case. Obviously
$\lim_{r\rightarrow\infty}\frac{Q_r}{\mathrm{vol}(B_r)}=0$ is
stronger than \eqref{eq2} which will be used in Corollary
\ref{Coro1} and \ref{Coro2}.
\end{rem}

\begin{proof}
By \eqref{eq2} there exists $r_0>0$ and $\theta<\epsilon_0$ such
that $Q_r\leq \theta \mathrm{vol}(B_r)$ for any $r\geq r_0.$ Then
we have
$$\mathrm{vol}(B_r)=P_{r-1}+P_r+Q_r\leq P_{r-1}+P_r+\theta\mathrm{vol}(B_r).$$ This yields
\begin{equation}\label{eq1}
(1-\theta)\mathrm{vol}(B_r)\leq P_{r-1}+P_r\leq 2P_r.
\end{equation}
We introduce an alternating partition of $B_r$ as follows. Let
$V_1=S_0\cup S_2\cup\cdots\cup S_r$ and $V_2=S_1\cup
S_3\cup\cdots\cup S_{r-1}$ if $r$ is even, or $V_1=S_0\cup
S_2\cup\cdots\cup S_{r-1}$ and $V_2=S_1\cup S_3\cup\cdots\cup
S_{r}$ if $r$ is odd. Then $\bar{h}(\Gamma)\leq 1-\epsilon_0$
implies that
$$2|E(V_1,V_2)|\leq (1-\epsilon_0)\mathrm{vol}(B_r).$$ That is
$$2P_{r-1}\leq(1-\epsilon_0)\mathrm{vol}(B_r)\leq \frac{2(1-\epsilon_0)}{1-\theta}P_r,$$ where the last inequality follows from \eqref{eq1}. Then we have for any $r\geq r_0+1$
\begin{eqnarray*}
p_r&=&P_r-P_{r-1}\geq\left(\frac{1-\theta}{1-\epsilon_0}-1\right)P_{r-1}\\
&\geq&\left(\frac{1-\theta}{1-\epsilon_0}-1\right)\left(\frac{1-\theta}{2}\right)\mathrm{vol}(B_{r-1})\\
&=&\delta(\epsilon_0)\mathrm{vol}(B_{r-1}),
\end{eqnarray*} where $\delta(\epsilon_0)=\left(\frac{1-\theta}{1-\epsilon_0}-1\right)\left(\frac{1-\theta}{2}\right)>0$ since $\theta<\epsilon_0.$
Then for $r\geq r_0+1$
\begin{eqnarray*}
\mathrm{vol}(B_r)&=&\mathrm{vol}(B_{r-1})+p_r+p_{r-1}+q_r\\
&\geq&\mathrm{vol}(B_{r-1})+p_r\\
&\geq&(1+\delta(\epsilon_0))\mathrm{vol}(B_{r-1}).
\end{eqnarray*}
The iterations imply that
$$\mathrm{vol}(B_r)\geq(1+\delta)^{r-r_0}\mathrm{vol}(B_{r_0}),$$
for any $r\geq r_0.$ The theorem follows.
\end{proof}

Next we give some sufficient conditions for the previous theorem.
We assume in the next two corollaries that
$\mathrm{vol}(B_r)\uparrow\infty$ $(r\rightarrow\infty)$. A
sufficient condition for this is for example that $\mu_{xy}\geq
C_0>0$ for any $x\sim y.$
\begin{coro}\label{Coro1}
Let $\Gamma$ be an infinite graph with $\bar{h}(\Gamma)\leq
1-\epsilon_0$ for some $0<\epsilon_0<1$ and
\begin{equation}\label{eq3}
\sum_{r=0}^\infty\frac{q_r}{\mathrm{vol}(B_r)}<\infty,
\end{equation} where we assume that $\mathrm{vol}(B_r)\uparrow\infty$
$(r\rightarrow\infty)$. Then we have
$$\mathrm{vol}(B_r)\geq C_1e^{C_2r},$$ for some $C_1,C_2>0$ and any $r\geq1.$
\end{coro}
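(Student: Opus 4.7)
The plan is to reduce Corollary \ref{Coro1} directly to Theorem \ref{EPV} by showing that the summability condition $\sum_{r=0}^\infty q_r/\mathrm{vol}(B_r)<\infty$ actually implies the (strictly stronger than needed) conclusion $\lim_{r\to\infty} Q_r/\mathrm{vol}(B_r)=0$. Since $0<\epsilon_0<1$, this gives $\limsup_r Q_r/\mathrm{vol}(B_r)=0<\epsilon_0$, i.e.\ hypothesis \eqref{eq2} of Theorem \ref{EPV}, and the exponential volume growth follows at once.

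To establish $Q_r/\mathrm{vol}(B_r)\to 0$, I would argue in the spirit of Kronecker's lemma, exploiting the monotonicity $\mathrm{vol}(B_i)\leq \mathrm{vol}(B_r)$ for $i\leq r$. Fix $\varepsilon>0$ and, using the convergence of the series, pick $N=N(\varepsilon)$ so that the tail satisfies $\sum_{i\geq N} q_i/\mathrm{vol}(B_i)<\varepsilon$. Then, for every $r\geq N$, split
\[
\frac{Q_r}{\mathrm{vol}(B_r)}
\;=\; \frac{Q_{N-1}}{\mathrm{vol}(B_r)} \;+\; \frac{1}{\mathrm{vol}(B_r)}\sum_{i=N}^r q_i
\;\leq\; \frac{Q_{N-1}}{\mathrm{vol}(B_r)} \;+\; \sum_{i=N}^r \frac{q_i}{\mathrm{vol}(B_i)},
\]
where the inequality uses $\mathrm{vol}(B_i)\leq\mathrm{vol}(B_r)$ for $i\leq r$. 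The first summand tends to $0$ as $r\to\infty$ since $Q_{N-1}$ is a fixed finite quantity and $\mathrm{vol}(B_r)\uparrow\infty$ by assumption, while the second summand is bounded by $\varepsilon$. Hence $\limsup_{r\to\infty} Q_r/\mathrm{vol}(B_r)\leq \varepsilon$, and since $\varepsilon$ was arbitrary we obtain $\lim_{r\to\infty} Q_r/\mathrm{vol}(B_r)=0$.

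With this in hand, condition \eqref{eq2} holds trivially, so Theorem \ref{EPV} applies and yields $\mathrm{vol}(B_r)\geq C_1 e^{C_2 r}$ for some constants $C_1,C_2>0$ and all $r\geq 1$, which completes the proof.

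There is really no serious obstacle: the heart of the argument is the Kronecker-type observation above, and the use of $\mathrm{vol}(B_r)\uparrow\infty$ is essential precisely for the first term in the split to vanish. The only subtlety worth flagging is that one must invoke the monotonicity of $\mathrm{vol}(B_i)$ in $i$ (obvious from its definition) to pass from the weighted tail $\sum q_i/\mathrm{vol}(B_i)$ to a bound on the unweighted partial sum $\sum q_i$ divided by $\mathrm{vol}(B_r)$.
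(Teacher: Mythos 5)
Your proof is correct and follows the same route as the paper: reduce to Theorem \ref{EPV} by showing $Q_r/\mathrm{vol}(B_r)\to 0$. The paper simply cites Kronecker's lemma for this step, while you have written out the standard Kronecker-type argument explicitly (split at $N$, use monotonicity of $\mathrm{vol}(B_i)$ to bound the tail, let $\mathrm{vol}(B_r)\to\infty$ to kill the head); the two are the same argument.
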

\begin{proof}
Kronecker's lemma, which is well-known in probability theory (see
\cite{Durrett96}), \eqref{eq3}, and
$\mathrm{vol}(B_r)\uparrow\infty$ imply that
$$\frac{Q_r}{\mathrm{vol}(B_r)}\rightarrow 0,\ \ \ \ r\rightarrow\infty.$$ Then the corollary follows from Theorem \ref{EPV}.
\end{proof}

\begin{coro}\label{Coro2}
Let $\Gamma$ be an infinite graph with
$\mathrm{vol}(B_r)\uparrow\infty$ $(r\rightarrow\infty)$,
$\bar{h}(\Gamma)\leq 1-\epsilon_0$ for some $0<\epsilon_0<1$ and
$$p_r\rightarrow\infty,\ \frac{q_r}{p_r}\rightarrow0,\ \ \ r\rightarrow\infty.$$ Then we have
$$\mathrm{vol}(B_r)\geq C_1e^{C_2r},$$ for some $C_1,C_2>0$ and any $r\geq1.$
\end{coro}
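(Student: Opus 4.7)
The plan is to reduce Corollary \ref{Coro2} to Theorem \ref{EPV} by showing that the hypotheses $p_r\to\infty$ and $q_r/p_r\to 0$ imply the stronger form of the growth condition, namely
\[
\lim_{r\to\infty}\frac{Q_r}{\mathrm{vol}(B_r)}=0,
\]
which certainly gives $\limsup_{r\to\infty}Q_r/\mathrm{vol}(B_r)<\epsilon_0$ as required in \eqref{eq2}.

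The key step is a Stolz--Ces\`aro argument applied to the sequences $Q_r=\sum_{i=0}^r q_i$ and $P_r=\sum_{i=0}^r p_i$. First I would note that because $p_r\to\infty$, the sequence $P_r$ is strictly increasing for $r$ sufficiently large and $P_r\to\infty$. The consecutive differences satisfy
\[
\frac{Q_{r+1}-Q_r}{P_{r+1}-P_r}=\frac{q_{r+1}}{p_{r+1}}\longrightarrow 0 \qquad (r\to\infty).
\]
The Stolz--Ces\`aro theorem (in its $\infty/\infty$ form) then yields $Q_r/P_r\to 0$.

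Next I would combine this with the trivial bound $\mathrm{vol}(B_r)=P_{r-1}+P_r+Q_r\geq P_r$ to conclude
\[
\frac{Q_r}{\mathrm{vol}(B_r)}\leq\frac{Q_r}{P_r}\longrightarrow 0,
\]
so in particular \eqref{eq2} holds with any $0<\epsilon_0<1$. Theorem~\ref{EPV} then supplies the desired exponential lower bound $\mathrm{vol}(B_r)\geq C_1 e^{C_2 r}$.

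I do not expect a real obstacle here; the content of the corollary is essentially that the two pointwise assumptions on $p_r,q_r$ force the global comparison required by Theorem~\ref{EPV}. The only mild subtlety is making sure Stolz--Ces\`aro is applicable, which requires checking that $P_r$ is eventually strictly increasing and unbounded --- both immediate from $p_r\to\infty$. The assumption $\mathrm{vol}(B_r)\uparrow\infty$ is not even needed for the reduction itself (it is already a hypothesis of Theorem~\ref{EPV} and is inherited automatically from $p_r\to\infty$).
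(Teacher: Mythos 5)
Your proposal is correct and matches the paper's argument: the paper's terse invocation of ``L'Hospital's Rule'' is exactly the discrete Stolz--Ces\`aro step you spell out, and both proofs then use $\mathrm{vol}(B_r)\geq P_r$ to pass from $Q_r/P_r\to 0$ to $Q_r/\mathrm{vol}(B_r)\to 0$ and invoke Theorem~\ref{EPV}. Your side remark that $\mathrm{vol}(B_r)\uparrow\infty$ is already forced by $p_r\to\infty$ is also accurate.
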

\begin{proof}
The corollary follows from
$$\frac{Q_r}{\mathrm{vol}(B_r)}\leq\frac{Q_r}{P_r}\rightarrow0,\ \ \ \ r\rightarrow\infty,$$ since $p_r\rightarrow\infty$ and $\frac{q_r}{p_r}\rightarrow0$ (L'Hospital's Rule).
\end{proof}

We recall the definition of Cayley graphs. Let $G$ be a group. A
subset $S\subset G$ is called a generating set of $G$ if any $x\in
G$ can be written as $x=s_1s_2\cdots s_n$ for some $n\geq1$ and
$s_i\in S,$ $1\leq i\leq n.$
\begin{defi}\label{Cayley graph} For a group $G$ and a finite symmetric generating set $S$ of $G$ (i.e. $S=S^{-1}$), there exists a graph $\Gamma=(V,E)$
associated with the pair $(G,S)$ where the set of vertices $V=G$
and $(x,y)\in E$ iff $x=ys$ for some $s\in S.$ It is called the
Cayley graph associated with $(G,S).$
\end{defi}

In the following examples, we calculate the largest eigenvalues of
Dirichlet Laplace operator. These calculations will show that
$\bar{\lambda}<2$ is not a rough-isometric invariant.

\begin{example}\label{ex1}
Let $P_{\infty}$ be the graph of an infinite line which is the
Cayley graph of $(\mathds{Z},\{\pm 1\})$, see Figure \ref{Fig.1}.
Denote $\Omega_n^{k_0}:=\{k_0+i: 0\leq i\leq n-1\}.$ By the
symmetry, $\lambda(\Omega_n^{k_0})=\lambda(\Omega_n^{k_1})$ for
any $k_0,k_1\in\mathds{Z}.$ Without loss of generality, we
consider the Dirichlet eigenvalues of $\Omega_n:=\Omega_n^0.$ Then
the eigenvalues of $\Delta_{\Omega_n}$ are
$$1-\cos\frac{j\pi}{n+1},\ \ \ \ j=1,2,\cdots,n.$$ Hence
$$\lambda_1(\Omega_n)=1-\cos\frac{\pi}{n+1}\rightarrow 0,$$ and
$$\lambda_{\mathrm{max}}(\Omega_n)=1+\cos\frac{\pi}{n+1}\rightarrow 2,$$
as $n\rightarrow\infty.$ By \eqref{Exhaustion} this implies that
$\underline{\lambda}(\Gamma)=0$ and
$\overline{\lambda}(\Gamma)=2$.
\end{example}
\begin{figure}\begin{center}
\includegraphics[width =
7cm]{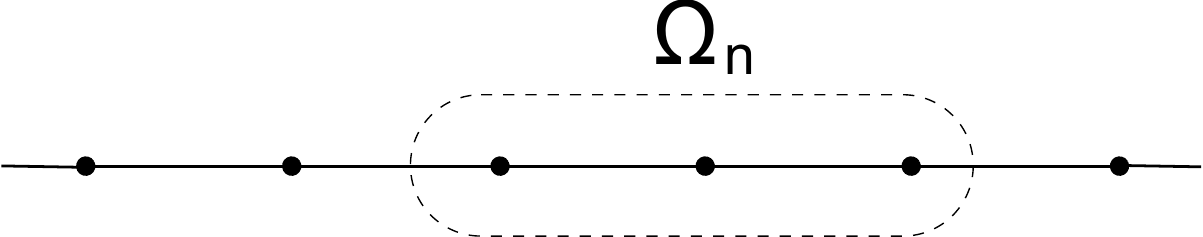}\caption{\label{Fig.1} The infinite path
$P_{\infty}$.  }
\end{center}
\end{figure}

\begin{example}\label{ex2}
Let $\Gamma=(V,E),$
$V=\{i\in\mathds{Z}\}\cup\{i'\in\mathds{Z}\}=V_1\cup V_2,$ i.e.
the disjoint union $\mathds{Z}\sqcup\mathds{Z},$ and
$E=E(V_1,V_1)\cup E(V_2,V_2)\cup E(V_1,V_2)$ where
$$E(V_1,V_1)=\{(i,j):i,j\in V_1, |i-j|=1\},$$
$$E(V_2,V_2)=\{(i',j'):i',j'\in V_2, |i'-j'|=1\},$$
and
$$E(V_1,V_2)=\{(i,j'):i\in V_1, j'\in V_2, |i-j'|=0\ or\ 1\},$$ see Figure
\ref{Fig.2}. Let $\Omega_n=\{i\in V_1: 0\leq i\leq
n-1\}\cup\{j'\in V_2: 0\leq j'\leq n-1\}.$ Then
$$\lambda_1(\Omega_n)=\frac{4}{5}(1-\cos\frac{\pi}{n+1})\rightarrow 0,$$ and
$$\lambda_{\mathrm{max}}(\Omega_n)=\frac{4}{5}(1+\cos\frac{\pi}{n+1})\rightarrow \frac{8}{5}<2,$$
as $ n\rightarrow\infty.$ Again by \eqref{Exhaustion} we have
$\underline{\lambda}(\Gamma)=0$ and
$\overline{\lambda}(\Gamma)=\frac{8}{5}$.
\end{example}
\begin{figure}\begin{center}
\includegraphics[width =
7cm, bb=0 0 400 300]{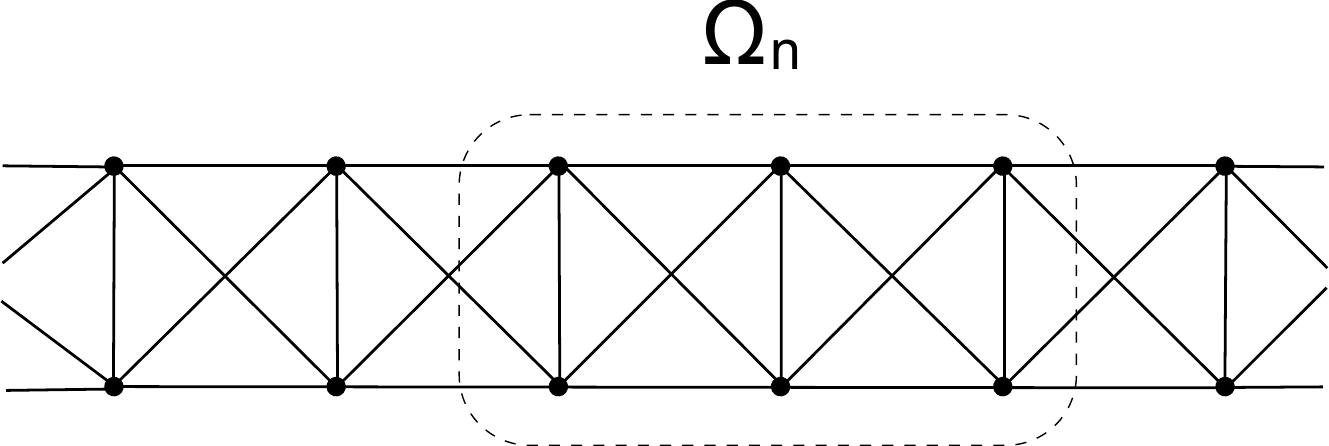}\caption{\label{Fig.2}  The
graph from Example \ref{ex2}.}
\end{center}
\end{figure}

\begin{example}\label{ex3}
Let $\Gamma$ be the Cayley graph of
$(\mathds{Z}\times\mathds{Z}_3,\{(\pm 1,\pm 1)\})$ and
$\Omega_n=\{i\in \mathds{Z}: 0\leq i\leq n-1\}\times\mathds{Z}_3$,
see Figure \ref{Fig.3}.  Then
$$\lambda_1(\Omega_n)=\frac{1}{2}(1-\cos\frac{\pi}{n+1})\rightarrow 0,$$
and
$$\lambda_{\mathrm{max}}(\Omega_n)=\frac{5}{4}+\frac{1}{2}\cos\frac{\pi}{n+1}\rightarrow \frac{7}{4}<2,$$
$n\rightarrow\infty.$ Again by \eqref{Exhaustion} we have
$\underline{\lambda}(\Gamma)=0$ and
$\overline{\lambda}(\Gamma)=\frac{7}{4}$.
\end{example}
\begin{figure}\begin{center}
\includegraphics[width =
7cm]{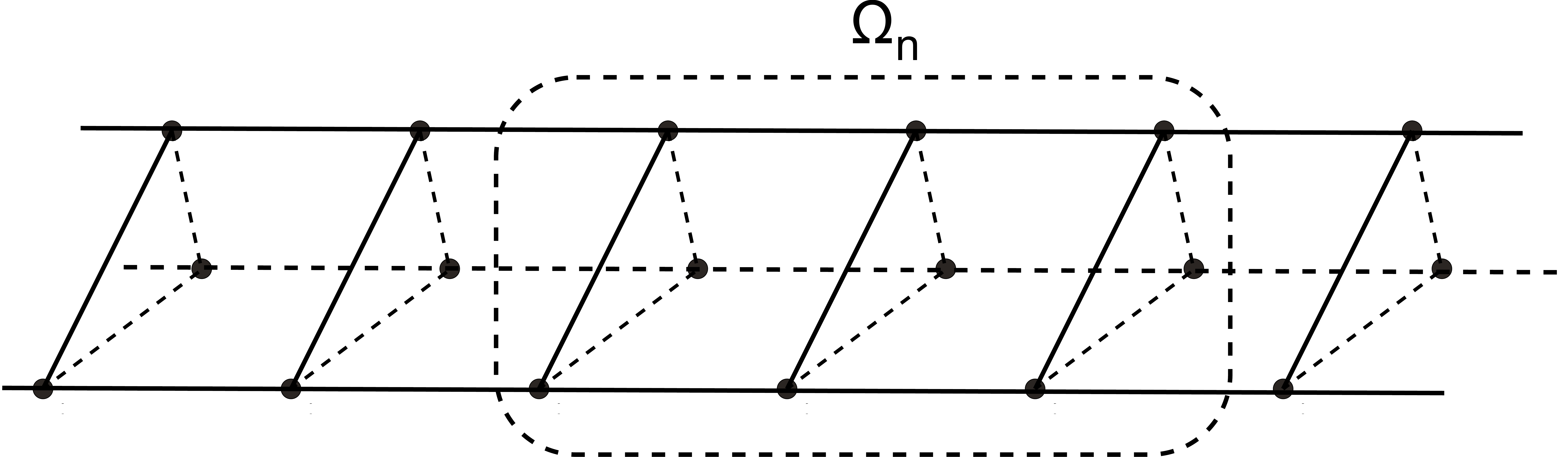}\caption{\label{Fig.3}  The Cayley
graph of $(\mathds{Z}\times\mathds{Z}_3,\{(\pm 1,\pm 1)\})$.}
\end{center}
\end{figure}

\begin{rem}\label{Remark hbar weak}
Example \ref{ex2} and Example \ref{ex3} show that in general
without additional conditions like \eqref{eq2} one cannot hope
that $\Gamma$ has exponential volume growth under the assumption
of $\bar{h}(\Gamma)<1$ (or, by Theorem \ref{Estimates top of the
spectrum}, equivalently $\bar{\lambda}(\Gamma)<2$). The heat
kernels $p_n(x,y)$ in Example \ref{ex2} and Example \ref{ex3}
decay as $\frac{1}{\sqrt{n}}$ which is the slowest possible rate
of heat kernel on infinite graph, see \cite{Coulhon99}. Hence the
spectral gap of $\bar{\lambda}(\Gamma)$
($\bar{\lambda}(\Gamma)<2$) does not imply any nice heat kernel
estimate.
\end{rem}

We recall the definition of rough isometries between metric
spaces, also called quasi-isometries (see
\cite{Gromov81,Woess00,Burago01}).
\begin{defi} Let $(X,d^X), (Y,d^Y)$ be two metric spaces. A rough isometry is a mapping $\phi: X\rightarrow Y$ such that
$$a^{-1}d^X(x,y)-b\leq d^Y(\phi(x),\phi(y))\leq a d^X(x,y)+b,$$ for all $x,y\in X,$ and
$$d^Y(z,\phi(X))\leq b,$$ for any $z\in Y,$ where $a\geq 1,$ $b\geq0$ and $d^Y(z,\phi(X)):=\inf\{d^Y(z,\phi(x)): x\in X\}.$ It is called an $(a,b)$-rough isometry.
\end{defi}

For infinite graphs, we consider the metric structure defined by
the graph distance. It is easy to see that the graphs in Example
\ref{ex1}, Example \ref{ex2}, and Example \ref{ex3} are
rough-isometric to each other, but in the first example
$\bar{\lambda}(\Gamma)=2$ whereas in the other two examples
$\bar{\lambda}(\Gamma)<2$. Hence the spectral gap of
$\bar{\lambda}(\Gamma)$ is not a rough-isometric invariant
although this is true for the spectral gap of
$\underline{\lambda}(\Gamma)$, see \cite{Woess00}. We summarize
these insights in the following corollary:

\begin{coro}\label{coro hbar weak }
For infinite graphs, the property that $\bar{h}(\Gamma)<1$, or
equivalently $\bar{\lambda}(\Gamma)<2$, is not a rough-isometric
invariant.
\end{coro}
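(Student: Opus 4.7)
The plan is straightforward since almost all the work has already been done in the preceding examples. The corollary has two equivalent formulations, so I first need to confirm the equivalence $\bar{h}(\Gamma)<1 \iff \bar{\lambda}(\Gamma)<2$, and then produce two rough-isometric infinite graphs, one satisfying the property and the other failing it.

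For the equivalence, I would invoke Theorem \ref{Estimates top of the spectrum}. Since $\bar{h}(\Gamma)\in[0,1]$ and $\bar{\lambda}(\Gamma)\in[0,2]$ always, if $\bar{h}(\Gamma)=1$ then the lower estimate $2\bar{h}(\Gamma)+h(\Gamma)\le\bar{\lambda}(\Gamma)$ forces $\bar{\lambda}(\Gamma)\ge 2$ and hence equality. Conversely, if $\bar{h}(\Gamma)<1$ then $(1-\bar{h}(\Gamma))^2>0$, so the upper estimate $\bar{\lambda}(\Gamma)\le 1+\sqrt{1-(1-\bar{h}(\Gamma))^2}<2$ gives the strict bound. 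This settles the equivalence.

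Next I would take $\Gamma_1=P_\infty$ from Example \ref{ex1}, for which \eqref{Exhaustion} together with the explicit Dirichlet eigenvalues gives $\bar{\lambda}(\Gamma_1)=2$, and $\Gamma_2$ from Example \ref{ex2} (or alternatively Example \ref{ex3}), for which the same exhaustion argument gives $\bar{\lambda}(\Gamma_2)=8/5<2$. So $\bar{\lambda}<2$ holds for $\Gamma_2$ but not for $\Gamma_1$. To finish I need to exhibit an explicit rough isometry $\phi:\Gamma_1\to\Gamma_2$. The natural candidate is $\phi(i)=i\in V_1\subset V(\Gamma_2)$, i.e.\ the embedding of $P_\infty$ onto one of the two copies of $\mathbb{Z}$ inside $\Gamma_2$. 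The graph distance in $\Gamma_2$ between $i$ and $j$ in $V_1$ lies between $\lfloor|i-j|/2\rfloor$ and $|i-j|$ (one can shortcut through $V_2$ at most by a factor of roughly two), so $\phi$ is bi-Lipschitz on the nose, and every vertex of $\Gamma_2$ is within distance $1$ of $\phi(V(\Gamma_1))$. Hence $\phi$ is a rough isometry with some constants $(a,b)$, for instance $(2,1)$.

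The only possible obstacle is the verification of the distance bounds for $\phi$, but this is elementary from the explicit description of the edge set of $\Gamma_2$ in Example \ref{ex2}: each pair of ``rungs'' $\{i,i'\}$ is joined, and consecutive rungs have a triangle between them, so geodesics in $\Gamma_2$ can save at most a constant factor over geodesics in $P_\infty$. (If one prefers Example \ref{ex3}, the map $i\mapsto(i,0)\in\mathbb{Z}\times\mathbb{Z}_3$ works for exactly the same reason, with a rough isometry constant depending only on the size of the $\mathbb{Z}_3$-fiber.) Combining the two halves proves that the property $\bar{h}(\Gamma)<1$, equivalently $\bar{\lambda}(\Gamma)<2$, is not preserved under rough isometries, which is the statement of the corollary.
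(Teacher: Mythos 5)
Your proposal is correct and takes essentially the same approach as the paper, which exhibits Examples \ref{ex1}--\ref{ex3} as rough-isometric graphs with differing top-of-spectrum behavior; you simply make explicit the equivalence $\bar{h}(\Gamma)<1 \iff \bar{\lambda}(\Gamma)<2$ (already noted in the text via Theorem \ref{Estimates top of the spectrum}) and the verification of the rough isometry, which the paper leaves as ``easy to see.'' One minor inaccuracy that does not affect the conclusion: in Example \ref{ex2} every edge changes the integer coordinate by at most one, so $d_{\Gamma_2}(i,j)=|i-j|$ exactly for $i,j\in V_1$ and no shortcut through $V_2$ is actually possible; your looser bound $\lfloor|i-j|/2\rfloor\leq d_{\Gamma_2}(i,j)$ is still true and still yields the rough isometry.
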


\section{The largest eigenvalue of graphs with certain symmetries}\label{Section9}
In this section, we shall consider an upper bound estimate for the
top of the spectrum which comes from the geometric properties of
graphs with certain symmetries, see quasi-transitive graphs in
\cite{Woess00}.

Let $\Gamma$ be a locally finite, connected, unweighted (infinite)
graph with the graph distance $d.$ An automorphism of $\Gamma$ is
a self-isometry of the metric space $(\Gamma, d).$ The group of
all automorphisms of $\Gamma$ is denoted by
$\mathrm{Aut}(\Gamma).$ For any $x\in \Gamma,$ as a subset of
$\Gamma,$ $\mathrm{Aut}(\Gamma) x:=\{gx: g\in \mathrm{Aut}(\Gamma)
\}$ is called an orbit of $x$ with respect to $\mathrm{Aut}
(\Gamma).$ It is easy to see that all orbits, denoted by
$\Gamma\slash \mathrm{Aut}(\Gamma)=\{O_i\}_{i\in \mathcal{I}}$,
compose a partition of $\Gamma.$
\begin{defi}
A graph $\Gamma$ is called quasi-transitive if there are finitely
many orbits for $\mathrm{Aut}(\Gamma$), i.e. $\sharp
\mathcal{I}<\infty.$ It is called vertex-transitive if there is
only one orbit, i.e. $\sharp \mathcal{I}=1.$
\end{defi}

By the definition, vertex-transitive graphs are quasi-transitive.
In particular, Cayley graphs (see Definition \ref{Cayley graph})
are vertex-transitive and hence quasi-transitive. Figure
\ref{quasitran} is an example of a quasi-transitive graph which is
not vertex-transitive.
\begin{figure}\begin{center}
\includegraphics[width =
 12cm]{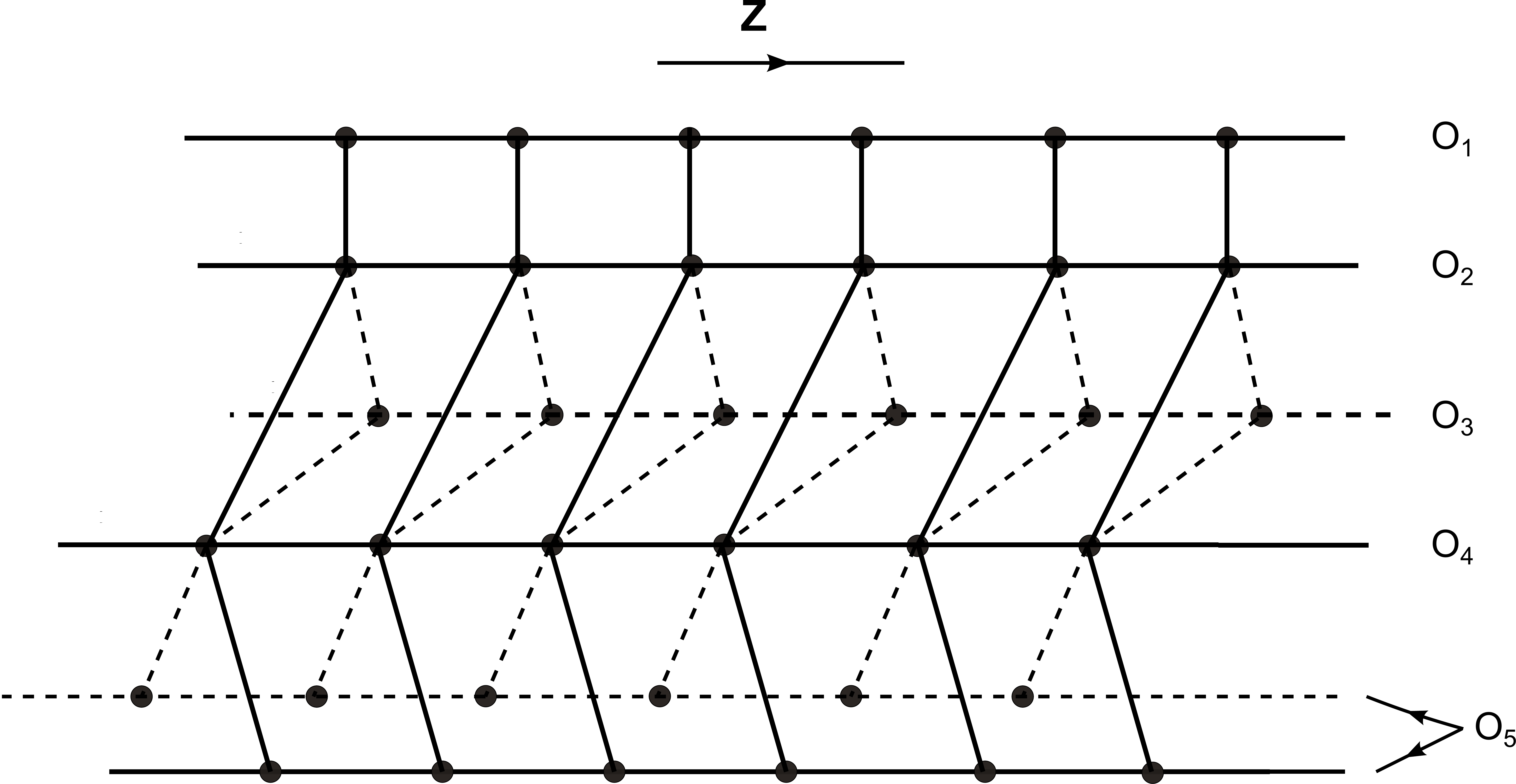}\caption{\label{quasitran}A quasi-transitive graph.}
\end{center}
\end{figure}

Let $\Gamma$ be a quasi-transitive graph and $\Gamma\slash
\mathrm{Aut}(\Gamma)=\{O_i\}_{i\in \mathcal{I}}$. Denote by
$d(\Gamma):=\max\{d_x: x\in \Gamma\}$ the maximal degree of
$\Gamma$ (in accordance with the existing literature we denote in
this section the degree of an unweighted graph by $d_x$ instead of
$\mu(x)$). For any (infinite) graph $\Gamma,$ a cycle of length
$n$, is a closed path as $x_0\sim x_1\sim\cdots\sim x_{n-1}\sim
x_0.$ It is called a circuit (or simple cycle) if there are no
repeated vertices on the cycle. Let us denote by $OG(\Gamma)$ the
odd girth of $\Gamma,$ i.e. the length of the shortest odd cycles
in the graph $\Gamma$ ($OG(\Gamma)=\infty$ if there is no odd
cycles). It is easy to see that there is an odd-length circuit
attaining this number if $OG(\Gamma)<\infty.$ In addition,
$OG(\Gamma)=\infty$ if and only if $\Gamma$ is bipartite.

In case $\Gamma$ is a finite graph, we consider the ordinary
normalized Laplace operator (not the Dirichlet one) defined on
$\Gamma$, see \cite{Chung97, Grigoryan09,Bauer12}. Then the
largest eigenvalue of the Laplace operator on $\Gamma$ is equal to
2 if and only if $\Gamma$ is bipartite. We want to prove a similar
result for the largest eigenvalue of the Dirichlet Laplace
operator on infinite graphs. However, for infinite graphs, $2$ is
not always contained in the spectrum if $\Gamma$ is bipartite
(think of the homogenous trees), nor is the graph bipartite if $2$
is in the spectrum (see the example in Remark \ref{Remark
counterexample}). However, if the graph is a quasi-transitive
graph we have the following theorem which is the main result of
this section:

\begin{theo}\label{mainth1}
Let $\Gamma$ be a quasi-transitive graph which is not bipartite.
Then
$$\bar{h}(\Gamma)\leq 1-\delta,$$ where $\delta=\delta(d(\Gamma),\sharp \mathcal{I}, OG(\Gamma)).$
\end{theo}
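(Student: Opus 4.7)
The plan is, given any disjoint nonempty $V_1,V_2$ inside a finite $\Omega$, to lower-bound the combined mass of ``defect'' edges---edges inside $V_1$, inside $V_2$, or on the boundary of $U:=V_1\cup V_2$---by a constant fraction of $\mathrm{vol}(V_1)+\mathrm{vol}(V_2)$ depending only on $d(\Gamma)$, $\sharp\mathcal{I}$ and $OG(\Gamma)$. Via the identity
$$\mathrm{vol}(V_1)+\mathrm{vol}(V_2)=2|E(V_1,V_2)|+|E(V_1,V_1)|+|E(V_2,V_2)|+|\partial(V_1\cup V_2)|$$
already used in the proof of Theorem \ref{h and hbar}, such a bound is equivalent to $\bar h(\Omega)\le 1-\delta$, and passing to an exhaustion $\Omega\uparrow\Gamma$ then proves the theorem.

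The engine is a uniform construction of short odd closed walks through every vertex. First I would establish an orbit-ball lemma: in a connected quasi-transitive graph with $m=\sharp\mathcal{I}$ orbits, every ball of radius $m-1$ meets every orbit. To prove it, consider the quotient multigraph on the $m$ orbits; it is connected because $\Gamma$ is, so its diameter is at most $m-1$, and a shortest orbit-geodesic $O_{i_0},\ldots,O_{i_r}$ is lifted edge-by-edge to a walk in $\Gamma$ of length $r\le m-1$ by translating each successive ``bridge'' edge with an automorphism so its endpoint coincides with the one already constructed. The second step fixes an odd circuit $C$ of length $k=OG(\Gamma)$ (which exists because $\Gamma$ is non-bipartite) and a base $x_0\in C$, and applies the lemma: for any $v\in V$ there is $v_0$ in the orbit of $v$ with $d(x_0,v_0)\le m-1$, and any automorphism $g$ with $gv_0=v$ sends $C$ to an odd circuit $gC$ within distance $m-1$ of $v$. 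Concatenating a shortest path $v\to gC$, one lap around $gC$, and the reversed path produces an odd closed walk $W(v)$ through $v$ of length at most $L:=2(m-1)+k$.

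Next, for any disjoint $V_1,V_2\subset\Omega$ I assign to each $v\in U=V_1\cup V_2$ a defect edge $b(v)$ lying on $W(v)$. If $W(v)\subset U$, the parity identity $\prod_i c(w_i)c(w_{i+1})=1$ applied to the $\pm1$-valued function equal to $+1$ on $V_1$ and $-1$ on $V_2$ shows that $W(v)$, having odd length, contains at least one edge with both endpoints in $V_1$ or both in $V_2$, contributing to $|E(V_1,V_1)|+|E(V_2,V_2)|$. Otherwise $W(v)$ visits $U^c$ and must therefore cross $\partial U$, supplying a boundary edge. Since $b(v)$ lies on a closed walk of length $L$ based at $v$, both of its endpoints are within graph-distance $\lfloor L/2\rfloor$ of $v$, so any fixed edge equals $b(v)$ for at most $N_L$ vertices, where $N_L$ is a ball-volume constant depending only on $d(\Gamma)$ and $L$. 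Thus
$$|E(V_1,V_1)|+|E(V_2,V_2)|+|\partial U|\ge \frac{\sharp U}{N_L}\ge \frac{\mathrm{vol}(U)}{d(\Gamma)\,N_L},$$
which combined with the volume identity gives $\bar h(\Omega)\le 1-1/(d(\Gamma)N_L)$, with $1/(d(\Gamma)N_L)$ a function of $d(\Gamma),\sharp\mathcal{I},OG(\Gamma)$ only.

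The main obstacle will be the orbit-ball lemma: a direct pigeonhole on distances in $\Gamma$ does not suffice because orbits need not be connected or even close together as subsets of $\Gamma$, so the lift of the quotient geodesic must be carried out equivariantly by re-centering with a new automorphism at each step. Once that lemma is in hand, the remainder of the argument is a homogeneous covering by short odd walks together with the bookkeeping above.
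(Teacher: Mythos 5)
Your proof is correct and rests on the same underlying mechanism as the paper's: quasi-transitivity places a short odd structure near every vertex, a parity argument on that structure locates a ``defect'' edge in $E(V_1,V_1)\cup E(V_2,V_2)\cup\partial(V_1\cup V_2)$, and a bounded-multiplicity map (with multiplicity controlled by a ball-volume constant) yields a lower bound on the defect mass proportional to $\mathrm{vol}(V_1)+\mathrm{vol}(V_2)$, which via the identity $\mathrm{vol}(V_1)+\mathrm{vol}(V_2)=2|E(V_1,V_2)|+|E(V_1,V_1)|+|E(V_2,V_2)|+|\partial(V_1\cup V_2)|$ gives $\bar h\le 1-\delta$. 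The bookkeeping differs in two places. First, the paper maps each \emph{edge} $e\in E(V_1,V_2)$ to a defect edge, using a path $P$ of length $\le\sharp\mathcal{I}$ from an endpoint of $e$ to the orbit $O_j$ carrying a circuit, followed by an odd circuit through the far end of $P$; you instead map each \emph{vertex} $v\in U$ to a defect edge lying on a single odd closed walk $W(v)$ through $v$, and then pass from $\sharp U$ to $\mathrm{vol}(U)$ via $\mathrm{vol}(U)\le d(\Gamma)\,\sharp U$. Both routes give a $\delta$ depending only on $d(\Gamma)$, $\sharp\mathcal{I}$, and $OG(\Gamma)$. Second, your explicit orbit-ball lemma---every ball of radius $\sharp\mathcal{I}-1$ meets every orbit, proved by equivariantly lifting a geodesic in the quotient multigraph---makes rigorous the step the paper asserts without justification when it claims, by ``connectedness,'' a path of bounded length to $O_j$; this is the one point where your write-up is genuinely tighter than the paper's, since a naive appeal to connectedness does not bound the length of such a path. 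The resulting closed walk $W(v)$ of length $\le 2(\sharp\mathcal{I}-1)+OG(\Gamma)$ is also a slightly cleaner object to run the parity argument on than the path-plus-circuit union $P\cup C_{2s+1}$ of the paper.
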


Recall that by Theorem \ref{Estimates top of the spectrum},
$\overline{\lambda}(\Gamma)=2$ is equivalent to
$\bar{h}(\Gamma)=1.$ As a direct corollary of Theorem
\ref{mainth1}, we obtain

\begin{coro}\label{mthlast}
Let $\Gamma$ be a quasi-transitive graph. If
$\bar{\lambda}(\Gamma)=2,$ then $\Gamma$ is bipartite.
\end{coro}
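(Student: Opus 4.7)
The plan is to argue by contraposition using Theorem \ref{mainth1} together with the upper bound for $\bar{\lambda}(\Gamma)$ from Theorem \ref{Estimates top of the spectrum}. Essentially all the geometric/combinatorial work has already been absorbed into Theorem \ref{mainth1}, so the corollary should follow by a short chain of inequalities.

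Concretely, I would suppose for contradiction that $\Gamma$ is a quasi-transitive graph with $\bar{\lambda}(\Gamma)=2$ but that $\Gamma$ is not bipartite. Since $\Gamma$ is quasi-transitive and non-bipartite, Theorem \ref{mainth1} applies and yields a constant $\delta=\delta(d(\Gamma),\sharp\mathcal{I},OG(\Gamma))>0$ with
\[
\bar{h}(\Gamma)\leq 1-\delta.
\]
Then the dual Cheeger upper bound from Theorem \ref{Estimates top of the spectrum} gives
\[
\bar{\lambda}(\Gamma)\leq 1+\sqrt{1-(1-\bar{h}(\Gamma))^2}\leq 1+\sqrt{1-\delta^2}<2,
\]
which contradicts the assumption $\bar{\lambda}(\Gamma)=2$. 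Hence $\Gamma$ must be bipartite.

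There is really no hard step here beyond invoking Theorem \ref{mainth1}; the only thing worth checking is that the parameters $d(\Gamma)$, $\sharp\mathcal{I}$, and $OG(\Gamma)$ entering $\delta$ are all finite for a quasi-transitive $\Gamma$ that is not bipartite. Local finiteness ensures $d(\Gamma)<\infty$ (this is part of our standing assumption on graphs), quasi-transitivity gives $\sharp\mathcal{I}<\infty$ by definition, and non-bipartiteness is precisely the condition $OG(\Gamma)<\infty$. Thus $\delta>0$ is a genuine positive constant and the strict inequality $1+\sqrt{1-\delta^2}<2$ is valid, completing the argument. The main intellectual content, of course, lies upstream in the proof of Theorem \ref{mainth1}; the corollary itself is a clean two-line consequence.
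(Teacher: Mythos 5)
Your proof is correct and follows essentially the same route as the paper: invoke Theorem \ref{mainth1} to get $\bar{h}(\Gamma)\leq 1-\delta<1$ for a non-bipartite quasi-transitive graph, then use the upper bound in Theorem \ref{Estimates top of the spectrum} to force $\bar{\lambda}(\Gamma)<2$. The paper phrases this as the equivalence ``$\bar{\lambda}(\Gamma)=2\Leftrightarrow\bar{h}(\Gamma)=1$'' plus Theorem \ref{mainth1}, which is the same chain of inequalities you spell out.
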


\begin{rem}\label{Remark counterexample}
This corollary is known in the case of Cayley graphs, see
\cite{HarpeRobertsonValette}. Instead of using C*-algebra
techniques, we estimate the geometric quantity, the dual Cheeger
constant, to prove this result so that it can be easily extended
to graphs with symmetry, e.g. quasi-transitive graphs. Note that
this corollary is not true for general graphs. For instance, the
standard lattice $\mathds{Z}^2$ with one more edge $((0,0),(1,1))$
is a counterexample. In addition, the converse of the assertion is
obviously not true (e.g. for the infinite $d$-regular tree $T_d$,
we have $\bar{\lambda}(\Gamma) =
1+\frac{2\sqrt{d-1}}{d}$).\end{rem}

In order to illustrate the main idea of the proof, we first
consider the case of Cayley graphs which is much easier. Let
$\Gamma$ be the Cayley graph of a group and a generating set
$(G,S)$. For simplicity, we assume the Cayley graph $\Gamma$ is
simple (i.e. the unit element $e\notin S$) and unweighted (i.e.
$\mu_{xy}=1$ for any $x\sim y$). Cayley graphs are regular graphs
with homogeneous structure. If there exists a circuit in $\Gamma$,
then there are isomorphic circuits passing through each vertex of
$\Gamma$.

The idea of the proof comes from the upper bound estimate of the
maximum cut of finite graphs.
\begin{defi}
The maximum cut of a finite graph $\Gamma$ is defined as
$$Mc(\Gamma):=\max_{V=V_1\cup V_2,V_1\cap V_2=\emptyset}|E(V_1,V_2)|.$$
\end{defi}

\begin{theo}\label{mthlast2}
Let $\Gamma=(V,E)$ be a finite Cayley graph of $(G,S).$ If it is
not bipartite, then
$$Mc(\Gamma)\leq m(1-\delta),$$
where $m$ is the number of edges in $\Gamma$ and
$\delta=\delta(d(\Gamma),OG(\Gamma)).$
\end{theo}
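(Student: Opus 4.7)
The plan is to leverage two facts: first, vertex-transitivity of a Cayley graph promotes a single odd circuit of length $g := OG(\Gamma)$ into a family of odd circuits, one through every vertex; and second, every odd cycle contains at least one monochromatic edge under any $2$-coloring. A simple double count then shows that any bipartition must leave a constant fraction of the edges monochromatic.

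First, I would fix a shortest odd circuit $C_0$ of length $g$, together with a distinguished vertex $v_0\in C_0$. Using the transitive left action of $G$ on $V=G$, for each $x\in V$ choose $\phi_x\in G$ with $\phi_x(v_0)=x$ and set $C_x:=\phi_x(C_0)$. Because $\phi_x$ is a graph automorphism, $C_x$ is an odd circuit of length $g$ (not merely a closed walk) passing through $x$.

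Next, for any bipartition $V=V_1\sqcup V_2$, call an edge \emph{monochromatic} if both its endpoints lie in the same part, and let $M$ denote the number of monochromatic edges. Since an odd cycle cannot be properly $2$-colored, every $C_x$ contains at least one monochromatic edge. I would then double count the incidences $\{(x,e): e\in C_x,\ e\text{ monochromatic}\}$: there are at least $|V|$ of them (one per $x$). In the reverse direction, fix an edge $e$; the number of length-$g$ circuits through $e$ in $\Gamma$ is at most some $N(d,g)$ depending only on $d:=d(\Gamma)$ and $g$ (an elementary walk-counting bound, e.g.\ $(d-1)^{g-1}$, obtained by forbidding immediate backtracking at each interior step), and each such circuit coincides with $C_x$ for at most $g$ values of $x$ (since $x$ must be one of its $g$ vertices). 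Combining,
\begin{equation*}
M\cdot g\, N(d,g)\ \geq\ |V|\ =\ \frac{2m}{d},\qquad\text{hence}\qquad M\ \geq\ \delta\, m,\quad \delta\ :=\ \frac{2}{d\cdot g\, N(d,g)}.
\end{equation*}
Since $|E(V_1,V_2)|=m-M\leq m(1-\delta)$ for every bipartition, taking the supremum gives $Mc(\Gamma)\leq m(1-\delta)$, with $\delta$ depending only on $d(\Gamma)$ and $OG(\Gamma)$ as required.

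The main obstacle is conceptual rather than technical: one must recognize that the appropriate hypothesis is vertex-transitivity (not edge-transitivity), and this is exactly what produces the family $\{C_x\}_{x\in V}$ covering every vertex. Once this is in place the double count is routine; the only delicate point is ensuring each $C_x$ is a bona fide circuit rather than a closed walk, which is automatic because automorphisms preserve the property of having no repeated vertices. The same template will then adapt to quasi-transitive $\Gamma$ with finitely many orbits, at the expense of a slightly weaker constant $\delta$.
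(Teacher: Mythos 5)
Your proof is correct and follows essentially the same argument as the paper: vertex-transitivity of the Cayley graph produces an odd circuit of length $g=OG(\Gamma)$ through every vertex, every such circuit must contain at least one monochromatic edge under any bipartition, and local finiteness bounds the multiplicity with which a single edge can occur in these circuits, yielding a double-count. The paper's bookkeeping is slightly different (it maps the set of all length-$g$ circuits to monochromatic edges and bounds $|T^{-1}(e)|$ by packing circuits into a ball $B_s(a)$), while you index circuits by vertices via the group action and count incidences $(x,e)$ directly, but the underlying idea and the shape of the constant $\delta=\delta(d,g)$ are the same.
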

\begin{proof}
Let us denote by $n=\sharp V, m=\sharp E$ and $d=\sharp S$ the
number of vertices, the number of edges and the degree of
$\Gamma$, respectively.

Case 1. $OG(\Gamma)=3.$

Then each vertex of $\Gamma$ is contained in at least one
triangle. We denote by $\triangle$ the set of all triangles in
$\Gamma$ and by $|\triangle|:=\sharp \triangle$ the number of
triangles in $\Gamma.$ Since each vertex is contained in at least
one triangle, $3|\triangle|\geq n.$ For any partition of
$V=V_1\cup V_2$ $(V_1\cap V_2=\emptyset),$ we define a
(single-valued) mapping as
$$T:\ \triangle\rightarrow E(V_1,V_2)^c$$
$$\triangle\ni \triangle_1 \mapsto T(\triangle_1),$$
where $E(V_1,V_2)^c=E\backslash E(V_1,V_2)$ and $T(\triangle_1)$
is one of the edges of the triangle $\triangle_1$ which does not
lie in $E(V_1,V_2).$ Note that there always exists at least one
such edge in each triangle $\triangle_1$.

We shall bound the multiplicity of the mapping $T,$ i.e. the
universal upper bound of $\sharp T^{-1}(e)$ for any $e\in
E(V_1,V_2)^c$. If $\triangle_1,\cdots,\triangle_k\in T^{-1}(e)$
for some $e\in E(V_1,V_2)^c,$ then the triangles
$\triangle_1,\cdots,\triangle_k$ share one edge $e.$ It is easy to
see that $k\leq d-1$ by local finiteness. Hence
$$|E(V_1,V_2)^c|\geq\sharp T(\triangle)\geq \frac{|\triangle|}{d-1}.$$ It follows from $m=\frac{nd}{2}$ and $3|\triangle|\geq n$ that
$$|E(V_1,V_2)^c|\geq \frac{n}{3(d-1)}=\frac{2m}{3d(d-1)}.$$
This yields
\begin{equation}\label{eq4}|E(V_1,V_2)|\leq m-\frac{2m}{3d(d-1)}=m\left(1-\frac{2}{3d(d-1)}\right).\end{equation}

Case 2. $OG(\Gamma)=2s+1$ for $s\geq2.$

Let $\pentagon$ denote the set of circuits of length $2s+1$ in
$\Gamma$ and $|\pentagon|:=\sharp \pentagon.$ Then there exists at
least one circuit of length $2s+1$ passing through each vertex
which implies $(2s+1)|\pentagon|\geq n$. Given any partition of
$\Gamma=V_1\cup V_2$ $(V_1\cap V_2=\emptyset),$ we may define a
mapping as
$$T:\ \pentagon\rightarrow E(V_1,V_2)^c$$
$$\pentagon\ni \pentagon_1 \mapsto T(\pentagon_1),$$ where $T(\pentagon_1)$ is one of edges of the circuit $\pentagon_1$ which does not lie in $E(V_1,V_2).$ By the odd length of the circuit $\pentagon_1,$ the mapping $T$ is well defined.

The key point is to estimate the multiplicity of the mapping $T.$
Suppose $\pentagon_1,\cdots,\pentagon_k\in T^{-1}(e)$ for some
$e\in E(V_1,V_2)^c,$ then each of them is contained in the
geodesic ball $B_s(a)$ where $a$ is one of the vertices of $e.$
Since the number of vertices in $B_s(a)$ is bounded above by
$d^s,$ the number of circuits of length $2s+1$ in $B_s(a)$ is
bounded above by $\binom{d^s}{2s+1}.$ That is $k=\sharp
T^{-1}(e)\leq \binom{d^s}{2s+1}.$ Hence
$$|E(V_1,V_2)^c|\geq\sharp T(\pentagon)\geq \frac{|\pentagon|}{\binom{d^s}{2s+1}}\geq \frac{2m}{d(2s+1)\binom{d^s}{2s+1}}=\frac{m}{C(d,s)}$$
This implies that
\begin{equation}\label{eq5}|E(V_1,V_2)|\leq m-\frac{m}{C(d,s)}=m(1-\delta(d,s)).\end{equation}

The theorem follows from \eqref{eq4} and \eqref{eq5}.
\end{proof}

The ingredient of Theorem \ref{mthlast2} is that the ratio
$\frac{Mc(V_1,V_2)}{m}\leq 1-\delta$ has an upper bound which is
independent of the number of vertices of the finite Cayley graph.
This suggests that we may obtain similar estimates for
$\bar{h}(\Gamma)$ for infinite Cayley graphs. Indeed, we now prove
the following theorem for infinite Cayley graphs which is the
special case of Theorem \ref{mainth1}.

\begin{theo}\label{mthlast1}
Let $\Gamma$ be an infinite Cayley graph of $(G,S).$ If it is not
bipartite, then
\begin{equation}\label{eq8}\bar{h}(\Gamma)\leq 1-\delta,
\end{equation} where $\delta=\delta(d(\Gamma),OG(\Gamma))>0.$
\end{theo}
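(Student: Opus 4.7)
The plan is to adapt the counting argument from the proof of Theorem \ref{mthlast2} to the infinite setting by using the boundary term $|\partial U|$ to absorb the boundary effect in the mapping construction. By the definition of $\bar{h}(\Gamma)$, it suffices to exhibit $\delta = \delta(d(\Gamma), OG(\Gamma)) > 0$ such that
$$\frac{2|E(V_1,V_2)|}{\mathrm{vol}(V_1)+\mathrm{vol}(V_2)} \leq 1 - \delta$$
for every disjoint nonempty finite pair $V_1, V_2 \subset \Gamma$; taking suprema and $\Omega \uparrow \Gamma$ then gives \eqref{eq8}. Write $d := d(\Gamma)$, $2s+1 := OG(\Gamma)$ (finite because $\Gamma$ is non-bipartite), and $U := V_1 \cup V_2$. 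Since $\Gamma$ is $d$-regular, $\mathrm{vol}(V_1)+\mathrm{vol}(V_2) = d\,\sharp U$.

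Next, decompose $U = U_{\mathrm{int}} \sqcup U_\partial$ with $U_{\mathrm{int}} := \{x \in U : B(x,s) \subset U\}$. Each $x \in U_\partial$ lies within graph distance $s$ of some vertex in $\partial U$ (take the first vertex outside $U$ along a shortest path from $x$ into $U^c$), hence $\sharp U_\partial \leq d^s |\partial U|$. Adapting the mapping from the proof of Theorem \ref{mthlast2}: by vertex-transitivity, every $x \in U_{\mathrm{int}}$ lies on at least one circuit of odd length $2s+1$, and such a circuit is contained in $B(x,s) \subset U$. Letting $\pentagon_U$ denote the set of all odd circuits of length $2s+1$ lying in $U$, we obtain $(2s+1)|\pentagon_U| \geq \sharp U_{\mathrm{int}}$. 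Since no odd-length circuit admits a proper $2$-coloring by $V_1 \sqcup V_2$, picking from each such circuit one edge inside $V_1$ or inside $V_2$ defines a map $T : \pentagon_U \to E(V_1,V_1) \cup E(V_2,V_2)$, and exactly as in the proof of Theorem \ref{mthlast2} the multiplicity is bounded by $\binom{d^s}{2s+1}$, so
$$|E(V_1,V_1)| + |E(V_2,V_2)| \geq C(d,s)\,\sharp U_{\mathrm{int}}, \qquad C(d,s) := \frac{1}{(2s+1)\binom{d^s}{2s+1}}.$$

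Combined with the identity $\mathrm{vol}(V_1)+\mathrm{vol}(V_2) = 2|E(V_1,V_2)| + 2(|E(V_1,V_1)| + |E(V_2,V_2)|) + |\partial U|$, this gives
$$1 - \frac{2|E(V_1,V_2)|}{\mathrm{vol}(V_1)+\mathrm{vol}(V_2)} \;\geq\; \frac{2C(d,s)\,\sharp U_{\mathrm{int}} + |\partial U|}{d\,\sharp U}.$$
The main obstacle is extracting a uniform positive lower bound from this expression, since neither $\sharp U_{\mathrm{int}}$ nor $|\partial U|$ is individually controlled relative to $\sharp U$; the interior estimate becomes useless when $|\partial U|$ dominates, and conversely. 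I would handle this by a case split on the ratio $|\partial U|/\sharp U$: if $|\partial U| \leq \sharp U/(2d^s)$, then $\sharp U_{\mathrm{int}} \geq \sharp U/2$ and the interior term alone yields at least $C(d,s)/d$; otherwise the boundary term alone yields at least $(2d^{s+1})^{-1}$. Hence $\delta := \min\{C(d,s)/d,\ (2d^{s+1})^{-1}\} > 0$ depends only on $d(\Gamma)$ and $OG(\Gamma)$ and works uniformly in $V_1,V_2$, completing the proof.
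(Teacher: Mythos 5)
Your proof is correct in its essentials (with small constant inaccuracies that are easily repaired, see below), but it takes a genuinely different route from the paper. The paper's proof constructs a map $T\colon E(V_1,V_2)\to F(V_1,V_2)$ where $F(V_1,V_2)=E(V_1,V_1)\cup E(V_2,V_2)\cup\partial(V_1\cup V_2)$: for each $e\in E(V_1,V_2)$, it picks an odd circuit of length $2s+1$ through an endpoint of $e$ --- a circuit that is allowed to exit $U=V_1\cup V_2$ --- and observes that such a circuit must contain an edge lying in $F(V_1,V_2)$ (it cannot alternate $V_1,V_2$ all the way around). Bounding the multiplicity by $\binom{d^{s+1}}{2}$ immediately gives $|E(V_1,V_2)|\le C\bigl(|E(V_1,V_1)|+|E(V_2,V_2)|+|\partial U|\bigr)$, and then a single algebraic manipulation yields $\bar h(\Gamma)\le 1-\delta$. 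Because $\partial U$ is built into the target of $T$, no case distinction on the size of $|\partial U|$ is required; the boundary automatically absorbs the circuits that leave $U$. Your proof instead restricts attention to odd circuits contained entirely in $U$ (which is possible only for vertices of the interior $U_{\mathrm{int}}$), maps those circuits to monochromatic edges $E(V_1,V_1)\cup E(V_2,V_2)$, and then compensates for the case of thin interior by a dichotomy on $|\partial U|/\sharp U$. This is a valid and perhaps more intuitive adaptation of the finite max-cut argument, but it requires the extra decomposition $U=U_{\mathrm{int}}\sqcup U_\partial$ and the case split, which the paper's choice of target set for $T$ cleverly sidesteps.

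Two small points to tighten. First, your volume identity should read
$\mathrm{vol}(V_1)+\mathrm{vol}(V_2)=2|E(V_1,V_2)|+|E(V_1,V_1)|+|E(V_2,V_2)|+|\partial U|$
in the paper's convention (where $|E(V_i,V_i)|=\sum_{x,y\in V_i}\mu_{xy}$ already double-counts each interior edge); the extra factor of $2$ you wrote is not there. Second, in the case $|\partial U|\le \sharp U/(2d^s)$ you actually obtain $\sharp U_{\mathrm{int}}\ge \sharp U/2$, so the interior term gives at least $C(d,s)/(2d)$ rather than $C(d,s)/d$. Neither affects the conclusion: $\delta=\min\{C(d,s)/(2d),\,1/(2d^{s+1})\}$ still depends only on $d(\Gamma)$ and $OG(\Gamma)$.
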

\begin{rem}
The ingredient of the proof is that once there is an odd-length
circuit in the Cayley graph $\Gamma,$ there is at least one
odd-length circuit passing through each vertex which makes the
graph systematically different from a bipartite one. Then it
becomes possible to estimate $\bar{h}(\Gamma)$ from above.
\end{rem}
\begin{proof}
By the definition of
$\bar{h}(\Gamma)=\lim_{\Omega\uparrow\Gamma}\bar{h}(\Omega)$ and
\eqref{dcc}, it suffices to show that for any finite disjoint
subsets $V_1,V_2\subset\Gamma,$
$$|E(V_1,V_2)|\leq C(|E(V_1,V_1)|+|E(V_2,V_2)|+|\partial (V_1\cup V_2)|),$$ where $C=C(d(\Gamma),OG(\Gamma)).$  We denote $V_3:=(V_1\cup V_2)^c=\Gamma\backslash (V_1\cup V_2)$ and $F(V_1,V_2):=E(V_1,V_1)\cup E(V_2,V_2)\cup \partial(V_1\cup V_2).$ Let $d=d(\Gamma).$

Case 1. $OG(\Gamma)=3.$

Then there is a triangle passing through each vertex of $\Gamma$.
We define a mapping as
$$T: E(V_1,V_2)\rightarrow F(V_1,V_2)$$
$$E(V_1,V_2)\ni e\mapsto T(e).$$ Given any $e\in E(V_1,V_2),$ we choose a
vertex $a\in e$ and a triangle $\triangle_1$ containing the vertex
$a.$ Since there exists at least one edge $e_1$ of $\triangle_1$
which lies in $F(V_1,V_2),$ we define $T(e)=e_1.$

The key point is to estimate the multiplicity of the mapping $T.$
For any $e_1\in F(V_1,V_2)$ such that $T^{-1}(e_1)\neq\emptyset,$
any $e\in T^{-1}(e_1)$ lies in the geodesic ball $B_2(b)$ where
$b$ is one of vertices of $e_1.$ Then $$\sharp T^{-1}(e_1)\leq
\mathrm{the\ number\ of\ edges\ in\ } B_2(b)\leq \binom{d^2}{2}.$$

Hence $$\sharp F(V_1,V_2)\geq \sharp T(E(V_1,V_2))\geq
\frac{|E(V_1,V_2)|}{\binom{d^2}{2}}.$$ Since $|E(V_1,V_1)|=2\sharp
E(V_1,V_1),$ we have
$$\sharp F(V_1,V_2)\leq |E(V_1,V_1)|+|E(V_2,V_2)|+|\partial (V_1\cup V_2)|.$$ This yields
$$|E(V_1,V_2)|\leq C(d)(|E(V_1,V_1)|+|E(V_2,V_2)|+|\partial (V_1\cup V_2)|).$$

Case 2. $OG(\Gamma)=2s+1\ (s\geq2).$

Then there exists at least one circuit of length $2s+1$ passing
through each vertex. We claim that for any circuit $C$ of length
$2s+1$ which intersects $V_1\cup V_2,$ $C\cap F(V_1, V_2)\neq
\emptyset,$ i.e. there exists at least one edge of $C$ contained
in $F(V_1,V_2).$ If not, all the edges of $C$ are contained in
$E(V_1,V_2)$ since the set of edges $E=E(V_1,V_2)\cup
F(V_1,V_2)\cup E(V_3,V_3)$ and $C\cap (V_1\cup V_2)\neq\emptyset.$
Then $C$ is a bipartite subgraph which contradicts to the odd
length of $C$. Then the claim follows.

We define a mapping as
$$T: E(V_1,V_2)\rightarrow F(V_1,V_2)$$
$$E(V_1,V_2)\ni e\mapsto T(e).$$
For any $e\in E(V_1,V_2)$ and a vertex $a\in e,$ there is a
circuit $C$ of length $2s+1$ passing through $a$. By the claim, we
may choose one of edges of $C$, $e_1$, which lies in $F(V_1,V_2)$
and define $T(e)=e_1.$

We shall estimate the multiplicity of the mapping $T.$ For any
$e_1\in T(E(V_1,V_2))$ and $e\in T^{-1}(e_1),$ the edge $e$ lies
in the geodesic ball $B_{s+1}(b)$ where $b$ is a vertex of $e_1.$
Then $\sharp T^{-1}(e_1)\leq \binom{d^{s+1}}{2}.$ Hence
$$\sharp F(V_1,V_2)\geq \frac{|E(V_1,V_2)|}{\binom{d^{s+1}}{2}}.$$
Then
$$|E(V_1,V_2)|\leq C(d,s)(|E(V_1,V_1)|+|E(V_2,V_2)|+|\partial (V_1\cup V_2)|).$$

Combining the case 1 and 2, we prove the theorem.
\end{proof}

Now we prove the main theorem of this section.
\begin{proof}[Proof of Theorem \ref{mainth1}]
Let $\Gamma$ be a quasi-transitive graph and $\Gamma\slash
\mathrm{Aut}(\Gamma)=\{O_i\}_{i\in \mathcal{I}}$ ($\sharp
\mathcal{I}<\infty$). Suppose $OG(\Gamma)=2s+1\ (s\geq 1),$ then
there exists a circuit of length $2s+1$, $C_{2s+1}$, passing
through a vertex in $O_j$ for some $j\in\mathcal{I}.$ By the group
action of $\mathrm{Aut}(\Gamma),$ there exists at least one
circuit of length $2s+1$ passing through each vertex in $O_j.$
Since $\Gamma$ is connected, for any $x\in \Gamma,$ there is a
path $P=\cup_{i=0}^{l-1}\{(x_i,x_{i+1})\},$ i.e. $x=x_0\sim
x_1\sim \cdots\sim x_l$ such that $x_l\in O_j$ and $l\leq \sharp
\mathcal{I}.$

For any two disjoint subset $V_1$ and $V_2,$ we define a mapping
$$T: E(V_1,V_2)\rightarrow F(V_1,V_2)$$
$$E(V_1,V_2)\ni e\mapsto T(e),$$ where $F(V_1,V_2):=E(V_1,V_1)
\cup E(V_2,V_2)\cup \partial(V_1\cup V_2).$ For any $e\in E(V_1,
V_2)$ and a vertex $x_0\in e,$ there is a path
$P=\cup_{i=0}^{l-1}\{(x_i,x_{i+1})\}$ such that $x_l\in O_j$ and
$l\leq \sharp \mathcal{I}.$ Let $C_{2s+1}$ be a circuit of length
$2s+1$ passing through $x_l.$ The same argument in Theorem
\ref{mthlast1} implies that $(P\cup C_{2s+1})\cap F(V_1,V_2)\neq
\emptyset.$ We choose one edge $e_1\in(P\cup C_{2s+1})\cap
F(V_1,V_2)$ and define $T(e)=e_1.$

We estimate the multiplicity of the mapping $T.$ For any $e_1\in
T(E(V_1,V_2))$ and $e\in T^{-1}(e_1),$ the edge $e$ lies in the
geodesic ball $B_{\sharp \mathcal{I}+s+1}(b)$ where $b$ is a
vertex of $e_1.$ Since $d(\Gamma)=\max\{d_x: x\in \Gamma\},$
$\sharp T^{-1}(e_1)\leq \binom{d(\Gamma)^{\sharp
\mathcal{I}+s+1}}{2}.$ Hence
$$\sharp F(V_1,V_2)\geq \frac{|E(V_1,V_2)|}{\binom{d(\Gamma)^{\sharp\mathcal{I}+s+1}}{2}}.$$
Then
$$|E(V_1,V_2)|\leq C(d(\Gamma),\sharp \mathcal{I},s)(|E(V_1,V_1)|+|E(V_2,V_2)|+|\partial (V_1\cup V_2)|)$$ which proves the theorem.
\end{proof}

\section{The essential spectrum of $
\Delta$}\label{Section10} In this section, we use the results in
the previous sections to estimate the essential spectrum of
infinite graphs. We denote by $\sigma^{\mathrm{ess}}(\Gamma)$ the
essential spectrum of normalized Laplace operator $\Delta$ of an
infinite graph $\Gamma$ and define the bottom and the top of the
essential spectrum as
$\underline{\lambda}^{\mathrm{ess}}(\Gamma):=\inf
\sigma^{\mathrm{ess}}(\Gamma)$ and
$\bar{\lambda}^{\mathrm{ess}}(\Gamma)=\sup
\sigma^{\mathrm{ess}}(\Gamma).$ Recall that the spectrum is given
by $\sigma(\Gamma) := \sigma^\mathrm{disc}(\Gamma)
\dot{\cup}\sigma^\mathrm{ess}(\Gamma)$, where
$\sigma^\mathrm{disc}(\Gamma)$ contains isolated eigenvalues of
finite multiplicity.

We recall the following theorem due to Fujiwara \cite{Fujiwara96}
which is also known as the decomposition principle in the
continuous setting, see \cite{Donnelly79}.
\begin{theo}[Fujiwara \cite{Fujiwara96}]\label{decomposition theorem} Let $\Gamma$ be an
infinite graph and $K$ be a finite subgraph. Then
$\sigma^\mathrm{ess}(\Delta(\Gamma)) =
\sigma^\mathrm{ess}(\Delta(\Gamma\setminus K))$ where
$\Delta(\Gamma\setminus K)$ denotes the Laplace operator with
Dirichlet boundary conditions.
\end{theo}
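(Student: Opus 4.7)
The plan is to reduce the statement to Weyl's theorem on the invariance of the essential spectrum under compact self-adjoint perturbations. First I would decompose $\ell^2(V,\mu) = \ell^2(K,\mu)\oplus \ell^2(\Gamma\setminus K,\mu)$ (identifying the subgraph $K$ with its vertex set) and verify by a direct pointwise computation that
\begin{equation*}
\Delta = (\Delta_K\oplus \Delta_{\Gamma\setminus K}) + B,
\end{equation*}
where, for $f = f_1\oplus f_2$ with $f_1\in \ell^2(K,\mu)$ and $f_2\in \ell^2(\Gamma\setminus K,\mu)$, the coupling operator $B$ is given by
\begin{equation*}
(Bf)(x) = -\frac{1}{\mu(x)}\sum_{y\in \Gamma\setminus K}\mu_{xy}\,f_2(y)\quad\text{for }x\in K,
\end{equation*}
and the symmetric formula for $x\in\Gamma\setminus K$. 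Here $\Delta_{\Gamma\setminus K}$ denotes the Dirichlet Laplacian on the (now infinite) set $\Gamma\setminus K$, defined via the same extension-by-zero prescription as in Section~\ref{Preliminaries}. The identity simply amounts to splitting the neighbor sum $\sum_y \mu_{xy}f(y)$ into its $K$ and $\Gamma\setminus K$ parts.

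The next step is to observe that $B$ has finite rank. Since $K$ is finite and $\Gamma$ is locally finite, the set $\partial K$ of vertices in $\Gamma\setminus K$ adjacent to some vertex of $K$ is finite. From the explicit formula, the range of $B|_{\ell^2(K,\mu)}$ is contained in $\ell^2(\partial K,\mu)$, and $B|_{\ell^2(\Gamma\setminus K,\mu)}$ factors through evaluation on $\partial K$. Consequently $\mathrm{rank}(B)\le |K|+|\partial K|<\infty$, so $B$ is a bounded self-adjoint compact operator on $\ell^2(V,\mu)$.

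By Weyl's theorem on the stability of the essential spectrum under compact self-adjoint perturbations,
\begin{equation*}
\sigma^{\mathrm{ess}}(\Delta) = \sigma^{\mathrm{ess}}(\Delta_K\oplus \Delta_{\Gamma\setminus K}) = \sigma^{\mathrm{ess}}(\Delta_K)\cup \sigma^{\mathrm{ess}}(\Delta_{\Gamma\setminus K}).
\end{equation*}
Since $\Delta_K$ acts on the finite-dimensional space $\ell^2(K,\mu)$, its spectrum is a finite set of eigenvalues of finite multiplicity, so $\sigma^{\mathrm{ess}}(\Delta_K)=\emptyset$ and the claim follows. The step I expect to require the most care is the first one: verifying that the diagonal blocks of $\Delta$ with respect to the decomposition $\ell^2(K,\mu)\oplus\ell^2(\Gamma\setminus K,\mu)$ coincide, as operators, with the intrinsically defined Dirichlet Laplacians, so that $B$ really encodes only the finitely many edges crossing $\partial K$. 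Once this bookkeeping is in place, the rest is a direct application of Weyl's classical perturbation theorem.
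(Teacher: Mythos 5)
Your proof is correct. Note that the paper does not actually give a proof of this theorem; it simply defers to Fujiwara \cite{Fujiwara96}, so there is no ``paper's own proof'' to compare against directly.

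Your argument via Weyl's perturbation theorem is a clean and complete route. The pointwise split of the neighbor sum $\sum_y \mu_{xy}f(y)$ into its $K$-part and its $(\Gamma\setminus K)$-part does exactly what you claim: the diagonal blocks are, by definition, the Dirichlet Laplacians $\Delta_K$ and $\Delta_{\Gamma\setminus K}$ (the paper's definition $\Delta_\Omega f = (\Delta\tilde f)|_\Omega$ with zero-extension gives precisely $f(x) - \frac{1}{\mu(x)}\sum_{y\in\Omega}\mu_{xy}f(y)$, with $\mu(x)$ still computed in the ambient graph, so the inner products on $\ell^2(K,\mu)$ and $\ell^2(\Gamma\setminus K,\mu)$ genuinely decompose $\ell^2(V,\mu)$ as an orthogonal sum). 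The off-diagonal coupling $B=\Delta-(\Delta_K\oplus\Delta_{\Gamma\setminus K})$ is automatically self-adjoint as a difference of bounded self-adjoint operators, and its range lies in $\ell^2(K\cup\partial K,\mu)$, which is finite-dimensional by local finiteness, so $B$ is finite-rank, hence compact. Weyl's theorem then gives $\sigma^{\mathrm{ess}}(\Delta)=\sigma^{\mathrm{ess}}(\Delta_K)\cup\sigma^{\mathrm{ess}}(\Delta_{\Gamma\setminus K})$, and the first set is empty since $\Delta_K$ acts on a finite-dimensional space.

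For context, Fujiwara's original proof (following the Riemannian blueprint of Donnelly--Li) runs via Weyl singular sequences: one takes an orthonormal sequence $(f_n)$ weakly converging to $0$ with $\|(\Delta-\lambda)f_n\|\to 0$, cuts it off away from $K$, and checks that the error introduced by the cutoff tends to zero. Your direct-sum/compact-perturbation argument sidesteps all cutoff estimates because in the discrete setting the coupling across $\partial K$ is literally finite-rank rather than merely ``small''; this makes the proof both shorter and more transparent than the singular-sequence route, at the price of invoking Weyl's perturbation theorem as a black box. Both yield the same conclusion, and yours is perfectly valid.
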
 This theorem shows that the essential spectrum of a
graph is invariant under compact perturbations of the graph. Hence
the essential spectrum does not depend on local properties of
graph but rather on the behavior of the graph at infinity. In
order to estimate the essential spectrum of $\Delta$ we are going
to define the Cheeger and the dual Cheeger constant at infinity.

\begin{defi}[cf. \cite{Brooks,Fujiwara96}]
The Cheeger constant at infinity $h_\infty$ is defined as
$$h_\infty =\lim_{K\uparrow \Gamma}h(\Gamma\setminus K).$$
\end{defi}
\begin{defi} The dual Cheeger constant at infinity
$\bar{h}_\infty$ is defined as
$$\bar{h}_\infty =\lim_{K\uparrow \Gamma}\bar{h}(\Gamma\setminus K).$$
\end{defi}
\begin{theo}\label{essential spectrum}
For the top of the essential spectrum we obtain $$2\bar{h}_\infty
+ h_\infty\leq \overline{\lambda}^\mathrm{ess}(\Gamma)\leq 1+
\sqrt{1 - (1-\bar{h}_\infty)^2}$$
\end{theo}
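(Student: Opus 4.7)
The plan is to establish the Persson-type identity
\begin{equation*}
\overline{\lambda}^{\mathrm{ess}}(\Gamma)=\lim_{K\uparrow\Gamma}\overline{\lambda}(\Gamma\setminus K)\qquad(\ast)
\end{equation*}
and then apply the finite-$K$ sandwich estimate \eqref{finitetoinfinite2} to the punctured graph $\Gamma\setminus K$, letting $K\uparrow\Gamma$. By the monotonicity of $\bar{h}$, $h$ and $\lambda_{\mathrm{max}}$ in $\Omega$ (Lemma \ref{BasicProperties}(v) and the definitions of the Cheeger and dual Cheeger constants), the quantities $\bar{h}(\Gamma\setminus K)$, $h(\Gamma\setminus K)$ and $\overline{\lambda}(\Gamma\setminus K)$ are all monotone in $K$, so their limits as $K\uparrow\Gamma$ exist and equal $\bar{h}_\infty$, $h_\infty$ and $\lim_K \overline{\lambda}(\Gamma\setminus K)$ respectively.

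For the upper bound only the easy direction $\overline{\lambda}^{\mathrm{ess}}(\Gamma)\leq \overline{\lambda}(\Gamma\setminus K)$ of $(\ast)$ is needed. This follows from Fujiwara's decomposition principle (Theorem \ref{decomposition theorem}), which gives $\sigma^{\mathrm{ess}}(\Gamma)=\sigma^{\mathrm{ess}}(\Delta(\Gamma\setminus K))\subset\sigma(\Delta(\Gamma\setminus K))$, so that the essential supremum is bounded by the full supremum. Combining with the upper half of \eqref{finitetoinfinite2} applied to $\Gamma\setminus K$ and sending $K\uparrow\Gamma$ yields
\begin{equation*}
\overline{\lambda}^{\mathrm{ess}}(\Gamma)\leq \lim_{K\uparrow\Gamma}\Bigl(1+\sqrt{1-(1-\bar{h}(\Gamma\setminus K))^2}\Bigr)=1+\sqrt{1-(1-\bar{h}_\infty)^2}.
\end{equation*}

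For the lower bound I need the harder direction $\lim_K \overline{\lambda}(\Gamma\setminus K)\leq \overline{\lambda}^{\mathrm{ess}}(\Gamma)$ of $(\ast)$. I will prove this by contradiction. If it failed, some $\delta>0$ would satisfy $\overline{\lambda}(\Gamma\setminus K)\geq \overline{\lambda}^{\mathrm{ess}}(\Gamma)+\delta$ for every finite $K$. Using the Rayleigh-quotient characterization of $\overline{\lambda}(\Gamma\setminus K)$ supplied by Lemma \ref{finitetoinfinite}, I construct inductively a sequence of finitely supported functions $f_n\in C_0(\Gamma)$ with $(\Delta f_n,f_n)_\mu\geq (\overline{\lambda}^{\mathrm{ess}}(\Gamma)+\delta/2)(f_n,f_n)_\mu$, while choosing the finite set $K$ at stage $n$ to contain the $1$-neighborhood of $\bigcup_{i<n}\mathrm{supp}(f_i)$. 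This arranges that $d(\mathrm{supp}(f_n),\mathrm{supp}(f_i))\geq 2$ for $i<n$, so no edge of $\Gamma$ joins the two supports and Green's formula (Lemma \ref{GreenLemma}) gives $(\Delta f_i,f_j)_\mu=0$ for $i\neq j$. Consequently $(\Delta f,f)_\mu\geq (\overline{\lambda}^{\mathrm{ess}}(\Gamma)+\delta/2)(f,f)_\mu$ on the infinite-dimensional subspace $\mathrm{span}\{f_n\}\subset \ell^2(V,\mu)$, contradicting the standard max--min characterization
\begin{equation*}
\overline{\lambda}^{\mathrm{ess}}(\Gamma)=\lim_{n\to\infty}\ \sup_{\substack{W\subset \ell^2(V,\mu)\\ \dim W=n}}\ \inf_{\substack{f\in W\\ f\neq 0}}\frac{(\Delta f,f)_\mu}{(f,f)_\mu}
\end{equation*}
of the top of the essential spectrum of a bounded self-adjoint operator. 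With $(\ast)$ in hand, applying the lower half of \eqref{finitetoinfinite2} to $\Gamma\setminus K$ and passing to the limit gives
\begin{equation*}
2\bar{h}_\infty+h_\infty=\lim_{K\uparrow\Gamma}\bigl(2\bar{h}(\Gamma\setminus K)+h(\Gamma\setminus K)\bigr)\leq \lim_{K\uparrow\Gamma}\overline{\lambda}(\Gamma\setminus K)=\overline{\lambda}^{\mathrm{ess}}(\Gamma).
\end{equation*}

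The main obstacle is the nontrivial direction of the Persson identity $(\ast)$; the key technical point there is that the supports of the test functions $f_n$ must be separated by graph distance at least $2$ (not merely be disjoint), since that is what makes the family orthogonal with respect to the quadratic form of $\Delta$ rather than only in the Hilbert space inner product, and thus allows the Rayleigh-quotient bound to pass from each individual $f_n$ to the full span.
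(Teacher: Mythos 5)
Your proof is correct and follows essentially the same route as the paper: apply the finite-$K$ sandwich estimate \eqref{finitetoinfinite2} to $\Gamma\setminus K$ and take the exhaustion $K\uparrow\Gamma$. The only difference is that the paper's one-line proof silently invokes the Persson-type identity $\lim_{K\uparrow\Gamma}\overline{\lambda}(\Gamma\setminus K)=\overline{\lambda}^{\mathrm{ess}}(\Gamma)$ as a known fact (it is stated without proof in the example following Theorem \ref{hbar and h at infinity}, and the easy direction is immediate from Fujiwara's decomposition principle), whereas you supply a complete proof of both directions. Your argument for the hard inequality — building, via Lemma \ref{finitetoinfinite}, an infinite family of finitely supported test functions whose supports are separated by graph distance at least $2$ so that Green's formula forces orthogonality in the quadratic form, and then invoking the max--min characterization of $\sup\sigma^{\mathrm{ess}}$ — is a correct and self-contained discrete analogue of Persson's theorem; it is closely parallel in spirit to the construction the paper does carry out explicitly in the proof of Theorem \ref{lasts1} via Theorem \ref{essential spectrum intersection}. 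So: same decomposition, same key lemma (\eqref{finitetoinfinite2}); you just close a gap the paper leaves to the reader.
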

\begin{proof}Take an exhaustion $K\uparrow\Gamma$ in \eqref{finitetoinfinite2}.
This completes the proof.
\end{proof}
For completeness, we write down the estimate of the bottom of the
essential spectrum which is a consequence of \eqref{bottom2} in
Theorem \ref{bottom4}, see also \cite{Fujiwara96}.
\begin{theo}[cf. \cite{Fujiwara96}]\label{bottom5}
For the bottom of the essential spectrum we have
$$1-\sqrt{1-h_{\infty}^2}\leq \underline{\lambda}^{\mathrm{ess}}(\Gamma)\leq h_{\infty}.$$
\end{theo}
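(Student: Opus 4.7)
The plan is to reduce both estimates to their Dirichlet analogues in Theorem \ref{bottom4} by removing a finite set $K$ and invoking Fujiwara's decomposition principle (Theorem \ref{decomposition theorem}), which guarantees $\sigma^{\mathrm{ess}}(\Delta(\Gamma))=\sigma^{\mathrm{ess}}(\Delta(\Gamma\setminus K))$ for every finite $K\subset\Gamma$, so in particular $\underline{\lambda}^{\mathrm{ess}}(\Gamma)=\underline{\lambda}^{\mathrm{ess}}(\Gamma\setminus K)$.

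The lower bound is the easy half. For any finite $K$, trivially $\underline{\lambda}^{\mathrm{ess}}(\Gamma\setminus K)\geq \underline{\lambda}(\Gamma\setminus K)$, and applying the lower half of \eqref{bottom2} gives
$$\underline{\lambda}^{\mathrm{ess}}(\Gamma) \;=\; \underline{\lambda}^{\mathrm{ess}}(\Gamma\setminus K) \;\geq\; \underline{\lambda}(\Gamma\setminus K) \;\geq\; 1-\sqrt{1-h^2(\Gamma\setminus K)}.$$
Since $h(\Gamma\setminus K)$ is monotone as $K$ grows (its defining infimum is taken over a smaller family of subsets), letting $K\uparrow \Gamma$ and using the definition of $h_{\infty}$ yields the desired bound $\underline{\lambda}^{\mathrm{ess}}(\Gamma)\geq 1-\sqrt{1-h_{\infty}^{2}}$.

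The upper bound requires a Persson-type identification
$$\underline{\lambda}^{\mathrm{ess}}(\Gamma) \;=\; \lim_{K\uparrow\Gamma}\underline{\lambda}(\Gamma\setminus K).$$
Granting this, the upper half of \eqref{bottom2} then gives $\underline{\lambda}^{\mathrm{ess}}(\Gamma)=\lim_{K\uparrow\Gamma}\underline{\lambda}(\Gamma\setminus K)\leq \lim_{K\uparrow\Gamma} h(\Gamma\setminus K)=h_{\infty}$. The $\geq$ direction of the Persson identity follows exactly as in the previous paragraph (it is what we already used). The nontrivial $\leq$ direction I would prove by a singular-sequence argument: given any $\lambda\in\sigma^{\mathrm{ess}}(\Gamma)$, Weyl's criterion produces $f_n\in\ell^2(V,\mu)$ with $\|f_n\|=1$, $f_n\rightharpoonup 0$ and $\|(\Delta-\lambda)f_n\|\to 0$. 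By cutting each $f_n$ off against an exhaustion $K_n\uparrow\Gamma$ and using that weak convergence to $0$ plus local finiteness kills the $\ell^2$-mass on any fixed finite set, one obtains $\tilde f_n\in C_0(\Gamma\setminus K_n)$ with Rayleigh quotient tending to $\lambda$; this forces $\underline{\lambda}(\Gamma\setminus K_n)\leq \lambda+o(1)$, and hence $\lim_{K\uparrow\Gamma}\underline{\lambda}(\Gamma\setminus K)\leq \lambda$. Taking the infimum over $\lambda\in\sigma^{\mathrm{ess}}(\Gamma)$ closes the identity.

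The main obstacle is precisely this localization-at-infinity step inside the Persson identity; one has to ensure that cutting off a Weyl sequence does not destroy the property $\|(\Delta-\lambda)\tilde f_n\|\to 0$. In our setting this is clean because $\Delta$ is a local, bounded operator on a locally finite graph, so the commutator $[\Delta,\chi_{K_n^c}]$ has uniformly bounded norm and acts only on a set whose $\ell^2$-mass of $f_n$ vanishes as $n\to\infty$ (by weak convergence). This is essentially the content of Fujiwara's argument in \cite{Fujiwara96}, from which the entire statement can also be quoted directly; I would include the short Weyl-sequence reduction above for completeness, and otherwise the theorem is an immediate corollary of Theorems \ref{decomposition theorem} and \ref{bottom4}.
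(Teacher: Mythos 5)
Your overall strategy is the same as the paper's, which is itself very terse here: the paper merely remarks that Theorem~\ref{bottom5} is ``a consequence of \eqref{bottom2}'' and refers to Fujiwara, so it too implicitly leans on a Persson-type identity without proving it. You are right that the lower bound is immediate from $\underline{\lambda}^{\mathrm{ess}}(\Gamma)=\underline{\lambda}^{\mathrm{ess}}(\Gamma\setminus K)\geq\underline{\lambda}(\Gamma\setminus K)$ together with \eqref{bottom2} and monotonicity of $h(\Gamma\setminus K)$; that half of the argument is fine.

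The gap is in your sketch of the ``nontrivial $\leq$ direction'' of the Persson identity $\underline{\lambda}^{\mathrm{ess}}(\Gamma)=\lim_{K\uparrow\Gamma}\underline{\lambda}(\Gamma\setminus K)$. Track the inequalities: from $\lambda\in\sigma^{\mathrm{ess}}(\Gamma)$ and the cut-off Weyl sequence you deduce $\underline{\lambda}(\Gamma\setminus K_n)\leq\lambda+o(1)$, hence $\lim_{K\uparrow\Gamma}\underline{\lambda}(\Gamma\setminus K)\leq\lambda$; taking the infimum over $\lambda\in\sigma^{\mathrm{ess}}(\Gamma)$ then yields $\lim_{K\uparrow\Gamma}\underline{\lambda}(\Gamma\setminus K)\leq\underline{\lambda}^{\mathrm{ess}}(\Gamma)$. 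But this is exactly the $\geq$ inequality you already had from the decomposition principle; you have proved $\underline{\lambda}^{\mathrm{ess}}(\Gamma)\geq\lim_{K}\underline{\lambda}(\Gamma\setminus K)$ twice, and the direction actually needed for the upper bound $\underline{\lambda}^{\mathrm{ess}}(\Gamma)\leq h_{\infty}$ --- namely $\underline{\lambda}^{\mathrm{ess}}(\Gamma)\leq\lim_{K}\underline{\lambda}(\Gamma\setminus K)$ --- is never established. The Weyl-sequence construction cannot give that direction: it produces localized approximations \emph{from} an essential-spectrum point and thus bounds $\lim_K$ from above by $\lambda$, whereas one must go the other way.

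The correct argument for the missing direction proceeds from near-minimizers of the Dirichlet Rayleigh quotient, not from Weyl sequences. Set $\lambda_0=\lim_{K\uparrow\Gamma}\underline{\lambda}(\Gamma\setminus K)$. For every finite $K$ and $\epsilon>0$ choose $g_K\in C_0(\Gamma\setminus K)$ with $(\Delta g_K,g_K)/(g_K,g_K)<\lambda_0+\epsilon$; then pick $K_1\subset K_2\subset\cdots$ growing so fast that the supports of the chosen $g_{K_n}$ are pairwise non-adjacent. These span an infinite-dimensional subspace on which the quadratic form is $<\lambda_0+\epsilon$, and the dual form of the Glazman/Donnelly criterion recorded in the paper as Theorem~\ref{essential spectrum intersection} (applied to $2I-\Delta$, or equivalently reversing the inequality) gives $(-\infty,\lambda_0+\epsilon]\cap\sigma^{\mathrm{ess}}(\Gamma)\neq\emptyset$, hence $\underline{\lambda}^{\mathrm{ess}}(\Gamma)\leq\lambda_0$. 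This is also the structure used later in the paper's proof of Theorem~\ref{lasts1}. Your appeal to Fujiwara as an alternative is legitimate, but the singular-sequence sketch you offer as the self-contained replacement does not do the job.
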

\begin{rem}
Remark \ref{Remark hbar weak} and Corollary \ref{coro hbar weak }
suggest that the dual Cheeger constant $\bar{h}$ is weaker than
the Cheeger constant $h$. Nevertheless, in the next theorem we
prove the somehow surprising results that at infinity both
quantities contain the same information.
\end{rem}The next theorem is the main result of this section.
\begin{theo}\label{hbar and h at infinity} Let $\Gamma$ be a
graph without self-loops. The following statements are equivalent:
\begin{itemize}
\item[$(i)$] $\bar{h}_\infty =0$\item[$(ii)$] $h_\infty
=1$\item[$(iii)$] $\sigma^\mathrm{ess}(\Gamma)=\{1\}$
\end{itemize}
\end{theo}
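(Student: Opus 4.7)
The plan is to close the loop $(i) \Rightarrow (ii) \Rightarrow (iii) \Rightarrow (ii) \Rightarrow (i)$ by combining the two structural inequalities relating $h(\Omega)$ and $\bar h(\Omega)$ (Theorem \ref{h and hbar} and Theorem \ref{h and hbar 3}) with the spectral estimates in Theorem \ref{essential spectrum} and Theorem \ref{bottom5}. The loop-free hypothesis enters only through Theorem \ref{h and hbar 3}, so I would isolate its use carefully.

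First I would establish the equivalence of $(i)$ and $(ii)$ at the purely geometric level. For any finite $K \subset \Gamma$, the inequalities $\bar h(\Omega) \le 1-h(\Omega)$ and $\tfrac{1}{2}(1-h(\Omega)) \le \bar h(\Omega)$ (the latter using no self-loops) pass to the monotone limits $\Omega \uparrow \Gamma \setminus K$, yielding
\[
\tfrac{1}{2}(1-h(\Gamma\setminus K)) \le \bar h(\Gamma\setminus K) \le 1 - h(\Gamma\setminus K).
\]
Now let $K \uparrow \Gamma$ in the exhaustion defining $h_\infty$ and $\bar h_\infty$ to obtain
\[
\tfrac{1}{2}(1-h_\infty) \le \bar h_\infty \le 1 - h_\infty.
\]
Since $0 \le \bar h_\infty$ and $h_\infty \le 1$ (as $|\partial U| \le \mathrm{vol}(U)$), the two sided sandwich forces $\bar h_\infty = 0 \Leftrightarrow h_\infty = 1$.

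Next I would prove $(ii) \Rightarrow (iii)$. Assuming $h_\infty = 1$, we have also $\bar h_\infty = 0$ by the previous step. Plug $h_\infty = 1$ into Theorem \ref{bottom5} to get $1 = 1 - \sqrt{1-h_\infty^2} \le \underline\lambda^{\mathrm{ess}}(\Gamma) \le h_\infty = 1$, so $\underline\lambda^{\mathrm{ess}}(\Gamma) = 1$. Plug $\bar h_\infty = 0$ and $h_\infty = 1$ into Theorem \ref{essential spectrum} to get
\[
1 = 2\bar h_\infty + h_\infty \le \bar\lambda^{\mathrm{ess}}(\Gamma) \le 1 + \sqrt{1-(1-\bar h_\infty)^2} = 1,
\]
so $\bar\lambda^{\mathrm{ess}}(\Gamma) = 1$. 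Since $\sigma^{\mathrm{ess}}(\Gamma)$ is contained in $[\underline\lambda^{\mathrm{ess}},\bar\lambda^{\mathrm{ess}}]$, we conclude $\sigma^{\mathrm{ess}}(\Gamma) = \{1\}$.

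Finally, $(iii) \Rightarrow (ii)$ is immediate: if $\sigma^{\mathrm{ess}}(\Gamma) = \{1\}$ then $\underline\lambda^{\mathrm{ess}}(\Gamma) = 1$, and the upper Cheeger bound $\underline\lambda^{\mathrm{ess}}(\Gamma) \le h_\infty$ from Theorem \ref{bottom5} combined with $h_\infty \le 1$ gives $h_\infty = 1$. The main (and really the only) delicate point is the exchange of the two limits $\Omega \uparrow \Gamma\setminus K$ and $K \uparrow \Gamma$ in the first paragraph: both $h$ and $\bar h$ are monotone in their argument under inclusion (with opposite monotonicities), so the iterated limits are well-defined and coincide with the direct limits $h_\infty$ and $\bar h_\infty$; I would state this observation explicitly before running the sandwich argument. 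No self-loops is exactly what is needed to invoke Theorem \ref{h and hbar 3}; without it, only the direction $(ii) \Rightarrow (i)$ survives.
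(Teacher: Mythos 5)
Your argument is correct. The equivalence $(i)\Leftrightarrow(ii)$ is proved exactly as in the paper: pass the sandwich $\tfrac{1}{2}(1-h(\Omega))\le \bar h(\Omega)\le 1-h(\Omega)$ (Theorems \ref{h and hbar} and \ref{h and hbar 3}) through the monotone limits $\Omega\uparrow\Gamma\setminus K$ and then $K\uparrow\Gamma$, and use $\bar h_\infty\ge 0$, $h_\infty\le 1$ to extract the two implications. Where you diverge is in $(ii)\Leftrightarrow(iii)$: the paper simply cites Fujiwara's theorem as a black box, whereas you re-derive it from the paper's own spectral estimates, pinning $\underline\lambda^{\mathrm{ess}}=1$ via Theorem \ref{bottom5} when $h_\infty=1$, and pinning $\bar\lambda^{\mathrm{ess}}=1$ via the dual Cheeger estimate at infinity (Theorem \ref{essential spectrum}) once $\bar h_\infty=0$ is in hand; the reverse implication $(iii)\Rightarrow(ii)$ falls out of the upper Cheeger bound $\underline\lambda^{\mathrm{ess}}\le h_\infty$. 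This self-contained route is a genuine gain: it shows that the dual Cheeger inequality developed in this paper, together with the standard Cheeger inequality, independently recovers Fujiwara's characterization, rather than relying on it externally. The only small caveat is that the upper estimate $\bar\lambda^{\mathrm{ess}}\le 1+\sqrt{1-(1-\bar h_\infty)^2}$ you invoke is itself a new contribution of the paper, so strictly speaking your argument is not ``more elementary'' than Fujiwara's; it is, however, more internal to the present framework and makes transparent exactly where the no-self-loop hypothesis enters (only through Theorem \ref{h and hbar 3}).
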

\begin{proof}The equivalence of $(ii)$
and $(iii)$ was proved by Fujiwara \cite{Fujiwara96}. It suffices
to show the equivalence of $(i)$ and $(ii).$ From Theorem \ref{h
and hbar} and Theorem \ref{h and hbar 3} it follows that for
finite subsets $\Omega\subset V $
$$\frac{1}{2}(1-h(\Omega))\leq \bar{h}(\Omega)\leq 1-h(\Omega).$$
This immediately translates to results for infinite subsets, i.e.
for finite $K\subset\Gamma$
\begin{eqnarray*}\frac{1}{2}(1-h(\Gamma\setminus K))&=&  \lim_{\Omega\uparrow\Gamma\setminus K}
\frac{1}{2}(1-h(\Omega)) \leq \lim_{\Omega\uparrow\Gamma\setminus
K} \bar{h}(\Omega) = \bar{h}(\Gamma\setminus K)\\ &\leq&
\lim_{\Omega\uparrow\Gamma\setminus K} (1-h(\Omega)) = 1 -
h(\Gamma\setminus K).\end{eqnarray*} Taking an exhaustion $
K\uparrow\Gamma$ yields
$$\frac{1}{2}(1-h_\infty)\leq \bar{h}_\infty \leq 1-h_\infty.$$
This proves that $\bar{h}_\infty =0$ if and only if $h_\infty =1.$
\end{proof}
\begin{rem}
\begin{itemize}
\item[$(i)$] The implication $h_\infty=1 \Rightarrow
\bar{h}_\infty =0$ is also true for graphs with self-loops by
Theorem \ref{h and hbar}. \item[$(ii)$] The reason why the proof
of Theorem \ref{hbar and h at infinity} does not work for graphs
with self-loops is that for such graphs one cannot always find a
partition that satisfies \eqref{surgery equation} and hence the
lower bound for $\bar{h}(\Omega)$ in Theorem \ref{h and hbar 3} is
not true. \item[$(iii)$] In general, for graphs with self-loops,
one cannot even expect that $h_\infty
>0$ if $\bar{h}_\infty =0$. The next example shows a graph with
self-loops for which $h_\infty=\bar{h}_\infty =0$ holds.
\end{itemize}
\end{rem}

\begin{example}Consider the graph $\Gamma$ in Figure \ref{Fig.4}. It
is easy to verify that for this graph $\bar{h}_\infty =h_\infty
=0$. Moreover, for this graph we have $\sigma^\mathrm{ess} =
\{0\}$. This can be seen as follows: We know that
$\lim_{\Omega\uparrow\Gamma\setminus K}
\lambda_\mathrm{max}(\Omega) = \overline{\lambda}(\Gamma\setminus
K)$, where $\Omega\uparrow\Gamma\setminus K$ is an exhaustion of
$\Gamma\setminus K$ and $\lambda_\mathrm{max}(\Omega)$ is the
largest eigenvalue of the Laplace operator with Dirichlet boundary
conditions. Moreover,
$\lim_{K\uparrow\Gamma}\overline{\lambda}(\Gamma\setminus K) =
\overline{\lambda}^\mathrm{ess}(\Gamma)$ which yields
$\lim_{K\uparrow\Gamma} \lim_{\Omega\uparrow\Gamma\setminus K}
\lambda_\mathrm{max}(\Omega) =
\overline{\lambda}^\mathrm{ess}(\Gamma)$. By abuse of notation we
denote in the following the closed ball centered at $0$ with
radius $K$ by $K$, i.e. $K := B(0,K)$. By considering the trace of
$\Delta_{\Omega}$ where $\Omega\uparrow \Gamma\setminus B(0,K)$ we
have
$$\lambda_\mathrm{max}(\Omega)\leq \sum_{i =1}^{\sharp
\Omega}\lambda_i(\Omega)= \mathrm{tr}(\Delta_{\Omega}) \leq
\sum_{k=K+1}^{\sharp \Omega +K} \frac{2}{2^k+2}\leq
\sum_{k=K+1}^{\infty} \frac{1}{2^{k-1}}=2^{1-K}.
$$ Taking the limit $\Omega\uparrow \Gamma\setminus B(0,K)$
and the limit $K\uparrow\Gamma$ (which corresponds to $K\to
\infty$) on both sides we arrive at
$$\overline{\lambda}^\mathrm{ess}(\Gamma) = \lim_{K\to
\infty} \lim_{\Omega\uparrow\Gamma\setminus B(0,K)}
\lambda_\mathrm{max}(\Omega)\leq 0.
$$ Since $\sigma^\mathrm{ess}(\Gamma) \subset [0,2]$ and
$\sigma^\mathrm{ess}(\Gamma) \neq \emptyset$ it follows that
$\sigma^\mathrm{ess}(\Gamma)= \{0\}$. Together with Theorem
\ref{essential spectrum} this also yields a proof for the
statement $\bar{h}_\infty =h_\infty =0$.
\end{example}
\begin{theo}\label{nonconcen} $\bar{h}_\infty +h_\infty
=1$ if $\Gamma$ is bipartite.
\end{theo}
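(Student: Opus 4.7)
The plan is to reduce the statement at infinity to the finite version established in Theorem \ref{h and hbar}, using that bipartiteness is a hereditary property of induced subgraphs.

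First, I would observe that if $\Gamma$ is bipartite with bipartition $V=A\sqcup B$, then for any finite $K\subset\Gamma$ and any finite subset $\Omega\subset\Gamma\setminus K$, the induced subgraph on $\Omega$ is again bipartite (with bipartition $(A\cap\Omega)\sqcup(B\cap\Omega)$, noting that self-loops are excluded in bipartite graphs so the hypothesis of Theorem \ref{h and hbar} applies). Hence the equality case of Theorem \ref{h and hbar} yields
\[
\bar{h}(\Omega)+h(\Omega)=1
\]
for every finite $\Omega\subset\Gamma\setminus K$.

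Next I would pass to the limit along an exhaustion $\Omega\uparrow\Gamma\setminus K$. By the definitions recalled in Section~\ref{Section4} and Section~\ref{Section10}, $h(\Omega)$ is nonincreasing in $\Omega$ while $\bar{h}(\Omega)$ is nondecreasing in $\Omega$, so both limits exist and equal $h(\Gamma\setminus K)$ and $\bar{h}(\Gamma\setminus K)$ respectively. Taking $\Omega\uparrow\Gamma\setminus K$ in the displayed identity therefore gives
\[
\bar{h}(\Gamma\setminus K)+h(\Gamma\setminus K)=1
\]
for every finite $K\subset\Gamma$.

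Finally, I would take a further exhaustion $K\uparrow\Gamma$. By the definitions of $h_\infty$ and $\bar{h}_\infty$, the left-hand side converges to $\bar{h}_\infty+h_\infty$, and so $\bar{h}_\infty+h_\infty=1$, completing the proof. There is no real obstacle here: once one notes that bipartiteness is inherited by all induced subgraphs, the conclusion is a two-step passage to the limit in the equality case of Theorem \ref{h and hbar}. The only mild subtlety is checking that both sides of the finite equality remain well-defined along the exhaustion (for which the monotonicity of $h$ and $\bar{h}$ in $\Omega$, already used in Section~\ref{Section4}, suffices).
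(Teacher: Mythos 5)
Your proof is correct and follows essentially the same route as the paper: the paper's own proof is a one-line remark that the equality case of Theorem~\ref{h and hbar} passes to the limit in the same way as in the proof of Theorem~\ref{hbar and h at infinity}, which is exactly your two-step exhaustion $\Omega\uparrow\Gamma\setminus K$ and then $K\uparrow\Gamma$. Your explicit observation that bipartiteness is inherited by all induced subgraphs (so that Theorem~\ref{h and hbar} applies with equality throughout) is the one detail the paper leaves implicit, and you have stated it correctly.
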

\begin{proof}
As above one can show that for bipartite graphs Theorem \ref{h and
hbar} implies that $\bar{h}_\infty +h_\infty =1$. This completes
the proof.
\end{proof}
\begin{rem}
In particular, Theorem \ref{essential spectrum}, Theorem
\ref{bottom5} and Theorem \ref{nonconcen} imply that for bipartite
graphs with $h_\infty =0$ (or equivalently $\bar{h}_\infty =1$)
the essential spectrum is not concentrated at all since both $0$
and $2$ are contained in the essential spectrum.
\end{rem}

\begin{figure}\begin{center}
\includegraphics[width =
 12cm]{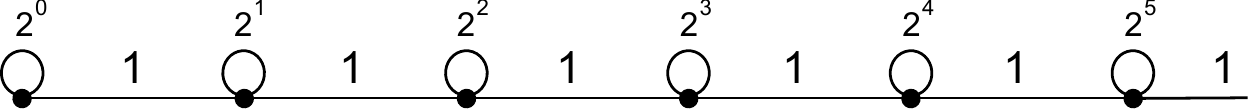}\caption{\label{Fig.4} Counterexample
for Theorem \ref{hbar and h at infinity} if we allow loops in the
graph.}
\end{center}
\end{figure}
We need the following consequence of the spectral theorem, see
\cite{Glazman} Theorem 13 or \cite{Donnelly81} Proposition 2.1:
\begin{theo}\label{essential spectrum intersection}
The interval $[\lambda, \infty)$ intersects the essential spectrum
of a self-adjoint operator $A$ if and only if for all $\epsilon>0$
there exists an infinite dimensional subspace
$\mathcal{H}_\epsilon\subset \mathcal{D}(A)$ of the domain of $A$
such that $(Af - \lambda f + \epsilon f, f)>0$ for all $f\in
\mathcal{H}_\epsilon.$
\end{theo}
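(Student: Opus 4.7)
The plan is to deduce both implications from the spectral theorem for the self-adjoint operator $A$, using the Weyl characterization that $\mu\in\sigma^{\mathrm{ess}}(A)$ if and only if $\mathrm{range}\,E_A((\mu-\delta,\mu+\delta))$ is infinite-dimensional for every $\delta>0$, where $E_A$ denotes the spectral family of $A$. In the intended application $A=\Delta$ is bounded on $\ell^2(V,\mu)$, and I will exploit this throughout so that domain issues do not arise.

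For the forward direction, I would pick any $\mu\in[\lambda,\infty)\cap\sigma^{\mathrm{ess}}(A)$ and, given $\epsilon>0$, set $\mathcal{H}_\epsilon:=\mathrm{range}\,E_A((\mu-\epsilon/2,\mu+\epsilon/2))$. This subspace is infinite-dimensional by the Weyl criterion and lies in $\mathcal{D}(A)$ trivially. For any nonzero $f\in\mathcal{H}_\epsilon$, the functional calculus gives $(Af,f)=\int t\,d\|E_A(t)f\|^2\geq(\mu-\epsilon/2)\|f\|^2$, hence $((A-\lambda+\epsilon)f,f)\geq(\mu-\lambda+\epsilon/2)\|f\|^2>0$, which is exactly the required inequality.

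For the backward direction I would argue by contraposition: assume $[\lambda,\infty)\cap\sigma^{\mathrm{ess}}(A)=\emptyset$ and exhibit an $\epsilon>0$ for which no subspace as in the statement can exist. Since $A$ is bounded, $\sigma(A)\cap[\lambda,\infty)\subset[\lambda,\|A\|]$ is compact, and by assumption consists only of isolated eigenvalues of finite multiplicity; any infinite collection of such eigenvalues in a compact set would accumulate, and the limit point would necessarily lie in $\sigma^{\mathrm{ess}}(A)$, contradicting the hypothesis. Thus there are only finitely many such eigenvalues, and their joint spectral projection $P$ is finite-dimensional. Moreover $\tau:=\sup(\sigma(A)\cap(-\infty,\lambda))<\lambda$ strictly, since $\tau=\lambda$ would force $\lambda$ to be a non-isolated point of $\sigma(A)$, again in $\sigma^{\mathrm{ess}}(A)$. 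Set $\epsilon:=(\lambda-\tau)/2>0$. By the spectral theorem the spectrum of $A|_{(I-P)\mathcal{H}}$ is contained in $(-\infty,\tau]$, so $((A-\lambda+\epsilon)f,f)\leq(\tau-\lambda+\epsilon)\|f\|^2=-\epsilon\|f\|^2\leq 0$ for every $f\in(I-P)\mathcal{H}$. If an infinite-dimensional $\mathcal{H}_\epsilon$ as in the theorem existed, then since $(I-P)\mathcal{H}$ has finite codimension the intersection $\mathcal{H}_\epsilon\cap(I-P)\mathcal{H}$ would contain a nonzero vector violating both inequalities simultaneously, a contradiction.

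The main obstacle I anticipate is the isolation step in the backward direction: one must rule out an infinite accumulation of discrete eigenvalues inside $[\lambda,\infty)$ and also exclude $\lambda$ itself from being a non-isolated spectral point. Both facts hinge on the identification of accumulation points of $\sigma(A)$ with points of $\sigma^{\mathrm{ess}}(A)$, together with the boundedness of $A$ to ensure compactness of $\sigma(A)\cap[\lambda,\infty)$. Once these points are settled, the dimension count of $\mathcal{H}_\epsilon\cap(I-P)\mathcal{H}$ delivers the contradiction painlessly, and the forward direction is an essentially one-line application of functional calculus.
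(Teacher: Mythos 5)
The paper does not actually prove this theorem; it is stated as a known consequence of the spectral theorem with a citation to Glazman (Theorem 13) and Donnelly (Proposition 2.1). So there is no internal proof to compare against, and your proposal must be judged on its own.

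Your argument is correct as written, and the decision to restrict to bounded $A$ is not merely a convenience but a genuine necessity. The forward direction (spectral projection onto $(\mu-\epsilon/2,\mu+\epsilon/2)$ for a chosen $\mu\in[\lambda,\infty)\cap\sigma^{\mathrm{ess}}(A)$, followed by a one-line functional-calculus estimate) is standard and clean, and in fact works even for unbounded $A$ since the range of a spectral projection over a bounded interval is always contained in $\mathcal{D}(A)$. The backward direction is where boundedness does real work: compactness of $\sigma(A)\cap[\lambda,\infty)\subset[\lambda,\|A\|]$ is what forces the finiteness of the eigenvalue set and hence finite rank of $P$, and the strict gap $\tau<\lambda$ is argued correctly via closedness of the spectrum together with the identification of non-isolated spectral points as essential. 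The final dimension count on $\mathcal{H}_\epsilon\cap(I-P)\mathcal{H}$ is the standard and correct way to close the argument. One could add, for clarity, that the statement as literally printed (with no boundedness hypothesis) is false: take $A$ self-adjoint with purely discrete spectrum $\lambda_1<\lambda_2<\cdots\to\infty$ and each eigenvalue of finite multiplicity, so $\sigma^{\mathrm{ess}}(A)=\emptyset$; then for any $\lambda\leq\lambda_1$ and any $\epsilon>0$, taking $\mathcal{H}_\epsilon=\mathcal{D}(A)$ satisfies the right-hand condition trivially, while the left-hand side fails. The cited sources either carry a semi-boundedness hypothesis or phrase the dual (lower) version; in the paper's application $A=\Delta$ has spectrum in $[0,2]$, so your restriction is exactly what is needed and loses nothing.
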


\begin{theo}\label{lasts1} Let $2\leq l=\sup_{x\in
V}\frac{\mu(x)}{\mu_{-}(x)}<\infty$, then
$$\sigma^\mathrm{ess}(\Delta) \cap [1 +
\frac{2\sqrt{l-1}}{l}-2\kappa_\infty, 2]\neq \emptyset,$$ where
$\kappa_\infty := \lim_{K\uparrow \Gamma} \kappa(\Gamma\setminus
K)$, $\kappa(\Gamma\setminus K)= \sup_{x_i\in\Gamma\setminus
K}\kappa(x_i,\Gamma\setminus K)$ and $\kappa(x_i,\Gamma\setminus
K) = \sup_{x\in \Gamma\setminus K}\frac{\mu_0(x)}{\mu(x)}$ where
we calculate $\mu_0$ with respect to $x_i$.
\end{theo}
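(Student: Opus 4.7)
The plan is to apply Theorem \ref{essential spectrum intersection} to $\Delta$ with $\lambda := 1 + \frac{2\sqrt{l-1}}{l} - 2\kappa_\infty$: for every $\epsilon > 0$ I will produce an infinite-dimensional subspace $\mathcal{H}_\epsilon \subset \ell^2(V,\mu)$ on which $(\Delta f - \lambda f + \epsilon f, f)_\mu > 0$. The building blocks of $\mathcal{H}_\epsilon$ will be Dirichlet eigenfunctions associated to the largest eigenvalue on a sequence of pairwise vertex- and edge-disjoint balls marching off to infinity; the comparison estimate of Corollary \ref{Estimates Balls 2} will force their largest Dirichlet eigenvalues to approach $\lambda$.

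Concretely, I first inductively construct balls $B_n := B(x_n, r_n)$ with $r_n \to \infty$, with each center $x_n$ chosen at distance exceeding $r_n + r_k + 1$ from every previous $x_k$, so that the $B_n$ are pairwise vertex- and edge-disjoint; this is possible since $\Gamma$ is infinite, locally finite and connected. Given $\epsilon > 0$, monotonicity of $K \mapsto \kappa(\Gamma \setminus K)$ gives a finite $K_\epsilon$ with $\kappa(\Gamma \setminus K_\epsilon) \leq \kappa_\infty + \epsilon/4$; by additionally requiring $B_n \subset \Gamma \setminus K_\epsilon$ for $n \geq N(\epsilon)$ and unwrapping the nested suprema in the definition of $\kappa(\Gamma\setminus K)$, one gets $\kappa(x_n, r_n) \leq \kappa_\infty + \epsilon/4$. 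Corollary \ref{Estimates Balls 2} then yields
\[
\lambda_\mathrm{max}(B_n) \geq 1 + \frac{2\sqrt{l-1}}{l}\cos\!\Bigl(\frac{\pi}{r_n+1}\Bigr) - 2\kappa(x_n, r_n) \geq \lambda - \epsilon/2
\]
after possibly enlarging $N(\epsilon)$ so that the cosine is close enough to $1$. Let $f_n$ be a Dirichlet eigenfunction for $\lambda_\mathrm{max}(B_n)$ and let $\tilde f_n$ denote its extension by zero to all of $V$.

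Vertex-disjointness of the balls makes the family $\{\tilde f_n\}$ orthogonal in $\ell^2(V, \mu)$, so $\mathcal{H}_\epsilon := \operatorname{span}\{\tilde f_n : n \geq N(\epsilon)\}$ is infinite dimensional. Repeating the direct calculation already used in the proof of Lemma \ref{Estimate Balls}, edge-disjointness gives $(\Delta \tilde f_n, \tilde f_m)_\mu = 0$ for $n \neq m$ together with $(\Delta \tilde f_n, \tilde f_n)_\mu = \lambda_\mathrm{max}(B_n)(\tilde f_n, \tilde f_n)_\mu$, so for any $0 \neq f = \sum a_n \tilde f_n \in \mathcal{H}_\epsilon$,
\[
(\Delta f, f)_\mu = \sum_n a_n^2 \lambda_\mathrm{max}(B_n)(\tilde f_n, \tilde f_n)_\mu \geq (\lambda - \epsilon/2)(f,f)_\mu > (\lambda - \epsilon)(f,f)_\mu,
\]
verifying the hypothesis of Theorem \ref{essential spectrum intersection}. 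Hence $\sigma^\mathrm{ess}(\Delta) \cap [\lambda, \infty) \neq \emptyset$, and since $\sigma(\Delta) \subset [0,2]$ the intersection in fact lies in $[\lambda, 2]$, which is the claim.

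The main technical subtlety is the bookkeeping step linking the local quantity $\kappa(x_n, r_n)$ (in which $\mu_0$ is computed with respect to the ball center $x_n$) to the global limit $\kappa_\infty$; this reduces to observing $\kappa(x_n, r_n) \leq \kappa(x_n, \Gamma \setminus K_\epsilon) \leq \kappa(\Gamma\setminus K_\epsilon)$ once $B_n \subset \Gamma\setminus K_\epsilon$, together with the monotone convergence $\kappa(\Gamma \setminus K_\epsilon) \downarrow \kappa_\infty$.
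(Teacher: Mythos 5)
Your proposal follows essentially the same route as the paper's proof: apply the eigenvalue comparison of Corollary~\ref{Estimates Balls 2} (i.e.\ Theorem~\ref{ecpt1}) to force $\lambda_{\mathrm{max}}$ of large balls near $1+\tfrac{2\sqrt{l-1}}{l}-2\kappa_\infty$, push the balls out past a finite set $K$ where $\kappa(\Gamma\setminus K)$ is close to $\kappa_\infty$, take the extended Dirichlet eigenfunctions on pairwise disjoint balls as test functions, and conclude with Theorem~\ref{essential spectrum intersection}. If anything you are a little more explicit than the paper in verifying that the spanned subspace works — you actually justify $(\Delta\tilde f_n,\tilde f_m)_\mu=0$ for $n\neq m$ via the Lemma~\ref{Estimate Balls} computation before passing to linear combinations, whereas the paper leaves that to the reader (and the paper's displayed inequality has a dropped factor, writing $\kappa_\infty f$ where $2\kappa_\infty f$ is meant).

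One small gap in the construction step: you fix the balls $B_n=B(x_n,r_n)$ once and for all, subject only to $d(x_n,x_k)>r_n+r_k+1$, and then claim that given $\epsilon$ one may ``additionally require'' $B_n\subset\Gamma\setminus K_\epsilon$ for $n\geq N(\epsilon)$. That disjointness condition alone does not guarantee this: it gives $d(x_n,x_0)>r_n+r_0+1$, so $d(x_n,x_0)-r_n$ is bounded below by the constant $r_0+1$, and since $r_n\to\infty$ nothing prevents the balls from always re-entering a fixed neighborhood of $x_0$. You need either to build in $d(x_n,x_0)-r_n\to\infty$ (trivially compatible with the other constraints, and then the tail misses every finite $K_\epsilon$ automatically), or, as the paper does, fix a single radius $r(\epsilon)$ for each $\epsilon$ and choose infinitely many disjoint balls of that one radius inside $\Gamma\setminus K_\epsilon$. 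Either repair is immediate, so the overall argument is sound.
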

\begin{proof}
Since $2 \leq l=\sup_{x\in V}\frac{\mu(x)}{\mu_{-}(x)}<\infty$ we
have from Theorem \ref{ecpt1} that for all $r\geq 0$
$$\lambda_{\mathrm{max}}(B(x_0,r))\geq
\lambda_{\mathrm{max}}(V_l(r))-2\kappa(x_0,r).$$ Moreover since
$\lim_{r\to\infty}\lambda_{\mathrm{max}}(V_l(r))=
\lambda_{\mathrm{max}}(R_l)$  we know that for all $\epsilon
>0$ there exists a $r(\epsilon)>0$ such that
$$\lambda_{\mathrm{max}}(V_l(r(\epsilon)))>
\lambda_{\mathrm{max}}(R_l)-\epsilon.$$ Thus for sufficiently
large $r(\epsilon)$ we have
$$\lambda_{\mathrm{max}}(B(x_0,r(\epsilon)))>
\lambda_{\mathrm{max}}(R_l)- 2\kappa(x_0,r(\epsilon))-\epsilon.$$
Since $\Gamma\setminus K$ is an infinite graph we can find
infinitely many points $x_i \in \Gamma\setminus K$ such that
$B(x_i,r(\epsilon))\subset \Gamma\setminus K$ are disjoint balls.
Hence there exists infinitely many functions $f_i$ with finite
disjoint support, $\mathrm{supp}f_i\subset
B(x_i,r(\epsilon))\subset \Gamma\setminus K$ such that
$$\frac{(\Delta f_i, f_i)}{(f_i,f_i)}>
\lambda_\mathrm{max}(R_l)-2\kappa(\Gamma\setminus K) - \epsilon =
1 + \frac{2\sqrt{l-1}}{l}- 2\kappa(\Gamma\setminus K)-\epsilon.$$
Taking an exhaustion $K\uparrow\Gamma$ it follows that there
exists an infinite dimensional subspace of $\ell^2(V,\mu)$ such
that
$$(\Delta f - (1 +
\frac{2\sqrt{l-1}}{l})f+ \kappa_\infty f+\epsilon f, f)_\mu
>0.$$
By Theorem \ref{essential spectrum intersection} we conclude that
$$\sigma^\mathrm{ess}(\Delta) \cap [1 +
\frac{2\sqrt{l-1}}{l}-2\kappa_\infty, 2]\neq \emptyset.$$
\end{proof}
From Lemma \ref{kappa and local clustering} we immediately obtain
the following corollary:
\begin{coro}
Let $2\leq l=\sup_{x\in V}\frac{\mu(x)}{\mu_{-}(x)}<\infty$, then
$$\sigma^\mathrm{ess}(\Delta) \cap [1 +
\frac{2\sqrt{l-1}}{l}-2C(\Gamma)_\infty, 2]\neq \emptyset,$$ where
$C(\Gamma)_\infty := \lim_{K\uparrow \Gamma} \sup_{x\in
\Gamma\setminus K}\frac{\sum_{y\in
\mathcal{C}(x)}\mu_{xy}}{\mu(x)}$ and $\mathcal{C}(x)$:= \{$y:
y\sim x$, $x$ and $y$ are both contained in a circuit of odd
length\}.
\end{coro}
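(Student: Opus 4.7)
The plan is to deduce this corollary directly from Theorem \ref{lasts1} by showing that $\kappa_\infty \leq C(\Gamma)_\infty$. Once this inequality is established, we have
$$1 + \frac{2\sqrt{l-1}}{l} - 2\kappa_\infty \geq 1 + \frac{2\sqrt{l-1}}{l} - 2C(\Gamma)_\infty,$$
so the interval appearing in Theorem \ref{lasts1} is contained in the larger interval $[1 + \frac{2\sqrt{l-1}}{l} - 2C(\Gamma)_\infty,\,2]$. Since the smaller interval already intersects $\sigma^{\mathrm{ess}}(\Delta)$, the larger one does too, and the corollary follows.

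The work is therefore to verify $\kappa_\infty \leq C(\Gamma)_\infty$. This is exactly Lemma \ref{kappa and local clustering} lifted from balls of finite radius to the infinite subgraphs $\Gamma\setminus K$. The key geometric observation in the proof of Lemma \ref{kappa and local clustering} is reference-point agnostic about radii: if a basepoint $x_i$ is fixed and $y \in V_0(x)$, i.e.\ $y \sim x$ and $d(x_i,y)=d(x_i,x)$, then concatenating shortest paths $P_x$ from $x_i$ to $x$ and $P_y$ from $x_i$ to $y$ (cut off at their last common vertex) with the edge $(x,y)$ produces a circuit of odd length. Thus $y \in \mathcal{C}(x)$, and so $V_0(x) \subseteq \mathcal{C}(x)$, giving $\mu_0(x) \leq \sum_{y\in\mathcal{C}(x)}\mu_{xy}$ for every choice of basepoint $x_i$.

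With this pointwise bound in hand, I would take $\sup_{x \in \Gamma\setminus K}$ on both sides to get
$$\kappa(x_i,\Gamma\setminus K) \;=\; \sup_{x\in \Gamma\setminus K}\frac{\mu_0(x)}{\mu(x)} \;\leq\; \sup_{x\in\Gamma\setminus K}\frac{\sum_{y\in \mathcal{C}(x)}\mu_{xy}}{\mu(x)}.$$
The right-hand side is independent of the basepoint $x_i$, so taking $\sup$ over $x_i \in \Gamma\setminus K$ preserves the inequality, and finally taking the limit over an exhaustion $K\uparrow\Gamma$ yields $\kappa_\infty \leq C(\Gamma)_\infty$.

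There is no real obstacle here; the only thing to be mindful of is that $C(\Gamma)_\infty$ is defined using circuits of \emph{arbitrary} odd length, whereas Lemma \ref{kappa and local clustering} states the bound in terms of $\mathcal{C}(x,r)$ with length $\leq 2r+1$. Since $\mathcal{C}(x,r)\subseteq\mathcal{C}(x)$ for every $r$, dropping the length restriction only enlarges the set and hence the quantity $C$, so the inequality goes in the favourable direction. This completes the reduction to Theorem \ref{lasts1}.
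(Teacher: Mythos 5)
Your proof is correct and follows the same route the paper intends: the corollary is stated to follow ``immediately'' from Lemma \ref{kappa and local clustering}, and you have correctly supplied the bookkeeping — lifting $V_0(x)\subseteq\mathcal{C}(x,r)\subseteq\mathcal{C}(x)$ to the pointwise bound $\mu_0(x)\le\sum_{y\in\mathcal{C}(x)}\mu_{xy}$ for every basepoint, taking suprema and the exhaustion limit to get $\kappa_\infty\le C(\Gamma)_\infty$, and then shrinking the interval in Theorem \ref{lasts1}.
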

\begin{rem}In the special case of unweighted graphs with
vertex degrees uniformly bounded from above by $l$ where $l$ is an
integer, Fujiwara \cite{Fujiwara96} (Theorem $6$) showed that
$\sigma^\mathrm{ess} \cap [1 + \frac{2}{l}, 2]\neq \emptyset$.
Even in the non-optimal case when we restrict ourselves to integer
valued $l$ our results improve the ones by Fujiwara
\cite{Fujiwara96} if $\kappa_\infty \leq \frac{\sqrt{l-1}-1}{l}$.
Moreover, for bipartite graphs Fujiwara obtained
$\sigma^\mathrm{ess}(\Delta) \cap [1 + \frac{2\sqrt{l-1}}{l},
2]\neq \emptyset$. This is a special case of our result since we
obtain the same estimate under the weaker assumption
$\kappa_\infty=0$ and we allow non-integer values for $l$.
\end{rem}

For some fixed reference point $x_0,$ we give the following
definitions which are introduced in \cite{Urakawa00}.
\begin{defi}
For any (possibly infinite) subset $U\subset \Gamma$ we define
$$M_{-}(x_0,U):=\sup_{x\in U}\frac{\mu_{-}(x)}{\mu(x)},$$
\begin{equation}\label{defn1}M_{-,\infty}(x_0,\Gamma):=\lim_{K\uparrow\Gamma}M_{-}(x_0,\Gamma\backslash K),\end{equation} and
$$\kappa(x_0,U):=\sup_{x\in U}\frac{\mu_{0}(x)}{\mu(x)},$$
\begin{equation}\label{defn3}\kappa_{\infty}(x_0,\Gamma):=\lim_{K\uparrow\Gamma}\kappa(x_0,\Gamma\backslash K).\end{equation}
\end{defi}
For the decomposition principle (Theorem \ref{decomposition
theorem}) we immediately obtain the following:
\begin{lemma}\label{MoharT} Let $\Gamma$ be an infinite graph and
$\Gamma^\prime$ be the graph obtained from $\Gamma$ by adding or
deleting finitely many edges. Then
$\sigma^\mathrm{ess}(\Delta(\Gamma))=
\sigma^\mathrm{ess}(\Delta(\Gamma^\prime))$.
\end{lemma}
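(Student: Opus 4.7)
The plan is to reduce the claim directly to Fujiwara's decomposition principle (Theorem \ref{decomposition theorem}) by exhibiting a finite set $K$ after which the two graphs look identical. Let $E_0$ denote the finite symmetric difference between the edge sets of $\Gamma$ and $\Gamma'$, and let $K \subset V$ be the finite set of all vertices that are endpoints of some edge in $E_0$. I emphasize that the vertex set is the same for $\Gamma$ and $\Gamma'$; only edges are changed.

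The key observation is that for every vertex $x \notin K$, no edge incident to $x$ belongs to $E_0$, so $\mu_{xy}^{\Gamma} = \mu_{xy}^{\Gamma'}$ for all $y \in V$ and hence $\mu^{\Gamma}(x) = \mu^{\Gamma'}(x)$. In particular, for any pair $x,y \in V \setminus K$ the edge weights agree in the two graphs. Consequently the Dirichlet Laplace operators $\Delta_{\Gamma \setminus K}$ and $\Delta_{\Gamma' \setminus K}$ act on the same Hilbert space $\ell^2(V\setminus K, \mu)$ by the same pointwise formula, and so coincide as self-adjoint operators. In particular they have the same essential spectrum.

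Applying Theorem \ref{decomposition theorem} to $\Gamma$ with finite subgraph $K$, and then to $\Gamma'$ with the same $K$, we obtain
\begin{equation*}
\sigma^{\mathrm{ess}}(\Delta(\Gamma)) \;=\; \sigma^{\mathrm{ess}}(\Delta_{\Gamma \setminus K}) \;=\; \sigma^{\mathrm{ess}}(\Delta_{\Gamma' \setminus K}) \;=\; \sigma^{\mathrm{ess}}(\Delta(\Gamma')),
\end{equation*}
which is the desired equality.

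There is essentially no obstacle in this argument; the only point requiring care is that the Dirichlet vertex measure of a boundary vertex $x$ (a vertex in $V \setminus K$ adjacent to $K$) uses the edges from $x$ to $K$ as well, and one must verify that none of these edges were modified. This is precisely why $K$ is chosen to contain \emph{both} endpoints of every modified edge: an edge from $x \notin K$ to $y \in K$ cannot lie in $E_0$, for otherwise $x$ would also belong to $K$. Note that we do not need $\Gamma \setminus K$ or $\Gamma' \setminus K$ to be connected, since Fujiwara's decomposition principle concerns the Dirichlet Laplacian on the complement of a finite set and makes no connectedness assumption.
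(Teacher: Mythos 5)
Your proof is correct and takes essentially the same approach as the paper: reduce to Fujiwara's decomposition principle (Theorem \ref{decomposition theorem}) by choosing a finite $K$ after whose removal the two graphs coincide. Your version is more explicit than the paper's (which just asserts the existence of suitable finite $K\subset\Gamma$, $K'\subset\Gamma'$ with $\Gamma\setminus K=\Gamma'\setminus K'$), and you rightly highlight the subtlety the paper glosses over: equality of the Dirichlet Laplacians on $\Gamma\setminus K$ and $\Gamma'\setminus K$ requires not only that edge weights agree between vertices outside $K$, but that the vertex measures $\mu(x)$ agree for $x\notin K$, which is exactly why both endpoints of each modified edge must be absorbed into $K$.
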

\begin{proof}
Since we only change finitely many edges, there exists a finite
subgraph $K\subset \Gamma$ and a finite subgraph $K^\prime \subset
\Gamma^\prime$ such that $\Gamma\setminus K = \Gamma^\prime
\setminus K^\prime$. Thus by the last theorem the essential
spectra of $\Delta(\Gamma)$ and $\Delta(\Gamma^\prime)$ coincide.
\end{proof}A similar result to Lemma \ref{MoharT} was obtained by Mohar \cite{Mohar82} for the adjacency operator.

By using Lemma \ref{MoharT}, we obtain the following estimates for
the essential spectra of graphs in terms of
$M_{-,\infty}(x_0,\Gamma)$ and $\kappa_{\infty}(x_0, \Gamma)$.

\begin{theo}\label{ees1}
Let $\Gamma$ be an infinite graph. Then
\begin{equation}\label{estim2}
\underline{\lambda}^{\mathrm{ess}}(\Gamma)\geq
1-\frac{2\sqrt{l-1}}{l}-\kappa_{\infty}(x_0,\Gamma),
\end{equation}
\begin{equation}
\bar{\lambda}^{\mathrm{ess}}(\Gamma)\leq
1+\frac{2\sqrt{l-1}}{l}+\kappa_{\infty}(x_0,\Gamma),
\end{equation} where $l= (M_{-,\infty}(x_0, \Gamma))^{-1}$ and
$x_0$ is an arbitrary vertex in $\Gamma$.
\end{theo}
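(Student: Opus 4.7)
The plan is to combine the decomposition principle (Theorem~\ref{decomposition theorem}) with a Barta-type argument modelled on the proof of Theorem~\ref{Theob1}. Since $\sigma^\mathrm{ess}(\Delta(\Gamma))=\sigma^\mathrm{ess}(\Delta_{\Gamma\setminus K})$ for any finite $K\subset\Gamma$, we obtain the sandwich
\[
\underline{\lambda}(\Delta_{\Gamma\setminus K})\le\underline{\lambda}^\mathrm{ess}(\Gamma)\le\bar{\lambda}^\mathrm{ess}(\Gamma)\le\bar{\lambda}(\Delta_{\Gamma\setminus K}),
\]
so it suffices, for each $\epsilon>0$, to produce a finite $K$ for which $\underline{\lambda}(\Delta_{\Gamma\setminus K})\ge 1-\frac{2\sqrt{l-1}}{l}-\kappa_\infty(x_0,\Gamma)-\epsilon$. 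The upper bound on $\bar{\lambda}^\mathrm{ess}(\Gamma)$ will then follow by applying Lemma~\ref{BasicProperties}$(iv)$ (i.e., $\lambda_1+\lambda_\mathrm{max}\le 2$) to each finite $\Omega\subset\Gamma\setminus K$ and passing to the limit via Lemma~\ref{finitetoinfinite}.

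Given $\epsilon>0$, the definitions~\eqref{defn1} and~\eqref{defn3} allow us to fix a finite $K\ni x_0$ such that $\mu(x)/\mu_-(x)\ge l_\epsilon:=(M_{-,\infty}(x_0,\Gamma)+\epsilon)^{-1}$ and $\mu_0(x)/\mu(x)\le\kappa_\infty(x_0,\Gamma)+\epsilon$ for every $x\in\Gamma\setminus K$. Fix a finite $\Omega\subset\Gamma\setminus K$, choose $s\ge\max_{y\in\Omega}d(y,x_0)$, and let $f_1^{l_\epsilon}$ denote the first Dirichlet eigenfunction of $V_{l_\epsilon}(s)\subset R_{l_\epsilon}$ from Lemma~\ref{Dirichlet Eigenvalues of Rl}. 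Define $g(y):=f_1^{l_\epsilon}(r(y))$ on all of $V$, with the convention $f_1^{l_\epsilon}(t)=0$ for $t>s$, and set $f:=g|_\Omega>0$. A direct calculation yields
\[
\Delta_\Omega f(x)=\Delta g(x)+\frac{1}{\mu(x)}\sum_{y\in V\setminus\Omega}\mu_{xy}g(y)\ge\Delta g(x)
\]
for every $x\in\Omega$, since $g\ge 0$ and $f=g$ on $\Omega$.

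The pointwise computation in the proof of Theorem~\ref{Theob1} applies verbatim to $g$ at each $x\in\Omega\subset\Gamma\setminus K$, using $\mu_-(x)/\mu(x)\le 1/l_\epsilon$ and the fact that $f_1^{l_\epsilon}$ is positive and decreasing on $V_{l_\epsilon}(s)$; it gives
\[
\frac{\Delta^{l_\epsilon}f_1^{l_\epsilon}(r(x))}{f_1^{l_\epsilon}(r(x))}-\frac{\Delta g(x)}{g(x)}\le \frac{\mu_0(x)}{\mu(x)}\le\kappa_\infty(x_0,\Gamma)+\epsilon.
\]
Since the left-hand quotient equals $\lambda_1(V_{l_\epsilon}(s))$, combining this with $\Delta_\Omega f\ge\Delta g$ and the Barta-type theorem for $\lambda_1$ (Theorem~$2.1$ in~\cite{Urakawa99}) produces $\lambda_1(\Omega)\ge\lambda_1(V_{l_\epsilon}(s))-\kappa_\infty(x_0,\Gamma)-\epsilon$.

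Exhausting $\Gamma\setminus K$ by such $\Omega$ so that the corresponding $s_\Omega\to\infty$ and invoking Lemma~\ref{bottom1} together with Lemma~\ref{Estimates Eigenvalues Rl} yields $\underline{\lambda}(\Delta_{\Gamma\setminus K})\ge 1-2\sqrt{l_\epsilon-1}/l_\epsilon-\kappa_\infty(x_0,\Gamma)-\epsilon$; letting $\epsilon\downarrow 0$ (so $l_\epsilon\to l$) completes the lower bound on $\underline{\lambda}^\mathrm{ess}(\Gamma)$, and the corresponding upper bound on $\bar{\lambda}^\mathrm{ess}(\Gamma)$ follows from Lemma~\ref{BasicProperties}$(iv)$ as indicated above. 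The main subtlety is the passage from the ball-centered statement of Theorem~\ref{Theob1} to a general $\Omega\subset\Gamma\setminus K$; this is absorbed by the inequality $\Delta_\Omega f\ge\Delta g$, which takes care of the contribution from the Dirichlet boundary of $\Omega$ in the favourable direction.
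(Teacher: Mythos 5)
Your argument is correct, and it differs from the paper's in a way worth noting. Both proofs start from the decomposition principle (Theorem~\ref{decomposition theorem}) and reduce to bounding $\underline{\lambda}(\Delta_{\Gamma\setminus K})$ from below via a Barta-type comparison with $R_{l_\epsilon}$. The difference is in how the comparison is set up. The paper decomposes $\Gamma\setminus B(x_0,r_0)$ into its connected components $\Gamma_i$, adjoins an auxiliary vertex $p_i$ to each $\Gamma_i$ so as to recreate an interior reference point (while preserving $\mu_-/\mu$ and $\mu_0/\mu$), invokes Lemma~\ref{MoharT} to identify $\sigma^{\mathrm{ess}}(\Gamma_i)$ with $\sigma^{\mathrm{ess}}(\Gamma_i')$, and then applies Corollary~\ref{Lowercomp1} directly to the modified graphs. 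You bypass the surgery entirely: you pull back the model eigenfunction via $d(\cdot,x_0)$ to a globally defined nonnegative $g$ on $V$, and absorb the Dirichlet boundary of $\Omega$ using the one-sided inequality $\Delta_\Omega f\ge\Delta g$ (valid since $g\ge 0$), which lets you run Barta on an arbitrary finite $\Omega\subset\Gamma\setminus K$ even though $x_0\notin\Omega$ and $\Omega$ is not a ball. The pointwise estimate then coincides with that of Theorem~\ref{Theob1} because every $x\in\Omega$ has $r(x)\ge 1$, so the separate treatment of the centre never arises. Your route is more direct and avoids the auxiliary-vertex construction; the paper's route has the advantage of quoting Corollary~\ref{Lowercomp1} as a black box rather than redoing the pointwise comparison. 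The treatment of the upper bound on $\bar\lambda^{\mathrm{ess}}$ is equivalent in both cases: you use Lemma~\ref{BasicProperties}$(iv)$ on finite $\Omega$ and pass to the limit, while the paper cites $\underline{\lambda}^{\mathrm{ess}}+\bar{\lambda}^{\mathrm{ess}}\le 2$ directly; these are the same fact. One small presentational point: for your conclusion $\lambda_1(\Omega)\ge 1-\tfrac{2\sqrt{l_\epsilon-1}}{l_\epsilon}-\kappa_\infty(x_0,\Gamma)-\epsilon$ you do not actually need $s_\Omega\to\infty$, since $\lambda_1(V_{l_\epsilon}(s))\ge 1-\tfrac{2\sqrt{l_\epsilon-1}}{l_\epsilon}$ already holds for every finite $s$ by Lemma~\ref{Estimates Eigenvalues Rl}.
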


\begin{proof}
Since it is well known \cite{Fujiwara96b,DodziukKarp} that
$\underline{\lambda}^{\mathrm{ess}}(\Gamma)+\bar{\lambda}^{\mathrm{ess}}(\Gamma)\leq
2,$ it suffices to show \eqref{estim2}. By the definitions of
$M_{-,\infty}(x_0, \Gamma)$ and $\kappa_{\infty}(x_0,\Gamma),$
\eqref{defn1} and \eqref{defn3}, for any $\epsilon>0,$ there
exists $r_0>0$ such that
\begin{equation}\label{estim5}M_{-,\infty}(x_0, \Gamma)\leq M_{-}(x_0,\Gamma\backslash B(x_0,r_0))\leq M_{-,\infty}(x_0, \Gamma)+\epsilon\end{equation} and
\begin{equation}\label{estim10}\kappa_{\infty}(x_0, \Gamma)\leq \kappa(x_0, \Gamma\backslash B(x_0,r_0))\leq \kappa_{\infty}(x_0, \Gamma)+\epsilon.\end{equation}
Let $\Gamma\backslash
B(x_0,r_0)=\Gamma_1\cup\Gamma_2\cup\cdots\cup\Gamma_k$ where
$\Gamma_i$ ($1\leq i\leq k$) are the connected components of
$\Gamma\backslash B(x_0,r_0).$ Then by Lemma \ref{MoharT},
$$\sigma^{\mathrm{ess}}(\Gamma)=\sigma^{\mathrm{ess}}(\Gamma\backslash B(x_0,r_0))=\cup_{i=1}^k\sigma^{\mathrm{ess}}(\Gamma_i).$$

We want to use Corollary \ref{Lowercomp1} to estimate
$\sigma^{\mathrm{ess}}(\Gamma_i).$ But the reference point $x_0$
is not contained in $\Gamma_i$ and thus
$\frac{\mu_{-}(x)}{\mu(x)}$ and $\frac{\mu_{0}(x)}{\mu(x)}$ will
change if we choose some reference point in $\Gamma_i$. In the
following, we add a new reference point to each $\Gamma_i$ which
preserves the quantities $\frac{\mu_{-}(x)}{\mu(x)}$ and
$\frac{\mu_{0}(x)}{\mu(x)}$.

We add a new vertex $p_i$ to each $\Gamma_i$ and let
$\Gamma_i'=\Gamma_i\cup\{p_i\}.$ For each $z\in \Gamma_i$ with
$d(z,x_0)=r_0+1,$ i.e. $z\in S_{r_0+1}(x_0)\cap \Gamma_i,$ we add
an new edge $zp_i$ in $\Gamma_i'$ with the edge weight
$\mu_{zp_i}=|E(z,B(x_0,r_0))|,$ that is, we identify the ball
$B(x_0,r_0)$ as a new vertex $p_i$ and preserve the edge between
$\Gamma_i$ and $B(x_0,r_0).$ Other edges in $\Gamma_i$ remain
unchanged in $\Gamma_i'$. Then by this construction, for any
$x\in\Gamma_i\subset\Gamma_i'$ the quantities
$\frac{\mu_{-}(x)}{\mu(x)}$ and $\frac{\mu_{0}(x)}{\mu(x)}$ w.r.t.
$x_0$ in $\Gamma$ are the same as those w.r.t. $p_i$ in
$\Gamma_i^\prime.$ Then
\begin{equation}\label{estim6}
M_{-}(p_i, \Gamma_i')=M_{-}(x_0, \Gamma_i),\ \kappa(p_i,
\Gamma_i')=\kappa(x_0, \Gamma_i)\end{equation} since
$\frac{\mu_{-}(p_i)}{\mu(p_i)}=\frac{\mu_{0}(p_i)}{\mu(p_i)}=0$
w.r.t. $p_i$ in $\Gamma_i'.$ Hence by \eqref{estim1} in Corollary
\ref{Lowercomp1} (letting $r\rightarrow\infty$), we have
\begin{equation}\label{estim7}\underline{\lambda}(\Gamma_i')\geq 1-\frac{2\sqrt{( M_{-}(p_i, \Gamma_i'))^{-1}-1}}{( M_{-}(p_i, \Gamma_i'))^{-1}}-\kappa(p_i, \Gamma_i').\end{equation}
Since $\sigma^{\mathrm{ess}}(\Gamma_i')\subset\sigma(\Gamma_i'),$
\begin{equation}\label{estim8}\underline{\lambda}^{\mathrm{ess}}(\Gamma_i')\geq
\underline{\lambda}(\Gamma_i').\end{equation} Hence
\begin{eqnarray*}
\underline{\lambda}^{\mathrm{ess}}(\Gamma)&=&\min_{1\leq i\leq k}\underline{\lambda}^{\mathrm{ess}}(\Gamma_i)\\
&=&\min_{1\leq i\leq k}\underline{\lambda}^{\mathrm{ess}}(\Gamma_i')\\
&\geq&\min_{1\leq i\leq k}\left\{1-\frac{2\sqrt{( M_{-}(p_i, \Gamma_i'))^{-1}-1}}{( M_{-}(p_i, \Gamma_i'))^{-1}}-\kappa(p_i, \Gamma_i')\right\}\\
&\geq&1-\frac{2\sqrt{( M_{-,\infty}(x_0,
\Gamma)+\epsilon)^{-1}-1}}{( M_{-,\infty}(x_0,
\Gamma)+\epsilon)^{-1}}-\kappa_{x_0, \infty}(\Gamma)-\epsilon,
\end{eqnarray*} where we use Lemma \ref{MoharT}, \eqref{estim5}, \eqref{estim10}, \eqref{estim6}, \eqref{estim7} and \eqref{estim8}.
Let $\epsilon\rightarrow 0,$ we prove the theorem.
\end{proof}

Theorem \ref{ees1} yields a more general sufficient condition for
the concentration of the essential spectrum than Theorem 5.4 in
\cite{Urakawa99}. It is a special case of a class of graphs with
concentrated essential spectrum discovered by Higuchi, see
\cite{Fujiwara96}.
\begin{coro}\label{lasts2}
Let $\Gamma$ be an infinite graph. If $M_{-,\infty}(x_0,
\Gamma)=0$ and $\kappa_{\infty}(x_0, \Gamma)=0,$ then
$\sigma^{\mathrm{ess}}(\Gamma)=\{1\}.$
\end{coro}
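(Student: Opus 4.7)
The corollary is essentially an immediate consequence of Theorem \ref{ees1}: the two hypotheses kill both error terms in the bounds on $\underline{\lambda}^{\mathrm{ess}}(\Gamma)$ and $\bar{\lambda}^{\mathrm{ess}}(\Gamma)$, pinching the essential spectrum to the single point $1$. The plan is to make this precise by passing through the intermediate $\varepsilon$-version of the estimate that appears in the proof of Theorem \ref{ees1}, which is needed because the natural parameter $l = (M_{-,\infty}(x_0,\Gamma))^{-1}$ is formally infinite under our hypothesis.

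Concretely, fix any $\varepsilon > 0$ small enough that $l(\varepsilon) := \varepsilon^{-1} \geq 2$. Since $M_{-,\infty}(x_0,\Gamma) = 0$ and $\kappa_{\infty}(x_0,\Gamma) = 0$, the intermediate step in the proof of Theorem \ref{ees1} (with $M_{-,\infty}(x_0,\Gamma) + \varepsilon = \varepsilon$) yields
\[
\underline{\lambda}^{\mathrm{ess}}(\Gamma) \geq 1 - \frac{2\sqrt{l(\varepsilon) - 1}}{l(\varepsilon)} - \varepsilon, \qquad \bar{\lambda}^{\mathrm{ess}}(\Gamma) \leq 1 + \frac{2\sqrt{l(\varepsilon) - 1}}{l(\varepsilon)} + \varepsilon.
\]
As $\varepsilon \to 0$ we have $l(\varepsilon) \to \infty$ and therefore $\frac{2\sqrt{l(\varepsilon) - 1}}{l(\varepsilon)} \to 0$, so both bounds collapse to $\underline{\lambda}^{\mathrm{ess}}(\Gamma) \geq 1$ and $\bar{\lambda}^{\mathrm{ess}}(\Gamma) \leq 1$.

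Since $\sigma^{\mathrm{ess}}(\Gamma)$ is nonempty (as recalled in the introduction, boundedness of $\Delta$ on the infinite-dimensional Hilbert space $\ell^2(V,\mu)$ guarantees this), and $\sigma^{\mathrm{ess}}(\Gamma) \subset [\underline{\lambda}^{\mathrm{ess}}(\Gamma), \bar{\lambda}^{\mathrm{ess}}(\Gamma)]$, these two bounds force $\sigma^{\mathrm{ess}}(\Gamma) = \{1\}$. No step poses a genuine obstacle: the real work was already done in Theorem \ref{ees1} (and behind it the eigenvalue comparison Corollary \ref{Lowercomp1} against the weighted half-line $R_l$). The only point that requires a moment of care is the limit $l \to \infty$, but this is handled automatically by the $\varepsilon$ built into the proof of Theorem \ref{ees1}.
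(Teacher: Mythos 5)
Your proof is correct and follows the route the paper intends: Corollary \ref{lasts2} is stated as an immediate consequence of Theorem \ref{ees1}, and the paper gives no separate argument. You are right to note the one point requiring care, namely that $l=(M_{-,\infty}(x_0,\Gamma))^{-1}$ is formally infinite when $M_{-,\infty}(x_0,\Gamma)=0$, so the literal statement of Theorem \ref{ees1} does not parse; reaching back into its proof and using the $\varepsilon$-approximation $(M_{-,\infty}(x_0,\Gamma)+\varepsilon)^{-1}=\varepsilon^{-1}=:l(\varepsilon)$, then letting $\varepsilon\to 0$ so that $\frac{2\sqrt{l(\varepsilon)-1}}{l(\varepsilon)}\to 0$, is exactly the right way to close that gap, and the nonemptiness of $\sigma^{\mathrm{ess}}(\Gamma)$ then pins it to $\{1\}$.
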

\textbf{Acknowledgement:} We thank Shiping Liu for his careful
proofreading of our paper.
\bibliography{DirichletLaplace}
\bibliographystyle{alpha}

\end{document}